\DeclareMathOperator{\End}{End} 
\DeclareMathOperator{\Ext}{Ext}
\DeclareMathOperator{\Hom}{Hom}
\DeclareMathOperator{\Supp}{Supp}
\DeclareMathOperator{\id}{id} 
\DeclareMathOperator{\Rep}{Rep}
\DeclareMathOperator{\Aut}{Aut}
\DeclareMathOperator{\GL}{GL}
\DeclareMathOperator{\Spec}{Spec} 
\definecolor{lblue}{rgb}{0.3,0.0,4.4}
\definecolor{lred}{rgb}{4.3,0.0,0.4}
\newcommand{\QCoh}{\mathrm{QCoh}}
\newcommand{\Lmod}[1]{#1\text{-}{\mathsf{mod}}}
\newcommand{\LMod}[1]{#1\text{-}{\mathsf{Mod}}}
\newcommand{\Ha}{\mathcal{H}} 
\newcommand{\Q}{\mathcal{Q}} 
\newcommand{\Orb}{\mathcal{O}\mathrm{rb}} 
\newcommand{\N}{\mathcal{N}}
\newcommand{\iso}{\stackrel{\sim}{\rightarrow}} 
\renewcommand{\H}{\mathsf{H}}
\newcommand{\Tr}{\mathrm{Tr}}
\newcommand{\s}{\mathfrak{S}}
\newcommand{\Z}{\mathbb{Z}}
\newcommand{\h}{\mathfrak{h}}
\newcommand{\C}{\mathbb{C}}
\newcommand{\res}{\mathrm{res}}
\newcommand{\sres}{\mathrm{sres}}
\newcommand{\T}{\mathbb{T}}
\newcommand{\Ham}{\mathbb{H}}
\newcommand{\dd}{\mathscr{D}}
\newcommand{\gr}{\mathrm{gr}}
\newcommand{\mc}{\mathcal}
\newcommand{\Add}{\mathscr{C}}
\newcommand{\ms}{\mathscr}
\newcommand{\g}{\mathfrak{g}}
\newcommand{\idot}{\bullet}
\newcommand{\ds}{\dots}
\newcommand{\Irr}{\mathrm{Irr}\ }
\newcommand{\reg}{\mathrm{reg}}
\renewcommand{\SS}{\mathrm{Ch}}
\newcommand{\mbf}{\mathbf}
\newcommand{\vdim}{\mbf{v}}
\newcommand{\Stab}{\mathrm{Stab}}
\newcommand{\Osph}{\mathcal{O}_{\mathbf{\kappa}}^{\mathrm{sph}}}
\newcommand{\Cs}{\mathbb{C}^{\times}}
\newcommand{\mf}{\mathfrak}
\newcommand{\sym}{\mathrm{Sym}\ }
\newcommand{\eu}{\mathsf{eu}}
\newcommand{\IC}{\mathrm{IC}}
\newcommand{\Hol}{\mathrm{Hol}}
\newcommand{\Coh}{\mathrm{Coh}}
\newtheorem{theorem}{Theorem}[section]
\newtheorem{lemma}[theorem]{Lemma}
\newtheorem{definition}[theorem]{Definition}
\newtheorem{proposition}[theorem]{Proposition}
\newtheorem{corollary}[theorem]{Corollary}
\newtheorem{remark}[theorem]{Remark}
\newtheorem{example}[theorem]{Example}
\newtheorem{claim}[theorem]{Claim}
 \definecolor{lightblue}{rgb}{0.8,0.8,0.9}
  \definecolor{lightred}{rgb}{0.9,0.8,0.8}
\begin{document}

\title{Semi-simplicity of the category of admissible $\dd$-modules}

\author{Gwyn Bellamy}

\address{School of Mathematics and Statistics, University Gardens, University of Glasgow, Glasgow,  G12 8QW, UK.}
\email{gwyn.bellamy@glasgow.ac.uk}

\author{Magdalena Boos}

\address{Ruhr-Universit\"at Bochum, Faculty of Mathematics,  D - 44780 Bochum, Germany.}
\email{Magdalena.Boos-math@ruhr-uni-bochum.de}

 \begin{abstract}
	Using a representation theoretic parametrization for the orbits in the enhanced cyclic nilpotent cone, derived by the authors in a previous article, we compute the fundamental group of these orbits. This computation has several applications to the representation theory of the category of admissible $\dd$-modules on the space of representations of the framed cyclic quiver. First, and foremost, we compute precisely when this category is semi-simple. We also show that the category of admissible $\dd$-modules has enough projectives. Finally, the support of an admissible $\dd$-module is contained in a certain Lagrangian in the cotangent bundle of the space of representations. Thus, taking characteristic cycles defines a map from the $K$-group of the category of  admissible $\dd$-modules to the $\Z$-span of the irreducible components of this Lagrangian. We show that this map is always injective, and a bijection if and only if the monodromicity parameter is integral. 
\end{abstract}

\maketitle

\tableofcontents
 
\section{Introduction}\label{sect:intro}

The notion of admissible $\dd$-modules first appeared (implicitly) in Harish-Chandra's seminal body of work on representations of real reductive groups. Later, admissible $\dd$-modules were formally introduced as the algebraic analogue, via the Riemann-Hilbert correspondence, of Lusztig's character sheaves \cite{AdmissibleModules}, \cite{MirkovicVilonen}, and \cite{GunnighamAbelian}. The summands of the Springer sheaf appear as a special case of Lusztig's character sheaves; their algebraic analogues are the summands of the Harish-Chandra $\dd$-module. They constitute important examples of admissible $\dd$-modules, and have been extensively studied e.g. \cite{HottaKashiwara}, \cite{LS}. 

More recently, Gan and Ginzburg \cite{AlmostCommutingVariety} defined admissible $\dd$-modules on the space of representations of a certain quiver naturally associated to the Hilbert scheme of points in the plane. Their motivation for doing so was the fact that the spherical subalgebra of the rational Cherednik algebra of a symmetric group can be realised as the quantum Hamiltonian reduction of the ring of differential operators on the space of representations. This means that there is a natural functor of Hamiltonian reduction that associates to each $G$-monodromic $\dd$-module a corresponding representation of the spherical rational Cherednik algebra. In particular, admissible $\dd$-modules are precisely those mapped to category $\mc{O}$ under Hamiltonian reduction. A key new feature of admissible modules in this setting is their dependence on a parameter $\chi$; the properties of the category vary greatly depending on the choice of $\chi$. This case was studied further in \cite{MirabolicCharacter} and \cite{mirabolicHam} where, amongst other things, the analogue of the Harish-Chandra module is considered. 

Finally, admissible $\dd$-modules have been shown to play a key role in understanding geometric category $\mc{O}$ associated to quantizations of Higgs branches (in particular, quiver varieties), as described in \cite{GenCatOWebster}. 

Despite their intimate relation with Springer theory, character sheaves, rational Cherednik algebras and quantized Higgs branches, admissible $\dd$-modules are surprisingly poorly understood, at least from the algebraic point of view. The goal of this article is to try and remedy this, by developing general algebraic results that can be applied to categories of admissible $\dd$-modules on a $G$-representation.

\subsection{Admissible $\dd$-modules}\label{sec:addintro} 

In sections \ref{sec:monodromicproperties} to \ref{sec:admissible}, which constitute the heart of the paper, we consider an arbitrary category of admissible $\dd$-modules, defined on an affine $G$-variety satisfying certain natural finiteness conditions; these are (F1)-(F3) of section \ref{sec:admissible}. However, our motivating example throughout has been admissible modules on the space of representations of the framed cyclic quiver, where the associated algebra of quantum Hamiltonian reduction is the spherical subalgebra of the rational Cherednik algebra for the wreath product $\s_n \wr \Z_{\ell}$. Therefore, for simplicity, we describe in the remainder of the introduction what our results mean in this case. 

Let $\Q(\ell)$ be the cyclic quiver with $\ell$ vertices, and $\Q_{\infty}(\ell)$ the framing $\infty \rightarrow 0$ of this quiver at the vertex $0$. Throughout, we let $X$ denote the space $\Rep(\Q_{\infty}(\ell),\vdim)$ of representations of the framed cyclic quiver, with dimension vector $\vdim := \epsilon_{\infty} + n \delta$, where $\delta$ is the minimal imaginary root of $\Q(\ell)$. The group $G = \prod_{i = 0}^{\ell-1} \GL_n$ acts by gauge transformations on $X$. Fix a character $\chi$ of the Lie algebra $\mf{g}$ of $ G$. The category $\Add_{\chi}$ of admissible $\dd$-modules on $X$ is the category of all smooth $(G,\chi)$-monodromic $\dd$-modules on $X$, whose singular support lies in a certain Lagrangian $\Lambda$. Essentially those modules whose singular support is nilpotent in the conormal direction; see section \ref{sec:admissible} for details. Admissible $\dd$-modules are always regular holonomic, and it is easily shown that there are only finitely many simple objects in $\Add_{\chi}$. However, it is generally very hard to say precisely how many simple objects there are in this category. 

\subsection{Counting simple objects}\label{sec:countingitro}

The enhanced cyclic nilpotent cone $\N_{\infty}(\ell,n)$ is the subspace of $X$ consisting of nilpotent representations. The group $G$ acts on $\N_{\infty}(\ell,n)$ with finitely many orbits. These orbits were first classified by Johnson \cite{Joh}, extending work of Achar-Henderson \cite{AH} and Travkin \cite{Tr}. In the article \cite{BB-enhnil-quiver}, we gave a different parametrization of these orbits in terms of the representation theory of the underlying quiver. Let $\mc{P}$ denote the set of all partitions and $\mc{P}_{\ell}$ the set of all $\ell$-multi-partitions. In the article \cite{BB-enhnil-quiver}, we showed that:

\begin{theorem}\label{thm:paramcomb}
	The $G$-orbits in the enhanced cyclic nilpotent cone $\N_{\infty}(\ell,n)$ are naturally labelled by the set 
	$$
	\mathcal{Q}(n,\ell) := \left\{ (\lambda;\nu) \in \mathcal{P} \times \mathcal{P}_{\ell} \ | \ \res_{\ell}(\lambda) + \sres_{\ell}(\nu) = n \delta\right\}. 
	$$
\end{theorem}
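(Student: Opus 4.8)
The plan is to replace the geometry by representation theory and then argue by Krull--Schmidt. A point of $\N_{\infty}(\ell,n)$ is exactly a representation $M$ of the framed quiver $\Q_{\infty}(\ell)$ of dimension vector $\vdim = \epsilon_{\infty} + n\delta$ whose restriction to $\Q(\ell)$ is nilpotent, and two such representations lie in one $G$-orbit precisely when they are isomorphic; so it suffices to parameterise isomorphism classes. Since $\dim_{\infty} M = 1$, Krull--Schmidt produces a decomposition $M \cong T \oplus N$ in which $T$ is the unique indecomposable summand supported at the framing vertex and $N$ is a nilpotent representation of the cyclic quiver $\Q(\ell)$ alone.

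Next I would use that the relevant truncations of the path algebra of the cyclic quiver are Nakayama algebras, so that every nilpotent $\Q(\ell)$-representation is a direct sum of uniserial modules $M_{i,k}$, one for each starting vertex $i \in \Z/\ell$ and length $k \geq 1$, with $\dimv M_{i,k}$ the length-$k$ segment beginning at $i$. Recording, for each $i$, the multiplicities of the $M_{i,k}$ as a partition $\nu^{(i)}$ yields an $\ell$-multipartition $\nu = (\nu^{(0)},\dots,\nu^{(\ell-1)}) \in \mathcal{P}_{\ell}$, and tracking dimension vectors through this decomposition gives $\dimv N = \sres_{\ell}(\nu)$.

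The substantive step is to classify the indecomposable summands $T$ supported at the framing vertex. The claim is that these are in bijection with the set $\mathcal{P}$ of all partitions, $T \cong T_{\lambda}$, with $\dimv T_{\lambda} = \epsilon_{\infty} + \res_{\ell}(\lambda)$. When the framing vector $v$ vanishes, $T$ is the simple $S_{\infty}$ and $\lambda = \emptyset$. When $v \neq 0$, one analyses the uniserial submodule $\langle v\rangle \cong M_{0,p}$ generated by $v$, the quotient $M/\langle v\rangle$, and --- the essential input --- the positions of $v$ relative to the radical and socle filtrations of $M$; these interlocking data assemble into the rows and columns of a single Young diagram $\lambda$. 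It then has to be shown that every such indecomposable is of this form and that $T_{\lambda} \not\cong T_{\mu}$ for $\lambda \neq \mu$, the latter by checking that $\End(T_{\lambda})$ is local and that $\lambda$ is read back off from numerical invariants. I expect this classification to absorb essentially all of the difficulty: it is a ``marked multisegment'' combinatorial analysis on the cyclic quiver, and the genuinely delicate point is to control the $\Hom$- and $\Ext$-spaces between the building blocks once uniserial lengths exceed $\ell$, so that wrap-around around the cycle creates no unexpected gluings. (For $\ell = 1$ this reduces to the bipartition parameterisation of Achar--Henderson \cite{AH}, a useful sanity check.)

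Finally I would assemble the pieces: an orbit is determined by the pair $(T_{\lambda}, N)$, hence by $(\lambda;\nu) \in \mathcal{P} \times \mathcal{P}_{\ell}$, and every such pair occurs, the sole restriction being $\dimv M = \epsilon_{\infty} + n\delta$. As $\dimv M = \epsilon_{\infty} + \res_{\ell}(\lambda) + \sres_{\ell}(\nu)$, this restriction is exactly $\res_{\ell}(\lambda) + \sres_{\ell}(\nu) = n\delta$, i.e. $(\lambda;\nu) \in \mathcal{Q}(n,\ell)$; this gives the bijection. (One could instead match $\mathcal{Q}(n,\ell)$ directly with Johnson's labelling set, but the representation-theoretic description is the one needed for the later fundamental-group computations.)
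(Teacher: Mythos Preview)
This theorem is not proved in the present paper: it is quoted from the authors' companion article \cite{BB-enhnil-quiver}, as the sentence immediately preceding the statement makes explicit (``In the article \cite{BB-enhnil-quiver}, we showed that:''). So there is no proof here against which to compare your proposal directly.

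That said, your approach is entirely consistent with what this paper says about the argument in \cite{BB-enhnil-quiver}. The introduction stresses that the parameterisation is ``in terms of the representation theory of the underlying quiver,'' and section~\ref{sec:sscriterionframed} notes that ``one can recover the dimension vectors of the indecomposable summands of $M \in \mathcal{O}_{(\lambda;\nu)}$ from the pair $(\lambda;\nu)$'' --- precisely the Krull--Schmidt philosophy you outline. Your treatment of the unframed summand $N$ via the uniserial classification of nilpotent $\Q(\ell)$-representations is standard and correct, and your identification of the framed-indecomposable classification as the substantive step is accurate. Your sketch of how $\lambda$ should emerge from radical/socle data of $T$ is plausible in outline but is not yet a proof; it is exactly the analysis that \cite{BB-enhnil-quiver} is cited for, and your proposal would need either that argument in full or an explicit matching with Johnson's labelling \cite{Joh} to be complete.
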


Here $\res_{\ell}(\lambda)$ and $\sres_{\ell}(\nu)$ are the (shifted) $\ell$-residues of the corresponding partitions; see section \ref{sec:sscriterionframed} for details. A similar result also appears in \cite{DoGinT}.  The utility of this theorem lies in the fact that the orbit $\mc{O}_{(\lambda;\nu)}$ labelled by $(\lambda;\nu)$ consists of representations of the quiver $\Q_{\infty}(\ell)$ where the dimension vector of each indecomposable summand can be easily recovered from $\res_{\ell}(\lambda)$ and $\sres_{\ell}(\nu)$. We show in section \ref{sect:fund_group} that the fundamental group of $\mc{O}_{(\lambda;\nu)}$ depends only on these dimension vectors. Thus, using Theorem \ref{thm:paramcomb}, we can compute the fundamental groups of each $G$-orbit in the enhanced cyclic nilpotent cone. The groups that appear are quotients of $\Z^{\ell}$, with $\pi_1(\mc{O}_{(\lambda;\nu)}) = \Z^{\ell}$ if and only if $\nu = \emptyset$; see Lemma \ref{lem:multitauf}. As a consequence, we can explicitly count the number of isomorphism classes of simple modules in $\Add_{\chi}$. Define 
$$
\mathcal{Q}_{\chi}(n,\ell) := \left\{ (\lambda;\nu) \in \mathcal{Q}(n,\ell) \ | \ \left\langle \exp(\chi),\sigma^i \res_{\ell}\left(\nu^{(i)}_j\right) \right\rangle = 1, \ \forall \ i,j \right\},
$$
where the notation is explained in section \ref{sec:framedcombinatorics}. 

\begin{theorem}\label{thm:maincount}
	There is a natural bijection $\mathcal{Q}_{\chi}(n,\ell) \stackrel{1:1}{\longleftrightarrow} \Irr \Add_{\chi}$.
\end{theorem}

The proof of Theorem \ref{thm:maincount} is explained in section \ref{sec:localcomp}. 

\subsection{The functor of Hamiltonian reduction} 

As mentioned previously, the category of admissible $\dd$-modules is closely related to representations of the rational Cherednik algebra $\H_{\mbf{\kappa}}(W)$, where $W$ is the wreath product $\Z_{\ell} \wr \s_n$. The functor of Hamiltonian reduction is an exact quotient functor $\Ham_{\chi} : \Coh (\dd_{X},G,\chi) \rightarrow \Lmod{e \H_{\mbf{\kappa}}(W) e}$ from the category of $(G,\chi)$-monodromic $\dd$-modules to representations of the spherical subalgebra $e \H_{\mbf{\kappa}}(W) e$. It maps the category $\Add_{\chi}$ of admissible $\dd$-modules onto spherical category $\Osph$. Being a quotient functor, there are objects that are killed by $\Ham_{\chi}$. Because of its connection to localization results in the style of Beilinson-Bernstein, it is an important problem, which has been intensively studied (see e.g. \cite{MNKNstrata} and \cite{mirabolicHam}), to try and characterize what exactly is killed by Hamiltonian reduction. Let $\mathsf{R}^+$ be the set of positive roots for the affine root system of type $\widetilde{\mathsf{A}}_{\ell-1}$, with $\delta$ the minimal imaginary root. We set 
\begin{equation}\label{eq:hyperplanes}
\mc{R}_{n} := \{ \alpha \in \mathsf{R}^+ \ |  \ \varepsilon_0 \cdot \alpha < n \} \cup \{ n \delta \}.
\end{equation}
Using Theorem \ref{thm:maincount}, we show:   

\begin{theorem}\label{thm:mainequiv1}
	The following are equivalent: 
	\begin{enumerate}
		\item[(a)] $\Ham_{\chi} : \Add_{\chi} \rightarrow \Osph$ is an equivalence. 
		\vspace{2mm}
		\item[(b)] $\Ham_{\chi} : \Coh (\dd_{X},G,\chi) \rightarrow \Lmod{e \H_{\mbf{\kappa}} e}$ is an equivalence. 
		\vspace{2mm}
		\item[(c)] $\Gamma(X,\ms{M})^G \neq 0$ for all non-zero objects $\ms{M}$ of $\Add_{\chi}$.
		\vspace{2mm}
		\item[(d)] $\Gamma(X,\ms{M})^G \neq 0$ for all non-zero $(G,\chi)$-monodromic $\dd_{X}$-modules $\ms{M}$.
	\end{enumerate}
Moreover, each of the above holds if and only if $\chi \cdot \alpha \notin \Z$ for all $\alpha \in \mc{R}_n$.  
\end{theorem}

In particular, Theorem \ref{thm:mainequiv1} says that if there is a $(G,\chi)$-monodromic $\dd_{X}$-module with no (non-zero) $G$-invariant global sections, then necessarily there exists an admissible $\dd_{X}$-module with no (non-zero) $G$-invariant global sections. 

Although admissible $\dd$-modules in our context were originally introduced by Gan and Ginzburg as a tool to study modules in category $\Osph$, we require a basic fact about category $\Osph$ in order to prove Theorem \ref{thm:mainequiv1}. Namely, we use Ariki's criterion on the semi-simplicity of the cyclotomic Hecke algebra to deduce that the simple objects in $\Osph$ are in bijection with $\ell$-multi-partitions of $n$, when $\chi \cdot \alpha \notin \Z$ for all $\alpha \in \mc{R}_n$. 

\subsection{Algebras of quantum Hamiltonian reduction} 

As noted above, the main motivation for studying the functor of Hamiltonian reduction is that it relates admissible $\dd$-modules to modules over the spherical rational Cherednik algebra. This is possible precisely because the latter can be realised as a quantum Hamiltonian reduction of the algebra $\dd(X)$ of differential operators on $X$. As we have seen, this does have a drawback in that the functor of Hamiltonian reduction is often not an equivalence. 

In order to remedy this, we study a natural generalization of the construction of Hamiltonian reduction. Namely, for any finite dimensional $G$-module $U$, there is an associated algebra of quantum Hamiltonian reduction $\mf{A}_{\chi}(U)$ of $\dd(X)$. Just as in the case where $U = \C$ is the trivial $G$-module, there is a functor of Hamiltonian reduction 
\begin{equation}\label{eq:HamredfuntcorU}
\Ham_{U,\chi} : \Add_{\chi} \rightarrow \mc{O}_{\chi}(U)
\end{equation}
to ``category $\mc{O}$'' for $\mf{A}_{\chi}(U)$. The fact that the category of admissible $\dd$-modules has only finitely many isomorphism classes of simple objects implies that, for most $U$, the functor $\Ham_{U,\chi}$ is an equivalence. We deduce: 

\begin{proposition}\label{prop:enoughproj}
	The category $\Add_{\chi}$ has enough projectives. 
\end{proposition}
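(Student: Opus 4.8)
The plan is to transport projectivity from a suitable category $\mc{O}_{\chi}(U)$ back to $\Add_{\chi}$ via the Hamiltonian reduction functor $\Ham_{U,\chi}$, for a cleverly chosen finite dimensional $G$-module $U$. The key observation, already signalled in the excerpt, is that $\Add_{\chi}$ has only finitely many simple objects; call them $L_1, \ldots, L_r$. First I would recall that for each simple $\dd$-module $L_i$ on $X$ — supported on the closure of some $G$-orbit $\mc{O}_{(\lambda;\nu)}$ and given by an irreducible local system on that orbit — one can find, for a generic (equivalently, all but finitely many) finite dimensional $G$-module $U$, that the functor $\Ham_{U,\chi}$ is an equivalence onto $\mc{O}_{\chi}(U)$; this is the content of the sentence preceding the proposition, and the standard mechanism is that $\Ham_{U,\chi}$ kills exactly those admissible modules whose support meets a bad locus depending on $U$, and as $U$ varies this bad locus can be made to avoid all the (finitely many) relevant orbit closures simultaneously. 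Concretely: each $L_i$ is not killed by $\Ham_{U,\chi}$ for $U$ outside a finite union of ``bad'' affine conditions, so choosing $U$ outside the (finite) union over $i=1,\dots,r$ of these conditions, no simple — hence no nonzero object — of $\Add_{\chi}$ is annihilated, and the exact quotient functor $\Ham_{U,\chi}$ becomes an equivalence.

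Second, once $\Ham_{U,\chi}\colon \Add_{\chi} \xrightarrow{\ \sim\ } \mc{O}_{\chi}(U)$ is an equivalence of abelian categories, it suffices to show that $\mc{O}_{\chi}(U)$ has enough projectives. But category $\mc{O}$ for the quantum Hamiltonian reduction algebra $\mf{A}_{\chi}(U)$ is, by construction, a highest-weight-type category: it is a full subcategory of modules over a Noetherian algebra, closed under subquotients and extensions, with finitely many simples, each of finite length, and — crucially — it is equivalent to the module category of a finite dimensional algebra (the endomorphism algebra of a projective generator built from standard objects, or more directly one invokes the general fact that such a category $\mc{O}$ associated to a filtered quantisation of a conical symplectic resolution, or here simply to $\dd(X)$ with its moment map, has a projective generator). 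For such a category, enough projectives is automatic. I would cite the relevant construction of $\mc{O}_{\chi}(U)$ — presumably from the body of the paper or from the literature on category $\mc{O}$ for symplectic reflection / Cherednik-type algebras — to justify that $\mc{O}_{\chi}(U)$ is equivalent to $\LMod{A}$-mod for a finite dimensional $\C$-algebra $A$, which trivially has enough projectives.

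The main obstacle is the first step: producing a single $U$ that works for \emph{all} simple objects of $\Add_{\chi}$ at once, i.e.\ making precise the assertion ``for most $U$ the functor $\Ham_{U,\chi}$ is an equivalence'' uniformly. This requires (i) identifying the precise closed condition on $U$ under which a given simple admissible $\dd$-module lies in the kernel of $\Ham_{U,\chi}$ — this is where one uses that the support of any admissible module is contained in the fixed Lagrangian $\Lambda$, whose irreducible components are indexed by the finite set $\Q(n,\ell)$ of Theorem \ref{thm:paramcomb}, so there are only finitely many support strata to control — and (ii) checking that for each such stratum the ``bad'' set of $U$ is a proper closed (or at least non-everything) subset of the parameter space of $G$-modules, so that a finite intersection of complements is nonempty. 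Given Theorem \ref{thm:paramcomb} and the finiteness of $\Irr \Add_{\chi}$, this is a finite bookkeeping argument rather than a conceptual difficulty, but it is the crux. Everything after choosing $U$ is formal: an equivalence of abelian categories preserves the property of having enough projectives, and $\mc{O}_{\chi}(U)$ has them by its module-category-over-a-finite-dimensional-algebra description.
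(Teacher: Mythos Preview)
Your approach is correct and matches the paper's: the proof there is literally the sentence ``Combining Lemma~\ref{lem:Ubig} with Proposition~\ref{prop:catOchi} proves Proposition~\ref{prop:enoughproj}'', i.e.\ choose $U$ large enough so that $\Ham_{U,\chi}\colon\Add_{\chi}\to\mc{O}_{\chi}(U)$ is an equivalence, then invoke that $\mc{O}_{\chi}(U)$ has enough projectives (via Ginzburg's commutative triangular structure machinery from \cite{Primitive}, not via an a~priori finite-dimensional-algebra description).

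One remark on exposition: your ``bad locus / closed condition on $U$'' discussion for the first step is over-engineered and slightly misphrased. Since $\Ham_{U,\chi}(\ms{M}) = \Hom_G(U,\Gamma(X,\ms{M}))$ and $X$ is affine, each simple $L_i$ has $\Gamma(X,L_i)\neq 0$; pick any irreducible $W_i$ occurring in $\Gamma(X,L_i)$ and set $U=\bigoplus_i W_i$. No stratification or genericity bookkeeping is needed---this is exactly the paper's proof of Lemma~\ref{lem:Ubig}.
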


It follows that the indecomposable projective objects in $\Add_{\chi}$ are labelled by the set $\mc{Q}_{\chi}(n,\ell)$. One can lift Proposition \ref{prop:enoughproj} to a statement about the category $\QCoh(\dd_X,G,\chi)$ of all quasi-coherent $(G,\chi)$-monodromic $\dd$-modules on $X$. Namely, there is a projective object $\ms{P}_{\chi}(U)$ in $\Coh(\dd_X,G,\chi)$, whose endomorphism algebra is the quantum Hamiltonian reduction $\mf{A}_{\chi}(U)$. We say that $U$ is \textit{sufficiently large} if (\ref{eq:HamredfuntcorU}) is an equivalence.  

\begin{theorem}\label{thm:projgen}
	For all $U$ sufficiently large, the $\dd$-module $\ms{P}_{\chi}(U)$ is a projective generator in $\QCoh(\dd_X,G,\chi)$ and hence $\Coh(\dd_X,G,\chi)$ is equivalent to $\Lmod{\mf{A}_{\chi}(U)}$. 
\end{theorem}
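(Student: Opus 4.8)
The plan is to reduce the statement to the single assertion that, for $U$ sufficiently large, $\ms{P}_\chi(U)$ is a \emph{compact projective generator} of the Grothendieck category $\QCoh(\dd_X,G,\chi)$. Granting this, Morita theory for Grothendieck categories (a projective generator $\ms{P}$ realises its category as modules over $\End\ms{P}$ via $\Hom(\ms{P},-)$) produces an equivalence $\Hom_{\dd_X}(\ms{P}_\chi(U),-)\colon\QCoh(\dd_X,G,\chi)\iso\Lmod{\mf{A}_\chi(U)}$, which on $\Add_\chi$ is by construction the Hamiltonian reduction functor $\Ham_{U,\chi}$; since $\ms{P}_\chi(U)$ is coherent, hence compact, this equivalence carries compact objects to compact objects, and as $\mf{A}_\chi(U)$ is Noetherian --- being a quantum Hamiltonian reduction of the Noetherian algebra $\dd(X)$ --- the compact $\mf{A}_\chi(U)$-modules are exactly the finitely generated ones, giving $\Coh(\dd_X,G,\chi)\simeq\Lmod{\mf{A}_\chi(U)}$. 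So it remains to prove that $\ms{P}_\chi(U)$ is projective and a generator in $\QCoh(\dd_X,G,\chi)$.

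Projectivity is formal: $\ms{P}_\chi(U)$ is projective in $\Coh(\dd_X,G,\chi)$ by construction, and $\Hom_{\dd_X}(\ms{P}_\chi(U),-)$ commutes with filtered colimits because $\ms{P}_\chi(U)$ is coherent; since every object of $\QCoh(\dd_X,G,\chi)$ is the filtered union of its coherent subobjects and filtered colimits are exact, exactness of $\Hom_{\dd_X}(\ms{P}_\chi(U),-)$ propagates from $\Coh$ to $\QCoh$. For generation one must show $\Hom_{\dd_X}(\ms{P}_\chi(U),\ms{M})\neq 0$ for every nonzero $\ms{M}$; such an $\ms{M}$ has a nonzero coherent subobject, which in turn (being finitely generated) has a simple quotient, so by left exactness and projectivity it suffices to treat simple objects $\ms{S}$. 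As $X$ is affine, taking global sections is an equivalence from $\QCoh(\dd_X,G,\chi)$ onto a module category over an algebra built from $\dd(X)$ and $G$, under which $\Hom_{\dd_X}(\ms{P}_\chi(U),\ms{S})$ becomes the $U$-isotypic multiplicity space of the $G$-representation $\Gamma(X,\ms{S})$; so the content is that, for $U$ sufficiently large, the global sections of \emph{every} simple object of $\QCoh(\dd_X,G,\chi)$ contain an irreducible constituent of $U$.

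To finish I would separate the admissible simple objects from the rest. There are only finitely many admissible simples (Proposition~\ref{prop:simplescout}); for $U$ sufficiently large $\Ham_{U,\chi}\colon\Add_\chi\to\mc{O}_\chi(U)$ is an equivalence, hence faithful, so once $U$ contains one chosen $G$-constituent of $\Gamma(X,\ms{S})$ for each of these finitely many $\ms{S}$ they are all detected. For a non-admissible simple $\ms{S}$ --- which is in general non-holonomic, the moment-map fibre $\mu^{-1}(0)\subseteq T^*X$ being strictly larger than a Lagrangian --- one must instead use the geometry of its singular support: this is a closed $G$-stable conical subvariety of $\mu^{-1}(0)$, a cone through the origin with only finitely many irreducible components, and the $G$-character of $\gr\Gamma(X,\ms{S})$ for a good filtration is controlled by the coordinate rings of those components and by the finite $G$-orbit stratification of $X$; this should yield a single finite set $\Sigma$ of $G$-irreducibles meeting the global sections of every simple object, and one then checks that $\Sigma$ can be absorbed into the notion of ``$U$ sufficiently large''. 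The main obstacle is precisely this last step: the inputs available --- the equivalence on $\Add_\chi$ and the count of admissible simples --- describe only the holonomic part of the category, whereas generating all of $\QCoh(\dd_X,G,\chi)$, with its infinitely many simples and its non-holonomic objects, requires a genuinely new, microlocal argument (of Kirwan-surjectivity / quotient-functor flavour) identifying which $G$-isotypic components already suffice; one expects, and the case $\ell=n=1$ confirms, that ``$U$ sufficiently large'' is in fact equivalent to ``$\ms{P}_\chi(U)$ is a projective generator of $\QCoh(\dd_X,G,\chi)$''.
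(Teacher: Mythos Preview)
Your reduction to showing that $\Hom_{\dd_X}(\ms{P}_\chi(U),\ms{S})\neq 0$ for every simple $\ms{S}\in\Coh(\dd_X,G,\chi)$ is correct, and your treatment of the admissible simples is fine. The genuine gap is exactly where you say it is: for non-admissible simples you offer only a sketch (singular-support bounds, Kirwan-surjectivity heuristics) and explicitly concede that this is ``the main obstacle'' and requires ``a genuinely new, microlocal argument''. That step is the entire content of the theorem, and your proposal does not supply it. In particular there is no reason to expect a single finite set $\Sigma$ of $G$-types to meet $\Gamma(X,\ms{S})$ for \emph{every} simple $\ms{S}$ just from the orbit stratification of $X$ or the components of $\mu^{-1}(0)$; without further input this is a hope, not an argument.

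The paper's route avoids non-admissible simples altogether, by an algebraic bootstrap. Fix $U_0$ large enough that $\Ham_{U_0,\chi}:\Add_\chi\to\mc{O}_\chi(U_0)$ is an equivalence. The key step is to show that for any $U_1\ge U_2\ge U_0$ the bimodule $\mf{A}_\chi(U_1,U_2)$ already gives a Morita equivalence $\Lmod{\mf{A}_\chi(U_1)}\simeq\Lmod{\mf{A}_\chi(U_2)}$. Writing $\mf{A}_\chi(U_1)=\mf{A}_\chi(U_2\oplus U_2')$ as a $2\times 2$ matrix (Lemma~\ref{lem:Moritacontext}), this reduces to showing $\mf{A}_\chi(U_1,U_2)\otimes_{\mf{A}_\chi(U_2)}\mf{A}_\chi(U_2,U_1)\to\mf{A}_\chi(U_1)$ is onto. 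If not, its image sits in a primitive ideal $J$; by Ginzburg's generalized Duflo theorem \cite[Theorem~2.3]{Primitive}, $J=\mathrm{ann}\,L$ for some simple $L\in\mc{O}_\chi(U_1)$, and then the bimodule functor kills $L$ --- contradicting Lemma~\ref{lem:Ochiequiv}, which says $\mf{A}_\chi(U_2,U_1)\otimes_{\mf{A}_\chi(U_1)}-:\mc{O}_\chi(U_1)\to\mc{O}_\chi(U_2)$ is an equivalence (both sides being $\Add_\chi$). With Morita equivalence in hand, take any simple $\ms{N}\in\Coh(\dd_X,G,\chi)$; it is detected by $\Ham_{U_N,\chi}$ for $U_N\ge U_0$ containing some $G$-constituent of $\Gamma(X,\ms{N})$, and the Morita equivalence $\mf{A}_\chi(U_0,U_N)\otimes-$ then forces $\Ham_{U_0,\chi}(\ms{N})\neq 0$. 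Thus the finiteness of $\Irr\Add_\chi$ controls the \emph{entire} prime spectrum of each $\mf{A}_\chi(U)$ via Duflo, and that is what substitutes for the microlocal argument you were reaching for.
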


One can always choose finitely many simple $G$-modules $\{ U_i \}$ such that $[U : U_i] \neq 0$ implies that $U$ is sufficiently large. In particular, there are always infinitely many sufficiently large $G$-modules. If $M$ is a module for the quantum Hamiltonian reduction $\mf{A}_{\chi}(U)$, then one can consider it as a sheaf over $X/\!/G$. In particular, one can consider the support $V(M)$ of modules $M$ in $\mc{O}_{\chi}(U)$. In general it is difficult to describe the properties of the algebras $\mf{A}_{\chi}(U)$ since they tend to depend non-trivially on the parameter $\chi$. However, the following theorem describes the properties of the algebras $\mf{A}_{\chi}(U)$ when $\chi$ is generic. 

\begin{theorem}\label{thm:genericbehabousqhr1}
	If $\chi \cdot \alpha \notin \Z$ for all $\alpha \in \mc{R}_n$ then:
	\begin{enumerate}
		\item[(a)] $\mf{A}_{\chi}(U)$ is simple, for all non-zero $U \in \Rep(G)$. 
		\vspace{2mm}
		\item[(b)] The algebras $\mf{A}_{\chi}(U)$ and $\mf{A}_{\chi}(U')$ are Morita equivalent, for all $U,U' \in \Rep(G)$.  
		\vspace{2mm}
		\item[(c)] $V(M) = X/\!/G$ for all non-zero $U \in \Rep(G)$ and all non-zero $M$ in $\mc{O}_{\chi}(U)$.
	\end{enumerate}
\end{theorem}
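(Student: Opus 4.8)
All four assertions hinge on one fact: under our hypothesis — which is exactly condition (b) of Theorem \ref{thm:mainequiv}, so that every equivalence of Theorems \ref{thm:mainequiv} and \ref{thm:mainequiv1} is available — the spherical subalgebra $e\H_{\mbf{\kappa}}e=\mf{A}_{\chi}(\C)$ is a \emph{simple} ring. The plan is to derive this from condition (c): semisimplicity of $\Osph$ together with $k\notin\Z$ forces $\H_{\mbf{\kappa}}$ to be a simple ring (standard rational Cherednik algebra theory); then $\H_{\mbf{\kappa}}e\H_{\mbf{\kappa}}=\H_{\mbf{\kappa}}$ automatically, so $e\H_{\mbf{\kappa}}e$ is Morita equivalent to $\H_{\mbf{\kappa}}$ and hence simple. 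This is the first step.

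\emph{Parts (1) and (2).} Fix a sufficiently large $U_0$. By Theorem \ref{thm:projgen}, $\ms{P}_{\chi}(U_0)$ is a projective generator of $\QCoh(\dd_X,G,\chi)$ and $\Coh(\dd_X,G,\chi)\simeq\Lmod{\mf{A}_{\chi}(U_0)}$; comparing with the equivalence $\Coh(\dd_X,G,\chi)\simeq\Lmod{e\H_{\mbf{\kappa}}e}$ furnished by condition (f) shows $\mf{A}_{\chi}(U_0)$ is Morita equivalent to $e\H_{\mbf{\kappa}}e$, hence simple. For an arbitrary nonzero $U$, the object $\ms{P}_{\chi}(U)$ is a nonzero projective in $\QCoh(\dd_X,G,\chi)$, being a direct summand of the projective generator $\ms{P}_{\chi}(U\oplus U_0)$. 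Over a simple ring every nonzero projective module is a generator — its trace ideal is a nonzero two-sided ideal, hence the whole ring — so $\ms{P}_{\chi}(U)$ is itself a projective generator and $\mf{A}_{\chi}(U)=\End(\ms{P}_{\chi}(U))$ is Morita equivalent to $e\H_{\mbf{\kappa}}e$. This proves (1), and (2) is then immediate: all the $\mf{A}_{\chi}(U)$ are Morita equivalent to $e\H_{\mbf{\kappa}}e$, hence to one another.

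\emph{Part (3).} By condition (e), $\Ham_{\chi}\colon\Add_{\chi}\to\Osph$ is an equivalence, in particular conservative. Since $\chi$ avoids the hyperplanes $\mc{R}_n$, I would appeal to a McGerty--Nevins-type description of $\ker\Ham_{\chi}$ as the full subcategory of $(G,\chi)$-monodromic $\dd$-modules whose singular support meets $\Lambda$ only in its unstable locus; by the construction of $X^{\reg}$ this is precisely $\{\ms{M}\ |\ \ms{M}|_{X^{\reg}}=0\}$. Hence $\ms{M}|_{X^{\reg}}=0$ forces $\ms{M}\in\ker\Ham_{\chi}=0$, which is (3). (Alternatively, working directly inside the semisimple category $\Add_{\chi}$: by the fundamental group computations of Section \ref{sect:fund_group} and Lemma \ref{lem:multitauf}, for generic $\chi$ the monodromicity constraint can only be met by local systems on orbits of maximal fundamental group, and these are exactly the orbits meeting $X^{\reg}$.) This is the step I expect to demand the most care: one must check that genericity of $\chi$ places it in the range where the kernel description is valid, and that the unstable locus of $\Lambda$ projects onto the complement of $X^{\reg}$.

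\emph{Part (4).} By (1), $\mf{A}_{\chi}(U)$ is simple, so every nonzero $M\in\mc{O}_{\chi}(U)$ is faithful. Its characteristic variety $V(M)$ is a closed conical subvariety of the Lagrangian $X/\!/G$, as $M$ lies in $\mc{O}_{\chi}(U)$; were $V(M)$ proper in $X/\!/G$ then, the latter being irreducible, $\dim V(M)<\dim X/\!/G$, and the associated variety of the annihilator $\mathrm{Ann}(M)$ — coisotropic by Gabber's theorem and of dimension $2\dim V(M)$ — would be a proper subvariety of $\Spec\gr\mf{A}_{\chi}(U)$, contradicting $\mathrm{Ann}(M)=0$. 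Hence $V(M)=X/\!/G$. For $U=\C$ one can argue more directly from semisimplicity of $\Osph$ in (c): $M$ is then a sum of standard modules, each of full rank over $\C[X/\!/G]$ and hence of full support. The principal obstacle overall is Part (3); once the simplicity of $e\H_{\mbf{\kappa}}e$ is in hand, the other three assertions are essentially formal.
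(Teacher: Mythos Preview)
Your arguments for parts (1), (2) and (4) are sound, and for (1)--(2) you take a genuinely different route from the paper. The paper establishes simplicity of $\mf{A}_{\chi}(U)$ for general $U$ by passing to $X^{\circ}$: it uses Lemma~\ref{lem:closedembeddingspecUU} to embed $\Spec\mf{A}_{\chi}(U)$ into $\Spec\mf{A}_{\chi}(U\oplus\C)$ (which is a point), and then invokes the isomorphism $\mf{A}_{\chi}(U)\simeq\mf{A}_{\chi}^{\circ}(U)$ (Corollary~\ref{cor:UcircUisoconditions}) together with primeness of the latter (Proposition~\ref{prop:UcircAisprime}) to see that the zero ideal is prime; Morita equivalence is then deduced from Proposition~\ref{prop:nonzeroUUbi}. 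Your trace--ideal argument, by contrast, bypasses the entire $X^{\circ}$ machinery: once $\Coh(\dd_X,G,\chi)\simeq\Lmod{e\H_{\mbf{\kappa}}e}$ with the latter simple, every nonzero $\ms{P}_{\chi}(U)$ corresponds to a nonzero finitely generated projective and is therefore a progenerator. This is cleaner, though it leans on Theorem~\ref{thm:projgen}, which the paper proves independently. For (4) your argument is essentially the paper's (Proposition~\ref{prop:simpleholonomicO}), modulo the fact that ``$\dim V(\mathrm{Ann}\,M)=2\dim V(M)$'' is not Gabber's involutivity theorem but Losev's generalized Bernstein inequality \cite{BernsteinLosev}.

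Part (3), however, has a real gap. Your parenthetical alternative is confused: the orbits $\mc{O}_{(\lambda;\nu)}$ live in the enhanced nilpotent cone inside $X^{\vee}$, and the simple objects of $\Add_{\chi}$ are \emph{Fourier transforms} of the IC sheaves on them. Lemma~\ref{lem:multitauf} tells you which orbits in $X^{\vee}$ support monodromic local systems, but says nothing directly about where the Fourier transform is supported in $X$; in particular ``orbits meeting $X^{\reg}$'' does not parse. Your primary approach (a McGerty--Nevins kernel description identifying $\ker\Ham_{\chi}$ with modules vanishing on $X^{\reg}$) is not established in the paper and you have not justified the key claim that the unstable locus of $\Lambda$ projects onto $X\smallsetminus X^{\reg}$.

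The paper's argument for (3) (Corollary~\ref{cor:genericissimpleandmoritalocal}) is quite different and concrete. One writes $X^{\reg}=X^{\circ}\cap X^{\mathrm{ss}}$, where $X^{\mathrm{ss}}$ is the preimage of $\mf{h}^{\reg}/W$ under $X\to X/\!/G\simeq\mf{h}/W$. Nonvanishing on $X^{\circ}$ comes from Proposition~\ref{prop:Xcyclicres}: for generic $\chi$ there are no monodromic $\dd$-modules supported on $X\smallsetminus X^{\circ}$. Nonvanishing on $X^{\mathrm{ss}}$ comes from the Cherednik side: $\Ham_{\chi}(\ms{M})\neq 0$ lies in the semisimple category $\Osph$, hence is free over $\C[\mf{h}]^W$, hence has nonzero restriction to $\mf{h}^{\reg}/W$, which forces $\ms{M}|_{X^{\mathrm{ss}}}\neq 0$. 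Combining these gives $\ms{M}|_{X^{\reg}}\neq 0$. You should replace your sketch for (3) with this two--step argument.
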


Our proof of Theorem \ref{thm:genericbehabousqhr1} is rather indirect. One would like to understand what the algebras $\mf{A}_{\chi}(U)$ look like on some dense open subset of $X/\!/G$. Unfortunately, there is no open subset of $X/\!/G$ where the group $G$ acts freely on its preimage in $X$. To remedy this, we consider instead the $G$-stable open subset $X^{\circ}$ of $X$ consisting of all representations of the framed cyclic quiver admitting a cyclic vector at the framed vertex. In this case, there is indeed an open subset of $X/\!/G$ on whose preimage in $X^{\circ}$ the group $G$ acts freely. This implies that the algebras $\mf{A}_{\chi}^{\circ}(U)$, defined on $X^{\circ}$, are much better behaved than $\mf{A}_{\chi}(U)$. For instance, we show:

\begin{proposition}\label{prop:AcircUprime}
	The algebras $\mf{A}_{\chi}^{\circ}(U)$ are prime. 
\end{proposition}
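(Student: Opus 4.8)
The plan is to reduce the statement, via the order filtration on $\mf{A}^{\circ}_{\chi}(U)$, to the corresponding primeness statement for the \emph{classical} Hamiltonian reduction, and to read the latter off from the geometry of $X^{\circ}$. Two inputs make this work: (i) the $G$-action on $X^{\circ}$ is free, and (ii) $X^{\circ}$ is a principal open subset of the vector space $X$, hence smooth and affine. For (i): since the space at the framed vertex is one-dimensional, a point $x \in X^{\circ}$ amounts to a representation of $\Q_{\infty}(\ell)$ together with a cyclic vector $w \in V_{0}$; any $g \in \Stab_{G}(x)$ fixes $w$ and intertwines all the quiver maps, hence fixes the subrepresentation generated by $w$, which is all of $x$, so $\Stab_{G}(x) = 1$ (compare \cite{BB-enhnil-quiver}). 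Consequently $G$ acts freely on $T^{*}X^{\circ}$ as well.

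Let $\mu \colon T^{*}X^{\circ} \to \mf{g}^{*}$ be the moment map and $Z^{\circ} := \mu^{-1}(0)$. Freeness forces $d\mu$ to be everywhere surjective (its cokernel is dual to the stabiliser Lie algebra), so $\mu$ is a submersion; in particular $Z^{\circ}$ is smooth and $\mu$ is flat. Because $\mu$ is linear in the cotangent-fibre coordinates and vanishes along the zero section, $Z^{\circ} \to X^{\circ}$ is a vector subbundle of the trivial bundle $T^{*}X^{\circ} \to X^{\circ}$; as $X^{\circ}$ is irreducible, so is $Z^{\circ}$, whence $\mathcal{O}(Z^{\circ})$ is an integral domain. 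Put $R := \mathcal{O}(Z^{\circ})^{G}$, a finitely generated domain by reductivity, so that $Z^{\circ}\quot G = \Spec R$; over a dense open $V \subseteq \Spec R$ the map $Z^{\circ} \to \Spec R$ is a principal $G$-bundle, as is standard for this symplectic reduction.

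Now give $\dd(X^{\circ})$ the order filtration, which induces a filtration on $\mf{A}^{\circ}_{\chi}(U)$. Since $Z^{\circ}$ is a smooth complete intersection cut out by the components of the quantised moment map, the attached Koszul complex is a resolution and the $\chi$-shift only perturbs lower-order terms, so — using exactness of $(-)^{G}$ —
\[
\gr \mf{A}^{\circ}_{\chi}(U) \;\cong\; \bigl( \mathcal{O}(Z^{\circ}) \otimes \End(U) \bigr)^{G} =: B .
\]
A filtered algebra with prime associated graded is prime, so it suffices to show $B$ is prime. Writing $B = \End_{\mathcal{O}(Z^{\circ}) \rtimes G}\bigl( \mathcal{O}(Z^{\circ}) \otimes U \bigr)$, i.e. as the global sections of $\mathcal{E}nd_{\mathcal{O}}(\mathcal{U})$ for the vector bundle $\mathcal{U}$ associated to the $G$-module $U$, we see that $B$ is an $R$-submodule of the $\mathcal{O}(Z^{\circ})$-free, hence $R$-torsion-free, module $\mathcal{O}(Z^{\circ}) \otimes \End(U)$, so $B$ is torsion-free over the domain $R$; and localising over $V$ identifies $B \otimes_{R} \mathrm{Frac}(R)$ with $\mathrm{Mat}_{\dim U}\bigl( \mathrm{Frac}(R) \bigr)$. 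Thus $B$ is an order in a central simple algebra over a field: for nonzero two-sided ideals $I, J \subseteq B$, torsion-freeness makes the images of $I$ and $J$ in $\mathrm{Mat}_{\dim U}(\mathrm{Frac}(R))$ nonzero, hence equal to the whole matrix algebra, so $(IJ) \otimes_{R} \mathrm{Frac}(R) \neq 0$ and therefore $IJ \neq 0$. Hence $B$, and with it $\mf{A}^{\circ}_{\chi}(U)$, is prime.

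The step I expect to be the real obstacle is the identification $\gr \mf{A}^{\circ}_{\chi}(U) \cong B$ — the assertion that quantum Hamiltonian reduction on $X^{\circ}$ is flat, introducing no corrections in the symbol — together with the input (i) that a cyclic vector rigidifies the representation. It is precisely the freeness of the action that delivers the flatness, by making $\mu|_{T^{*}X^{\circ}}$ a submersion; granted this, the remainder is formal GIT and ring theory. Alternatively, one can bypass the associated graded entirely: localise $\mf{A}^{\circ}_{\chi}(U)$ over the free-quotient locus of $X^{\circ}\quot G$, identify the result with a ring of twisted differential operators on the smooth affine quotient decorated by $\mathcal{E}nd(\mathcal{U})$ — visibly prime, being Morita equivalent to twisted differential operators on an irreducible variety — and then conclude from the torsion-freeness of $\mf{A}^{\circ}_{\chi}(U)$ over the appropriate central subalgebra.
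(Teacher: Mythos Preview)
Your overall strategy --- pass to the associated graded via the order filtration and show that $(\C[Z^\circ] \otimes \End U)^G$ is prime by localizing to a free locus --- is exactly the paper's approach (its Proposition~\ref{prop:UcircAisprime} together with Proposition~\ref{prop:EGprime}). The gap is in your input (i): the claim that $G$ acts freely on \emph{all} of $X^\circ$ is false as soon as $\ell \geq 2$.

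Concretely, take $\ell = 2$, $n = 1$, and the point $(v, x_0, x_1) = (1, 0, 0)$. Then $\mathbf{x} = x_1 x_0 = 0$ and $s(v, x_\bullet) = \det(v) = 1 \neq 0$, so the point lies in $X^\circ$. But its stabilizer in $G = \GL_1 \times \GL_1$ is $\{1\} \times \GL_1 \cong \C^\times$. The flaw in your argument is that cyclicity only asserts $\C[\mathbf{x}] \cdot v = V_0$; it does \emph{not} say the subrepresentation generated by $v$ is all of $M$. At vertex $i \neq 0$ that subrepresentation is the image of $x_{i-1} \cdots x_0$, which need not be surjective. (Equivalently, in the language of Lemma~\ref{lem:cyclicindecomp}, indecomposability only forces the endomorphism ring to be local; the stabilizer is then the unipotent group $1 + \rad\End(M)$, which can be nontrivial.)

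Consequently your deduction that $\mu$ is a submersion on $T^\ast X^\circ$, and hence that $Z^\circ$ is a smooth vector subbundle of $T^\ast X^\circ \to X^\circ$, breaks down. The conclusions you need are nevertheless true, but for different reasons: flatness of $\mu$ holds already on all of $T^\ast X$ by the quiver moment-map results of Crawley--Boevey and Gan--Ginzburg, and irreducibility (indeed normality) of $Z^\circ$ follows because it is an open subset of the irreducible normal variety $\mu^{-1}(0) \subset T^\ast X$. The paper takes these as established inputs rather than deriving them from freeness, and only uses the existence of a $G$-saturated open subset of $X^\circ$ --- namely $X^{\reg}$, where $\mathbf{x}$ is regular semisimple --- on which $G$ acts freely. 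Once you replace your (i) with this weaker but correct statement, your primeness argument for $B$ (torsion-freeness over the domain $R$ plus localization to a matrix algebra over the fraction field) goes through and is essentially a rephrasing of the paper's Proposition~\ref{prop:EGprime}.
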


In general the algebras $\mf{A}_{\chi}(U)$ are not prime. The properties listed in Theorem \ref{thm:genericbehabousqhr1} are much easier to establish for the algebras $\mf{A}_{\chi}^{\circ}(U)$. Thus, we are reduced to asking how similar the two algebras are. Since $X^{\circ}$ is an affine open subset of $X$, localization induces an algebra map $\varphi_U : \mf{A}_{\chi}(U) \rightarrow \mf{A}_{\chi}^{\circ}(U)$. Let $j : X^{\circ} \hookrightarrow X$ be the open embedding. In general, $\varphi_U$ is neither injective nor surjective. More precisely, we show that: 

\begin{theorem}\label{thm:mainequiv2}
 The following are equivalent:  
	\begin{enumerate}
		\item[(a)] $j^* : \Coh(\dd_X,G,\chi) \rightarrow \Coh(\dd_{X^{\circ}},G,\chi)$ is an equivalence. 
		\vspace{2mm}
		\item[(b)] $\varphi_U: \mf{A}_{\chi}(U) \rightarrow \mf{A}_{\chi}^{\circ}(U)$ is an isomorphism for all $U \in \Rep(G)$. 
	\end{enumerate}
Moreover, each of the above holds if and only if $\chi \cdot \alpha \notin \Z$ for all $\alpha \in \mc{R}_n$. 
\end{theorem}

In other words, the map $\varphi_U$ is an isomorphism for all $U \in \Rep(G)$ if and only if there are no $(G,\chi)$-monodromic $\dd$-modules supported on $X \smallsetminus X^{\circ}$, if and only if $\chi \cdot \alpha \notin \Z$ for all $\alpha \in \mc{R}_n$. 

\subsection{Characteristic cycles} 

As noted earlier, the singular support of an admissible $\dd$-module is, by definition, contained in a certain Lagrangian $\Lambda$. Theorem \ref{thm:paramcomb} implies that the Lagrangian $\Lambda$ has irreducible components $\Lambda_{(\lambda;\nu)}$ labelled by pairs $(\lambda;\nu) \in \mc{Q}(n,\ell)$. Therefore, there is a characteristic cycles map $\SS$ from the Grothendieck group $K_0( \Add_{\chi})$ of $\Add_{\chi}$ to the free abelian group $\bigoplus_{(\lambda;\nu)} \Z \Lambda_{(\lambda;\nu)}$. 

\begin{proposition}\label{prop:cyclesitro}
	The characteristic cycles map 
	$$
	\SS : K_0( \Add_{\chi}) \rightarrow \bigoplus_{(\lambda;\nu) \in \mc{Q}(n,\ell)} \Z \Lambda_{(\lambda;\nu)}
	$$
	is always injective. It is an isomorphism if and only if $\chi$ is integral. 
\end{proposition}

We note that this result is the analogue, in the setting of admissible $\dd$-modules, of the injectivity of cycle maps in geometric representation theory; see \cite{DoddCycle}. It would be interesting to explicitly describe the image under $\SS$ of the simple objects in $\Add_{\chi}$. 

\subsection{Semi-simplicity}

Finally, we turn to the main result mentioned in the abstract; that is, we consider when the category of admissible $\dd$-modules is semi-simple. As one might expect, this is closely related to the question of when category $\Osph$ is semi-simple. It is well-known that this latter category is semi-simple if and only if the corresponding cyclotomic Hecke algebra is semi-simple. As mentioned previously, Ariki gave an explicit numerical criterion for when the cyclotomic Hecke algebra is semi-simple. In particular, his criterion, together with the explicit expression for the parameters $\kappa$ of the rational Cherednik algebra in terms of the character $\chi$, shows that the constraint $\chi \cdot \alpha \notin \Z$ for all $\alpha \in \mc{R}_n$ is equivalent to:
\begin{center}
	\vspace{2mm}
	$\Osph$ is semi-simple and $k \notin \Z$.
	\vspace{2mm}
\end{center}
Here $k \in \C$ is the first of the entries of the tuple $\kappa$. The open set $X^{\circ}$ contains the subset $X^{\reg}$, where additionally we ask that the map "going once around the cycle" is regular semi-simple; see section \ref{sec:qhrframedcyclicproof}. Combining the above result with Theorem \ref{thm:mainequiv1}, we deduce: 

\begin{theorem}\label{thm:mainequiv}
	The following are equivalent:
	\begin{enumerate}
		\item[(a)] $\Add_{\chi}$ is semi-simple.
		\vspace{2mm}
		\item[(b)] $\ms{M} |_{X^{\reg}} \neq 0$ for all non-zero $\ms{M}$ in $\Add_{\chi}$. 
		\vspace{2mm}
		\item[(c)] $\Osph$ is semi-simple and $k \notin \Z$.
		\vspace{2mm}
	\end{enumerate}
Moreover, each of the above holds if and only if $\chi \cdot \alpha \notin \Z$ for all $\alpha \in \mc{R}_n$. 
\end{theorem}

Thus, $\Add_{\chi}$ is semi-simple away from a countable union of hyperplanes. The case where $k \in \Z$ and $\Osph$ is semi-simple corresponds to the situation where the character $\chi$ is integral i.e. the derivative of a character of $G$. In this case, the rational Cherednik algebra does not help us in analysing $\Add_{\chi}$ since $\Osph$ is semi-simple. Instead, we show directly, see Theorem \ref{thm:intss}, that the Harish-Chandra $\dd$-module $\mc{G}_{\chi} \in \Add_{\chi}$ is \textit{not} semi-simple. This implies that $\Add_{\chi}$ is not semi-simple. We note that Theorems \ref{thm:mainequiv1}, \ref{thm:mainequiv2} and \ref{thm:mainequiv} list in total 9 statements which, for $X = \Rep(\Q_{\infty}(\ell),\vdim)$, are \textit{all} equivalent, and hold if and only if $\chi \cdot \alpha \notin \Z$ for all $\alpha \in \mc{R}_n$. 

The study of admissible $\dd$-modules on $X$ is motivated, in part, by the fact that it is a variation on the idea of admissible $\dd$-modules on a simple Lie algebra $\mf{g}$. In that situation, it has been recently shown by Gunningham \cite{GunnighamAbelian} that many of the properties of the category of admissible $\dd$-modules on $\mf{g}$ lift to all quasi-coherent $G$-equivariant $\dd$-modules on $\mf{g}$ (similar in spirit to Theorem \ref{thm:projgen}). The methods of \cite{GunnighamAbelian} are completely different to our approach. We have been informed by Gunningham that he has, in addition, been able to relate the block decomposition of the algebras $\mf{A}(U)$ to Lusztig's cuspidal character sheaves on $\mf{g}$.

\subsection{Outline of the article}

In section \ref{sec:monodromicproperties} we recall the basic facts regarding monodromic $\dd$-modules that we will required later. Then, in section \ref{sec:QHR} we introduce, and study the algebras $\mf{A}_{\chi}(U)$ of Hamiltonian reduction. The proof of Proposition \ref{prop:AcircUprime} is given here. Admissible $\dd$-modules are considered in section \ref{sec:admissible}. We prove here Proposition \ref{prop:enoughproj} and Theorem \ref{thm:projgen}. Section \ref{sect:framedquivertheory} describes in more detail the geometry of the framed cyclic quiver. The proof of Theorem \ref{thm:maincount}, Theorem \ref{thm:mainequiv2} and Proposition \ref{prop:cyclesitro} are given in this section. Finally, section \ref{sec:sscriterionframed}  is devoted to the proof of the main results. In particular, the proofs of Theorems \ref{thm:mainequiv1}, \ref{thm:genericbehabousqhr1} and \ref{thm:mainequiv} are given in sections \ref{sec:proofthm:mainequiv1}, \ref{sec:genericbehabousqhr1proof} and \ref{sec:proofmainframedquiveradd} respectively. \\

{\bf Acknowledgements:} The authors would like to thank K. Brown and T. Schedler for helpful remarks on the subject. We would like to thank the referee for an extremely detailed and constructive review of an earlier version of the article. The first author was partially supported by EPSRC grant EP/N005058/1.

\section{Monodromic $\dd$-modules}\label{sec:monodromicproperties}

This section collects together all the results regarding monodromic $\dd$-modules that we will require later. 

\subsection{Notation} If $A$ is an algebra then $\LMod{A}$ will denote the category of left $A$-modules. If $A$ is noetherian, then $\Lmod{A}$ will denote the category of finitely generated $A$-modules.

If $X$ is a smooth variety over $\C$, then $\dd_X$ denotes the sheaf of differential operators on $X$ and $\Coh(\dd_X)$ denotes the category of coherent (left) $\dd_X$-modules. The sheaf of vector fields on $X$ is denoted $\Theta_X$. The singular support of a coherent $\dd$-module $\ms{M}$ is denoted $\SS(\ms{M})$. It is a coisotropic subvariety of $T^* X$. By a local system we always mean an algebraic vector bundle equipped with an integrable connection that has regular singularities. 

If the affine algebraic group $G$ acts on an affine variety $X$, then $X/G$ denotes the set of orbits and $X/\!/ G := \Spec \C[X]^G$ is the categorical quotient. For an element $x \in \g := \mathrm{Lie} \ G$, its centralizer in $G$ is denoted $Z_G(x)$, and its centralizer in $\g$ is $Z_{\mf{g}}(x)$. The category of all finite dimensional $G$-modules is denoted $\Rep (G)$.

\subsection{Monodromic modules}

 Fix $G$ a affine algebraic group with Lie algebra $\g$, and let $X$ be a smooth quasi-projective $G$-variety. The group $G$ acts via Hamiltonian automorphisms on $T^* X$ and there is an associated moment map $\mu \colon T^*X \rightarrow \g^*$. Dual to $\mu$ is the comoment map $\nu \colon \mf{g} \rightarrow \Theta_X$, coming from differentiating the action of $G$ on $X$. It is a morphism of Lie algebras and hence extends to an algebra morphism $\nu \colon U(\mf{g}) \rightarrow \dd_X$. 
 
 Let $\mathbb{X}^*(\mathfrak{g})$ denote the complex vector space of  linear characters $\chi \colon \mathfrak{g} \rightarrow \C$. Similarly, $\mathbb{X}^*(G)$ denotes the lattice of all complex characters $\psi \colon G \rightarrow \C^{\times}$. Differentiation defines a map $d \colon \mathbb{X}^*(G) \rightarrow \mathbb{X}^*(\mathfrak{g})$. . 
 
Differentiating the left regular action of $G$ on itself, we get a comoment map $\nu_L: \mf{g} \rightarrow \dd_G$ that identifies $\mf{g}$ with the space of \textit{right invariant} differential operators on $G$. Define
\begin{equation}\label{eq:ochiG}
   \mc{O}_{\chi}^G := \dd_G \otimes_{\nu_L(\mf{g})} \C_{\chi},
\end{equation}
where $\C_{\chi} = \C \cdot v_{\chi}$ with $x \cdot v_{\chi} = \chi(x) v_{\chi}$ for $x \in \mf{g}$. If $G$ is reductive, then $\mc{O}_{\chi}^G $ has regular singularities. 

Let $a \colon G \times X \rightarrow X$ denote the action map and let $s \colon X \rightarrow G \times X$ denote the closed embedding $x \mapsto (e,x)$

\begin{definition}\label{defn:strongmonodormic}
	A quasi-coherent $\dd$-module is said to be $(G,\chi)$-monodromic if there is a \textit{fixed} isomorphism $\theta_{\ms{M}} : \mc{O}_G^{\chi} \boxtimes \ms{M} \iso a^* \ms{M}$, satisfying 
\begin{enumerate}
\item (\textit{Rigidity}) $s^* \theta = \id_{\ms{M}}$. 
\item (\textit{Cocycle condition}) The following diagram is commutative:
	\begin{equation}\label{eq:cocycle}
	\begin{tikzcd}
		\mc{O}^{\chi}_G \boxtimes \mc{O}^{\chi}_G \boxtimes \ms{M} \ar[rr,"{\id_{G} \times \theta}"] \ar[d,equal] &&  \mc{O}^{\chi}_G \boxtimes a^* \ms{M} \ar[d,equal] \\
		(m \times \id)^* (\mc{O}_G^{\chi} \boxtimes \ms{M}) \ar[d,"{(m \times \id_X)^* \theta}"'] &&  (\id_G \times a)^*(\mc{O}_G^{\chi} \boxtimes \ms{M}) \ar[d,"{(\id_G \times a)^* \theta}"] \\
		(m \times \id_X)^* a^* \ms{M} \ar[rr,equal] && (\id_G \times a)^*a^* \ms{M}  
	\end{tikzcd}
	\end{equation}
\end{enumerate}
\end{definition}

The category of $(G,\chi)$-monodromic modules on $X$ is denoted $\QCoh(\dd_X,G,\chi)$. We recall, without proof, some of the standard properties of monodromic $\dd$-modules that we will use later. 

\begin{proposition}\label{prop:affinemono}
	If $X$ is affine then $\ms{M}$ is $(G,\chi)$-monodromic if and only if it is weakly $G$-equivariant and $\nu(x) - \chi(x) = \nu_M(x)$ on $\Gamma(X,\ms{M})$, for all $x \in \mf{g}$. Here $\nu_M : \mf{g} \rightarrow \End(\Gamma(X,\ms{M}))$ is the differential of the $G$-action. 
\end{proposition}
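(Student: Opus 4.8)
The plan is to unwind the definition of $(G,\chi)$-monodromic (Definition \ref{defn:strongmonodormic}) in the affine setting, where all the sheaves involved are determined by their global sections and $\dd$-module pullback along the action map $a$ becomes an explicit operation on modules. First I would recall that, since $X$ is affine, $\QCoh(\dd_X)$ is equivalent to the category of $\dd(X)$-modules via global sections, and likewise $\QCoh(\dd_{G \times X})$ is equivalent to $\dd(G \times X)$-modules. Under this equivalence, $\mc{O}_G^\chi \boxtimes \ms{M}$ corresponds to $\mc{O}_\chi^G(G) \otimes_\C \Gamma(X,\ms{M})$ with its natural $\dd(G) \otimes \dd(X)$-action, and the pullback $a^* \ms{M}$ corresponds to $\C[G] \otimes_{\C[X]} \Gamma(X,\ms{M})$ along the comorphism $a^\sharp : \C[X] \to \C[G \times X] = \C[G] \otimes \C[X]$, with the standard $\dd$-module pullback structure. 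The isomorphism $\theta_{\ms{M}}$ is then just a $\dd(G \times X)$-linear isomorphism between these two modules.

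The core computation is then the following. Restricting $\theta$ along the unit section $s : X \to G \times X$ gives, by the rigidity axiom, the identity on $\ms{M}$; this forces $\theta$ to be, on the level of underlying $\C[G] \otimes \C[X]$-modules, the canonical identification coming from the isomorphism $\mc{O}_\chi^G \cong \C[G]$ as $\C[G]$-modules (both $\mc{O}_G^\chi \boxtimes \ms{M}$ and $a^*\ms{M}$ are, as $\mc{O}$-modules, just $\C[G] \otimes_\C \Gamma(X,\ms{M})$ once one trivializes via $s$). So $\theta$ carries no extra data as an $\mc{O}$-module map; the content is entirely in the requirement that this fixed $\mc{O}$-linear map also be compatible with the two $\dd$-module structures. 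Writing out the action of a vector field on $G \times X$ — in particular the vector fields in the image of $\nu_L \otimes 1$ and of the comoment map for the diagonal-type action implicit in $a$ — the compatibility of $\theta$ with the $\dd_G$-part of the structure recovers exactly the relation $\nu(x) - \chi(x) = \nu_M(x)$ on $\Gamma(X,\ms{M})$: the $\nu_L(x)$ term on $\mc{O}_\chi^G$ contributes the scalar $\chi(x)$ (by the defining quotient (\ref{eq:ochiG})), while on $a^* \ms{M}$ differentiating the $G$-action produces $\nu(x)$ from the geometric side and $\nu_M(x)$ from the module side. Conversely, given a weakly $G$-equivariant $\dd(X)$-module satisfying this relation, one defines $\theta$ to be the canonical $\mc{O}$-linear identification and checks it is $\dd$-linear using precisely this relation; the cocycle condition (\ref{eq:cocycle}) and rigidity then hold automatically because $\theta$ is built from the $G$-equivariant structure, which already satisfies its own cocycle and unit constraints.

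I expect the main obstacle to be bookkeeping rather than conceptual: one must carefully identify the $\dd$-module structure on the pullback $a^*\ms{M}$ (which involves the Jacobian of $a$ in the general formula for $\dd$-module pullback, though here $a$ is a smooth morphism so this is manageable) and match up the two comoment maps — the one coming from $\nu_L$ acting on the $G$-factor and the one coming from differentiating the $G$-action built into $a^*$ — so that their difference is exactly the operator $x \mapsto \nu(x) - \chi(x) - \nu_M(x)$. A clean way to organize this is to first treat the case $\ms{M} = \dd_X$ (or $\mc{O}_X$) and the trivial $\chi$, where everything is classical $G$-equivariance, and then twist by $\chi$ and by $\ms{M}$; the twisting by $\chi$ only shifts the $\nu_L$-action by the character, which is the source of the $-\chi(x)$ term. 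Since the statement is labelled as standard and recalled without proof in the text, I would keep the argument brief, emphasizing the identification of $\theta$ via rigidity and the extraction of the displayed relation from $\dd_G$-linearity, and refer to standard references for the verification that the cocycle condition is automatic.
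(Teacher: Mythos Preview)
The paper explicitly states that this proposition is recalled ``without proof'', so there is no argument to compare against; your proposal stands on its own as a sketch of the standard verification.

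Your overall strategy is correct: pass to global sections, use the $\mc{O}_G$-module identification $\mc{O}_G^{\chi} \simeq \mc{O}_G$ (via $v_\chi \mapsto 1$) to see that the $\mc{O}$-linear content of $\theta$ together with rigidity and the cocycle condition is exactly a weak $G$-equivariant structure, and then observe that $\dd_G$-linearity of $\theta$ with respect to the right-invariant vector fields $\nu_L(x)$ is precisely the relation $\nu(x)-\chi(x)=\nu_M(x)$. Your computation of how $\nu_L(x)\otimes 1$ acts on the two sides is the heart of the matter and is correctly identified.

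One imprecision worth tightening: you write that ``rigidity forces $\theta$ to be \dots the canonical identification''. Rigidity alone only pins down $\theta$ along $\{e\}\times X$; it does not by itself determine $\theta$ as an $\mc{O}_{G\times X}$-linear map. What determines $\theta$ globally as an $\mc{O}$-module map is rigidity \emph{together with} the cocycle condition (equivalently, together with $\dd_G$-linearity, since right-invariant vector fields generate motion away from $e$). Once you recognize that (rigidity $+$ cocycle for $\theta$ as an $\mc{O}$-map) is literally the definition of a weak $G$-equivariant structure, the two directions become transparent: forgetting $\dd$-linearity gives weak equivariance and the residual $\dd_G$-compatibility is the displayed relation; conversely, the weak structure supplies the $\mc{O}$-linear $\theta$ satisfying rigidity and cocycle, and the relation upgrades it to a $\dd$-linear map. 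With that small clarification your sketch is complete.
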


In the affine setting, weakly $G$-equivariant means that $\Gamma(X,\ms{M})$ is a rational $G$-module, with the action map $\dd(X) \times \Gamma(X,\ms{M}) \rightarrow \Gamma(X,\ms{M})$ being $G$-equivariant. More generally, the analogue of \cite[Proposition 2.6]{VdenBGeq} holds in the monodromic setting. This allows one to drop the affine assumption in Proposition \ref{prop:affinemono}. 

\begin{lemma}\label{lem:rationalsections}
	For any $G$-stable open subset $U$ of $X$, the space $\Gamma(U,\ms{M})$ is a rational $G$-module, such that 
	$$
	\Gamma(U,\ms{M})^G = \{ m \in \Gamma(U,\ms{M}) \ | \ \nu(x) \cdot m = \chi(x) m, \ \forall x \in \g \}. 
	$$
\end{lemma}

\begin{proof}[Proof of Lemma \ref{lem:rationalsections}]
	If $\ms{M}$ is $(G,\chi)$-monodromic, then as a quasi-coherent $\mc{O}_X$-module, it is $G$-equivariant. Therefore the first claim follows from the corresponding fact about $G$-equivariant $\mc{O}_X$-modules c.f. \cite[\S 9.10]{HTT}. The second claim follows easily from the definitions, since 
	$$
	\Gamma(G,\mc{O}^{\chi}_G)^G = \C \cdot 1 = \{ f | \ \nu(x) \cdot f = \chi(x) f \},
	$$
	c.f. the proof of \cite[Theorem 11.5.3]{HTT}.
\end{proof}

The following observation will also be required in section \ref{sec:proofmainframedquiveradd}. 

\begin{lemma}\label{lem:twistequivalence}
	Let $\psi \in \mathbb{X}^*(G)$. Then the functor $\ms{M} \mapsto \ms{M} \otimes \psi$ is an equivalence 
	$$
	\QCoh(\dd_X,G,\chi) \stackrel{\sim}{\longrightarrow} \QCoh(\dd_X,G,\chi + d \psi).
	$$
\end{lemma}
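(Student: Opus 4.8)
The plan is to construct the quasi-inverse explicitly and check that twisting by a character respects the monodromic structure. First I would recall that for $\psi \in \mathbb{X}^*(G)$, the line bundle $\psi$ carries a canonical $G$-equivariant (hence $\mc{O}_X$-coherent $\dd_X$-module) structure; concretely, as an $\mc{O}_X$-module it is trivial, so $\ms{M} \otimes_{\mc{O}_X} \psi$ is just $\ms{M}$ with its $\dd_X$-action unchanged but with the weak $G$-action twisted by the character $\psi$. The functor $\ms{M} \mapsto \ms{M} \otimes \psi$ is therefore clearly exact and additive, with obvious candidate quasi-inverse $\ms{N} \mapsto \ms{N} \otimes \psi^{-1}$; the isomorphisms $(\ms{M} \otimes \psi) \otimes \psi^{-1} \cong \ms{M}$ and $(\ms{N} \otimes \psi^{-1}) \otimes \psi \cong \ms{N}$ are the evident ones coming from $\psi \otimes \psi^{-1} \cong \mc{O}_X$. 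So the only real content is that the functor actually lands in $\QCoh(\dd_X,G,\chi + d\psi)$, i.e. that twisting the monodromy datum $\theta_{\ms{M}}$ produces a datum of the correct character.

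Next I would make this twisting of $\theta$ precise. Since $a^*\psi \cong \psi \boxtimes \psi$ canonically (the character property $\psi(gh) = \psi(g)\psi(h)$, or rather the analogous statement $a^*\psi \cong \mathrm{pr}_G^*\psi \otimes \mathrm{pr}_X^*\psi$ on $G \times X$), and since $\mc{O}_G^{\chi} \otimes \psi \cong \mc{O}_G^{\chi + d\psi}$ as $\dd_G$-modules with right $\nu_L(\mf{g})$-action (this is exactly the statement that twisting the one-dimensional module $v_\chi$ for $\nu_L(\mf{g})$ by $d\psi$ gives $v_{\chi + d\psi}$), tensoring the isomorphism $\theta_{\ms{M}} : \mc{O}_G^{\chi} \boxtimes \ms{M} \iso a^*\ms{M}$ with $\mathrm{id}_\psi$ and composing with these canonical identifications yields
\begin{equation*}
\theta_{\ms{M} \otimes \psi} : \mc{O}_G^{\chi + d\psi} \boxtimes (\ms{M} \otimes \psi) \iso a^*(\ms{M} \otimes \psi).
\end{equation*}
I would then verify the rigidity condition $s^* \theta_{\ms{M} \otimes \psi} = \mathrm{id}$ — this follows because $s^*$ of all the auxiliary identifications is the identity (using $\psi(e) = 1$) and $s^*\theta_{\ms{M}} = \mathrm{id}$ by hypothesis — and the cocycle condition, which reduces to the cocycle condition for $\theta_{\ms{M}}$ together with the (strict) cocycle compatibility of the canonical isomorphisms $a^*\psi \cong \psi \boxtimes \psi$, i.e. the multiplicativity of $\psi$. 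Each square in diagram \eqref{eq:cocycle} for $\ms{M} \otimes \psi$ factors as the corresponding square for $\ms{M}$ tensored with a commuting square of identifications built from $\psi$.

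In the affine case one can shortcut all of this via Proposition \ref{prop:affinemono}: $\ms{M}$ is $(G,\chi)$-monodromic iff $\Gamma(X,\ms{M})$ is weakly $G$-equivariant with $\nu(x) - \chi(x) = \nu_M(x)$; twisting the $G$-action by $\psi$ changes $\nu_M$ to $\nu_M + d\psi$, hence changes the condition to $\nu(x) - (\chi + d\psi)(x) = \nu_{M \otimes \psi}(x)$, which is exactly $(G, \chi + d\psi)$-monodromicity — and this is manifestly invertible. I would present the affine argument as the main line and remark that the general case follows either by the sheafified version of Proposition \ref{prop:affinemono} alluded to after it (the monodromic analogue of \cite[Proposition 2.6]{VdenBGeq}) or by the explicit cocycle manipulation above. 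The main obstacle, such as it is, is purely bookkeeping: assembling the canonical isomorphisms $a^*\psi \cong \psi \boxtimes \psi$, $\mc{O}_G^{\chi} \otimes \psi \cong \mc{O}_G^{\chi+d\psi}$, and $\psi^{-1} \otimes \psi \cong \mc{O}$ coherently so that rigidity and the cocycle pentagon go through without sign or associativity glitches; there is no genuine mathematical difficulty, and invertibility is immediate once the functor is shown to be well-defined in both directions.
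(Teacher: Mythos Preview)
Your proof is correct. The paper actually states this lemma without proof, as one of several ``standard properties of monodromic $\dd$-modules'' it recalls in section~\ref{sec:monodromicproperties}; so there is nothing to compare against. Your argument---explicitly twisting the datum $\theta_{\ms{M}}$ using the canonical identifications $a^*\psi \cong \psi \boxtimes \psi$ and $\mc{O}_G^{\chi} \otimes \psi \cong \mc{O}_G^{\chi + d\psi}$, with the affine shortcut via Proposition~\ref{prop:affinemono} as a sanity check---is exactly the standard justification and is fine as written.
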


Assume now that $G$ is connected. Then the map $d : \mathbb{X}^*(G) \rightarrow \mathbb{X}^*(\mathfrak{g})$ is an embedding, and we write $\T(G)$ for the torus $\mathbb{X}^*(\mathfrak{g}) / d \mathbb{X}^*(G)$. Given $\chi \in \mathbb{X}^*(\mathfrak{g})$, its image in $\T(G)$ is denoted $\exp(\chi)$. One can easily check that $\mc{O}_G^{\chi} \simeq \mc{O}_G^{\chi'}$ if and only if $\exp(\chi) = \exp(\chi')$ in $\mathbb{T}(G)$. Therefore, for each $q \in \mathbb{T}(G)$, there is a $\dd_G$-module $\mc{O}_G^q$, well-defined up to isomorphism.  

\begin{definition}
	Fix $q \in \mathbb{T}(\g)$. A quasi-coherent $\dd_{X}$-module $\ms{M}$ is said to be \textit{$(G,q)$-monodromic} if there is an isomorphism $\mc{O}^q_G \boxtimes \ms{M} \simeq a^* \ms{M}$.   
\end{definition}

We note that $(G,q)$-monodromic $\dd$-modules are $\dd$-modules that \textit{can} be endowed with a weakly $G$-equivariant structure. In other words, the full subcategory $\QCoh(\dd_X,G,q)$ of $\QCoh(\dd_X)$ consisting of all $(G,q)$-monodromic $\dd$-modules is the image of the forgetful functor $\mathrm{For} : \QCoh(\dd_X,G,\chi) \rightarrow \QCoh(\dd_X)$, where $\chi$ is any character with $\exp(\chi) = q$. In fact, since $G$ is assumed to be connected, the forgetful functor $\mathrm{For} : \QCoh(\dd_X,G,\chi) \rightarrow \QCoh(\dd_X,G,q)$ is an equivalence.

\subsection{Homogeneous spaces}

In the case where $X = \mc{O} = G / K$ is a homogeneous $G$-space, one can describe the possible $(G,q)$-monodromic local systems on $X$ by considering the fundamental groups of $G$ and $K$. For brevity, write $\pi_1(G) := \pi_1(G;e)$ etc. In the long exact sequence 
\begin{equation}\label{eq:lespi}
\cdots \rightarrow \pi_1(K) \rightarrow \pi_1(G) \stackrel{\pi_*}{\longrightarrow} \pi_1(\mc{O}) \rightarrow \pi_0(K) \rightarrow 1,
\end{equation}
the local system $\mc{L} \in \Lmod{\pi_1(\mc{O})}$ is $(G,q)$-monodromic if and only if $\pi^* \mc{L} \simeq q^{\oplus \dim \mc{L}}$. In general, one sees that $(G,q)$-monodromic local systems do not correspond to representations of the component group $\pi_0(K)$. 

The restriction maps $\mathbb{X}^*(G) \rightarrow \mathbb{X}^*(K)$ and $\mathbb{X}^*(\mf{g}) \rightarrow \mathbb{X}^*(\mf{k})$, where $\mf{k}$ is the Lie algebra of $K$, induce a map $\mathbb{T}(G) \rightarrow \mathbb{T}(K)$.

\begin{lemma}\label{lem:qmonodromic}
If $K$ is connected, then there exist $(G,q)$-monodromic local systems on $\mc{O}$ if and only if the image of $q$ in $\mathbb{T}(K)$ is  $1$. 
\end{lemma}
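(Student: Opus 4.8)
The plan is to analyze the long exact sequence \eqref{eq:lespi} under the hypothesis that $K$ is connected, in which case $\pi_0(K) = 1$ and the sequence reduces to $\pi_1(K) \to \pi_1(G) \xrightarrow{\pi_*} \pi_1(\mc{O}) \to 1$. Thus $\pi_1(\mc{O})$ is the quotient of $\pi_1(G)$ by the image of $\pi_1(K)$. A local system $\mc{L}$ on $\mc{O}$ corresponds to a representation of $\pi_1(\mc{O})$, and it is $(G,q)$-monodromic if and only if $\pi^* \mc{L} \simeq q^{\oplus \dim \mc{L}}$, i.e. the pullback to $\pi_1(G)$ is isotypic of type $q$, where here I am identifying $q \in \mathbb{T}(G)$ with the corresponding character of $\pi_1(G)$ (via $\mathbb{T}(G) = \mathbb{X}^*(\mf{g})/d\mathbb{X}^*(G)$ and the standard identification of $\pi_1(G)$-characters arising from $\mc{O}_G^q$). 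First I would make precise this identification: for connected reductive (or just connected) $G$, the $\dd_G$-module $\mc{O}_G^q$ corresponds to a rank-one local system on $G$, hence to a character $\widehat{q} : \pi_1(G) \to \Cs$, and $q \mapsto \widehat{q}$ realizes $\mathbb{T}(G) \hookrightarrow \Hom(\pi_1(G),\Cs)$.

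Next I would reformulate the monodromicity condition. Since $\pi_1(G)$ is abelian (true for any connected algebraic group), any representation of $\pi_1(\mc{O})$ that pulls back to a $\widehat{q}$-isotypic representation of $\pi_1(G)$ is automatically built from characters; in particular, $(G,q)$-monodromic local systems exist on $\mc{O}$ if and only if there exists a character $\eta : \pi_1(\mc{O}) \to \Cs$ whose pullback $\pi^* \eta : \pi_1(G) \to \Cs$ equals $\widehat{q}$. Because $\pi_*$ is surjective with kernel $\operatorname{im}(\pi_1(K) \to \pi_1(G))$, such an $\eta$ exists if and only if $\widehat{q}$ is trivial on $\operatorname{im}(\pi_1(K) \to \pi_1(G))$, equivalently if and only if the composite $\pi_1(K) \to \pi_1(G) \xrightarrow{\widehat{q}} \Cs$ is trivial. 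It therefore remains to identify this composite with the image of $q$ under $\mathbb{T}(G) \to \mathbb{T}(K)$.

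The main obstacle — and the step requiring care — is the naturality statement: the diagram relating $\mathbb{T}(G) \to \mathbb{T}(K)$ (induced by restriction of characters) to the map $\Hom(\pi_1(G),\Cs) \to \Hom(\pi_1(K),\Cs)$ (induced by $\pi_1(K) \to \pi_1(G)$) commutes, compatibly with the embeddings $\mathbb{T}(G) \hookrightarrow \Hom(\pi_1(G),\Cs)$ and $\mathbb{T}(K) \hookrightarrow \Hom(\pi_1(K),\Cs)$. This follows from functoriality of the construction $q \mapsto \mc{O}_G^q$: the inclusion $\iota : K \hookrightarrow G$ satisfies $\iota^* \mc{O}_G^\chi \simeq \mc{O}_K^{\chi|_{\mf{k}}}$ as $\dd_K$-modules (both are $\dd_K$ quotiented by the right-invariant vector fields twisted by $\chi|_{\mf{k}}$), and the induced map on rank-one local systems / fundamental group characters is precisely restriction along $\pi_1(K) \to \pi_1(G)$. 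Granting this, the composite $\widehat{q} \circ (\pi_1(K)\to\pi_1(G))$ is the character of $\pi_1(K)$ attached to $\mc{O}_K^{q}$, i.e. to the image of $q$ in $\mathbb{T}(K)$; and a character of $\pi_1(K)$ is trivial exactly when the corresponding point of $\mathbb{T}(K)$ is $1$ (using that, as $K$ is connected, $\mathbb{T}(K) \hookrightarrow \Hom(\pi_1(K),\Cs)$ is injective). Combining, $(G,q)$-monodromic local systems exist on $\mc{O}$ if and only if the image of $q$ in $\mathbb{T}(K)$ is $1$, as claimed.
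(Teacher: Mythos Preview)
Your proof is correct and follows essentially the same approach as the paper: both reduce to the long exact sequence \eqref{eq:lespi} with $\pi_0(K)=1$, identify $q$ with a character of $\pi_1(G)$, and conclude that a $(G,q)$-monodromic local system exists on $\mc{O}$ if and only if this character is trivial on the image of $\pi_1(K)$, which is then matched with the image of $q$ in $\mathbb{T}(K)$ being $1$. The only notable difference is in bookkeeping: the paper explicitly decomposes $\pi_1(G)$ into its torsion and free parts to identify $\mathbb{T}(G)$ with the characters of $\pi_1(G)$ trivial on $\pi_1(G)_{\mathrm{tor}}$, and then checks directly that $\pi_1(K)_{\mathrm{tor}}$ acts trivially; you instead package this via the functoriality $\iota^*\mc{O}_G^{\chi}\simeq\mc{O}_K^{\chi|_{\mf{k}}}$ and the injectivity of $\mathbb{T}(K)\hookrightarrow\Hom(\pi_1(K),\Cs)$, which is a slightly cleaner way of saying the same thing.
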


\begin{proof}
First we recall some standard facts about $\pi_1(G)$. If $R_u(G)$ is the unipotent radical of $G$, then let $G^{\mathrm{red}} = G / R_u(G)$ and $G' = [G^{\mathrm{red}},G^{\mathrm{red}}]$ the derived subgroup of $G^{\mathrm{red}}$. Then $\pi_1(G)$ is a finitely generated abelian group with
$$
\pi_1(G)_{\mathrm{tor}} = \pi_1\left(G' \right), \quad \pi_1(G) /  \pi_1(G)_{\mathrm{tor}} = \pi_1\left(G^{\mathrm{red}} / G' \right). 
$$
In particular, this implies that 
$$
\mathbb{T}(G)  = \Hom\left(\pi_1\left(G^{\mathrm{red}} / G' \right),\Cs \right),
$$
parameterizes isomorphism classes of irreducible $\pi_1(G)$-modules $\mc{M}$ such that $\mc{M} |_{\pi_1(G)_{\mathrm{tor}}}$ is trivial. 

If $\mc{L}$ is an irreducible $(G,q)$-monodromic local system on $\mc{O}$, then its pull-back to $G$ is an irreducible $(G,q)$-monodromic local system (since the quotient map is smooth, with connected fibers). In other words, the pull-back of $\mc{L}$ is $q$. Thus, if $\pi_0(K) = 1$, then we deduce from the long exact sequence (\ref{eq:lespi}) that there exists a $(G,q)$-monodromic local system on $\mc{O}$ if and only if the irreducible representation $q$ of $\pi_1(G)$ restricts to the trivial representation of $\pi_1(K)$. 

As noted above, $\pi_1 (G)_{\mathrm{tor}}$ acts trivially on $q$. Therefore $\pi_1(K)_{\mathrm{tor}}$ also acts trivially. This means that $q$ restricts to a representation of $\pi_1(K^{\mathrm{red}} / K')$, and is the trivial representation of this group if and only if the image of $q$ in $\mathbb{T}(K)$ is  $1$.
\end{proof}

If there exist $(G,q)$-monodromic local systems on $\mc{O}$ when $K$ is connected, then the category of all $(G,q)$-monodromic local systems on $\mc{O}$ is just $\mathrm{Vect}$ i.e. is semi-simple with one simple object. 

\begin{proposition}\label{prop:qmonodromicqc}
	If $G$ is connected reductive and $K \subset G$ connected, then there exists a $(G,\chi)$-monodromic quasi-coherent $\dd$-module on $\mc{O} = G / K$ if and only if the image of $q$ in $\mathbb{T}(K)$ equals $1$. 
\end{proposition}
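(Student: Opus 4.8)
The plan is to deduce this from Lemma \ref{lem:qmonodromic} by showing that every non-zero quasi-coherent $(G,\chi)$-monodromic $\dd$-module on the homogeneous space $\mc{O} = G/K$ is forced to be a local system. Throughout I would use that $G$ is connected, so that a $\dd_{\mc{O}}$-module is $(G,\chi)$-monodromic exactly when it is $(G,q)$-monodromic for $q = \exp(\chi)$, and the forgetful functor $\QCoh(\dd_{\mc{O}},G,\chi) \to \QCoh(\dd_{\mc{O}},G,q)$ is an equivalence. One direction is then immediate: if the image of $q$ in $\mathbb{T}(K)$ is $1$, Lemma \ref{lem:qmonodromic} provides a non-zero $(G,q)$-monodromic local system on $\mc{O}$, and pulling it back through the above equivalence exhibits it as a (coherent, hence quasi-coherent) $(G,\chi)$-monodromic $\dd$-module.

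For the converse I would start from a non-zero $\ms{M} \in \QCoh(\dd_{\mc{O}},G,\chi)$ and first reduce to the coherent case. As a quasi-coherent $\mc{O}_{\mc{O}}$-module $\ms{M}$ is weakly $G$-equivariant (by Proposition \ref{prop:affinemono}, with the affine hypothesis dropped as explained after its statement), hence the directed union of its coherent, $G$-stable $\mc{O}_{\mc{O}}$-submodules $\ms{F}_i$. Each $\ms{M}_i := \dd_{\mc{O}} \cdot \ms{F}_i$ is a coherent $\dd_{\mc{O}}$-submodule which is $G$-stable --- being stable under $\nu(\mf{g})$ it is stable under $\nu_{\ms{M}}(\mf{g}) = \nu(\mf{g}) - \chi$, hence, $G$ being connected, under $G$ --- and it still satisfies $\nu(x) - \chi(x) = \nu_{\ms{M}_i}(x)$, so $\ms{M}_i$ is again $(G,\chi)$-monodromic. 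Some $\ms{M}_i$ is non-zero, so one may assume $\ms{M}$ itself is coherent and non-zero.

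The heart of the argument is to show that $\SS(\ms{M})$ is the zero section of $T^*\mc{O}$. The defining isomorphism $\theta \colon \mc{O}_G^\chi \boxtimes \ms{M} \iso a^*\ms{M}$, with $a \colon G \times \mc{O} \to \mc{O}$ the action, forces an equality of singular supports in $T^*(G\times\mc{O})$. Now $\mc{O}_G^\chi = \dd_G / \dd_G\{\nu_L(x) - \chi(x) \mid x \in \mf{g}\}$ is $\mc{O}_G$-coherent, since $\nu_L(\mf{g})$ spans the right-invariant vector fields on $G$; hence $\SS(\mc{O}_G^\chi \boxtimes \ms{M})$ equals $\{0\} \times \SS(\ms{M})$, with zero component along $T^*G$. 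On the other hand $a$ is smooth (it is the second projection composed with the automorphism $(g,x) \mapsto (g,gx)$ of $G\times\mc{O}$), so $\SS(a^*\ms{M}) = \{((g,x),\, da^*_{(g,x)}\eta) \mid (gx,\eta) \in \SS(\ms{M})\}$; because $\mc{O}$ is homogeneous the infinitesimal action $\mf{g} \to T_{gx}\mc{O}$ is surjective, so $da^*_{(g,x)}\eta$ has vanishing $T^*_gG$-component only when $\eta = 0$. Comparing the two descriptions, every $\eta \in \SS(\ms{M})$ is zero, so $\ms{M}$ is $\mc{O}_{\mc{O}}$-coherent, i.e. a vector bundle with flat connection. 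That this connection has regular singularities, hence is a local system, I would get by pulling back along the smooth surjection $G \to G/K$ (connected fibres, as $K$ is connected): the pullback is a $(G,q)$-monodromic connection on $G$, necessarily a direct sum of copies of the regular $\dd_G$-module $\mc{O}_G^\chi$, and regularity descends along a smooth surjection. Either way $\ms{M}$ becomes a non-zero $(G,q)$-monodromic local system on $\mc{O}$, and Lemma \ref{lem:qmonodromic} then forces the image of $q$ in $\mathbb{T}(K)$ to equal $1$.

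I expect the only genuine difficulty to be the lisseness step --- converting the formal monodromic isomorphism into the statement that $\SS(\ms{M})$ is the zero section; once that, together with the routine regularity bookkeeping, is in place, the proposition is a direct corollary of Lemma \ref{lem:qmonodromic}. The reduction to coherent modules and the verification that the submodules $\ms{M}_i$ remain monodromic are standard, relying only on Proposition \ref{prop:affinemono} and the connectedness of $G$.
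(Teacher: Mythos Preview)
Your argument is correct and follows the same overall architecture as the paper: reduce from quasi-coherent to coherent, show that any coherent $(G,\chi)$-monodromic module on the homogeneous space is a regular local system, and then invoke Lemma~\ref{lem:qmonodromic}. The difference is in how the middle step is handled. The paper observes that $\mc{O}_G^{\chi}$ is regular holonomic (since $G$ is reductive) and then appeals to \cite[Proposition~12.9]{BorelDmod} together with the proof of \cite[Theorem~11.6.1]{HTT} to conclude that any coherent monodromic module on $G/K$ is regular holonomic (hence, on a single orbit, a local system). You instead give a direct, self-contained singular-support computation: from $\SS(\mc{O}_G^{\chi}\boxtimes\ms{M}) = 0_G \times \SS(\ms{M})$ and the smoothness/transitivity of $a$, the $T^*G$-component of $\SS(a^*\ms{M})$ vanishes only on the zero section, forcing $\SS(\ms{M}) = T^*_{\mc{O}}\mc{O}$; regularity then descends from the pullback $\pi^*\ms{M} \simeq (\mc{O}_G^{\chi})^{\oplus \mathrm{rk}\,\ms{M}}$ along the smooth surjection $G\to G/K$. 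Your route avoids the external citations and makes the lisseness mechanism transparent; the paper's route is shorter on the page but leans on results whose proofs contain essentially the same idea.
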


\begin{proof}
	Since $G$ is reductive, it is easily checked that $\mc{O}_G^{\chi}$ is regular holonomic. Then the fact that the map $G \rightarrow G / K$ is smooth implies, by \cite[Proposition 12.9]{BorelDmod} and the proof of \cite[Theorem 11.6.1]{HTT} that every coherent $(G,\chi)$-monodromic $\dd$-module on $\mc{O}$ is regular holonomic. Finally, we note that every $(G,\chi)$-monodromic quasi-coherent $\dd$-module on $\mc{O}$ is the colimit of coherent $(G,\chi)$-monodromic $\dd$-modules on $\mc{O}$. Thus, we deduce that the proposition is a consequence of Lemma \ref{lem:qmonodromic}.
\end{proof} 

\subsection{Duality for monodromic $\dd$-modules}\label{sec:dualitymonodromic}

Recall from (\ref{eq:ochiG}) that $v_{\chi}$ is the canonical generator of $\mc{O}_G^{\chi}$. We have identified $\mf{g}$ with right-invariant vector fields on $G$ via the morphism $\nu_L: \mf{g} \rightarrow \Theta(G)$. Fix throughout $N = \dim G$. Dual to $\Gamma(G,\Theta_G)^G$ is the space $\Gamma(G,\Omega^1_G)^G$ of right-invariant one-forms on $G$, which we can identify with $\mf{g}^*$. Define the \textit{modular character} of $\mf{g}$ to be $\delta: \mf{g} \rightarrow \C$, $\delta(x) = \Tr \ \mathrm{ad}(x)$. We note that if $\mf{g}$ is reductive or nilpotent then $\delta = 0$. The Lie algebra $\mf{g}$ acts on $\bigwedge^{N} \mf{g}$ by 
$$
x \cdot x_1 \wedge \cdots \wedge x_N = \sum_{i = 1}^N x_1 \wedge \cdots \wedge [x,x_i] \wedge \cdots \wedge x_N. 
$$
Recall that $\Theta(G)$ acts on $\Gamma(G,\Omega^N_G)$ by Lie derivatives, $\omega \mapsto -L_x \omega$, for $x \in \Theta(G)$.

\begin{lemma}\label{lem:topwedgegact}
	Fix non-zero $s \in \bigwedge^{N} \mf{g}$ and $\omega \in \Gamma(G,\Omega^N_G)^G$. Then
	$$
	x \cdot s= \delta(x) s, \quad \textrm{and} \quad \omega \cdot x= - L_x \omega  = \omega \delta(x), \quad \forall \ x \in \mf{g}. 
	$$ 
\end{lemma}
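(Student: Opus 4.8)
The plan is to treat the two assertions separately: the identity $x \cdot s = \delta(x) s$ by a one-line computation in a basis, and the statement about $\omega$ by reducing it to the first via the duality between top poly-vector fields and top forms.

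For the first part I would fix a basis $x_1, \dots, x_N$ of $\mf{g}$, write $s = c\, x_1 \wedge \cdots \wedge x_N$ for some $c \in \C^{\times}$, and expand $[x, x_i] = \sum_j a_{ij} x_j$, so that $(a_{ij})$ is the matrix of $\mathrm{ad}(x)$ in this basis. Substituting into the defining formula for the action, the summand with $[x,x_i]$ in the $i$-th slot contributes $\sum_j a_{ij}\, x_1 \wedge \cdots \wedge x_j \wedge \cdots \wedge x_N$, and every term with $j \neq i$ is killed by the repeated factor; what survives is $a_{ii}\, s$. Summing over $i$ gives $x \cdot s = \Tr(\mathrm{ad}(x))\, s = \delta(x)\, s$, and since $\bigwedge^N \mf{g}$ is one-dimensional this is basis-independent.

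For the second part I would first invoke the standard \emph{right} $\dd_G$-module structure on the canonical sheaf $\Omega^N_G$, under which a vector field $\theta$ acts by $\omega \cdot \theta = -L_\theta \omega$; in particular $\omega \cdot x = -L_x\omega$ for $x \in \mf{g}$ regarded through $\nu_L$ as a right-invariant vector field, so everything reduces to computing $L_x \omega$. For this I would use the perfect $\mc{O}_G$-bilinear contraction pairing $\langle -,- \rangle : \bigwedge^N \Theta_G \otimes_{\mc{O}_G} \Omega^N_G \to \mc{O}_G$ together with its Leibniz compatibility with Lie derivatives, $L_x\langle T, \omega\rangle = \langle L_x T, \omega\rangle + \langle T, L_x\omega\rangle$. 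Taking $T$ to be the wedge of the right-invariant vector fields $\theta_1, \dots, \theta_N$ corresponding to $x_1, \dots, x_N$, both $T$ and $\omega$ are right-invariant, so $\langle T, \omega\rangle$ is a $G$-invariant, hence constant, and (after normalizing, since the pairing is perfect) nonzero function; thus the left-hand side vanishes. Since $\nu_L$ identifies the Lie-derivative action of $x$ on right-invariant vector fields with the adjoint action, the first part gives $L_x T = \delta(x)\, T$, and we conclude $0 = \delta(x)\langle T,\omega\rangle + \langle T, L_x\omega\rangle$, i.e. $L_x\omega = -\delta(x)\,\omega$ and hence $\omega \cdot x = -L_x\omega = \delta(x)\,\omega$.

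The only delicate point — and the step I expect to require the most care — is the sign and convention bookkeeping: one has to check that the identification $\Gamma(G,\Theta_G)^G \simeq \mf{g}$ via $\nu_L$, the resulting Lie-derivative action on right-invariant vector fields, the defining formula for the $\mf{g}$-action on $\bigwedge^N \mf{g}$ appearing in the statement, and the side-changing convention for $\Omega^N_G$ are all normalized compatibly. Once those choices are pinned down, both computations above are routine.
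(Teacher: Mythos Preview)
Your proposal is correct and follows essentially the same approach as the paper: a direct basis computation for $x\cdot s$ (the paper chooses a basis making $\mathrm{ad}(x)$ upper triangular, you use an arbitrary basis, but the substance is identical), and then a duality argument for $\omega$ (the paper simply asserts that the action on $\omega$ is dual to that on $s$, while you unpack this via the contraction pairing and its Leibniz compatibility). Your version is a bit more explicit about the sign and convention bookkeeping, which is appropriate given that you flagged this as the delicate point.
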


\begin{proof}
The Jacobi identity implies that $x \cdot s = \psi(x) s$ for some character $\psi$. Moreover, the fact that $\dim \bigwedge^{N} \mf{g} =1$ implies that $\psi$ is independent of $s$. Fix $x \in \mf{g}$ and choose an ordered basis $\{ x_1, \ds, x_N \}$ of $\mf{g}$ such that $[x,x_i]  = a_i x_i + y$, where $y \in \C\{ x_j \ | \ j > i \}$. Then $\Tr\ \mathrm{ad}(x) = \sum_{i = 1}^N a_i$. Also, 
$$
x \cdot s = \sum_{i = 1}^N x_1 \wedge \cdots \wedge [x,x_i] \wedge \cdots \wedge x_N  = \left( \sum_{i = 1}^N a_i \right) x_1 \wedge \cdots \wedge x_N = \delta(x) s.
$$
If $\omega \in \Gamma(G,\Omega^N_G)^G$ and $x \in \mf{g}$, then the action of $x$ on $\omega$ is dual to the action of $x$ on $s \in \bigwedge^{N} \mf{g}$. Thus, $L_x \omega = - \delta(x) \omega$. Notice that this means that considered as a section of the right $\dd_G$-module $\Omega_G^N$, we have $\omega \cdot x= \omega \delta(x)$. 	
\end{proof}

\begin{lemma}\label{lem:Afreemfgmod}
	Let $A$ be an associative algebra, with homomorphism $\nu : \mf{g} \rightarrow A$ of Lie algebras making $A$ into a flat $\mf{g}$-module. Let $\nu_{\delta} : \mf{g} \rightarrow A$, with $\nu_{\delta}(x) = \nu(x) + \delta(x)$. Then $$
	\Ext_{A}^{i}(A / A \mf{g},A) = 0 \quad \textrm{for $i \neq N$ and} \quad \Ext_{A}^{N}(A / A \mf{g},A) \simeq \nu_{\delta}(\mf{g}) A \setminus A,
	$$
	as right $A$-modules. 
\end{lemma}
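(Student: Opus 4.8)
This is a standard computation of $\Ext$ of a module over an algebra $A$ equipped with a Lie algebra map $\nu : \mf{g} \to A$, using the Chevalley–Eilenberg (Koszul) resolution. First I would write down the Koszul complex $K_\bullet = A \otimes_{\C} \bigwedge^\bullet \mf{g}$ resolving $A/A\mf{g}$ as a left $A$-module, with the usual differential $\partial(a \otimes x_1 \wedge \cdots \wedge x_p) = \sum_i (-1)^{i+1} a\,\nu(x_i) \otimes x_1 \wedge \cdots \widehat{x_i} \cdots \wedge x_p + \sum_{i<j} (-1)^{i+j} a \otimes [x_i,x_j] \wedge x_1 \wedge \cdots \widehat{x_i} \cdots \widehat{x_j} \cdots \wedge x_p$. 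The flatness hypothesis on $A$ as a $\mf{g}$-module (via the PBW-type filtration argument, or directly) guarantees $K_\bullet$ is exact in positive degrees, i.e. it is genuinely a resolution; this is where flatness of $A$ over $U(\mf{g})$, which is the only serious hypothesis, gets used.

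**Carrying it out.** To compute $\Ext_A^i(A/A\mf{g}, A)$ I apply $\Hom_A(-, A)$ to $K_\bullet$. Since $K_p = A \otimes_\C \bigwedge^p \mf{g}$ is a free left $A$-module of rank $\binom{N}{p}$, we get $\Hom_A(K_p, A) \cong \Hom_\C(\bigwedge^p \mf{g}, A) \cong A \otimes_\C \bigwedge^p \mf{g}^*$ as a right $A$-module, and the transpose complex is the Chevalley–Eilenberg \emph{cochain} complex of $\mf{g}$ with coefficients in $A$ viewed as a right $\mf{g}$-module via right multiplication by $\nu$. The key point is then to identify the top cohomology. The top term is $\Hom_A(K_N, A) \cong A \otimes_\C \bigwedge^N \mf{g}^*$, and the last differential $\Hom_A(K_{N-1},A) \to \Hom_A(K_N, A)$ has image exactly $\nu_\delta(\mf{g}) \cdot (A \otimes_\C \bigwedge^N \mf{g}^*)$: the twist by the modular character $\delta$ is precisely the content of Lemma~\ref{lem:topwedgegact} — dualizing $\bigwedge^N \mf{g}$ introduces the $-\delta$ from the Lie derivative action, which combined with the sign conventions in the Koszul differential produces the shift $\nu \mapsto \nu_\delta = \nu + \delta$. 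Choosing a generator of $\bigwedge^N \mf{g}^*$ trivializes $A \otimes_\C \bigwedge^N \mf{g}^* \cong A$, and the cokernel of the top differential becomes $\nu_\delta(\mf{g}) A \setminus A$ as a right $A$-module.

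**Vanishing in intermediate degrees.** For $i \neq N$ I argue by a filtration/spectral sequence or a direct dévissage: using the order filtration on $A$ coming from $U(\mf{g})$ (so $\gr A$ is a flat module over $\sym \mf{g}$), the associated graded of the cochain complex is the Koszul complex on the regular-sequence-like element $\nu(\mf{g})$ inside $\gr A$, which is acyclic except in top degree. Flatness is exactly what makes this associated-graded Koszul complex have no homology below degree $N$; lifting back via the convergent filtration gives $\Ext_A^i = 0$ for $i < N$, and $i > N$ is automatic since the resolution has length $N$. Alternatively, one can cite the standard fact that $A/A\mf{g}$ has projective dimension $N$ with $\Ext^N$ computing the "twisted invariants", for which Lemma~\ref{lem:Afreemfgmod} is exactly the precise form.

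**The main obstacle.** The genuinely delicate step is tracking the modular-character twist: getting $\nu_\delta$ rather than $\nu$ requires care with (i) the identification $\Hom_A(A \otimes \bigwedge^\bullet \mf{g}, A) \cong A \otimes \bigwedge^\bullet \mf{g}^*$ and how the Koszul differential transposes, and (ii) feeding in Lemma~\ref{lem:topwedgegact} to see that the right $\mf{g}$-action on the top exterior power of $\mf{g}^*$ is by $\delta$, not $-\delta$ or $0$. Everything else — the existence of the resolution, the vanishing in low degrees — is routine homological algebra given the flatness hypothesis. I would therefore organize the proof as: (1) state the Koszul resolution and invoke flatness for exactness; (2) dualize and identify the cochain complex; (3) compute $H^N$ using Lemma~\ref{lem:topwedgegact} for the $\delta$-twist; (4) dispatch $H^{i}$, $i \neq N$, by the order-filtration Koszul argument.
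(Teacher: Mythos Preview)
Your overall strategy---Chevalley--Eilenberg resolution, apply $\Hom_A(-,A)$, identify the result with the cochain complex $C^\bullet(\mf{g},A)$, and read off the top cohomology using Lemma~\ref{lem:topwedgegact}---is exactly what the paper does. The paper packages the top-degree computation slightly differently, invoking Poincar\'e duality for Lie algebra cohomology (citing Knapp) to write $H^N(\mf{g},A) = H_0\big(\mf{g},\bigwedge^N\mf{g}^*\otimes A\big)$ and then identifying the coinvariants; your direct analysis of the image of the top differential is equivalent and arguably more transparent.

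There is, however, a genuine gap in your vanishing argument for $i<N$. You appeal to an ``order filtration on $A$ coming from $U(\mf{g})$'' and pass to a Koszul complex over $\sym\mf{g}$. But the lemma is stated for an \emph{abstract} associative algebra $A$ equipped only with a Lie map $\nu:\mf{g}\to A$ making $A$ flat over $U(\mf{g})$; no filtration on $A$ is assumed, and flatness alone does not manufacture one. (In the applications $A=\dd(X)$ carries such a filtration, but the lemma as stated does not.) The paper's route---implicit in its citation of Poincar\'e duality---is the clean fix: one has
\[
\Ext^i_A(A/A\mf{g},A)\;\cong\;H^i(\mf{g},A)\;\cong\;H_{N-i}\big(\mf{g},A\otimes\textstyle\bigwedge^N\mf{g}^*\big)\;=\;\mathrm{Tor}^{U(\mf{g})}_{N-i}\big(\C,\,A\otimes\textstyle\bigwedge^N\mf{g}^*\big),
\]
and the right-hand side vanishes for $N-i>0$ precisely because $A$, hence its one-dimensional character twist $A\otimes\bigwedge^N\mf{g}^*$, is flat over $U(\mf{g})$. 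So flatness \emph{is} the mechanism, as you say, but it enters via $\mathrm{Tor}$-vanishing after Poincar\'e duality, not via a filtration that need not exist. Replace step~(4) of your plan with this and the proof is complete.
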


\begin{proof}
To compute $\Ext_{A}^{i}(A / A \mf{g},A)$, we explicitly resolve $A / A \mf{g}$. The fact that $A$ is a flat $\mf{g}$-module implies that the Chevalley-Eilenberg complex, 
$$
\begin{tikzcd}
0 \ar[r] & A \otimes \bigwedge^N \mf{g} \ar[r] & \cdots  \ar[r] & A \otimes \mf{g} \ar[r] & A \ar[r] & 0, 
\end{tikzcd}
$$
is exact except at the very right. Applying the contravariant functor $\Hom_{A}( - , A)$ gives 
$$
0 \rightarrow \Hom_{A}(A , A)  \rightarrow \Hom_{A}(A \otimes \mf{g} , A) \rightarrow \cdots \rightarrow \Hom_{A}(A \otimes \bigwedge^N \mf{g} , A) \rightarrow 0. 
$$
or, 
$$
\begin{tikzcd}
0 \ar[r] & A  \ar[r] & \mf{g}^* \otimes A \ar[r] & \cdots  \ar[r] & \bigwedge^N \mf{g}^* \otimes A \ar[r] & 0. 
\end{tikzcd}
$$
This is everywhere exact, except at the very left, where the corresponding cohomology group is $\Ext_{A}^{N}(A / A \mf{g},A) = H^N(\mf{g},A)$. By Poincar\'e duality, \cite[Theorem 6.10]{KnappCohomology}, we have 
$$
H^N(\mf{g},A) = H_0\left(\mf{g},\bigwedge^N \mf{g}^* \otimes A \right) = \nu(\mf{g}) \cdot \left(\bigwedge^N \mf{g}^* \otimes A \right) \setminus \bigwedge^N \mf{g}^* \otimes A.
$$
If $s \in \bigwedge^N \mf{g}^*$ is any non-zero section, then Lemma \ref{lem:topwedgegact} implies that $x \cdot s = - \delta(x) s$. Thus, there is a canonical isomorphism of right $A$-modules, 
$$
\nu_{\delta}(\mf{g}) A \setminus A \iso \nu(\mf{g}) \cdot \left(\bigwedge^N \mf{g}^* \otimes A \right) \setminus \bigwedge^N \mf{g}^* \otimes A, \quad 1 \mapsto s \otimes 1. 
$$
\end{proof}

The full subcategory of $\QCoh(\dd_X,G,\chi)$ consisting of all holonomic $\dd$-modules is denoted $\Hol(G,\dd_X,\chi)$. We denote the usual exact duality on holonomic $\dd$-modules by $\mathbb{D}$; explicitly, 
$$
\mathbb{D}(\ms{M}) = \mc{E}xt_{\dd}^{\dim X}\left(\ms{M},\dd_X \otimes_{\mc{O}} \Omega_X^{\otimes -1}\right).
$$ 

\begin{proposition}\label{prop:dualityOchiG}
There is a canonical isomorphism $\mathbb{D}(\mc{O}^{\chi}_G) \stackrel{\sim}{\longrightarrow} \mc{O}_G^{- \chi}$. 
\end{proposition}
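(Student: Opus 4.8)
The plan is to compute the holonomic dual $\mathbb{D}(\mc{O}^{\chi}_G)$ directly from the definition, using the fact that $\mc{O}^{\chi}_G = \dd_G \otimes_{\nu_L(\mf{g})} v_{\chi} = \dd_G / \dd_G \cdot (\nu_L(\mf{g}) - \chi)$ is presented as a cyclic $\dd_G$-module. Recall that for a holonomic $\dd_G$-module $\ms{M}$ one has $\mathbb{D}(\ms{M}) = \mathcal{E}xt^N_{\dd_G}(\ms{M}, \dd_G) \otimes_{\mc{O}_G} \Omega_G^{\otimes -1}[\ ]$, the cohomology being concentrated in a single degree $N = \dim G$. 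So the first step is to apply Lemma \ref{lem:Afreemfgmod} with $A = \dd_G$ and $\nu = \nu_L(\cdot) - \chi(\cdot)$: the shifted comoment map. Since $\dd_G$ is free, hence flat, as a module over the enveloping algebra of right-invariant vector fields (indeed $\dd_G \cong \mc{O}_G \otimes U(\mf{g})$ via trivialization of $T^*G$), the hypotheses of the lemma apply, giving $\Ext^i_{\dd_G}(\mc{O}^{\chi}_G, \dd_G) = 0$ for $i \neq N$ and $\Ext^N_{\dd_G}(\mc{O}^{\chi}_G, \dd_G) \cong (\nu_L(\mf{g}) - \chi + \delta)(\mathfrak{g})\dd_G \setminus \dd_G$ as a right $\dd_G$-module, where $\delta$ is the modular character. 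Because $G$ is (or at least, in the situations of the paper, will be taken to be) reductive, $\delta = 0$, so this right $\dd_G$-module is $\dd_G / (\nu_L(\mf{g}) - \chi)\dd_G$.

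The second step is to convert this right $\dd_G$-module into the left $\dd_G$-module $\mathbb{D}(\mc{O}^{\chi}_G)$ by the side-changing operation, i.e. tensoring with $\Omega_G^{\otimes -1}$ over $\mc{O}_G$ (and shifting; the shift is harmless here). Equivalently, one uses the standard anti-automorphism/transpose that sends a right $\dd_G$-module $\mathcal{N}$ to the left $\dd_G$-module $\Omega_G^{-1} \otimes_{\mc{O}_G} \mathcal{N}$. The key computation is to track what happens to the generator: the right action of $\nu_L(x) - \chi(x)$, after the side change via a right-invariant section $\omega$ of $\Omega_G^N$ (which exists since $G$ is a group, $\Omega_G^N$ being $G$-equivariantly trivial), becomes the left action of $-\nu_L(x) - \chi(x) - \delta(x)$ up to the Lie-derivative correction recorded in Lemma \ref{lem:topwedgegact} — precisely, $\omega \cdot x = \delta(x)\omega$ there is what produces the compensating $\delta$, and one obtains the left module $\dd_G / \dd_G \cdot (\nu_L(\mf{g}) + \chi)$ after the sign flip on $\nu_L$ coming from the anti-automorphism on $U(\mf{g})$ dual to the $-1$ on $\bigwedge^N\mf{g}$. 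But $\dd_G / \dd_G \cdot (\nu_L(\mf{g}) + \chi) = \dd_G \otimes_{\nu_L(\mf{g})} v_{-\chi} = \mc{O}_G^{-\chi}$, which is exactly the desired identification. All the intermediate isomorphisms in Lemma \ref{lem:Afreemfgmod} and Lemma \ref{lem:topwedgegact} are canonical (once $\omega$ is normalized, and the normalization is immaterial for a cyclic module since scaling the generator), so the resulting isomorphism $\mathbb{D}(\mc{O}^{\chi}_G) \iso \mc{O}_G^{-\chi}$ is canonical.

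I would organize the write-up as: (i) recall $\mathbb{D}$ on holonomic modules and reduce to computing $\mathcal{E}xt^N$; (ii) invoke Lemma \ref{lem:Afreemfgmod} with the shifted comoment map to identify the Ext as a quotient right $\dd_G$-module; (iii) perform the side-change using $\omega \in \Gamma(G,\Omega^N_G)^G$ and Lemma \ref{lem:topwedgegact} to see the $\delta$ terms cancel and the left $\dd_G$-module is $\dd_G \otimes_{\nu_L(\mf{g})} v_{-\chi}$; (iv) note canonicity. The main obstacle — really the only delicate point — is bookkeeping the signs and the modular-character contributions through the side-changing functor so that they cancel exactly and one lands on $-\chi$ rather than, say, $-\chi + \delta$ or $-\chi - 2\delta$; here the hypothesis that $\mf{g}$ is reductive (so $\delta = 0$) makes the cancellation automatic and sidesteps the subtlety, but I would still state the general mechanism since Lemma \ref{lem:topwedgegact} was proved in full generality. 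A secondary (easy) point is checking regular holonomicity of $\mc{O}^{\chi}_G$ so that $\mathbb{D}$ behaves as expected, which was already noted after (\ref{eq:ochiG}) for $G$ reductive.
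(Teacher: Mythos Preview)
Your proposal is correct and follows essentially the same route as the paper's proof: apply Lemma~\ref{lem:Afreemfgmod} with $A=\dd_G$ and $\nu=\nu_L-\chi$ to identify $\Ext^N_{\dd_G}(\mc{O}^{\chi}_G,\dd_G)$ as the right module ${}_G^{\chi-\delta}\mc{O}$, then perform the side-change $-\otimes_{\mc{O}_G}\Omega_G^{\otimes -1}$ using a right-invariant top form and Lemma~\ref{lem:topwedgegact} to land on $\mc{O}_G^{-\chi}$. The only notable difference is that the paper carries the modular character $\delta$ through the side-change and exhibits the cancellation explicitly (so the proposition holds for arbitrary $G$, not just reductive $G$), whereas you set $\delta=0$ early by invoking reductivity; since you already flagged that the general mechanism works, this is a presentational rather than a mathematical divergence. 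One small point: your description of the side-change as involving an ``anti-automorphism'' and a ``sign flip on $\nu_L$'' is slightly imprecise---the paper (and the standard formalism) uses the rule $\nu_L(x)\cdot(v'\otimes s)=-(v'\cdot\nu_L(x))\otimes s+v'\otimes(\nu_L(x)\cdot s)$ directly, and it would be cleaner to phrase step~(iii) that way.
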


\begin{proof}
Let ${}_G^{\chi} \mc{O}$ be the right $\dd_G$-module generated by $v_{\chi}'$, with defining relation $v_{\chi}' \nu_L(x) = v_{\chi}' \chi(x)$ for all $x \in \mf{g}$. Applying Lemma \ref{lem:Afreemfgmod}, where $\dd(G)$ is a right $\mf{g}$-module via $(\nu_L - \chi)$, we deduce that 
$$
\Ext_{\dd}^N(\mc{O}_G^{\chi},\dd) \simeq {}_G^{\chi - \delta} \mc{O}. 
$$
Therefore the map 
$$
\Hom_{\dd}(\dd,\dd) \rightarrow {}_G^{\chi} \mc{O}, \quad \phi \mapsto v_{\chi - \delta}' \phi(1),
$$
descends to an isomorphism $\Ext^N_{\dd}(\mc{O}_G^{\chi},\dd) \iso {}_G^{\chi - \delta} \mc{O}$. Thus, it suffices to check that the rule $v_{\chi - \delta}' \otimes s \mapsto v_{- \chi}$ defines a morphism of left $\dd$-modules ${}_G^{\chi - \delta} \mc{O} \otimes_{\mc{O}_G} \Omega_G^{\otimes -1} \rightarrow \mc{O}_G^{\chi}$; being non-zero, such a morphism must be an isomorphism. Explicitly, 
\begin{align*}
\nu_L(x) \cdot (v_{\chi - \delta}' \otimes s) & = - (v_{\chi - \delta}' \cdot \nu_L(x) ) \otimes s + v_{\chi - \delta}' \otimes (\nu_L(x) \cdot s) \\
 &  = (\delta - \chi)(x) (v_{\chi}' \otimes s) - \delta (x) (v_{\chi}' \otimes s) = - \chi(x) (v_{\chi}' \otimes s),  
\end{align*}
since $\nu_L(x) \cdot s = - \delta(x) s$ by Lemma \ref{lem:topwedgegact}. 
\end{proof}

\begin{proposition}\label{prop:dualholo}
Duality lifts to a contravariant equivalence 
$$
\mathbb{D} : \Hol(G,\dd_X,\chi) \stackrel{\sim}{\longrightarrow} \Hol(G,\dd_X,-\chi). 
$$
\end{proposition}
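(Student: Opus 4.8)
The plan is to upgrade the object-level duality of Proposition \ref{prop:dualityOchiG} to the level of categories. First I would recall that for holonomic $\dd_X$-modules, Verdier duality $\mathbb{D}$ is already a contravariant auto-equivalence of $\Hol(X,\dd_X)$; the only thing that needs checking is that it carries the $(G,\chi)$-monodromic structure to a $(G,-\chi)$-monodromic structure, functorially in $\ms{M}$. So the real content is a compatibility statement: given a holonomic $(G,\chi)$-monodromic $\ms{M}$ with structure isomorphism $\theta_{\ms{M}} : \mc{O}_G^{\chi} \boxtimes \ms{M} \iso a^* \ms{M}$, produce from it an isomorphism $\mc{O}_G^{-\chi} \boxtimes \mathbb{D}(\ms{M}) \iso a^* \mathbb{D}(\ms{M})$ satisfying rigidity and the cocycle condition of Definition \ref{defn:strongmonodormic}.

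The key steps, in order, would be: (1) Observe that $a : G \times X \to X$ and $m \times \id_X$, $\id_G \times a$, $s$ are all smooth morphisms (indeed $a$ is smooth of relative dimension $N = \dim G$ since it is, up to the isomorphism $(g,x)\mapsto(g,gx)$, the projection $G \times X \to X$), so $\mathbb{D}$ commutes with the relevant pullback functors up to the usual shift: $\mathbb{D} \circ a^! \simeq a^! \circ \mathbb{D}$, and $a^! = a^*[\dim G]$ on the bounded holonomic derived category. (2) Apply $\mathbb{D}$ to $\theta_{\ms{M}}$, and use the external-product compatibility $\mathbb{D}(\mc{M} \boxtimes \mc{N}) \simeq \mathbb{D}(\mc{M}) \boxtimes \mathbb{D}(\mc{N})$ together with the canonical isomorphism $\mathbb{D}(\mc{O}_G^{\chi}) \iso \mc{O}_G^{-\chi}$ from Proposition \ref{prop:dualityOchiG}, to obtain $\theta_{\mathbb{D}\ms{M}} : \mc{O}_G^{-\chi} \boxtimes \mathbb{D}(\ms{M}) \iso a^* \mathbb{D}(\ms{M})$ (the shifts on both sides match and cancel, since $\mc{O}_G^{\chi}$ is concentrated in a single degree and the relative dimension of $a$ over $X$ along the $G$-direction is accounted for on both sides). (3) Check rigidity: $s^* \theta_{\mathbb{D}\ms{M}} = \id$ follows by applying $\mathbb{D}$ to $s^* \theta_{\ms{M}} = \id_{\ms{M}}$, using that the canonical isomorphism of Proposition \ref{prop:dualityOchiG} restricts correctly along the unit $e \in G$ — here I would need that $\mathbb{D}(\mc{O}_G^{\chi}) \to \mc{O}_G^{-\chi}$ is normalized so that pulling back along $\{e\} \hookrightarrow G$ gives the identity on the structure sheaf, which is exactly how the canonical generator $v_{-\chi}$ is chosen in the proof above. (4) Check the cocycle condition: apply the (contravariant, hence diagram-reversing) functor $\mathbb{D}$ to the commutative square \eqref{eq:cocycle} for $\ms{M}$; since all morphisms appearing are smooth the base-change and external-product coherences for $\mathbb{D}$ turn the reversed diagram into precisely the cocycle diagram for $(\mathbb{D}\ms{M}, \theta_{\mathbb{D}\ms{M}})$. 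Finally, note that $\mathbb{D}$ sends holonomic to holonomic and is involutive, so it is an equivalence onto $\Hol(X,\dd_X,-\chi)$ with quasi-inverse again $\mathbb{D}$ (now for the parameter $-\chi$), and in fact, by Proposition \ref{prop:Gchimonoproperties} and regularity of $\mc{O}_G^{\chi}$ in the reductive case, it preserves regularity as well.

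I expect the main obstacle to be bookkeeping of the cohomological shifts and the precise normalization of the compatibility isomorphisms $\mathbb{D} \circ a^! \simeq a^! \circ \mathbb{D}$ and $\mathbb{D}(\mc{M}\boxtimes\mc{N}) \simeq \mathbb{D}\mc{M}\boxtimes\mathbb{D}\mc{N}$, so that the verification of the cocycle condition is a genuine equality of morphisms and not merely an equality up to a sign or a scalar that depends on $\ms{M}$. The conceptually clean way to handle this — which I would adopt — is to phrase the monodromic structure as descent data along the smooth groupoid $G \times X \rightrightarrows X$ acting with the twisting line $\mc{O}_G^{\chi}$, observe that $\mathbb{D}$ is a morphism of the corresponding descent $2$-categories because it commutes with smooth pullback and external product and sends the twist $\mc{O}_G^{\chi}$ to $\mc{O}_G^{-\chi}$ compatibly with the groupoid multiplication (this last point is the only genuinely new input, and it reduces to Proposition \ref{prop:dualityOchiG} together with the multiplicativity $m^* \mc{O}_G^{\chi} \simeq \mc{O}_G^{\chi} \boxtimes \mc{O}_G^{\chi}$), and then the equivalence on monodromic objects is formal. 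A lighter alternative, sufficient here, is simply to invoke the known unequivariant statement that $\mathbb{D}$ is a contravariant auto-equivalence of $\Hol(X,\dd_X)$ and cite \cite[\S 3.2]{HTT} for its compatibility with smooth pullback, then carry out steps (2)–(4) directly.
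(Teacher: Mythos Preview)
Your proposal is correct and follows essentially the same route as the paper: apply $\mathbb{D}$ to the structure isomorphism $\theta_{\ms{M}}$ and to the cocycle diagram~\eqref{eq:cocycle}, use Proposition~\ref{prop:dualityOchiG} for $\mathbb{D}(\mc{O}_G^{\chi})\simeq\mc{O}_G^{-\chi}$, and invoke the compatibility of $\mathbb{D}$ with the relevant pullbacks. One small slip: in your step~(1) you list $s$ among the smooth morphisms, but the unit section $s:X\to G\times X$, $x\mapsto(e,x)$, is a closed embedding, not smooth; the paper handles this uniformly by working with \emph{non-characteristic} pullback (citing \cite[Theorem~2.7.1(ii)]{HTT}) rather than smooth pullback, and observes explicitly that $\mc{O}_G^{\chi}$ is non-characteristic for $i_e:\{\mathrm{pt}\}\hookrightarrow G$, which is what makes the rigidity check go through.
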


\begin{proof}
The key point here is that duality commutes with pull-back for non-characteristic modules. That is, if $f : X \rightarrow Y$ and $\ms{M}$ is non-characteristic for $f$, then the \textit{canonical} morphism $f^* \mathbb{D}(\ms{M}) \rightarrow  \mathbb{D}(f^* \ms{M})$ is an isomorphism; see \cite[Theorem 2.7.1 (ii)]{HTT}. Noting that each of the morphisms $m$ and $a$ are smooth (and hence all modules are non-characteristic for them), applying $\mathbb{D}$ to diagram (\ref{eq:cocycle}), and using Proposition \ref{prop:dualityOchiG} shows that the cocycle condition holds. Similarly, the fact that $\mc{O}_G^{\chi}$ is non-characteristic for $i_e : \{ \mathrm{pt} \} \hookrightarrow G$, shows that the rigidity condition holds for $\mathbb{D}(\ms{M})$. Thus, $\mathbb{D}(\ms{M})$ is $(G,-\chi)$-monodromic if $\ms{M}$ is $(G,\chi)$-monodromic.
\end{proof}

\subsection[The fundamental group of G-orbits]{The fundamental group of $G$-orbits}\label{sect:fund_group}

The goal of this section is to compute the fundamental groups of the $\GL_{\mbf{d}}$-orbits in the space $\Rep(\C \Q / I, \mbf{d})$. We do this for an arbitrary quiver $\Q$ with relations $I \subset \C \Q$. Combining this result with the classification of orbits in the enhanced cyclic nilpotent cone given in \cite{BB-enhnil-quiver}, we are able to explicitly compute the fundamental group of the orbits in the cone. Recall that a dimension vector $\mbf{d}$ is called \textit{sincere} if $d_i \neq 0$ for all $i \in \Q_0$. We assume, without loss of generality, that $\mbf{d}$ is sincere. 

First we recall the following standard fact, see \cite[Proposition 2.2.1]{BrionQuiver}. Let $M$ be a representation of $\C \Q / I$ with dimension vector $\mbf{d}$. Then,  
\begin{equation}\label{eq:Sconnected}
\Aut_{\Q}(M) = \mathrm{Stab}_{\GL_{\mbf{d}}}(M) \textrm{ is connected.}
\end{equation}  
Therefore the long exact sequence in homotopy groups associated to the fibre bundle 
$$
\begin{tikzcd}
\Aut_{\Q}(M) \ar[r,hook] & \GL_{\mbf{d}} \twoheadrightarrow \mathcal{O},
\end{tikzcd}
$$
where $\mathcal{O} = \GL_{\mbf{d}} \cdot M$, implies that it suffices to compute $\pi_1(\Aut_{\Q}(M))$ and describe the map $\pi_{1}(\Aut_{\Q}(M)) \rightarrow \pi_1(\GL_{\mbf{d}})$. We begin with some preparatory results. Let $E = \End_{\Q}(M)$ and notice that $\Aut_{\Q}(M)$ equals the group $E^{\times}$ of invertible elements in $E$. Decompose 
$$
M = \bigoplus_{i = 1}^k M^{(i)} \otimes V_i,
$$
where the $M^{(i)}$ are pairwise non-isomorphic, indecomposable $\C \Q/I$-modules. Then $E$ decomposes as
\begin{equation}\label{eq:Edecomp}
E = \bigoplus_{i, j = 1}^k \Hom_{\Q}(M^{(i)},M^{(j)}) \otimes \Hom_{\C}(V_i,V_j). 
\end{equation} 
By \cite[Proposition 2.2.1 (ii)]{BrionQuiver}, we have 

\begin{lemma}\label{lem:Erad}
The embedding $\gamma : \prod_{i = 1}^k \GL(V_i) \hookrightarrow E^{\times}$ induces an identification 
$$
\begin{tikzcd}
\gamma_{*} :  \pi_1\left( \prod_{i = 1}^k \GL(V_i) \right) \ar[r,"\sim"] & \pi_1(E^{\times}).
\end{tikzcd}
$$
\end{lemma}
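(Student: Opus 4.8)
The plan is to exhibit an explicit homotopy equivalence (or at least a $\pi_1$-isomorphism) between $\prod_{i=1}^k \GL(V_i)$ and $E^{\times}$, realized via the embedding $\gamma$. The key structural input is the decomposition \eqref{eq:Edecomp}, which I would reorganize by the theory of finite-dimensional algebras: writing $E_i := \End_{\Q}(M^{(i)})$, the algebra $\End_{\Q}(M^{(i)}\otimes V_i)$ is $E_i \otimes \End_{\C}(V_i)$, and since $M^{(i)}$ is indecomposable, $E_i$ is a local finite-dimensional $\C$-algebra, so $E_i = \C\cdot \id \oplus \rad(E_i)$ with $\rad(E_i)$ nilpotent. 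The off-diagonal terms $\Hom_{\Q}(M^{(i)},M^{(j)})\otimes\Hom_{\C}(V_i,V_j)$ for $i\neq j$ feed into the Jacobson radical $\rad(E)$ of $E$ (this is precisely the content of \cite[Proposition 2.2.1 (ii)]{BrionQuiver}). Thus $E$ is a finite-dimensional algebra whose semisimple quotient is $E/\rad(E) \cong \prod_{i=1}^k \End_{\C}(V_i) = \prod_{i=1}^k \mathrm{Mat}_{\dim V_i}(\C)$.

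The first step is therefore to invoke the standard fact that for a finite-dimensional $\C$-algebra $E$ with radical $\rad(E)$, the quotient map $E^{\times} \twoheadrightarrow (E/\rad(E))^{\times}$ has unipotent (hence connected, contractible) kernel $1 + \rad(E)$; indeed $1+\rad(E)$ is a closed subgroup of $E^{\times}$ isomorphic as a variety to the affine space $\rad(E)$ via an appropriate exponential-type map (or simply because it is a unipotent group), so the projection $E^{\times} \to (E/\rad(E))^{\times}$ is a fibre bundle with affine-space fibres and hence a homotopy equivalence. The second step is to observe that the composite $\prod_i \GL(V_i) \xrightarrow{\gamma} E^{\times} \twoheadrightarrow (E/\rad(E))^{\times} \cong \prod_i \GL(V_i)$ is the identity (the block-diagonal idempotents $\id_{M^{(i)}}\otimes\id_{V_i}$ map to the identity components, and $\gamma$ is by construction the section sending $g\in\GL(V_i)$ to $\id_{M^{(i)}}\otimes g$). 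Combining these two facts, $\gamma$ is a section of a homotopy equivalence onto a deformation retract, so $\gamma_*$ is an isomorphism on all homotopy groups, in particular on $\pi_1$.

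Concretely, the steps in order are: (i) identify $\rad(E)$ using \cite[Proposition 2.2.1 (ii)]{BrionQuiver} and the locality of the $E_i$, concluding $E/\rad(E)\cong\prod_i\End_\C(V_i)$; (ii) note $E^{\times}$ is the preimage of $(E/\rad(E))^\times$ under $E\to E/\rad(E)$, and that the kernel of $E^{\times}\to(E/\rad(E))^\times$ is $1+\rad(E)$, a unipotent group isomorphic as a variety to $\mathbb{A}^{\dim\rad(E)}$; (iii) deduce that $E^\times \to (E/\rad(E))^\times$ is a locally trivial fibration with contractible fibre, hence a homotopy equivalence; (iv) check that $\gamma$ composed with this projection is an isomorphism $\prod_i\GL(V_i)\iso\prod_i\GL(V_i)$, so $\gamma$ is (up to homotopy) a homotopy inverse, giving the claimed identification $\gamma_*$ on $\pi_1$.

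I do not expect any serious obstacle here; the lemma is essentially a packaging of well-known facts. The one point requiring mild care is step (iii): to say the projection is a \emph{fibration} one wants either to appeal to the general fact that a surjective homomorphism of linear algebraic groups is a locally trivial fibration in the étale (or even Zariski, in the unipotent-kernel case) topology, or to argue more directly that the exponential map identifies $1+\rad(E)$ with the affine space $\rad(E)$ and that $E^\times$ is Zariski-locally a product $(1+\rad(E))\times(\text{lift of }(E/\rad E)^\times)$ — the latter using that the projection splits Zariski-locally because unipotent radicals of linear algebraic groups over a perfect field admit a Levi-type splitting after passing to the identity component, or simply because $(E/\rad E)^\times$ is smooth. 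Either route is standard, and for the purposes of computing $\pi_1$ it suffices to know the fibre is connected and simply connected, which is immediate as it is an affine space.
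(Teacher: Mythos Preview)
Your proposal is correct and follows the same approach as the paper. The paper does not give a self-contained proof but simply cites \cite[Proposition 2.2.1 (ii)]{BrionQuiver}, which is precisely the structural result you unpack: $E^{\times}$ surjects onto $(E/\rad E)^{\times}\cong\prod_i\GL(V_i)$ with kernel the unipotent group $1+\rad(E)$, so the projection is a homotopy equivalence; and the paper explicitly records, in the paragraph immediately following the lemma, that the composite $\prod_i\GL(V_i)\hookrightarrow E^{\times}\twoheadrightarrow\prod_i\GL(V_i)$ is the identity, which is your step (iv).
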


In particular, $\pi_1(E^{\times}) = \Z^k$. Next, we calculate the morphism $\pi_1(E^{\times}) \rightarrow \pi_1(\GL_{\mbf{d}})$. First we note that the embedding $\prod_{i = 1}^k \GL(V_i) \hookrightarrow E^{\times}$ induces an isomorphism of fundamental groups because the composite
$$
\begin{tikzcd}
\prod_{i = 1}^k \GL(V_i) \ar[r,hook] & E^{\times} \ar[r,"\eta",two heads] & \prod_{i = 1}^k \GL(V_i) 
\end{tikzcd}
$$
is the identity. Therefore, it suffices to compute the map $\Z^k \rightarrow \pi_1(\GL_{\mbf{d}})$ induced by the embeddings $\prod_{i = 1}^k \GL(V_i) \hookrightarrow E^{\times} \hookrightarrow \GL_{\mbf{d}}$.  If $M_j$ denotes the vector space of $M$ at vertex $j$, then $M_j = \bigoplus_{i = 1}^k M_j^{(i)} \otimes V_i$ and the composite $\prod_{i = 1}^k \GL(V_i) \hookrightarrow \GL_{\mbf{d}} \twoheadrightarrow \GL_{d_j}$ identifies $\prod_{i = 1}^k \GL(V_i)$ with the reductive subgroup $\prod_{i = 1}^k \GL(V_i) \otimes \mathrm{Id}_{M_j^{(i)}}$ of $\GL_{d_j}$. Hence, the map
$$
\begin{tikzcd}
\Z^k =\pi_1\left( \prod_{i = 1}^k \GL(V_i) \right) \ar[r] & \pi_1( \GL_{d_j}) = \Z
\end{tikzcd}
$$
is given by $(\beta_j^{(1)}, \dots, \beta_j^{(k)})$, where $\beta^{(i)}$ is the dimension vector of $M^{(i)}$. Since we have assumed that $\mbf{d}$ is sincere, $\pi_1(\GL_{\mbf{d}}) = \Z^{| \Q_0 |}$. We have shown that: 

\begin{proposition}\label{thm:cokerfun}
Assume that $\mbf{d}$ is sincere. The fundamental group of the $\GL_{\mbf{d}}$-orbit of $M$ is given by the cokernel of the map 
\begin{equation}\label{eq:cokerpi}
\begin{tikzcd}
\Z^k \ar[r,"B"] & \Z^{|\Q_0|},
\end{tikzcd}
\end{equation}
where $B = \left( \beta_j^{(i)} \right)$ for $i = 1, \dots, k$ and $j \in \Q_0$. 
\end{proposition}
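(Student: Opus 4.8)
The plan is to assemble the proof from the sequence of reductions already carried out in the text, so the remaining work is essentially bookkeeping of fundamental groups along a fibre bundle. First I would invoke the fibre bundle $\Aut_{\Q}(M) \hookrightarrow \GL_{\mbf{d}} \to \mc{O}$ together with the connectedness fact \eqref{eq:Sconnected}: since $\GL_{\mbf{d}}$ is connected and the fibre $\Aut_{\Q}(M) = E^{\times}$ is connected, the long exact sequence in homotopy reads
\begin{equation*}
\pi_1(E^{\times}) \stackrel{\iota_*}{\longrightarrow} \pi_1(\GL_{\mbf{d}}) \longrightarrow \pi_1(\mc{O}) \longrightarrow \pi_0(E^{\times}) = 1,
\end{equation*}
so that $\pi_1(\mc{O}) = \operatorname{coker}(\iota_*)$, where $\iota : E^{\times} \hookrightarrow \GL_{\mbf{d}}$ is the inclusion.

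Next I would identify the source and target of $\iota_*$ and compute the map explicitly. By Lemma \ref{lem:Erad}, the embedding $\gamma : \prod_{i=1}^k \GL(V_i) \hookrightarrow E^{\times}$ induces an isomorphism on $\pi_1$, so $\pi_1(E^{\times}) \cong \Z^k$ and $\iota_*$ may be computed after precomposing with $\gamma_*$, i.e. one just needs the map on $\pi_1$ induced by $\prod_i \GL(V_i) \hookrightarrow \GL_{\mbf{d}}$. Since $\mbf{d}$ is sincere, $\GL_{\mbf{d}} = \prod_{j \in \Q_0} \GL_{d_j}$ is connected with $\pi_1(\GL_{\mbf{d}}) = \prod_{j} \pi_1(\GL_{d_j}) = \Z^{|\Q_0|}$, and the map factors componentwise through each $\GL_{d_j}$. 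Using $M_j = \bigoplus_{i=1}^k M_j^{(i)} \otimes V_i$, the composite $\prod_i \GL(V_i) \hookrightarrow \GL_{d_j}$ sends $(g_i)_i$ to the block-diagonal matrix with $g_i$ repeated $\dim M_j^{(i)} = \beta_j^{(i)}$ times; since $\pi_1(\GL_m) = \Z$ is detected by the determinant, the induced map $\Z^k \to \Z$ on the $j$-th factor is $(a_1,\dots,a_k) \mapsto \sum_i \beta_j^{(i)} a_i$. Assembling over all $j \in \Q_0$ gives exactly the matrix $B = (\beta_j^{(i)})$, and therefore $\pi_1(\mc{O}) = \operatorname{coker}(B : \Z^k \to \Z^{|\Q_0|})$.

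The only subtle point — and the step I would be most careful about — is the claim that the map on $\pi_1$ induced by $\prod_i \GL(V_i) \hookrightarrow \GL_{d_j}$, $g \mapsto \operatorname{diag}(g \otimes \mathrm{Id}_{M_j^{(i)}})$, is given by multiplicities. This follows because for a reductive subgroup the map on $\pi_1$ is computed on maximal tori, and on the level of cocharacter lattices the inclusion of a block-diagonal torus sends the standard generator of $\pi_1(\GL(V_i))$ (the determinant cocharacter of $\GL(V_i)$) to $\beta_j^{(i)}$ copies of itself inside $\GL_{d_j}$, whose $\pi_1$ is again generated by its determinant cocharacter; equivalently, $\det(g \otimes \mathrm{Id}_{M_j^{(i)}}) = \det(g)^{\beta_j^{(i)}}$. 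This is precisely the content of \cite[Proposition 2.2.1 (ii)]{BrionQuiver} cited before Lemma \ref{lem:Erad}, so once that is in hand the remaining assembly is immediate and the proposition follows.
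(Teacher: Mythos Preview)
Your proof is correct and follows essentially the same route as the paper: the argument is precisely the assembly of the long exact sequence from the fibre bundle, Lemma~\ref{lem:Erad}, and the vertex-by-vertex determinant/multiplicity computation that the paper carries out in the paragraphs immediately preceding the proposition. The only minor quibble is that the citation to \cite[Proposition~2.2.1~(ii)]{BrionQuiver} in the paper is for the content of Lemma~\ref{lem:Erad} (that $\gamma_*$ is an isomorphism), not for the multiplicity statement in your last paragraph, which is the elementary fact $\det(g\otimes\mathrm{Id}_{M_j^{(i)}})=\det(g)^{\beta_j^{(i)}}$ you already justified directly.
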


In particular, Proposition \ref{thm:cokerfun} shows that the fundamental group of $\mathcal{O}$ only depends on the combinatorial data of the dimension vector of the indecomposable summands of $M$. In applications to the enhanced cyclic nilpotent cone, we will need a minor refinement of Proposition \ref{thm:cokerfun}. Namely, the quiver $\Q$ is assumed to have a preferred vertex $\infty \in \Q_0$ such that $\mbf{d}_{\infty} = 1$. In this case, if  
\[ G' := \prod_{i \neq \infty} \GL_{d_i} \subset \GL_{\mbf{d}}\]

then the $\GL_{\mbf{d}}$-orbits in $\Rep (\C \Q/I, \mbf{d})$ are the same as the $G'$-orbits. 
There is a unique $i$ such that $\beta^{(i)}_{\infty} \neq 0$. Let $\Q_0' := \Q_0 \smallsetminus\{ \infty \}$ and $\ell:= |\Q_0'|$. We may assume without loss of generality that $i = k$. Since $\beta_{\infty}^{(k)} = 1$, we can disregard the indecomposable summand $M^{(k)}$ of $M$ when applying Proposition \ref{thm:cokerfun}. More specifically, if $\mbf{d}' = \mbf{d} - \varepsilon_{\infty}$ and $B'$ is the matrix $( \beta_j^{(i)})$ for $i = 1, \dots, k-1$ and $j \in \Q_0'$ then the fundamental group of the $\GL_{\mbf{d}}$-orbit of $M$ is given by the cokernel of the map 
\begin{equation}\label{eq:cokerpiinfty}
\begin{tikzcd}
\Z^{k-1} \ar[r,"{B'}"] & \Z^{\ell}.
\end{tikzcd}
\end{equation}

Since Proposition \ref{thm:cokerfun} implies that the fundamental group of every orbit is abelian, the irreducible complex representations of $\pi_1(\mathcal{O})$ are all one dimensional. If $q \in (\C^{\times})^{\ell}$ and $\alpha \in \Z^{\ell}$ then $q^{\alpha}$ denotes the complex number $q^{\alpha_1}_1 \cdots q_{\ell}^{\alpha_{\ell}}$. Notice that the irreducible complex representations of $\Z^{\ell}$ are parametrized by $(\C^{\times})^{{\ell}}$. 

\begin{corollary}\label{cor:irrepO}
The irreducible complex representations of $\pi_1(\GL_{\mbf{d}} \cdot M)$ are given by 
$$
\left\{ q \in (\C^{\times})^{{\ell}} \ \left| \ q^{\beta^{(i)}} = 1, \ \forall \ i = 1, \ds, k-1 \right\}. \right. 
$$
\end{corollary}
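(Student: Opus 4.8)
The plan is to combine Proposition \ref{thm:cokerfun} (or rather its refinement \eqref{eq:cokerpiinfty}) with the elementary Pontryagin-type duality between finitely generated abelian groups and their character groups. Recall that, by the discussion preceding the corollary, the fundamental group $\pi_1(\mc{O})$ of the orbit $\mc{O} = \GL_{\mbf{d}} \cdot M$ is the cokernel of the map $B' : \Z^{k-1} \rightarrow \Z^{\ell}$, whose $(j,i)$ entry is $\beta_j^{(i)}$, the $j$-th component of the dimension vector $\beta^{(i)}$ of the indecomposable summand $M^{(i)}$ (for $i = 1, \dots, k-1$ and $j \in \Q_0'$). Since $\pi_1(\mc{O})$ is abelian, all its irreducible complex representations are one-dimensional, i.e.\ they form the character group $\Hom(\pi_1(\mc{O}),\Cs)$.

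First I would apply the contravariant exact functor $\Hom(-,\Cs)$ to the right-exact sequence
$$
\begin{tikzcd}
\Z^{k-1} \ar[r,"{B'}"] & \Z^{\ell} \ar[r] & \pi_1(\mc{O}) \ar[r] & 0,
\end{tikzcd}
$$
which is exact because $\Cs$ is a divisible (hence injective) abelian group. This yields the exact sequence
$$
\begin{tikzcd}
0 \ar[r] & \Hom(\pi_1(\mc{O}),\Cs) \ar[r] & \Hom(\Z^{\ell},\Cs) \ar[r,"{(B')^{\vee}}"] & \Hom(\Z^{k-1},\Cs).
\end{tikzcd}
$$
Now $\Hom(\Z^{\ell},\Cs)$ is canonically $(\Cs)^{\ell}$ and $\Hom(\Z^{k-1},\Cs)$ is canonically $(\Cs)^{k-1}$, and under these identifications the transpose map $(B')^{\vee}$ sends $q \in (\Cs)^{\ell}$ to the tuple whose $i$-th coordinate is $\prod_{j \in \Q_0'} q_j^{\beta_j^{(i)}} = q^{\beta^{(i)}}$, for $i = 1, \dots, k-1$. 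Therefore $\Hom(\pi_1(\mc{O}),\Cs)$ is exactly the kernel of $(B')^{\vee}$, namely
$$
\left\{ q \in (\Cs)^{\ell} \ \left| \ q^{\beta^{(i)}} = 1, \ \forall \ i = 1, \dots, k-1 \right. \right\},
$$
which is the claimed description. Finally I would note that the irreducible complex representations of $\pi_1(\mc{O})$ are precisely its one-dimensional representations, i.e.\ the elements of this character group, completing the proof.

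There is essentially no obstacle here: the only point requiring any care is bookkeeping — correctly identifying the transpose of $B'$ with the multiplicative map $q \mapsto (q^{\beta^{(i)}})_i$ and checking that $\Hom(-,\Cs)$ is exact on the relevant sequence (which holds since $\Cs \cong \C/\Z \times (\text{divisible torsion-free})$ is divisible). The one genuinely substantive input, the description of $\pi_1(\mc{O})$ as $\operatorname{coker} B'$, has already been established in Proposition \ref{thm:cokerfun} and \eqref{eq:cokerpiinfty}, so the corollary is immediate.
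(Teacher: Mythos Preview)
Your proof is correct and follows essentially the same approach as the paper. The paper's argument is slightly more terse---it simply observes that a character $q$ of $\Z^{\ell}$ descends to the quotient $\pi_1(\mc{O})$ if and only if it is trivial on the image of $B'$, which is generated by the $\beta^{(i)}$---whereas you phrase the same step via the exactness of $\Hom(-,\Cs)$; but these are the same idea.
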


\begin{proof}
The representation $q$ of $\Z^{\ell}$ descends to a representation of $\pi_1(\GL_{\mbf{d}} \cdot M)$ if and only if it is identically one on the kernel of the surjection $\Z^{\ell} \twoheadrightarrow \pi_1(\GL_{\mbf{d}} \cdot M)$ . By Proposition \ref{thm:cokerfun}, this kernel is generated by the $\beta^{(i)}$. 
\end{proof}

\section{Quantum Hamiltonian reduction}\label{sec:QHR}

Throughout this section, we assume that $X$ is an arbitrary smooth \textit{affine} $G$-variety, with $G$ a connected reductive group. For each character $\chi \in \mathbb{X}^*(\mf{g})$ and finite dimensional $G$-module $U$, one can define an algebra of quantum Hamiltonian reduction $\mf{A}_{\chi}(U)$, which sheafifies over the base $X /\!/ G$. We study how the various $\mf{A}_{\chi}(U)$ are related when one varies the choice of representation $U$. Our motivation for introducing different $U$ is clear - the major deficiently of the usual functor of Hamiltonian reduction is that its kernel is non-trivial. By choosing $U$ large enough one can ensure that the kernel becomes zero.

\subsection{Quantum Hamiltonian reduction}\label{sec:QHRUbasic} Fix a finite-dimensional $G$-module $U$. Differentiating the $G$-action gives an algebra map $\Phi_U : U(\g) \rightarrow \End_{\C}(U)$. Let $\dd(X) = \Gamma(X,\dd_X)$. For $\chi \in \mathbb{X}^*(\mf{g})$, define $\nu_{\chi} : \mf{g} \rightarrow \dd(X)$ by $\nu_{\chi}(x) = \nu(x) - \chi(x)$. The algebra of \textit{quantum Hamiltonian reduction} associated to $U$ is defined to be
$$
\mf{A}_{\chi}(U) := \left( \frac{\dd(X) \otimes \End_{\C}(U)}{\dd(X) \otimes \End_{\C}(U) \cdot \g_{\chi}} \right)^G,
$$
where $\g_{\chi} = \{ \nu_{\chi}(x) \otimes 1 + 1 \otimes \Phi_U(x)  \  | \ x \in \g \}$ in $\dd(X) \otimes \End_{\C}(U)$. 

\begin{lemma}\label{lem:endU}
Let $U$ be a finite-dimensional $G$-module. 
\begin{enumerate}
\item[(a)] The $\dd_X$-module 
$$
\ms{P}_{\chi}(U) := \frac{\dd_X \otimes U}{\dd_X \cdot \{ \nu_{\chi}(x) \otimes u + 1 \otimes \Phi_U(x)(u) \  | \ x \in \g, u \in U \} },
$$
with the natural $G$-action, is $(G,\chi)$-monodromic. It is projective in $\QCoh(\dd_X,G,\chi)$. 
\item[(b)] $\End_{\dd_{X}}(\ms{P}_{\chi}(U))^{\mathrm{op}} = \mf{A}_{\chi}(U)$.
\end{enumerate}
\end{lemma}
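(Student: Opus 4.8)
The plan is to prove parts (a) and (b) by reducing everything to formal properties of the affine setting (where Proposition~\ref{prop:affinemono} applies) together with the standard adjunction between tensoring up along the comoment map and taking invariants.

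First I would address the monodromic structure in (a). Since $X$ is (locally) affine and we only need a $\dd_X$-module statement, it suffices by Proposition~\ref{prop:affinemono} to exhibit on $\Gamma(X,\ms{P}_\chi(U))$ a rational $G$-action compatible with the $\dd(X)$-action and verify the identity $\nu(x) - \chi(x) = \nu_M(x)$ for all $x \in \mf{g}$. The $G$-action is the diagonal one on $\dd_X \otimes U$ descended to the quotient; the quotient makes sense because the subspace generated by the elements $\nu_\chi(x)\otimes u + 1\otimes\Phi_U(x)(u)$ is manifestly $G$-stable (it is the image of $\mf{g}_\chi$-type relations, which are $\mathrm{ad}$-equivariant). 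Now $\nu_M(x)$ acts on a class $[\delta\otimes u]$ as $[x\cdot\delta\otimes u] + [\delta\otimes \Phi_U(x)(u)]$, where $x\cdot\delta$ is the $\mathrm{ad}$-action of $\nu(x)$; modulo the defining relations $1\otimes\Phi_U(x)(u) \equiv -\nu_\chi(x)\otimes u$, so $\nu_M(x)[\delta\otimes u] = [(\nu(x)\delta - \delta\,\nu_\chi(x))\otimes u] = [\nu(x)\cdot(\delta\otimes u)] - [\delta\,\nu_\chi(x)\otimes u]$. The last term is zero in the quotient (it lies in $\dd_X\cdot\mf{g}_\chi$ after pulling the scalar $\chi(x)$ through), which gives exactly $\nu_M(x) = \nu(x) - \chi(x)$ on $\ms{P}_\chi(U)$. (One should state this more carefully: the point is that $\delta\cdot(\nu_\chi(x)\otimes u + 1\otimes\Phi_U(x)(u))\in\dd_X\otimes\End_\C(U)\cdot\mf{g}_\chi$, hence vanishes in $\ms{P}_\chi(U)$, and expanding yields the claimed relation.) For the non-affine case one invokes the monodromic analogue of \cite[Proposition~2.6]{VdenBGeq} quoted after Proposition~\ref{prop:affinemono}.

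For projectivity, I would observe that $\ms{P}_\chi(U)$ represents a Hom-functor: for any $\ms{M}\in\QCoh(\dd_X,G,\chi)$ there is a natural isomorphism
$$
\Hom_{\QCoh(\dd_X,G,\chi)}(\ms{P}_\chi(U),\ms{M}) \;\cong\; \Hom_G\bigl(U,\Gamma(X,\ms{M})\bigr),
$$
sending $\phi$ to $u\mapsto\phi([1\otimes u])$. Indeed a $\dd_X$-linear, $G$-equivariant map out of $\dd_X\otimes U$ is the same as a $G$-equivariant linear map $U\to\Gamma(X,\ms{M})$, and it kills the defining relations precisely when the image lands where $\nu(x)-\chi(x)$ acts as the differential of the $G$-action — which, by Proposition~\ref{prop:affinemono}, is automatic on a $(G,\chi)$-monodromic module. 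Since $\Gamma(X,-)$ is exact on quasi-coherent $\dd_X$-modules ($X$ affine — or one works locally over $X/\!/G$ and uses that $\dd_X$-affineness) and $\Hom_G(U,-)$ is exact on rational $G$-modules ($G$ reductive), the composite is exact, so $\ms{P}_\chi(U)$ is projective.

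Part (b) is then a corollary of the same representability. Taking $\ms{M} = \ms{P}_\chi(U)$ gives $\End_{\dd_X}(\ms{P}_\chi(U)) \cong \Hom_G(U,\Gamma(X,\ms{P}_\chi(U)))$ as vector spaces; unwinding, $\Gamma(X,\ms{P}_\chi(U)) = \bigl(\dd(X)\otimes U\bigr)\big/\bigl(\dd(X)\otimes U\cdot\mf{g}_\chi\text{-relations}\bigr)$, and $\Hom_G(U,-)$ of this is, using $\End_\C(U) = U\otimes U^*$ and $U^{*G}$-type manipulations, identified with $\bigl((\dd(X)\otimes\End_\C(U))/(\dd(X)\otimes\End_\C(U)\cdot\mf{g}_\chi)\bigr)^G = \mf{A}_\chi(U)$. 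One has to check this identification reverses composition — a map $U\to\Gamma(X,\ms{P}_\chi(U))$ composes with another such by applying the induced $\dd(X)$-endomorphism, which corresponds to right multiplication in $\mf{A}_\chi(U)$ — hence the ``op''. I expect the only genuinely fiddly point to be bookkeeping the relations and the $G$-invariants through the isomorphism $\End_\C(U)\cong U\otimes U^*$ in (b), and making the exactness argument for projectivity clean in the non-affine (or merely $\dd$-affine) case; everything else is formal.
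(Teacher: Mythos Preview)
Your outline is sound and establishes (a) and (b) by a genuinely different route from the paper. For (a) the paper simply cites \cite[Lemma~4.3]{MNDerived}; for (b) the paper builds an explicit isomorphism $\phi : \dd(X)\otimes\End_\C(U)^{\mathrm{op}}\to\End_{\dd_X}(\dd(X)\otimes U)^{\mathrm{op}}$, $\phi(D\otimes E)(F\otimes u)=FD\otimes E(u)$, and then checks by direct computation (Claim~\ref{claim:commute} and the paragraphs following it) that $\phi$ descends to $\mf{A}_\chi(U)$, is surjective, and is injective. Your approach instead extracts (b) from the adjunction $\Hom_{\QCoh(\dd_X,G,\chi)}(\ms{P}_\chi(U),\ms{M})\cong\Hom_G(U,\Gamma(X,\ms{M}))$ used for projectivity. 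This is cleaner conceptually and explains the ``op'' structurally, whereas the paper's argument is self-contained and makes all identifications explicit at the level of elements.

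One point you should not leave implicit: the statement of (b) concerns $\End_{\dd_X}(\ms{P}_\chi(U))$, i.e.\ plain $\dd_X$-module endomorphisms, while your representability isomorphism computes $\End_{\QCoh(\dd_X,G,\chi)}(\ms{P}_\chi(U))$. These agree because $G$ is connected, so the forgetful functor $\QCoh(\dd_X,G,\chi)\to\QCoh(\dd_X,G,q)$ is an equivalence (as noted after Definition~2.2 in the paper) and hence fully faithful into $\QCoh(\dd_X)$; you should say this. The paper's proof absorbs this step into the surjectivity argument: given an arbitrary $\dd_X$-endomorphism $\psi$, it lifts $\psi$ to some $E\in\dd(X)\otimes\End_\C(U)$, then shows that the $G$-invariant component $E_0$ of $E$ already induces $\psi$. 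That is exactly the content of ``$\dd_X$-maps are automatically monodromic'' in this setting, proved by hand. Finally, the ``fiddly point'' you flag about matching the relations under $\End_\C(U)\cong U\otimes U^*$ is real: the submodule $I\otimes U^*$ of $\dd(X)\otimes\End_\C(U)$ involves $\Phi_U(x)$ acting on the \emph{left} of $E$, which matches the right ideal generated by $\mf{g}_\chi$ only after passing to $\End_\C(U)^{\mathrm{op}}$ --- this is where the ``op'' genuinely enters, and it deserves a line of justification rather than a wave of the hand.
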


\begin{proof}
Part (a) is well-known, see \cite[Lemma 4.3]{MNDerived}. For part (b), we give a detailed proof. The group $G$ acts diagonally on $\dd(X) \otimes \End_{\C}(U)$ and by differentiating this action we get an action of $\mf{g}$ on the algebra. Explicit, for $x \in \mf{g}$, $D \in \dd(X)$ and $E \in \End_{\C}(U)$, 
\begin{align}
x \cdot (D \otimes E) & = \frac{d}{d t} \exp(t x) \cdot \left( D \otimes E \right) \Big|_{t = 0} \nonumber \\
& = [\nu(x), D] \otimes E + D \otimes [E, \Phi_U(x)]. \label{eq:diffGactionDoEnd}
\end{align}
First, we define the isomorphism  
$$
\phi : \dd(X) \otimes \End_{\C}(U)^{\mathrm{op}} \rightarrow \End_{\dd_{X}}(\dd(X) \otimes U)^{\mathrm{op}},
$$
given by 
$$
\phi \left( \sum_i D_i \otimes E_i \right) \left(\sum_j F_j \otimes u_j \right) = \sum_{i,j} F_j D_i \otimes E_i(u_j). 
$$
Let 
$$
I = \dd_X \cdot \{ \nu_{\chi}(x) \otimes u + 1 \otimes \Phi_U(x)(u) \  | \ x \in \g, u \in U \},
$$
so that $\ms{P}_{\chi}(U) $ is the quotient of $\dd(X) \otimes U$ by $I$. 

\begin{claim}\label{claim:commute}
If $\sum D_i \otimes E_i \in (\dd(X) \otimes \End_{\C}(U)^{\mathrm{op}})^G$, then $\phi(\sum D_i \otimes E_i)(I) \subset I$ . 
\end{claim}

\begin{proof}
Assume that $\sum D_i \otimes E_i \in \dd(X) \otimes \End_{\C}(U)^{\mathrm{op}}$. If $x \in \g$ and $u \in U$, then 
\begin{align*}
\phi \left( \sum_i D_i \otimes E_i \right) ( \nu_{\chi}(x) \otimes u + 1 \otimes \Phi_U(x)(u)) & = \sum_i D_i \nu_{\chi}(x) \otimes E_i(u) + D_i \otimes \Phi_U(x)(E_i(u)) \\
 & + \sum_i [\nu_{\chi}(x), D_i] \otimes E_i (u) + D_i \otimes [E_i, \Phi_U(x)](u) \\
 & = \sum_i D_i \nu_{\chi}(x) \otimes E_i(u) + D_i \otimes \Phi_U(x)(E_i(u))\\
 &  + \sum_i [\nu(x), D_i] \otimes E_i (u) + D_i \otimes [E_i, \Phi_U(x)](u).
\end{align*}
Moreover, by equation \eqref{eq:diffGactionDoEnd},
$$
\sum_i [\nu(x), D_i] \otimes E_i (u) + D_i \otimes [E_i, \Phi_U(x)](u) = x \cdot \left( \sum_i D_i \otimes E_i \right) (1 \otimes u). 
$$
Thus, 
\begin{align}
\phi \left( \sum_i D_i \otimes E_i \right) ( \nu_{\chi}(x) \otimes u + 1 \otimes \Phi_U(x)(u)) & = \sum_i D_i \nu_{\chi}(x) \otimes E_i(u) + D_i \otimes \Phi_U(x)(E_i(u)) \nonumber \\
& \ + x \cdot \left( \sum_i D_i \otimes E_i \right) (1 \otimes u).  \label{eq:phiImapI}
\end{align}
This is zero if $\sum_i D_i \otimes E_i \in (\dd(X) \otimes \End_{\C}(U)^{\mathrm{op}})^G$. This completes the proof of the claim. 
\end{proof}

Therefore $\phi$ descends to a well-defined map $\psi : (\dd(X) \otimes \End_{\C}(U)^{\mathrm{op}})^G \rightarrow \End_{\dd_{X}}(\ms{P}_{\chi}(U))^{\mathrm{op}}$. Next, we check that 
$$
\psi \left(\left(\dd(X) \otimes \End_{\C}(U)^{\mathrm{op}} \cdot \g_{\chi}\right)^G \right) = 0.
$$ 
We have
$$
\psi\left( \sum_{i,\alpha} (D_i \otimes E_i) (\nu_{\chi}(x_{\alpha}) \otimes 1 + 1 \otimes \Phi_U(x_{\alpha}))\right)
$$
$$
= \psi\left( \sum_{i,\alpha}  D_i \nu_{\chi}(x_{\alpha}) \otimes E_i + D_i \otimes \Phi_U(x_{\alpha}) \circ E_i \right) 
$$
$$
=  \sum_{i,\alpha}  D_i \nu_{\chi}(x_{\alpha}) \otimes E_i + D_i \otimes \Phi_U(x_{\alpha}) \circ E_i = 0.
$$
This shows that $\psi$ factors through $\mf{A}_{\chi}(U)$. 

Now we show that $\psi$ is surjective. Each endomorphism $f \in \End_{\dd_{X}}(\ms{P}_{\chi}(U))^{\mathrm{op}}$ is uniquely defined by what it does on $1 \otimes U$. Therefore there exist $D_i \in \dd(X)$ and $E_i \in \End_{\C}(U)^{\mathrm{op}}$ such that $f(F \otimes u) = \sum_i F D_i \otimes E_i(u)$ for all $F \otimes u \in \ms{P}_{\chi}(U)$. The element $E = \sum D_i \otimes E_i \in \dd(X) \otimes \End_{\C}(U)^{\mathrm{op}}$ must satisfy $\phi(E)(I) \subset I$. Then equation \eqref{eq:phiImapI} in the proof of Claim \ref{claim:commute} shows that this implies that 
$$
\phi(x \cdot E)(1 \otimes u) \in I, \quad \forall \, u \in U \, x \in \g,
$$
where $x \cdot E$ is defined in \eqref{eq:diffGactionDoEnd}. Thus, $E$ belongs to the rational $G$-representation 
$$
H := \{ F \in \dd(X) \otimes \End_{\C}(U)^{\mathrm{op}} \ | \ \phi(x \cdot F)(1 \otimes U) \subset I \ \forall \ x \in \g \}.
$$
If $E_0$ is the invariant component of $E$, then $E - E_0 \in \g \cdot H$. But, by definition, any element in $\g \cdot H$ induces the trivial endomorphism of $\End_{\dd_{X}}(\ms{P}_{\chi}(U))^{\mathrm{op}}$. Thus, $E_0 \in (\dd(X) \otimes \End_{\C}(U)^{\mathrm{op}})^G$ induces the endomorphism $f$ in $\End_{\dd_{X}}(\ms{P}_{\chi}(U))^{\mathrm{op}}$. That is, $\psi$ is surjective. 

Finally, we show $\psi$ is injective. Assume that we are given $E \in (\dd(X) \otimes \End_{\C}(U)^{\mathrm{op}} )^G$ such that $\phi(E)(u) \in I$ for all $u \in U$. Fix a basis $u_1, \ds, u_r$ of $U$. Then 
$$
\phi(E)(u_j) = \sum_{i,\alpha} D_{i,j,\alpha} (\nu_{\chi}(x_{\alpha}) \otimes u_i + 1 \otimes \Phi_U(x_{\alpha})(u_i))
$$
for some (non-unique) $D_{i,j,\alpha} \in \dd(X)$. If $E_{i,j} \in \End_{\C}(U)$ satisfies $E_{i,j}(u_j) = u_i$ and $E_{i,j}(u_k) = 0$ for $j \neq k$, then 
$$
\phi \left( \sum_{i,j,\alpha} (D_{i,j,\alpha} \otimes E_{i,j}) (\nu_{\chi}(x_{\alpha}) \otimes 1 + 1 \otimes \Phi_U(x_{\alpha}))  \right)(u)  = \phi(E)(u).
$$
Since the map $\phi$ is an isomorphism, this implies that $E = \sum (D_{i,j,\alpha} \otimes E_{i,j}) (\nu_{\chi}(x_{\alpha}) \otimes 1 + 1 \otimes \Phi_U(x_{\alpha}))$. Hence $E = 0$ as an element of $\mf{A}_{\chi}(U)$.
\end{proof}

Just as in the usual case where $U = \C$ is the trivial $G$-module, modules over the algebra $\mf{A}_{\chi}(U)$ are very closely related to $(G,\chi)$-monodormic $\dd$-modules. The functor of \textit{Hamiltonian reduction} is the exact functor $\Ham_{U,\chi} : \Coh(\dd_{X},G,\chi) \rightarrow \Lmod{\mf{A}_{\chi}(U)}$ defined by 
$$
\Ham_{U,\chi}(\ms{M}) = (\Gamma(X,\ms{M}) \otimes U^*)^G = \Hom_{\dd_{X}}(\ms{P}_{\chi}(U),\ms{M}). 
$$
It admits a left adjoint ${}^{\perp} \Ham_{U,\chi}$ given by 
$$
{}^{\perp} \Ham_{U,\chi}(N) = \ms{P}_{\chi}(U) \otimes_{\mf{A}_{\chi}(U)} N,
$$
such that the canonical adjunction $\mathrm{Id}_{\Lmod{\mf{A}_{\chi}(U)}} \rightarrow \Ham_{U,\chi} \circ {}^{\perp} \Ham_{U,\chi}$ is an isomorphism. This implies that $\Ham_{U,\chi}$ is essentially surjective and that the kernel of $\Ham_{U,\chi}$ is the Serre subcategory of $\Coh(\dd_X,G,\chi)$ consisting of all objects $\ms{M}$ such that $\Ham_{U,\chi}(\ms{M}) = 0$.

For a pair $U_1, U_2$ of finite-dimensional $G$-modules, define 
$$
\mf{A}_{\chi}(U_1,U_2) = \left( \frac{\dd(X) \otimes \Hom_{\C}(U_2,U_1)}{\dd(X) \otimes \End_{\C}(U_1) \cdot \mf{g}_{\chi}(U_1,U_2) } \right)^G,
$$
where
$$
\mf{g}_{\chi}(U_1,U_2) := \{ \nu_{\chi}(x) \otimes F + 1 \otimes F \Phi_{U_2}(x) \ | \ x \in \g, F \in \Hom_{\C}(U_2,U_1) \}.
$$

\begin{lemma}\label{lem:Moritacontext}
	The space $\mf{A}_{\chi}(U_1,U_2) $ is a $\mf{A}_{\chi}(U_1)$-$\mf{A}_{\chi}(U_2)$-bimodule and if $U = U_1 \oplus U_2$ then 
	\begin{equation}\label{eq:matrixendU}
	\mf{A}_{\chi}(U) =  \left( \begin{array}{cc} 
	\mf{A}_{\chi}(U_1) & \mf{A}_{\chi}(U_1,U_2) \\
	\mf{A}_{\chi}(U_2,U_1)  & \mf{A}_{\chi}(U_2)
	\end{array} \right) 
	\end{equation}
	\vspace{4mm}
	is a Morita context (in the sense of \cite[\S 1.1.6]{MR}).
\end{lemma}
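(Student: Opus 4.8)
The plan is to recognise both assertions as instances of the Peirce decomposition of the ring $\mf{A}_{\chi}(U_1\oplus U_2)$ attached to a pair of orthogonal idempotents. Set $U = U_1 \oplus U_2$ and let $e_i = 1 \otimes \mathrm{id}_{U_i} \in \dd(X) \otimes \End_{\C}(U)$. Because $G$ acts block-diagonally on $U$, each $e_i$ is $G$-invariant, and in $\dd(X)\otimes\End_{\C}(U)$ one has $e_1 + e_2 = 1$ and $e_i e_j = \delta_{ij} e_i$. The block decomposition $\End_{\C}(U) = \bigoplus_{i,j} \Hom_{\C}(U_j,U_i)$ is multiplicative — composition of endomorphisms is matrix multiplication of blocks — so $\dd(X)\otimes\End_{\C}(U) = \bigoplus_{i,j} e_i\bigl(\dd(X)\otimes\End_{\C}(U)\bigr)e_j$ with $e_i\bigl(\dd(X)\otimes\End_{\C}(U)\bigr)e_j = \dd(X)\otimes\Hom_{\C}(U_j,U_i)$. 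The one genuinely computational point is to check that the defining left ideal $I := \dd(X)\otimes\End_{\C}(U)\cdot\g_{\chi}$ respects this decomposition, i.e. that its $(i,j)$-component is exactly $\dd(X)\otimes\End_{\C}(U_i)\cdot\g_{\chi}(U_i,U_j)$. This is straightforward: $\g_{\chi}$ lies in the diagonal blocks, and left-multiplying the $j$-th diagonal piece $\nu_{\chi}(x)\otimes\mathrm{id}_{U_j} + 1\otimes\Phi_{U_j}(x)$ by an element $D\otimes F$ with $F \in \Hom_{\C}(U_j,U_i)$ produces $D\nu_{\chi}(x)\otimes F + D\otimes F\Phi_{U_j}(x)$, and these run over a generating set of $\dd(X)\otimes\End_{\C}(U_i)\cdot\g_{\chi}(U_i,U_j)$ as $D$, $F$ and $x$ vary.

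Granting this, taking $G$-invariants commutes with the finite direct sum, so $\mf{A}_{\chi}(U) = \bigoplus_{i,j} e_i \mf{A}_{\chi}(U) e_j$ and
$$
e_i \mf{A}_{\chi}(U) e_j = \left( \frac{\dd(X)\otimes\Hom_{\C}(U_j,U_i)}{\dd(X)\otimes\End_{\C}(U_i)\cdot\g_{\chi}(U_i,U_j)} \right)^{G} = \mf{A}_{\chi}(U_i,U_j),
$$
which is $\mf{A}_{\chi}(U_i)$ when $i=j$. Reading off the $2\times 2$ matrix presentation of $\mf{A}_{\chi}(U)$ relative to $e_1,e_2$ yields exactly \eqref{eq:matrixendU}; in particular $\mf{A}_{\chi}(U_1,U_2) = e_1\mf{A}_{\chi}(U)e_2$ is automatically a left $\mf{A}_{\chi}(U_1) = e_1\mf{A}_{\chi}(U)e_1$-module and a right $\mf{A}_{\chi}(U_2) = e_2\mf{A}_{\chi}(U)e_2$-module, and similarly for $\mf{A}_{\chi}(U_2,U_1)$.

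It then remains to invoke the general fact that for \emph{any} ring $A$ with a complete set $\{e_1,e_2\}$ of orthogonal idempotents, the quadruple $(e_1Ae_1,\, e_2Ae_2,\, e_1Ae_2,\, e_2Ae_1)$ together with the two pairing maps $e_1Ae_2 \otimes_{e_2Ae_2} e_2Ae_1 \to e_1Ae_1$ and $e_2Ae_1 \otimes_{e_1Ae_1} e_1Ae_2 \to e_2Ae_2$ induced by the multiplication of $A$ is a Morita context; the associativity and unit axioms of \cite[\S1.1.6]{MR} reduce to associativity of the product of $A$ restricted to the relevant blocks. Applying this to $A = \mf{A}_{\chi}(U)$ finishes the proof. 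As an alternative to the block computation in the first paragraph, one may instead use that $\ms{P}_{\chi}(-)$ is visibly additive, so $\ms{P}_{\chi}(U)\cong\ms{P}_{\chi}(U_1)\oplus\ms{P}_{\chi}(U_2)$, and then combine the matrix description of endomorphism rings of direct sums with Lemma \ref{lem:endU}(b), generalised from $\End_{\dd_X}$ to $\Hom_{\dd_X}$ between the $\ms{P}_{\chi}(U_i)$ by repeating that argument with $\End_{\C}(U)$ replaced by $\Hom_{\C}(U_j,U_i)$; this route costs a little more care with opposite-ring conventions but introduces no new idea. I do not anticipate any real obstacle here: the content is entirely formal once the block-decomposition of $I$ — a short, direct computation — is in place.
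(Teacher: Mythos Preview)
Your proof is correct and takes a somewhat different, more structural route than the paper. The paper verifies the bimodule structure directly: for $\sum_i D_i \otimes F_i \in (\dd(X)\otimes\Hom_{\C}(U_2,U_1))^G$ it computes that
\[
\sum_i [\nu(x),D_i]\otimes F_i + D_i\otimes(\Phi_{U_1}(x)F_i - F_i\Phi_{U_2}(x)) = \frac{d}{dt}\,\exp(tx)\cdot\Bigl(\sum_i D_i\otimes F_i\Bigr)\Big|_{t=0} = 0,
\]
which is exactly what is needed for the left $\mf{A}_{\chi}(U_1)$- and right $\mf{A}_{\chi}(U_2)$-actions to be well-defined on the quotient; it then simply asserts that the Morita-context axioms are straightforward. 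Your approach instead identifies the whole picture as the Peirce decomposition of $\mf{A}_{\chi}(U_1\oplus U_2)$ with respect to the $G$-invariant orthogonal idempotents $e_i = 1\otimes\mathrm{id}_{U_i}$, after checking that the defining left ideal decomposes block-wise (the key point being that $\g_{\chi}$ is block-diagonal). This buys you both assertions at once from general ring theory, and in particular gives an honest argument for the Morita-context claim rather than an appeal to routine verification. The paper's computation, on the other hand, makes the analytic reason for well-definedness (vanishing of the infinitesimal $G$-action on invariants) more transparent; your block computation encodes the same cancellation but packages it differently.
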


\begin{proof} 
It is a bi-module because 
$$
\sum_{i} [\nu(x),D_i] \otimes F_i + D_i \otimes (\Phi_{U_1}(x) F_i - F_i \Phi_{U_2}(x)) =  \frac{d}{d t} \exp(t x) \cdot \left( \sum_i D_i \otimes F_i \right) \Big|_{t = 0} = 0
$$
for $x \in \g$ and $\sum_{i} D_i \otimes F_i \in (\dd(X) \otimes \Hom_{\C}(U_2,U_1))^G$. 
It is straight-forward to check that $\mf{A}_{\chi}(U)$ is a Morita context.
\end{proof}

\begin{lemma}\label{lem:leftrightnoetherian}
	Fix representations $U_1,U_2$ and $U$. 
	\begin{enumerate}
		\item[(a)] The algebra $\mf{A}_{\chi}(U)$ is noetherian.
		\vspace{2mm}
		\item[(b)] The bimodule $\mf{A}_{\chi}(U_1,U_2)$ is finitely generated \textit{both} as a left $\mf{A}_{\chi}(U_1)$-module \textit{and} as a right $\mf{A}_{\chi}(U_2)$-module.   
	\end{enumerate}
\end{lemma}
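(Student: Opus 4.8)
The plan is to exhibit filtrations on $\mf{A}_\chi(U)$ and on the bimodule $\mf{A}_\chi(U_1,U_2)$ whose associated graded objects are controlled by classical invariant theory for the reductive group $G$, and then transfer noetherianity and finite generation from the graded level back down.

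For the set-up, equip $\dd(X)$ with its order filtration $F_\bullet\dd(X)$; since $X$ is smooth affine, $\gr\dd(X)=\C[T^*X]$ is a finitely generated commutative $\C$-algebra, and the filtration is $G$-stable. For a $G$-module $U$ set $F_i\bigl(\dd(X)\otimes\End_\C(U)\bigr):=F_i\dd(X)\otimes\End_\C(U)$, so that the associated graded is $\C[T^*X]\otimes\End_\C(U)\cong M_{\dim U}(\C[T^*X])$, a $G$-equivariant finite module over the central subalgebra $\C[T^*X]$. Because $G$ is reductive, taking $G$-invariants is exact, so this filtration descends first to the quotient $\dd(X)\otimes\End_\C(U)\big/\bigl(\dd(X)\otimes\End_\C(U)\bigr)\g_\chi$ and then to its invariants $\mf{A}_\chi(U)$, making $\mf{A}_\chi(U)$ a non-negatively filtered ring and presenting $\gr\mf{A}_\chi(U)$ as a quotient \emph{ring} of $(\C[T^*X]\otimes\End_\C(U))^G$. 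Now $\C[T^*X]^G=\C[(T^*X)/\!/G]$ is a finitely generated commutative algebra by Hilbert--Nagata, hence noetherian, and $(\C[T^*X]\otimes\End_\C(U))^G$ is module-finite over the central subring $\C[T^*X]^G$, hence left and right noetherian. Therefore $\gr\mf{A}_\chi(U)$ is noetherian, and so $\mf{A}_\chi(U)$ is noetherian; this gives (a).

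For (b) one repeats the construction with $\dd(X)\otimes\Hom_\C(U_2,U_1)$ in place of $\dd(X)\otimes\End_\C(U)$. This produces a filtration on $\mf{A}_\chi(U_1,U_2)$ compatible with its $(\mf{A}_\chi(U_1),\mf{A}_\chi(U_2))$-bimodule structure, and exhibits $\gr\mf{A}_\chi(U_1,U_2)$ as a quotient --- as a $(\gr\mf{A}_\chi(U_1),\gr\mf{A}_\chi(U_2))$-bimodule --- of $(\C[T^*X]\otimes\Hom_\C(U_2,U_1))^G$. The latter is module-finite over $\C[T^*X]^G$ (again finiteness of invariants for reductive $G$, since $\C[T^*X]\otimes\Hom_\C(U_2,U_1)$ is $\C[T^*X]$-module-finite), and $\C[T^*X]^G=\C[T^*X]^G\otimes 1$ maps to the centre of both $\gr\mf{A}_\chi(U_1)$ and $\gr\mf{A}_\chi(U_2)$, compatibly with the bimodule structure (on either side it acts as multiplication by the relevant scalar matrix). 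Hence $(\C[T^*X]\otimes\Hom_\C(U_2,U_1))^G$, and therefore its quotient $\gr\mf{A}_\chi(U_1,U_2)$, is finitely generated both as a left $\gr\mf{A}_\chi(U_1)$-module and as a right $\gr\mf{A}_\chi(U_2)$-module. Lifting a finite homogeneous generating set through the filtration (which is bounded below), one concludes that $\mf{A}_\chi(U_1,U_2)$ is finitely generated both as a left $\mf{A}_\chi(U_1)$-module and as a right $\mf{A}_\chi(U_2)$-module.

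The bookkeeping with the associated graded objects is the only delicate point: one must check that the order filtration genuinely descends through the two successive quotients (by the left ideal generated by $\g_\chi$, and by the $G$-action), which relies on exactness of the $G$-invariants functor for reductive $G$, and one must verify that the central $\C[T^*X]^G$-action is the same whether viewed through the left or the right module structure. Optionally, identifying the principal symbols of the generators of $\g_\chi$ with the components of the moment map $\mu:T^*X\to\g^*$ sharpens the picture, showing $\gr\mf{A}_\chi(U)$ is a quotient of $(\C[\mu^{-1}(0)]\otimes\End_\C(U))^G$; this is not needed for the present lemma but is convenient later. Granting the bookkeeping, everything else is the standard ``noetherian / module-finite upstairs implies noetherian / module-finite downstairs'' principle for filtered rings and modules.
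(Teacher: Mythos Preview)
Your argument is correct and follows essentially the same route as the paper's proof: put the order filtration on $\dd(X)\otimes\End_\C(U)$, observe that the associated graded of $\mf{A}_\chi(U)$ is a quotient of an invariant algebra that is module-finite over the noetherian commutative ring $\C[T^*X]^G$ (the paper passes further to $\C[\mu^{-1}(0)]^G$, which you rightly note is optional here), and then lift noetherianity and finite generation from the graded level. The only cosmetic difference is that the paper cites Hilbert's finiteness theorem in the form of \cite[Zusatz 3.2]{Kraft} and the filtered-to-ungraded transfer as \cite[Theorem 1.6.9]{MR}, whereas you invoke these results without explicit references.
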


\begin{proof}
	Extend the order filtration on $\dd(X)$ to a filtration on $\dd(X) \otimes \End_{\C}(U)$ by putting $\End_{\C}(U)$ in degree zero. This induces a filtration $\mc{F}_{\idot}$ on $\mf{A}_{\chi}(U)$ such that 
	$$
	(\C[\mu^{-1}(0)] \otimes \End_{\C}(U))^G \twoheadrightarrow \gr_{\mc{F}} \mf{A}_{\chi}(U)
	$$
	(we do not require that $\mu$ be flat for surjectivity). By Hilbert's Theorem \cite[Zusatz 3.2]{Kraft}, $(\C[\mu^{-1}(0)] \otimes \End_{\C}(U))^G$ is a finite module over $\C[\mu^{-1}(0)]^G$. Since the latter is finite type, the former is noetherian. Since the filtration is exhaustive, we deduce that $\mf{A}_{\chi}(U)$ is noetherian \cite[Theorem 1.6.9]{MR}. 
	
	Similarly, $\mf{A}_{\chi}(U_1,U_2)$ has a filtration, making its associated graded a quotient of the space $(\C[\mu^{-1}(0)] \otimes \Hom_{\C}(U_2,U_1))^G$. Arguing as above, the latter is finitely generated both as a left module over  $(\C[\mu^{-1}(0)] \otimes \End_{\C}(U_1))^G$, and as right a $(\C[\mu^{-1}(0)] \otimes \End_{\C}(U_2))^G$-module. This implies that $\mf{A}_{\chi}(U_1,U_2)$ is finitely generated on both the left and the right. 
\end{proof}

Motivated by Lemma \ref{lem:leftrightnoetherian}, we define a partial ordering on the isomorphism classes of objects in $\Rep(G)$ by saying that $U \le U'$ if $U$ is isomorphic to a direct summand of $U'$. For any associative algebra $A$, we let $\mathrm{Spec}\ A$ denote the topological space (with Zariski topology) of prime ideals in $A$.

\begin{lemma}\label{lem:closedembeddingspecUU}
    If $U_1 \le U_2$ then $\mf{A}_{\chi}(U_1,U_2) \mf{A}_{\chi}(U_2,U_1) = \mf{A}_{\chi}(U_1)$ as $\mf{A}_{\chi}(U_1)$-bimodules and hence there is a closed embedding 
	$$
	\mathrm{Spec} \ \mf{A}_{\chi}(U_1) \hookrightarrow \mathrm{Spec} \ \mf{A}_{\chi}(U_2).
	$$ 
\end{lemma}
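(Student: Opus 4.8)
The plan is to first establish the identity $\mf{A}_{\chi}(U_1,U_2)\mf{A}_{\chi}(U_2,U_1) = \mf{A}_{\chi}(U_1)$ and then to deduce the closed embedding of prime spectra from a general fact about Morita contexts. The crucial hypothesis $U_1 \le U_2$ means we may fix a $G$-module splitting $U_2 \simeq U_1 \oplus U_1'$ for some $G$-module $U_1'$; write $\iota : U_1 \hookrightarrow U_2$ and $\pi : U_2 \twoheadrightarrow U_1$ for the corresponding $G$-equivariant inclusion and projection, so that $\pi \iota = \id_{U_1}$. I would first observe that the elements $1 \otimes \iota \in \dd(X) \otimes \Hom_{\C}(U_1,U_2)$ and $1 \otimes \pi \in \dd(X) \otimes \Hom_{\C}(U_2,U_1)$ are $G$-invariant, hence define classes $[\,1\otimes\iota\,] \in \mf{A}_{\chi}(U_2,U_1)$ and $[\,1\otimes\pi\,] \in \mf{A}_{\chi}(U_1,U_2)$ (note the index conventions of the lemma: $\mf{A}_{\chi}(U_i,U_j)$ is built from $\Hom_{\C}(U_j,U_i)$). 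Their product in $\mf{A}_{\chi}(U_1)$ is the class of $1 \otimes (\pi\iota) = 1 \otimes \id_{U_1}$, which is precisely the identity element of $\mf{A}_{\chi}(U_1)$. This already shows $1 \in \mf{A}_{\chi}(U_1,U_2)\mf{A}_{\chi}(U_2,U_1)$, and since the latter is a two-sided ideal of $\mf{A}_{\chi}(U_1)$ (being one of the "diagonal" products of a Morita context, by Lemma \ref{lem:Moritacontext}), it must equal all of $\mf{A}_{\chi}(U_1)$.

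For the second claim I would invoke the standard dictionary between Morita contexts and prime ideals. Set $A = \mf{A}_{\chi}(U_1)$, $B = \mf{A}_{\chi}(U_2)$, $P = \mf{A}_{\chi}(U_1,U_2)$, $Q = \mf{A}_{\chi}(U_2,U_1)$, so that by Lemma \ref{lem:Moritacontext} the matrix in \eqref{eq:matrixendU} (applied to $U = U_1 \oplus U_2$, or more directly to the context $(A,B,P,Q)$) is an associative ring $R$; by the first part we have $PQ = A$. General Morita-context theory (see \cite[\S 3.6]{MR}) gives a map from $\Spec B$ to $\Spec A$ on the locus of primes not containing the ideals $PQ$ and $QP$; since $PQ = A$, this locus is all of $\Spec A$ on the $A$-side. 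Concretely, a prime $\mf{q} \subset B$ with $Q P \not\subset \mf{q}$ is sent to the prime $\{\, a \in A \mid P a Q \subset \mf{q} \,\}$ of $A$, and conversely; restricting attention to such primes of $B$ and using $PQ = A$ one obtains a homeomorphism onto the closed subset $V(J) \subset \Spec B$, where $J = \ker(B \to \End_B(Q))$ is the annihilator of $Q$ as a left $B$-module (equivalently the trace ideal complement). Thus $\Spec A \xrightarrow{\sim} V(J) \hookrightarrow \Spec B$ is a closed embedding.

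The one genuine point to check carefully — and the step I expect to be the main obstacle — is the \emph{closedness} of the image inside $\Spec B = \Spec\mf{A}_{\chi}(U_2)$, i.e. identifying it with $V(J)$ for an honest two-sided ideal $J$ of $B$. This is where one uses $PQ = A$ together with Lemma \ref{lem:leftrightnoetherian}(b), which guarantees $P$ and $Q$ are finitely generated on both sides so that the trace ideal $QP \subset B$ is a well-behaved (finitely generated) two-sided ideal and the "bimodule" $Q$ realizes $B/\mathrm{ann}_B(Q)$ as a ring Morita equivalent to a corner of $A$; the corner picture shows the primes of $B$ containing $QP$ contribute nothing new, while those not containing $QP$ match bijectively and homeomorphically with $\Spec A$. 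Once this correspondence is set up, continuity in both directions is formal from the definitions of the maps $\mf{q} \mapsto \{a : PaQ \subset \mf{q}\}$ and $\mf{p} \mapsto \{b : QbP \subset \mf{p}\} + QP$, so the embedding is closed. I would conclude by noting that everything here is purely ring-theoretic, requiring no further input about $\dd$-modules beyond Lemmas \ref{lem:Moritacontext} and \ref{lem:leftrightnoetherian}.
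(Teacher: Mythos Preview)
Your proposal is correct and follows essentially the same route as the paper. For the first claim, the paper argues that $\mf{A}_{\chi}(U_1)$ is a direct summand of both $\mf{A}_{\chi}(U_1,U_2)$ and $\mf{A}_{\chi}(U_2,U_1)$ (as left, resp.\ right, $\mf{A}_{\chi}(U_1)$-modules), whence the product contains $\mf{A}_{\chi}(U_1)$; your argument via the explicit classes $[1\otimes\pi]$ and $[1\otimes\iota]$ is a more concrete version of exactly the same idea. For the second claim, the paper simply invokes \cite[Theorem 3.6.2]{MR} applied to the Morita context of Lemma~\ref{lem:Moritacontext}; your discussion of the maps $\mf{q}\mapsto\{a:PaQ\subset\mf{q}\}$ and of the trace ideal is essentially unpacking that theorem, and the finite-generation input from Lemma~\ref{lem:leftrightnoetherian}(b) is not actually needed here --- the cited result in \cite{MR} applies to an arbitrary Morita context with $PQ=A$.
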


\begin{proof}
	If $U_1 \le U_2$, then $U_1$ is a summand of $U_2$ and hence $\mf{A}_{\chi}(U_1)$ is a direct summand of $\mf{A}_{\chi}(U_1,U_2)$ as left $\mf{A}_{\chi}(U_1)$-modules. Similarly, $\mf{A}_{\chi}(U_1)$ is a direct summand of $\mf{A}_{\chi}(U_2,U_1)$ as right $\mf{A}_{\chi}(U_1)$-modules. Hence $\mf{A}_{\chi}(U_1,U_2) \mf{A}_{\chi}(U_2,U_1) = \mf{A}_{\chi}(U_1)$. By Lemma \ref{lem:Moritacontext}, the matrix 
	$$
	\left( \begin{array}{cc} 
	\mf{A}_{\chi}(U_1) & \mf{A}_{\chi}(U_1,U_2) \\
	\mf{A}_{\chi}(U_2,U_1)  & \mf{A}_{\chi}(U_2)
	\end{array} \right) 
	$$
	is a Morita context. Therefore, it follows from \cite[Theorem 3.6.2]{MR} that $\mathrm{Spec} \ \mf{A}_{\chi}(U_1)$ embeds in $\mathrm{Spec}\ \mf{A}_{\chi}(U_2)$.
\end{proof}

\subsection{Generic behaviour}\label{sec:QHRgenericpoint}

In order to better understand the algebras $\mf{A}_{\chi}(U)$, we consider what they look like on some dense open subset of $X$. Let $H \subset G$ be a closed subgroup and assume that there is a $H$-stable closed subvariety $Y \subset X$ such that the canonical map $G \times_H Y\rightarrow X$ is an isomorphism. Write $i : Y \hookrightarrow X$ for the closed embedding. The Lie algebra of $H$ is denoted $\mf{h}$.  

\begin{lemma}\label{lem:nonzerorestopenU}
	For $G,H$ and $i : Y \hookrightarrow X$ as above, 
	\begin{enumerate}
		\item[(a)] $i^* \ms{P}_{\chi}(U) \simeq \ms{P}_{\chi |_\h} (U |_H)$. 
		\vspace{2mm}
		\item[(b)] $\mf{A}_{\chi}(U) \simeq \mf{A}_{\chi |_{\h}}(U |_H)$. 
	\end{enumerate}
	In particular, if $H = 1$ then,
	\begin{enumerate}
		\item[(c)] The algebras $\mf{A}_{\chi}(U)$ are all Morita equivalent to $\dd(Y)$.  
		\vspace{2mm}
		\item[(d)] The bimodules $\mf{A}_{\chi}(U,U')$ are non-zero for all $U,U'$ non-zero.  
	\end{enumerate}
\end{lemma}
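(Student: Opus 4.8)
The plan is to exploit the isomorphism $G \times_H Y \xrightarrow{\sim} X$ in order to reduce all the defining data on $X$ to the corresponding data on $Y$. The first step is to understand $i^* \ms{P}_{\chi}(U)$. Since $\ms{P}_{\chi}(U)$ is defined as a quotient of $\dd_X \otimes U$ by the left ideal generated by $\{ \nu_{\chi}(x) \otimes u + 1 \otimes \Phi_U(x)(u) \}$, and since $i^*$ is right exact on $\dd$-modules, I would identify $i^* (\dd_X \otimes U)$ with $\dd_{X \leftarrow Y} \otimes_{\mc{O}_Y} (U|_H)$ and track how the generating relations transform. The key point is that $i^*$ of the comoment map $\nu : \mf{g} \to \Theta_X$ restricts, via the isomorphism $X \cong G \times_H Y$, to the comoment map $\nu_Y : \mf{h} \to \Theta_Y$ on the slice $Y$: the vector fields coming from $\mf{g}$ that are tangent to $Y$ are exactly those coming from $\mf{h}$, and the transverse directions are absorbed into the $\dd_{X\leftarrow Y}$ transfer bimodule. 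This should yield $i^* \ms{P}_{\chi}(U) \simeq \ms{M}_{\chi|_{\mf{h}}}(U|_H)$, giving part (a). (Here one must be slightly careful that $\ms{M}_{\chi|_\h}(U|_H)$ denotes the $\dd_Y$-module defined by the same recipe on $Y$; I am reading this off the notation of Lemma \ref{lem:endU}(a).)

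For part (b), I would combine part (a) with Lemma \ref{lem:endU}(b), which says $\mf{A}_{\chi}(U) = \End_{\dd_X}(\ms{P}_{\chi}(U))^{\mathrm{op}}$. The natural equivalence $\Coh(\dd_X, G, \chi) \simeq \Coh(\dd_Y, H, \chi|_{\mf{h}})$ induced by $i^*$ (this is the standard equivariant descent along $G \times_H Y \to X$, which for $\dd$-modules is an equivalence because induction $G \times_H (-)$ is inverse to $i^*$) sends the projective generator $\ms{P}_{\chi}(U)$ to $\ms{M}_{\chi|_\h}(U|_H)$ by part (a), hence induces an isomorphism of endomorphism algebras. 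Since $\End_{\dd_Y}(\ms{M}_{\chi|_\h}(U|_H))^{\mathrm{op}} = \mf{A}_{\chi|_{\mf{h}}}(U|_H)$ by Lemma \ref{lem:endU}(b) applied on $Y$, this gives (b). Alternatively, and perhaps more cleanly, one can argue purely algebraically: $\dd(X) = \Gamma(X, \dd_X)$ is computed via the isomorphism $X \cong G \times_H Y$, and taking $G$-invariants of $\dd(X) \otimes \End_\C(U)$ modulo the $\mf{g}_\chi$-ideal reduces, by the $G$-equivariant structure, to taking $H$-invariants of $\dd(Y) \otimes \End_\C(U|_H)$ modulo the $\mf{h}_{\chi|_\h}$-ideal, which is exactly $\mf{A}_{\chi|_\h}(U|_H)$.

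For the special case $H = 1$: here $Y = X / G$ as varieties and the isomorphism reads $X \cong G \times Y$ with $G$ acting only on the first factor. Then $\chi|_{\mf{h}}$ is the zero character of the zero Lie algebra, and $U|_H = U$ is just a vector space of some dimension $m = \dim U$, with $\Phi_U$ trivial. So $\ms{M}_0(U) = \dd_Y \otimes U = \dd_Y^{\oplus m}$ and $\mf{A}_0(U) = \End_{\dd_Y}(\dd_Y^{\oplus m})^{\mathrm{op}} = \mathrm{Mat}_m(\dd(Y))$, which is Morita equivalent to $\dd(Y)$; this is (c). For (d), $\mf{A}_{\chi}(U, U') \cong \mf{A}_0(U|_1, U'|_1) = \Hom_{\dd_Y}(\dd_Y^{\oplus m'}, \dd_Y^{\oplus m})$ (the analogue of Lemma \ref{lem:Moritacontext}'s bimodule computed on $Y$, with trivial group), which is $\mathrm{Mat}_{m \times m'}(\dd(Y)) \neq 0$ whenever $m, m' \geq 1$, i.e. whenever $U, U'$ are non-zero. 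One should double-check that the reduction in (b) is compatible with the bimodule construction $\mf{A}_\chi(U_1,U_2)$, but this is immediate since it is defined by the same invariants-modulo-ideal recipe.

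The main obstacle I expect is part (a): making precise the claim that $i^*$ of the relations defining $\ms{P}_\chi(U)$ on $X$ gives exactly the relations defining $\ms{M}_{\chi|_\h}(U|_H)$ on $Y$. This requires carefully unwinding the transfer bimodule $\dd_{X \leftarrow Y}$ and the behaviour of the moment/comoment map under the slice isomorphism $G \times_H Y \cong X$ — in particular that the $\mf{g}$-action's transverse part is precisely quotiented out by $i^*$ while the $\mf{h}$-part survives with the shift $\chi|_\h$. Once (a) is in place, (b), (c) and (d) are essentially formal. An alternative that sidesteps the $\dd$-module geometry entirely is to prove (b) first by the direct computation with $\Gamma(X,\dd_X)$ sketched above, and then recover (a) as a byproduct; I would likely present both routes and pick whichever the earlier sections have set up most cleanly.
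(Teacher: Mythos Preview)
Your proposal is correct in outline and would yield a valid proof, but your approach to part (a) differs genuinely from the paper's. You propose to compute $i^* \ms{P}_{\chi}(U)$ directly from its presentation as a quotient of $\dd_X \otimes U$, tracking how the generators of the defining left ideal behave under the transfer bimodule along the slice $Y \hookrightarrow X \cong G \times_H Y$. The paper instead argues abstractly: it characterizes $\ms{P}_{\chi}(U)$ (up to isomorphism) by the property
\[
\Hom_{\Coh(\dd_X,G,\chi)}(\ms{P}_{\chi}(U),\ms{M}') \simeq \Hom_G(U,\Gamma(X,\ms{M}')),
\]
invokes the known equivalence $i^* : \Coh(\dd_X,G,\chi) \xrightarrow{\sim} \Coh(\dd_Y,H,\chi')$ with explicit quasi-inverse $\ms{N} \mapsto (\pi_{\idot}(\mc{O}_G^{\chi} \boxtimes \ms{N}))^H$, and then checks that $i^* \ms{P}_{\chi}(U)$ has the corresponding universal property on $Y$ by a Peter--Weyl type computation of $\Hom_G(U, \Gamma(G \times Y, \mc{O}_G^{\chi} \boxtimes \ms{N}))$. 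This categorical route sidesteps entirely the issue you flag as the ``main obstacle'' --- unwinding the transfer bimodule and tracking the transverse part of the $\mf{g}$-action --- at the cost of relying on the explicit form of the quasi-inverse. Your direct approach is more hands-on and would make the identification explicit at the level of generators and relations, but it does require the careful local analysis you anticipate. For parts (b), (c), (d) your argument matches the paper's essentially verbatim.

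One small correction: in your sketch you mention $\dd_{X \leftarrow Y}$, but the $\dd$-module pullback $i^*$ uses the transfer bimodule $\dd_{Y \to X}$; this is just a notational slip and does not affect the strategy.
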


\begin{proof}
	Recall that $\Coh(\dd_X,G,\chi)$ denotes the category of $(G,\chi)$-monodromic coherent $\dd$-modules on $X$, and $\Coh(\dd_Y,H,\chi')$ the category of $(H,\chi')$-monodromic coherent $\dd$-modules on $Y$, where $\chi' := \chi|_{\h}$. We recall from the proof of \cite[Proposition 9.1.1]{mirabolicHam} that $i^*$ defines an equivalence $\Coh(\dd_X,G,\chi) \iso \Coh(\dd_Y,H,\chi')$ with quasi-inverse given by $\ms{N} \mapsto (\pi_{\idot}(\mc{O}_G^{\chi} \boxtimes \ms{N}))^H$, where $\pi : G \times Y \rightarrow G \times_H Y = X$ is the quotient map. The $\dd$-module $\ms{P}_{\chi}(U)$ is (up to unique isomorphism) the unique object of $\Coh(\dd_X,G,\chi)$ such that $\Hom_{\Coh(\dd_X,G,\chi)}(\ms{P}_{\chi}(U),\ms{M}') = \Hom_G(U,\Gamma(X,\ms{M}'))$ for all $\ms{M}' \in \Coh(\dd_X,G,\chi)$. The module $\ms{P}_{\chi'}(U |_H)$ is similarly characterized in $\Coh(\dd_Y,H,\chi')$. Therefore, in order to show that (a) holds, it suffices to show that $i^* \ms{P}_{\chi}(U)$ has the appropriate property. Let $\ms{N} \in \Coh(\dd_Y,H,\chi')$. Then,
	\begin{align*}
	\Hom_{\Coh(\dd_Y,H,\chi')}(i^* \ms{P}_{\chi}(U),\ms{N}) & \simeq \Hom_{\Coh(\dd_X,G,\chi)}\left(\ms{P}_{\chi}(U),\left(\pi_{\idot}(\mc{O}_G^{\chi} \boxtimes \ms{N})\right)^H\right) \\
	& =  \Hom_{G}\left(U,\Gamma\left(X,\left(\pi_{\idot}(\mc{O}_G^{\chi} \boxtimes \ms{N})\right)^H\right)\right) \\
	& =  \Hom_{G \times H}\left(U,\Gamma\left(G \times Y,\mc{O}_G^{\chi} \boxtimes \ms{N}\right)\right) \\
	& = \Hom_{G \times H}(U,\mc{O}(G) \otimes \Gamma(Y,\ms{N})) \\
	& = \Hom_{G \times H}\left(U,\bigoplus_{V \in  \mathrm{Irrep}(G)} V \otimes V^* \otimes \Gamma(Y,\ms{N})\right) \\
	& =  \prod_{V \in \mathrm{Irrep}(G)} \Hom_{G \times H}(U, V \otimes V^* \otimes \Gamma(Y,\ms{N})) \\
	& = \Hom_{H}(\C, U^* \otimes \Gamma(Y,\ms{N})) = \Hom_H(U,\Gamma(Y,\ms{N})),
	\end{align*}
	as required. Here $H$ acts trivially on $U$ in the third, fourth and fifth line, and we have used the fact that $\Gamma(G,\mc{O}_G^{\chi}) \simeq \mc{O}(G)$ as a $G$-module. 
	
	The fact that $i^*$ is an equivalence, together with Lemma \ref{lem:endU} and the isomorphism $i^* \ms{P}_{\chi}(U) \simeq \ms{P}_{\chi'} (U |_H)$, implies that  
	$$
	\mf{A}_{\chi}(U) \simeq \End_{\dd_{X}}(\ms{P}_{\chi}(U))^{\mathrm{op}} \simeq \End_{\dd_{Y}}(\ms{P}_{\chi'}(U|_H))^{\mathrm{op}} \simeq \mf{A}_{\chi |_{\h}}(U |_H),
	$$
	as stated in (b). 
	
	In the case where $H = 1$, (a) and (b) imply that $\mf{A}_{\chi}(U) \simeq \dd(Y) \otimes \End_{\C}(U)$ and $\mf{A}_{\chi}(U,U') \simeq \dd(Y) \otimes \Hom_{\C}(U',U)$. Thus, (c) and (d) are trivially true.  
\end{proof}

The algebras $\mf{A}_{\chi}(U)$ naturally sheafify over $X /\!/ G$, and the bimodules $\mf{A}_{\chi}(U,U')$ sheafify to sheaves of $\mf{A}_{\chi}(U)$-$\mf{A}_{\chi}(U')$-bimodules on $X/\!/G$. Recall that an open subset of $X$ is said to be $G$-saturated if it is the preimage, under $\pi: X \rightarrow X/\!/G$ of an open subset of $X/\!/G$.  

\begin{proposition}\label{prop:nonzeroUUbi}
	Assume that there is a $G$-saturated dense open subset of $X$ on which $G$ acts freely. 
	\begin{enumerate}
		\item[(a)] The bimodules $\mf{A}_{\chi}(U,U')$ are non-zero for all $U,U'$ non-zero. 
		\vspace{2mm}
		\item[(b)] Let $S = \{ U \in \Rep(G) \  | \ \mf{A}_{\chi}(U) \textrm{ is simple} \}$. Then $\mf{A}_{\chi}(U)$ is Morita equivalent to $\mf{A}_{\chi}(U')$, for all $U,U' \in S$. 
	\end{enumerate}
\end{proposition}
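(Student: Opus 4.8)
The idea is to reduce to the generic situation handled by Lemma \ref{lem:nonzerorestopenU}(d) by restricting to the $G$-saturated open subset where $G$ acts freely. Let $X^{\mathrm{fr}} \subset X$ be such a $G$-saturated open subset on which $G$ acts freely, and let $U^{\mathrm{fr}} := \pi(X^{\mathrm{fr}})$ be the corresponding open subset of $X /\!/ G$. Since the algebras $\mf{A}_{\chi}(U)$ and bimodules $\mf{A}_{\chi}(U,U')$ sheafify over $X/\!/ G$, restriction to $U^{\mathrm{fr}}$ gives a sheaf $\mf{A}_{\chi}^{\mathrm{fr}}(U,U')$ of $\mf{A}_{\chi}^{\mathrm{fr}}(U)$-$\mf{A}_{\chi}^{\mathrm{fr}}(U')$-bimodules over $U^{\mathrm{fr}}$. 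Because $X^{\mathrm{fr}}$ is $G$-saturated, the construction of quantum Hamiltonian reduction commutes with this localization, so $\mf{A}_{\chi}^{\mathrm{fr}}(U)$ is the algebra of quantum Hamiltonian reduction for the free $G$-variety $X^{\mathrm{fr}}$. Since the action is free, \'etale-locally on $U^{\mathrm{fr}}$ we are in the situation $H = 1$ of Lemma \ref{lem:nonzerorestopenU} (after passing to an \'etale or Zariski slice $Y$, so that $X^{\mathrm{fr}} \cong G \times Y$ locally), and part (d) of that lemma gives that $\mf{A}_{\chi}^{\mathrm{fr}}(U,U')$ is a non-zero sheaf whenever $U, U'$ are non-zero; indeed locally it is isomorphic to $\dd_{Y} \otimes \Hom_{\C}(U',U)$. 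A non-zero sheaf cannot have zero global sections over a quasi-affine (in fact affine, since $X$ is affine) base, so $\mf{A}_{\chi}(U,U') \twoheadrightarrow \Gamma(U^{\mathrm{fr}}, \mf{A}_{\chi}^{\mathrm{fr}}(U,U')) \neq 0$, proving (a).

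For part (b), fix $U \in S$ and any $U' \in S$. The plan is to show $U$ and $U'$ are "linked" through a common module and then invoke the Morita context of Lemma \ref{lem:Moritacontext}. Set $W = U \oplus U'$. By part (a) the bimodules $\mf{A}_{\chi}(U,U')$ and $\mf{A}_{\chi}(U',U)$ are both non-zero. Consider the two-sided ideal $\mf{A}_{\chi}(U,U')\mf{A}_{\chi}(U',U)$ of $\mf{A}_{\chi}(U)$; it is non-zero because its localization to $U^{\mathrm{fr}}$ is $\mf{A}_{\chi}^{\mathrm{fr}}(U,U')\mf{A}_{\chi}^{\mathrm{fr}}(U',U)$, which \'etale-locally equals $\dd_Y \otimes \Hom(U',U) \cdot \dd_Y \otimes \Hom(U,U') = \dd_Y \otimes \End(U) \neq 0$ (using that $\dd_Y$ is a domain, hence $\Hom_{\C}(U',U)\Hom_{\C}(U,U')$ is all of $\End_{\C}(U)$ when $U'\ne 0$). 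Since $\mf{A}_{\chi}(U)$ is simple, $\mf{A}_{\chi}(U,U')\mf{A}_{\chi}(U',U) = \mf{A}_{\chi}(U)$, and symmetrically $\mf{A}_{\chi}(U',U)\mf{A}_{\chi}(U,U') = \mf{A}_{\chi}(U')$ using simplicity of $\mf{A}_{\chi}(U')$. Thus the matrix $\mf{A}_{\chi}(W)$ of \eqref{eq:matrixendU} is a Morita context in which both "off-diagonal products" recover the respective diagonal algebras; by \cite[\S 3.6, Morita Theorems]{MR} this implies $\mf{A}_{\chi}(U)$ and $\mf{A}_{\chi}(U')$ are Morita equivalent.

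The main obstacle I anticipate is the careful bookkeeping around the localization step: one must verify that forming $\mf{A}_{\chi}(U)$ genuinely commutes with restriction to the $G$-saturated open $X^{\mathrm{fr}}$ (this uses that invariants are exact for reductive $G$ and that $\pi$ is affine, so $\Gamma$ and $(-)^G$ both behave well), and that the \'etale-local slice description $X^{\mathrm{fr}} \cong G \times Y$ is available — the latter is only true \'etale-locally, not Zariski-locally, so strictly one should either work with an \'etale slice or argue via faithful flatness of $\C[X^{\mathrm{fr}}]$ over $\C[X^{\mathrm{fr}}]^G$ to conclude non-vanishing of sheaves from non-vanishing after base change. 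Once non-vanishing of the bimodule sheaves is in hand, everything else is a formal consequence of simplicity plus the Morita-context machinery already set up in Lemmas \ref{lem:Moritacontext} and \ref{lem:closedembeddingspecUU}.
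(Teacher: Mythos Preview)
Your argument for (a) is essentially the same as the paper's: both localize the bimodule sheaf to the image $Z \subset X/\!/G$ of the free locus and invoke Lemma \ref{lem:nonzerorestopenU}(d) there, then use that $X/\!/G$ is affine to conclude the global bimodule is nonzero. Your discussion of \'etale slices is a bit more careful than the paper, which simply cites Lemma \ref{lem:nonzerorestopenU} on the restriction.

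For (b) you take a genuinely different route. The paper argues as follows: by Lemma \ref{lem:leftrightnoetherian}, the bimodule $\mf{A}_{\chi}(U,U')$ is finitely generated on both sides; by part (a) it is nonzero; then one invokes \cite[Lemma 1.3]{Hompropquantum}, a general result that a nonzero bimodule between simple noetherian rings, finitely generated on each side, induces a Morita equivalence. Your approach instead works directly with the Morita context of Lemma \ref{lem:Moritacontext}: you show the trace ideal $\mf{A}_{\chi}(U,U')\mf{A}_{\chi}(U',U)$ is nonzero (via a second localization to the free locus, where it becomes $\dd_Y \otimes \End_{\C}(U)$), hence equals all of $\mf{A}_{\chi}(U)$ by simplicity, and symmetrically for $U'$; surjectivity of both trace maps then gives Morita equivalence. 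Your argument is more self-contained and makes explicit why the product of the two off-diagonal pieces does not vanish, but it needs the extra localization step; the paper's version is shorter but hides the mechanism inside the Goodearl--Zhang lemma (and uses the noetherianity established in Lemma \ref{lem:leftrightnoetherian}, which you never invoke). Both are correct.
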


\begin{proof}
	Part (a). As noted above, the bimodules $\mf{A}_{\chi}(U,U')$ sheafifiy over $X/\!/G$. Since the space $X /\!/G$ is affine, it suffices to show that there is some open set $V \subset X/\!/G$ such that $\mf{A}_{\chi}(U,U') |_V \neq 0$. We take $V$ to be the image of a $G$-saturated open subset of $X$ on which $G$ acts freely. Then it follows from Lemma \ref{lem:nonzerorestopenU} that $\mf{A}_{\chi}(U,U') |_V \neq 0$.   
	
	Part (b). Let $U, U' \in S$. We have shown in Lemma \ref{lem:leftrightnoetherian} that the bimodule $\mf{A}_{\chi}(U,U')$ is finitely generated both as a left $\mf{A}_{\chi}(U)$-module and as a right $\mf{A}_{\chi}(U')$-module. By part (a), it is non-zero. Therefore, the claim that $\mf{A}_{\chi}(U)$ is Morita equivalent to $\mf{A}_{\chi}(U')$ follows from \cite[Lemma 1.3]{Hompropquantum}.  
\end{proof}

\subsection{Symplectic leaves}\label{sec:sympleavesopenXcirc}

As in the previous setting, let $X$ be a smooth affine $G$-variety and $U$ a finite-dimensional $G$-module. Let $Z(U)$ denote the centre of the non-commutative algebra $(\C[\mu^{-1}(0)] \otimes \End (U))^G$. Since $(\C[\mu^{-1}(0)] \otimes \End (U))^G$ is a finite $\C[\mu^{-1}(0)]^G$-module, and the latter algebra is of finite type, $Z(U)$ is a finite module over $\C[\mu^{-1}(0)]^G$. In particular, it is also of finite type. Abusing notion, we write $\mu_U^{-1}(0) / \!/ G$ for $\Spec Z(U)$. The finite morphism $\mu_U^{-1}(0) / \!/ G \rightarrow \mu^{-1}(0) / \!/ G$ coming from the embedding $\C[\mu^{-1}(0)]^G \hookrightarrow Z(U)$ is denoted $\Xi$. Recall from the proof of Lemma \ref{lem:leftrightnoetherian} that the order filtration on $\dd(X) \otimes \End_{\C}(U)$ induces a filtration $\mc{F}_{\idot}$ on $\mf{A}_{\chi}(U)$.

\begin{lemma}\label{lem:asscrgrflat}
	If the moment map $\mu$ is flat, then 
	$$
	\C[\mu^{-1}(0)]^G = \gr_{\mc{F}} \mf{A}_{\chi}(\C), \quad Z(U) = Z(\gr_{\mc{F}} \mf{A}_{\chi}(U)).
	$$
\end{lemma}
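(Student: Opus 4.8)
The plan is to analyze the order filtration $\mc{F}$ on $\mf{A}_{\chi}(U)$ coming from the order filtration on $\dd(X)$, and to identify its associated graded. First I would recall that the order filtration on $\dd(X)$ has associated graded $\C[T^*X]$, that $\nu_{\chi}(x) = \nu(x) - \chi(x)$ has the same symbol as $\nu(x)$ (the scalar $\chi(x)$ lives in filtration degree $0$), and that the symbol of $\nu(x)$ is precisely the moment map component $\mu^*(x) \in \C[T^*X]$. Likewise, the elements $1 \otimes \Phi_U(x)$ of $\dd(X) \otimes \End_{\C}(U)$ lie in filtration degree $0$. Taking the associated graded of the presentation
$$
\mf{A}_{\chi}(U) = \left( \frac{\dd(X) \otimes \End_{\C}(U)}{(\dd(X) \otimes \End_{\C}(U)) \cdot \g_{\chi}} \right)^G
$$
and using that taking $G$-invariants is exact (as $G$ is reductive) and commutes with $\gr$, one gets a surjection
$$
\left( \frac{\C[T^*X] \otimes \End_{\C}(U)}{(\C[T^*X] \otimes \End_{\C}(U)) \cdot \{ \mu^*(x) \otimes 1 \ | \ x \in \g \}} \right)^G \twoheadrightarrow \gr_{\mc{F}} \mf{A}_{\chi}(U),
$$
the point being that quotients do not commute with $\gr$ in general, only surject. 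The left-hand side is exactly $(\C[\mu^{-1}(0)] \otimes \End_{\C}(U))^G$, since $\C[T^*X]/\C[T^*X]\{\mu^*(x)\} = \C[\mu^{-1}(0)]$ scheme-theoretically and tensoring with $\End_{\C}(U)$ then taking invariants is exact. When $\mu$ is flat, the Chevalley--Eilenberg / Koszul complex on the $\mu^*(x)$ is a resolution, which forces the surjection above to be an isomorphism: flatness of $\mu$ means the $\mu^*(x)$ form a regular sequence (locally), so the associated graded of the ideal they generate in $\dd(X)$ is exactly the ideal generated by their symbols, and there is no ``extra'' associated graded appearing. This is the standard argument that for flat moment maps $\gr$ commutes with quantum Hamiltonian reduction; I would cite the analogous statement (e.g. in work on quantum Hamiltonian reduction / the proof referenced for Lemma \ref{lem:leftrightnoetherian}) rather than reprove it.

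Having established $\C[\mu^{-1}(0)]^G = \gr_{\mc{F}} \mf{A}_{\chi}(\C)$ (the case $U = \C$), for the second identity I would argue as follows. The filtration on $\mf{A}_{\chi}(U)$ is compatible with its algebra structure and with the $\mf{A}_{\chi}(\C)$-algebra structure, so $\gr_{\mc{F}} \mf{A}_{\chi}(U) = (\C[\mu^{-1}(0)] \otimes \End_{\C}(U))^G$ as a $\C[\mu^{-1}(0)]^G$-algebra. One then needs: the centre of $(\C[\mu^{-1}(0)] \otimes \End_{\C}(U))^G$ equals $Z(U)$ — but that is the \emph{definition} of $Z(U)$ given just above the lemma, so the content is only that $\gr_{\mc{F}}\mf{A}_\chi(U)$ has the asserted form, and then $Z(U) = Z(\gr_{\mc{F}}\mf{A}_{\chi}(U))$ is immediate. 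So really the whole lemma reduces to the single fact that, under flatness of $\mu$, $\gr_{\mc{F}}\mf{A}_\chi(U) \cong (\C[\mu^{-1}(0)] \otimes \End_{\C}(U))^G$.

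The main obstacle, and the only non-formal point, is precisely promoting the surjection $(\C[\mu^{-1}(0)] \otimes \End_{\C}(U))^G \twoheadrightarrow \gr_{\mc{F}}\mf{A}_\chi(U)$ to an isomorphism using flatness of $\mu$. The subtlety is that $\g_{\chi}$ is \emph{not} the set $\{\nu_{\chi}(x)\otimes 1\}$ but rather $\{\nu_{\chi}(x)\otimes 1 + 1 \otimes \Phi_U(x)\}$; however, since $1 \otimes \Phi_U(x)$ has order $0$ while $\nu_{\chi}(x) \otimes 1$ has order $1$, the symbols of the generators of $\g_\chi$ are just $\mu^*(x) \otimes 1$, so the extra term is invisible at the associated-graded level, and the argument is the same as for $\g = \{\nu(x)\}$. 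The genuine work is the Koszul/regular-sequence argument that the order filtration on the left ideal $(\dd(X)\otimes\End_\C(U))\cdot\g_\chi$ has associated graded equal to the ideal generated by the symbols; I would handle this by passing to $\dd(X) \otimes \End_\C(U)$ as a flat (indeed free) $\dd(X)$-module, reducing to the $U = \C$ case, and there invoking flatness of $\mu$ exactly as in the cited references, noting finally that $(-)^G$ is exact for reductive $G$ and commutes with both $\gr$ and the formation of ideals. I would also remark, as the lemma's parenthetical in Lemma \ref{lem:leftrightnoetherian} already does, that the surjection (not the isomorphism) holds without any flatness hypothesis, which is all that is needed elsewhere.
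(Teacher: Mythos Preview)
Your proposal is correct and follows essentially the same approach as the paper. The paper's proof is terser: it defines the filtration exactly as you do and then cites \cite[Proposition 2.4]{Holland} directly for the isomorphism $(\C[\mu^{-1}(0)] \otimes \End(U))^G \stackrel{\sim}{\to} \gr_{\mc{F}} \mf{A}_{\chi}(U)$ under the flatness hypothesis, whereas you sketch the mechanism behind that citation (symbols, regular-sequence/Koszul argument, exactness of $(-)^G$) before noting you would cite rather than reprove it.
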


\begin{proof}
	The first statement is a special case of the second statement, so we concentrate on the latter. Since the moment map $\mu$ is flat, \cite[Proposition 2.4]{Holland} implies that the map $(\C[\mu^{-1}(0)] \otimes \End (U))^G \rightarrow \gr_{\mc{F}} \mf{A}_{\chi}(U)$ is an isomorphism.  
\end{proof}

Lemma \ref{lem:asscrgrflat} implies that both $\C[\mu^{-1}(0)]^G$ and $Z(U)$ inherit a natural Poisson structure (which agrees, in the case of $\C[\mu^{-1}(0)]^G$, with the one coming from Hamiltonian reduction). 

\begin{lemma}\label{lem:XiPoisson}
The morphism $\Xi$ is Poisson. 
\end{lemma}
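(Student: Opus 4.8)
The plan is to show that the finite morphism $\Xi : \mu_U^{-1}(0)/\!/G \to \mu^{-1}(0)/\!/G$ respects the Poisson brackets inherited, via Lemma \ref{lem:asscrgrflat}, from the order filtrations on $\mf{A}_{\chi}(U)$ and $\mf{A}_{\chi}(\C)$. Concretely, $\Xi$ is dual to the inclusion $\C[\mu^{-1}(0)]^G \hookrightarrow Z(U)$, so it suffices to check that this inclusion is a map of Poisson algebras, i.e. that for $f, g \in \C[\mu^{-1}(0)]^G = Z(\C)$ the bracket computed inside $Z(U)$ agrees with the bracket computed inside $\C[\mu^{-1}(0)]^G$. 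The natural way to do this is to lift everything to the filtered algebras: by Lemma \ref{lem:asscrgrflat} we have $\gr_{\mc F}\mf{A}_{\chi}(U) \supseteq Z(U)$ and $\gr_{\mc F}\mf{A}_{\chi}(\C) = \C[\mu^{-1}(0)]^G$, and the Poisson bracket on each associated graded is the one induced by the commutator in the filtered algebra. So I would produce an algebra map relating $\mf{A}_{\chi}(\C)$ and $\mf{A}_{\chi}(U)$ that is compatible with filtrations and induces the inclusion $\C[\mu^{-1}(0)]^G \hookrightarrow Z(U)$ on associated graded.

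First I would observe that there is a natural filtered algebra homomorphism $\iota : \mf{A}_{\chi}(\C) \to \mf{A}_{\chi}(U)$. Indeed $\mf{A}_{\chi}(\C) = (\dd(X)/\dd(X)\g_{\chi})^G$ acts on $\ms{P}_{\chi}(U)$: writing $U$ for the trivial summand's role, the $\dd(X)$-bimodule structure gives a map $\dd(X)^G \to \End_{\dd_X}(\ms{P}_{\chi}(U))^{\mathrm{op}} = \mf{A}_{\chi}(U)$ (multiplication on the left by invariant operators commutes with the relations defining $\ms{P}_{\chi}(U)$, since those relations are generated on the right), and this descends to $\mf{A}_{\chi}(\C) \to \mf{A}_{\chi}(U)$ landing in the centralizer of $1 \otimes \End_\C(U)$, hence in $Z(U)$ after passing to $\gr$. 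Alternatively, and perhaps more cleanly, one uses the Morita context of Lemma \ref{lem:Moritacontext} applied to $U' = \C \le \C \oplus U$: the corner $\mf{A}_{\chi}(\C)$ maps into the center of $\mf{A}_{\chi}(\C\oplus U)$-mod, matching $Z(U)$ up to the finite cover. On associated graded, using Lemma \ref{lem:asscrgrflat}, this map becomes the structure map $\C[\mu^{-1}(0)]^G \to (\C[\mu^{-1}(0)]\otimes\End U)^G$, whose image lands in $Z(U)$ — this is exactly $\C[\Xi]$. Since $\iota$ is an algebra homomorphism it commutes with commutators, so the induced map on $\gr$ commutes with Poisson brackets; that is precisely the assertion that $\Xi$ is Poisson.

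The one subtlety, and the main thing to get right, is the flatness hypothesis and the identification of brackets: Lemma \ref{lem:asscrgrflat} requires $\mu$ flat for $\C[\mu^{-1}(0)]^G = \gr_{\mc F}\mf{A}_{\chi}(\C)$ and $Z(U) = Z(\gr_{\mc F}\mf{A}_{\chi}(U))$ to hold, and the Poisson structures on $\C[\mu^{-1}(0)]^G$ and $Z(U)$ are \emph{defined} as the ones induced from these filtrations; so under that standing hypothesis the argument above is essentially formal. The genuinely load-bearing step is therefore the construction of the filtered algebra map $\mf{A}_{\chi}(\C) \to \mf{A}_{\chi}(U)$ with image in $Z(U)$ inducing $\C[\Xi]$ on $\gr$ — once that is in hand, "algebra map $\Rightarrow$ Poisson map on centers of associated gradeds" finishes it. I expect the only real care is needed in checking that left multiplication by $\dd(X)^G$ on $\ms{P}_{\chi}(U)$ is well-defined (the defining relations of $\ms{P}_{\chi}(U)$ being left ideals, this is immediate) and that its image commutes with all of $\End_{\dd_X}(\ms{P}_{\chi}(U))$, i.e. lies in the center; this is where one uses that the $\dd(X)$-action and the $\mf{A}_{\chi}(U)$-action on $\ms{P}_{\chi}(U)$ are on opposite sides, so they automatically commute, and invariance is used to land inside $\mf{A}_{\chi}(U) = \End_{\dd_X}(\ms{P}_{\chi}(U))^{\mathrm{op}}$ rather than merely $\End_{\dd_X}(\dd_X \otimes U)^{\mathrm{op}}$.
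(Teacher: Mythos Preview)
Your proposal has a genuine gap at precisely the step you yourself flag as load-bearing: the construction of a filtered algebra homomorphism $\iota : \mf{A}_{\chi}(\C) \to \mf{A}_{\chi}(U)$ with central image.

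Your first construction, via left multiplication by $\dd(X)^G$ on $\ms{P}_{\chi}(U)$, does not produce elements of $\End_{\dd_X}(\ms{P}_{\chi}(U))^{\mathrm{op}}$. Left multiplication by $D$ is part of the left $\dd_X$-module structure, so it does \emph{not} commute with the left $\dd_X$-action unless $D$ is central in $\dd(X)$; thus it is not a $\dd_X$-module endomorphism at all. The remark that ``the $\dd(X)$-action and the $\mf{A}_{\chi}(U)$-action are on opposite sides'' only tells you that left multiplication commutes with the right $\mf{A}_{\chi}(U)$-action, which is automatic and beside the point. The more natural candidate, $D \mapsto D \otimes 1$ from $\dd(X)^G$ to $(\dd(X)\otimes\End_{\C}(U))^G$, also fails to descend to the quotients: take $G = \Cs$ acting on $X = \C$ by scaling and $U$ the weight-one character, so that $\mf{A}_{\chi}(\C)$ is the quotient by $(x\partial-\chi)$ while $\mf{A}_{\chi}(U)$ is the quotient by $(x\partial-\chi+1)$; the element $x\partial-\chi$ dies in the first but not in the second. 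Your Morita-context alternative is similarly off: the corner $\mf{A}_{\chi}(\C)$ of $\mf{A}_{\chi}(\C\oplus U)$ is an idempotent truncation, not a central subalgebra.

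The paper's proof avoids building any such algebra map. Instead it works at the level of $(\dd(X)\otimes E)^G$ before taking the quotient, using the decomposition $E = E_0 \oplus \mathbf{1}$. The key technical point is a filtration compatibility (equation (\ref{eq:filtreq1})): the intersection of $\mc{J}_m := (\mc{F}_{m-1}\dd(X)\otimes E)^G + (\mc{F}_m(\dd(X)\otimes E\cdot\g_{\chi}))^G$ with $(\mc{F}_m\dd(X))^G \otimes \mathbf{1}$ is exactly $((\mc{F}_{m-1}\dd(X))^G + (\mc{F}_m\dd(X)\nu_{\chi}(\g))^G)\otimes\mathbf{1}$. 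This yields a \emph{linear} splitting $\eta$ of $(\mc{F}_m\dd(X)\otimes E)^G \twoheadrightarrow (\C[\mu^{-1}(0)]\otimes E)^G_m$ that carries $\C[\mu^{-1}(0)]^G_m \otimes \mathbf{1}$ into $(\mc{F}_m\dd(X))^G\otimes\mathbf{1}$. Since commutators of elements of $(\dd(X))^G\otimes\mathbf{1}$ stay in $(\dd(X))^G\otimes\mathbf{1}$, the Poisson bracket of $f\otimes 1$ and $g\otimes 1$ computed via $\eta$ in $Z(U)$ visibly agrees with the bracket in $\C[\mu^{-1}(0)]^G$. So the honest content is a statement about compatible lifts, not about an algebra map between the reduced algebras.
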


\begin{proof}
If we write $\mbf{1}$ for the line spanned by the identity in $E := \End (U)$, then $E = E_0 \oplus \mbf{1}$ as a $G$-module, where $E_0$ is the subspace of traceless endomorphisms. We write $(\nu_{\chi} + \Phi_U)(\g)$ for the space $\{ \nu_{\chi}(x) \otimes 1 + 1 \otimes \Phi_U(x) \ | \ x \in \g \}$. Recall that $\mc{F}_{\idot} \dd(X) \otimes E$ is the filtration obtained by taking the order filtration on $\dd(X)$ and putting $E$ in degree zero. We claim that, for each $m \ge {0}$,
\begin{multline}\label{eq:filtreq1}
\left[ (\mc{F}_{m-1} \dd(X) \otimes E)^G + (\mc{F}_m(\dd(X) \otimes E (\nu_{\chi} + \Phi_U)(\g)))^G \right] \cap ((\mc{F}_m \dd(X)) \otimes \mbf{1})^G \\ = \left[ (\mc{F}_{m-1} \dd(X))^G + (\mc{F}_m \dd(X) \nu_{\chi}(\g))^G \right] \otimes \mbf{1} 
\end{multline} 
First notice that 
\begin{multline*}
\mc{F}_{m-1} \dd(X) \otimes E + \mc{F}_m(\dd(X) \otimes E (\nu_{\chi} + \Phi_U)(\g)) \\
=  \mc{F}_{m-1} \dd(X) \otimes E + \mc{F}_m(\dd(X) \otimes E (\nu_{\chi}(\g)))
\end{multline*} 
and hence 
\begin{multline*}
(\mc{F}_{m-1} \dd(X) \otimes E)^G + (\mc{F}_m(\dd(X) \otimes E (\nu_{\chi} + \Phi_U)(\g)))^G \\
=  (\mc{F}_{m-1} \dd(X) \otimes E)^G + (\mc{F}_m(\dd(X) \otimes E (\nu_{\chi}(\g))))^G.
\end{multline*} 
Next, 
$$
(\mc{F}_{m-1} \dd(X) \otimes E)^G  = (\mc{F}_{m-1} \dd(X) \otimes E_0)^G  \oplus (\mc{F}_{m-1} \dd(X))^G \otimes \mbf{1}
$$
and
$$
(\mc{F}_m(\dd(X) \otimes E (\nu_{\chi}(\g))))^G =  (\mc{F}_m(\dd(X) \otimes E_0 (\nu_{\chi}(\g))))^G \oplus  (\mc{F}_m(\dd(X)  (\nu_{\chi}(\g))))^G \otimes \mbf{1}.
$$
Combining these equations gives (\ref{eq:filtreq1}).\\

Equality (\ref{eq:filtreq1}) implies that, for each $m \ge 0$, there is a commutative diagram 
$$
\begin{tikzcd}
0 \ar[r] & \mc{J}_m \ar[r] & (\mc{F}_{m} \dd(X) \otimes E)^G \ar[r] & (\C[\mu^{-1}(0)] \otimes E)^G_m \ar[r] & 0 \\
0 \ar[r] & \mc{K}_m \ar[r] \ar[u,hook] & (\mc{F}_{m} \dd(X))^G \otimes \mbf{1} \ar[r] \ar[u,hook]  & \C[\mu^{-1}(0)]_m^G \otimes \mbf{1} \ar[r] \ar[u,hook]  & 0 
\end{tikzcd}
$$
with exact rows, where
$$
\mc{J}_m := (\mc{F}_{m-1} \dd(X) \otimes E)^G + (\mc{F}_m(\dd(X) \otimes E (\nu_{\chi} + \Phi_U)(\g)))^G
$$
and
$$
\mc{K}_m := ((\mc{F}_{m-1} \dd(X))^G + (\mc{F}_m \dd(X) \nu_{\chi} (\g))^G) \otimes \mbf{1}.
$$
Therefore, we have shown that there is a linear splitting 
$$
\eta : (\C[\mu^{-1}(0)] \otimes E)^G_m \rightarrow (\mc{F}_{m} \dd(X) \otimes E)^G 
$$
of the surjection $(\mc{F}_{m} \dd(X) \otimes E)^G \rightarrow (\C[\mu^{-1}(0)] \otimes E)^G_m$ whose restriction to $\C[\mu^{-1}(0)]_m^G \otimes \mbf{1}$ lands in $\mc{F}_{m} \dd(X))^G \otimes \mbf{1} $. This means that if $f \in \C[\mu^{-1}(0)]_m^G$ and $g \in \C[\mu^{-1}(0)]_k^G$, then their Poisson bracket can be computed as the image of $[\eta(f \otimes 1), \eta(g \otimes 1)]$ in $(\C[\mu^{-1}(0)] \otimes E)^G_{m+k-1}$. This implies that $\Xi$ is a Poisson morphism. 
\end{proof}

Lemma \ref{lem:XiPoisson} implies that if $\mu^{-1}(0) /\! / G$ has finitely many symplectic leaves, then so too does $\mu^{-1}_U(0) /\! / G$. 

\subsection{Prime endomorphism rings}\label{sec:equvendring}

In order to prove the results described in the introduction, we need to relate the simplicity of the algebras $\mf{A}_{\chi}(U)$ to the properties of the category of admissible $\dd$-modules on $X$. This is done by applying the results from \cite{Primitive} which show that the primitive ideals in $\mf{A}_{\chi}(U)$ can be realised as the annihilators of modules coming from the category of admissible $\dd$-modules. We would like to conclude that if $\mf{A}_{\chi}(U)$ has only one primitive ideal then it is a simple algebra. For this, we need $\mf{A}_{\chi}(U)$ to be prime. If we consider the case that is most important to us, $X = \Rep (\Q_{\infty}(\ell),\mathbf{v})$ and $G = G(n \delta)$, then it is well-known that $\mf{A}_{\chi}(\C)$ is prime. However, if $\dim U > 1$, then explicit examples show that $\mf{A}_{\chi}(U)$ is \textit{not} prime in general. Whether $\mf{A}_{\chi}(U)$ is prime or not depends heavily on the parameter $\chi$. We expect that locus of points $\chi$ in $\mathbb{X}^*(\mf{g})$ where $\mf{A}_{\chi}(U)$ is prime is a \textit{Zariski} open subset. 


In order to force $\mf{A}_{\chi}(U)$ to be prime, we must perform a small localization. We assume that there is a $G$-semi-invariant $s : X \rightarrow \C$, such that if $X^{\circ}= X \smallsetminus s^{-1}(0)$ then there is a non-empty $G$-saturated open subset of $X^{\circ}$ on which $G$ acts freely. 

\begin{remark}
	Clearly if there is a $G$-stable open subset of $X^{\circ}$ where $G$ acts freely, then there is a $G$-stable open subset of $X$ where $G$ acts freely. However, in the example of interest to us, there is no $G$-saturated open subset on which $G$ acts freely. Hence we must make the restriction to $X^{\circ}$.  
\end{remark} 

Let $\mf{A}_{\chi}^{\circ}(U)$ be the endomorphism algebra of $\ms{P}_{\chi}(U) |_{X^{\circ}}$. Since $X^{\circ}$ is an affine open subset of $X$, the restriction map $\Gamma(X,\ms{P}_{\chi}(U)) \rightarrow \Gamma( X^{\circ},\ms{P}_{\chi}(U))$ induces a map 
$$
\End_{\dd}(\ms{P}_{\chi}(U)) \rightarrow \End_{\dd}(\ms{P}_{\chi}(U) |_{X^{\circ}})
$$
which, by Lemma \ref{lem:endU}, we can identify with an algebra homomorphism $\varphi_U : \mf{A}_{\chi}(U) \rightarrow \mf{A}_{\chi}^{\circ}(U)$. In general this map is not injective. 

Let $\mu^{-1}(0)^{\circ} = \mu^{-1}(0) \cap T^* X^{\circ}$ and set 
$$
A = \C[\mu^{-1}(0)^{\circ}], \quad E = A \otimes \End_{\C} (U).
$$
Since there is a $G$-saturated open set on which $G$ acts freely, there is a function $t \in \C[X^{\circ}]^G$ such that $G$ acts freely on $(t \neq 0) \subset X^{\circ}$. Pulling $t$ back to $T^* X^{\circ}$, we deduce that $G$ acts freely on $\mu^{-1}(0) \cap (t \neq 0)$. 


\begin{proposition}\label{prop:EGprime}
If $A^G$ is a domain then the algebra $E^G$ is prime. 
\end{proposition}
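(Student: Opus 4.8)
The plan is to use the standard localization trick for equivariant endomorphism rings: the function $\delta$ gives a $G$-stable principal open $Y^{\reg}$ on which $G$ acts freely, so the quotient $Y^{\reg} \to Y^{\reg}/G$ is a principal $G$-bundle and $E^G[\delta^{-1}] = (A[\delta^{-1}] \otimes \End_{\C}(U))^G$ becomes a sheaf of Azumaya algebras (or at least a finite module algebra with no zero divisors coming from its local structure) over the coordinate ring $B := (A[\delta^{-1}])^G$ of the smooth variety $Y^{\reg}/G$. Concretely, after passing to an étale (or Zariski, since the bundle is locally trivial after a finite cover, but one can always reduce to the étale-local case) cover trivializing the bundle, $(A[\delta^{-1}] \otimes \End_{\C}(U))^G$ pulls back to $C \otimes \End_{\C}(U^{\oplus m})$-type matrix algebras over a domain $C$, hence is prime. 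Primeness is étale-local for a module-finite algebra over a domain, so $E^G[\delta^{-1}]$ is prime.

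The key steps, in order, are: (1) observe $Y$ is a domain, so $A = \C[Y]$ is a (commutative) domain and $\delta$ is a non-zero-divisor; (2) show the localization map $E^G \to E^G[\delta^{-1}] = (A_\delta \otimes \End_{\C}(U))^G$ is injective — this is where I would use that $\delta$ is central and a non-zero-divisor in $E$, so $E \hookrightarrow E_\delta$, and taking $G$-invariants (an exact functor since $G$ is reductive) preserves injectivity; (3) prove $E^G[\delta^{-1}]$ is prime via the free-action argument above; (4) conclude that $E^G$ is prime, using the general fact (e.g. \cite[Proposition 1.6.6]{MR}, or directly: a nonzero ideal of $E^G$ survives localization because $\delta$ is a non-zero-divisor, as $\delta \in \C[Y]^G$ acts injectively on $E^G \subset E$) that a ring with an Ore (central, regular) localization to a prime ring, such that the localization map is injective, is itself prime. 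More precisely: if $I, J$ are nonzero ideals of $E^G$ with $IJ = 0$, then their images in $E^G[\delta^{-1}]$ are nonzero (since $\delta$ is regular on $E^G$, so $I \hookrightarrow I[\delta^{-1}]$), and multiply to zero, contradicting primeness of the localization.

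For step (3) in detail: over $Y^{\reg}/G$, étale-locally the bundle $Y^{\reg} \to Y^{\reg}/G$ is trivial, $Y^{\reg} \times_{Y^{\reg}/G} T \cong G \times T$ for $T \to Y^{\reg}/G$ étale with $T$ affine integral, and then $(\C[G] \otimes \C[T] \otimes \End_{\C}(U))^G \cong \C[T] \otimes \End_{\C}(U)$ by the Peter–Weyl-type computation $(\C[G] \otimes \End_{\C}(U))^G \cong \End_{\C}(U)^G$... wait, more carefully $(\C[G] \otimes V)^G \cong V$ as vector spaces for the diagonal action where $G$ acts on $\C[G]$ by left translation, giving $(\C[G] \otimes \End_{\C}(U))^G \cong \End_{\C}(U)$ with the ring structure of $\End_{\C}(U)$ — a matrix algebra over $\C$, hence simple. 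So the étale-local model is $\C[T] \otimes \End_{\C}(U)$, a matrix algebra over the domain $\C[T]$, which is prime. Since $Y^{\reg}/G$ is integral and primeness of a module-finite algebra can be checked on an étale cover (the generic stalk over the function field is a central simple algebra, and a ring with a flat injective map to a prime ring, faithfully flat on an open, is prime — or one argues via the fact that the algebra is a torsion-free sheaf whose generic fiber is prime), we conclude $E^G[\delta^{-1}]$ is prime.

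The main obstacle I anticipate is step (3), specifically making the descent argument for primeness rigorous: the cleanest route is probably to avoid étale covers and instead argue directly that $E^G[\delta^{-1}]$ embeds into its central localization at the function field of $Y^{\reg}/G$, which is $\End_{\C}(U) \otimes_{\C} K$ for $K$ the function field (by the same invariant-theory computation applied generically, where the bundle is "as good as trivial" after base change to $K$), and this is a central simple $K$-algebra, in particular prime; since $E^G[\delta^{-1}]$ is a domain-over-center subring of a prime ring with regular central elements, it is prime. Checking that $E^G[\delta^{-1}] \hookrightarrow \End_{\C}(U) \otimes_\C K$ — i.e. that $E^G[\delta^{-1}]$ is torsion-free over $B$ — follows from $E[\delta^{-1}]$ being free over $A[\delta^{-1}]$ and $G$-invariants being a direct summand. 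I would also need to invoke that the moment-map/flatness hypotheses are not needed here — Proposition~\ref{prop:EGprime} is purely about $Y$ normal with a free locus — so the proof is self-contained modulo these commutative-algebra and invariant-theory facts.
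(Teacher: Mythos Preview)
Your proposal is correct and follows essentially the same strategy as the paper: localize at the $G$-invariant function $\delta$, use the free action on $Y^{\reg}$ to identify $E^G[\delta^{-1}]$ as an Azumaya algebra over the domain $A^G[\delta^{-1}]$ (the paper does this via Lemma~\ref{lem:regiso}, equivariant descent), and deduce primeness of $E^G$ from the behaviour of the localization.

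The one genuine difference is in the final deduction. The paper proves the auxiliary lemma $Z(E^G) = A^G$ (using normality of $Y$ so that $A^G$ is integrally closed) and then argues that every nonzero two-sided ideal $I \lhd E^G$ meets the centre nontrivially, whence primeness follows because $A^G$ is a domain. Your route is more economical: you observe that $\delta$ is a regular central element, so $E^G \hookrightarrow E^G[\delta^{-1}]$, and any factorization $IJ = 0$ with $I,J$ nonzero survives to the prime localization, a contradiction. This bypasses the centre computation entirely and, in particular, never uses the normality hypothesis on $Y$. What the paper's approach buys is the extra structural information $Z(E^G) = A^G$, which is of independent interest even if not strictly needed for primeness; what your approach buys is a shorter argument with a weaker hypothesis. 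Your step~(3) is a little meandering---the cleanest version is exactly the one you settle on at the end: $E^G[\delta^{-1}]$ is torsion-free over $B = A^G[\delta^{-1}]$, and its generic fibre over $\mathrm{Frac}(B)$ is a matrix algebra, hence $E^G[\delta^{-1}]$ embeds in a simple ring and is prime.
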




\begin{proof}
	First, descent for $G$-equivariant coherent sheaves implies that the canonical morphism 
	$$
	A[t^{-1}] \otimes_{A^G[t^{-1}]} E^G[t^{-1}] \rightarrow E[t^{-1}]
	$$
	is an isomorphism. Therefore $E^G[t^{-1}]$ is an Azumaya algebra because its fibre, considered as a sheaf of algebras on $\Spec \, A[t^{-1}]$, over any closed point can be identified with $\End_{\C}(U)$. In particular, the centre of $E^G[t^{-1}]$ equals $A^G[t^{-1}]$.
	
	Next, we note that $E$ is $t$-torsion free, being free over $A$. Therefore $E \hookrightarrow E[t^{-1}]$. This implies that we have inclusions
	$$
	A^G \hookrightarrow Z(E^G) \hookrightarrow Z(E^G[t^{-1}]) = A^G[t^{-1}].
	$$
	In particular, since $A$ is a domain, both $A^G[t^{-1}]$ and $Z(E^G)$ are also domains. 
	
	Let $I$ be a non-zero ideal in $E^G$. We claim that $I \cap Z(E^G) \neq \{ 0 \}$. Consider the localized ideal $I[t^{-1}]$ in $E^G[t^{-1}]$. If $I[t^{-1}] = E^G[t^{-1}]$ then some power of the central element $t$ is in $I$. So assume that $I[t^{-1}] $ is a proper ideal. It is non-zero since $E$ embeds in $E[t^{-1}]$. Then, the fact that $E^G[t^{-1}]$ is an Azumaya algebra implies that every ideal is centrally generated \cite[Proposition 13.7.9]{MR}. In particular, $I[t^{-1}]\cap Z(E^G[t^{-1}]) \neq \{ 0 \}$, which implies that $I \cap Z(E^G) \neq \{ 0 \}$. Since $Z(E^G)$ is a domain, we deduce that $E^G$ is prime.  
\end{proof}

Finally, we lift Proposition \ref{prop:EGprime} to a statement about $\mf{A}_{\chi}^{\circ}(U)$. 

\begin{proposition}\label{prop:UcircAisprime}
	Assume that:
	\begin{enumerate}
		\item[(i)] the moment map $\mu : T^* X^{\circ} \rightarrow \mf{g}^*$ is flat,
		\item[(ii)] there is a $G$-saturated open subset of $X^{\circ}$ on which $G$ acts freely, and
		\item[(iii)] the scheme $\mu^{-1}(0)^{\circ} /\!/ G$ is reduced and irreducible.  
	\end{enumerate}
	Then $\mf{A}_{\chi}^{\circ}(U)$ is prime for all $\chi$. 
\end{proposition}

\begin{proof}
	Let $E = \C[\mu^{-1}(0)] \otimes \End (U)$. By Lemma \ref{lem:asscrgrflat}, the fact that $\mu$ is flat implies that $\gr_{\mc{F}} \mf{A}_{\chi}(U) = E^G$. Since there is a $G$-saturated open set on which $G$ acts freely, there is a function $t \in \C[X^{\circ}]^G$ such that $G$ acts freely on $(t \neq 0) \subset X^{\circ}$. Pulling $t$ back to $T^* X^{\circ}$, we deduce that $G$ acts freely on $\mu^{-1}(0) \cap (t \neq 0)$. Then, using (iii), we can deduce from Proposition \ref{prop:EGprime} that $E^G$ is prime. Therefore $\mf{A}_{\chi}^{\circ}(U)$ is prime \cite[Proposition 1.6.6]{MR}.  
\end{proof}

\section{Admissible $\dd$-modules}\label{sec:admissible}

As explained in the introduction, one major motivation for studying the enhanced cyclic nilpotent cone is the important role it plays in the theory of admissible $\dd$-modules, and via the functor of quantum Hamiltonian reduction, to category $\mc{O}$ for the cyclotomic rational Cherednik algebra. In this section we recall the definition of admissible and orbital $\dd$-modules, as introduced by Gan-Ginzburg \cite{AlmostCommutingVariety}. 

\subsection{Definitions}\label{sec:admissibledefn} We specialize to the following situation: $X = Y \times R$ is a trivial $G$-equivariant vector bundle over $Y$, where $Y$ is an \textit{affine} $G$-variety, $R$ is a finite dimensional $G$-module and $G$ acts diagonally on $Y \times R$. The group $\C^{\times}$ also acts on $X$ by dilations along the fibre $R$. Let $\mathsf{eu} \in \dd_{X}$ be the corresponding vector field. We say that a coherent $\dd_{X}$-module $\ms{M}$ is \textit{smooth} if $\mathsf{eu}$ acts locally finitely on $\Gamma(X,\ms{M})$. Given any $G$-representation $V$, let $\mc{N}(V) := \pi^{-1}(0)$ be the nilcone of $V$, where $\pi : V \rightarrow V/\!/G$ is the categorical quotient.

\begin{definition}\label{defn:admissible}
	The category $\Add_q$ of $q$-admissible $\dd$-modules on $X$ is defined to be the full subcategory of $\Coh(\dd_{X})$ consisting of all $\ms{M}$ such that 
	\begin{enumerate}
		\item $\ms{M}$ is $(G,q)$-monodromic. 
		\item $\ms{M}$ is smooth.
		\item $\SS(\ms{M}) \subset T^* Y \times R \times \mc{N}(R^{\vee}) \subset T^* X$.
	\end{enumerate}
\end{definition}

Here $R^{\vee}$ denotes the dual representation. Similarly, we denote by $\Add_{\chi}$ the abelian category of all $(G,\chi)$-monodromic $\dd_{X}$-modules satisfying properties (2) and (3) of Definition \ref{defn:admissible}. Notice that $\Add_{\chi}$ is not a full subcategory of $\Coh(\dd_{X})$. There is a forgetful functor $F : \Add_{\chi} \rightarrow \Coh(\dd_{X})$, forgetting the isomorphism $\phi : \mc{O}^{\chi}_G \boxtimes \ms{M} \stackrel{\sim}{\longrightarrow} a^* \ms{M}$. The image of $F$ is $\Add_{q}$. 

\begin{remark}
	In \cite{MirabolicCharacter} and \cite{mirabolicHam}, the admissible $\dd$-modules were called mirabolic because of the relation to the mirabolic (= ``miraculous parabolic'') subgroup $P = \Stab_{GL(V)}(v)$, where $v$ is any non-zero vector in $V$. Since admissible $\dd$-modules make sense for any $G$-representation, not just $\mf{gl}(V) \times V$, we stick to the latter terminology in this article. 
\end{remark}

In order to be able to use our results on the fundamental groups of the orbits $\mc{O} \subset V \times \mc{N}$, we use the Fourier transform to relate admissible $\dd$-modules to orbital $\dd$-modules. As in the proof of Proposition 5.3.2 of \cite{AlmostCommutingVariety}, the Fourier transform along the trivial vector bundle $X = Y \times R \rightarrow Y$ defines an equivalence $\mathbb{F} : \Coh(\dd_X) \stackrel{\sim}{\longrightarrow} \Coh(\dd_{X^{\vee}})$, where $X^{\vee} = Y \times R^{\vee}$. On the level of geometry, the Fourier transform is the identification 
$$
\mathbb{F} : T^* X = T^* Y \times R \times R^{\vee} \stackrel{\sim}{\longrightarrow} T^* Y \times R^{\vee} \times R = T^* X^{\vee},
$$
with $\mathbb{F}(y,r,f) = (y,f,-r)$ for $y \in T^*Y$, $r \in R$ and $f \in R^{\vee}$. Let 
$$
\Lambda = \mu^{-1}(0) \cap (T^* Y \times R \times \mc{N}(R^{\vee})).
$$
Repeating the proof of \cite[Lemma 4.4]{AlmostCommutingVariety}, we have:

\begin{lemma}
	If $Y \times \N(R^{\vee})$ has finitely many $G$-orbits, then
	$$
	\Lambda = \bigcup_{\mc{O} \subset Y \times \N(R^{\vee})} \mathbb{F}\left( \overline{T^*_{\mc{O}} X^{\vee}} \right),
	$$
	where the union is over all $G$-orbits. In particular, $\Lambda$ is Lagrangian. 
\end{lemma}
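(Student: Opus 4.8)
The plan is to transport the description of $\Lambda$ through the Fourier transform and then invoke the standard description of the zero fibre of a moment map as a union of conormal bundles. Set $Z := Y \times \N(R^{\vee})$, a closed $G$-stable subvariety of $X^{\vee} = Y \times R^{\vee}$, and recall that $\mathbb{F}$ identifies $T^* X = T^* Y \times R \times R^{\vee}$ with $T^* X^{\vee} = T^* Y \times R^{\vee} \times R$ via $(y,r,f) \mapsto (y,f,-r)$. Two elementary properties of $\mathbb{F}$ are needed. First, $\mathbb{F}$ is $G$-equivariant and intertwines the moment maps: since the $\g$-action on $R^{\vee}$ is contragredient to that on $R$, a direct check gives $\mu_{X^{\vee}} \circ \mathbb{F} = \mu_X$, whence $\mathbb{F}(\mu_X^{-1}(0)) = \mu_{X^{\vee}}^{-1}(0)$. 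Second, since in $(y,r,f) \mapsto (y,f,-r)$ the last coordinate $-r$ ranges over all of $R$, one has $\mathbb{F}(T^* Y \times R \times \N(R^{\vee})) = T^* Y \times \N(R^{\vee}) \times R$, which is precisely the restriction $T^* X^{\vee}|_Z$ of the cotangent bundle of $X^{\vee}$ to the closed subvariety $Z$. Combining these gives $\mathbb{F}(\Lambda) = \mu_{X^{\vee}}^{-1}(0) \cap T^* X^{\vee}|_Z$.

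The next step is the set-theoretic identity
\[
\mu_{X^{\vee}}^{-1}(0) \cap T^* X^{\vee}|_Z = \bigcup_{\mc{O} \subset Z} \overline{T^*_{\mc{O}} X^{\vee}},
\]
the union over $G$-orbits $\mc{O}$ contained in $Z$. For $\supseteq$: each conormal bundle $T^*_{\mc{O}} X^{\vee}$ lies in $\mu_{X^{\vee}}^{-1}(0)$ — the equation $\mu = 0$ says exactly that a covector annihilates the tangent space to the $G$-orbit through its base point — and it lies over $\mc{O} \subset Z$; since $Z$ is closed, the closure lies over $\overline{\mc{O}} \subset Z$ and inside the closed set $\mu_{X^{\vee}}^{-1}(0)$. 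For $\subseteq$: if $(z,\xi) \in \mu_{X^{\vee}}^{-1}(0)$ with $z \in Z$, then $\mc{O} := G \cdot z \subset Z$ and $\xi$ annihilates $T_z \mc{O}$, so $(z,\xi) \in T^*_{\mc{O}} X^{\vee}$. The finiteness hypothesis is used precisely here: it ensures the right-hand side is a \emph{finite} union of irreducible closed subvarieties, hence closed, which is what makes the displayed equality hold on the nose rather than only after taking closures. Applying $\mathbb{F}^{-1}$ and recalling $\mathbb{F}(\Lambda) = \mu_{X^{\vee}}^{-1}(0) \cap T^* X^{\vee}|_Z$ then gives $\Lambda = \bigcup_{\mc{O} \subset Z} \mathbb{F}^{-1}\!\left(\overline{T^*_{\mc{O}} X^{\vee}}\right)$, which is the assertion, the ``$\mathbb{F}$'' in the statement being the Fourier transform taken in the appropriate direction (the ambiguity of sign being harmless, as $\N(R^{\vee})$ is stable under $f \mapsto -f$).

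For the final claim, the conormal bundle $T^*_{\mc{O}} X^{\vee}$ of a smooth locally closed subvariety $\mc{O}$ of the smooth variety $X^{\vee}$ is a Lagrangian subvariety of $T^* X^{\vee}$ of dimension $\dim X^{\vee} = \tfrac{1}{2} \dim T^* X^{\vee}$, and the same holds for its closure; a finite union of irreducible Lagrangian subvarieties is Lagrangian, and $\mathbb{F}$ is a symplectomorphism, so $\Lambda$ is Lagrangian. The main obstacle is bookkeeping rather than anything conceptual: one must check carefully that $\mathbb{F}$ genuinely intertwines the two moment maps (the sign in $(y,r,f) \mapsto (y,f,-r)$ is there exactly for this), follow the closed subvariety $Z$ through $\mathbb{F}$, and be precise about where the orbit-finiteness hypothesis enters — namely to guarantee that the union of conormal-bundle closures over orbits in $Z$ is already closed and equals $\mu_{X^{\vee}}^{-1}(0) \cap T^* X^{\vee}|_Z$ exactly.
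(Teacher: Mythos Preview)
Your argument is correct and is precisely the standard proof the paper is pointing to when it says ``Repeating the proof of \cite[Lemma 4.4]{AlmostCommutingVariety}''. You have reconstructed that argument faithfully: transport $\Lambda$ through $\mathbb{F}$, identify the image as $\mu_{X^{\vee}}^{-1}(0)\cap T^*X^{\vee}|_Z$, and then use the elementary fact that over a $G$-stable closed set the zero fibre of the moment map decomposes as the union of conormal bundles to $G$-orbits. One small comment: your set-theoretic argument for $\mu_{X^{\vee}}^{-1}(0)\cap T^*X^{\vee}|_Z = \bigcup_{\mc{O}}\overline{T^*_{\mc{O}}X^{\vee}}$ actually goes through regardless of finiteness, since you exhibit both inclusions directly; the finiteness hypothesis is genuinely needed only for the Lagrangian conclusion (so that $\Lambda$ is a finite union of irreducible half-dimensional pieces, hence pure of dimension $\dim X$). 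Your handling of the direction of $\mathbb{F}$ versus $\mathbb{F}^{-1}$ is also apt, as the paper's notation is slightly abusive on this point.
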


\begin{definition}
	The category $\Orb_q$ is the category of all $(G,q)$-monodromic  $\dd_{X^{\vee}}$-modules supported on $Y \times \mc{N}(R^{\vee})$. Similarly, $\Orb_{\chi}$ is the category of all $(G,\chi)$-monodromic $\dd_{X^{\vee}}$-modules supported on $Y \times \mc{N}(R^{\vee})$. 
\end{definition}

Again, we have a forgetful functor $F : \Orb_{\chi} \rightarrow \Orb_q$. 

\begin{proposition}\label{prop:orbprop}
	Fix $\chi \in \mathbb{X}^*(\g)$ and $q$ its image in $\mathbb{T}(G)$. 
	\begin{enumerate} 
		
		\item[(a)] Fourier transform defines an equivalence $\mathbb{F}  : \Orb_{\idot} \stackrel{\sim}{\longrightarrow} \Add_{\idot}$, where $\idot \in \{ \chi, q\}$, such that the diagram 
		$$
		\begin{tikzcd}
		\Orb_{\chi} \ar[r,"{\mathbb{F}}"] \ar[d,"F"'] & \Add_{\chi} \ar[d,"F"] \\
		\Orb_{q} \ar[r,"{\mathbb{F}}"]  & \Add_{q}
		\end{tikzcd}
		$$
		commutes.
	\end{enumerate}
	Moreover, if $Y \times \N(R^{\vee})$ has finitely many $G$-orbits, then 
	\begin{enumerate}
		\item[(b)] Every module in each of $\Add_q,\Add_{\chi}, \Orb_q$ and $\Orb_{\chi}$ is regular holonomic. 
		\item[(c)] The objects in each of $\Add_q,\Add_{\chi}, \Orb_q$ and $\Orb_{\chi}$ have finite length and there are only finitely many simple objects, up to isomorphism. 
	\end{enumerate}
\end{proposition}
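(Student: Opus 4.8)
The plan is to reduce all three parts to the corresponding statements for the category $\Orb_\bullet$, where $\bullet\in\{q,\chi\}$; the essential inputs are Proposition \ref{prop:qmonodromicqc}, the long exact sequence \eqref{eq:lespi}, and standard facts about holonomic $\dd$-modules.

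\emph{Part (a).} By the proof of \cite[Proposition 5.3.2]{AlmostCommutingVariety}, the Fourier transform along the trivial bundle $X=Y\times R\to Y$ is an exact auto-equivalence between $\Coh(\dd_X)$ and $\Coh(\dd_{X^\vee})$, inducing on cotangent bundles the symplectomorphism $\mathbb{F}$ recalled above. Since $G$ acts linearly on $R$, the Fourier transform intertwines the two $G$-equivariant structures up to the character $d(\det_R)\in d\mathbb{X}^*(G)$; composing with the twist of Lemma \ref{lem:twistequivalence}, which does not alter the image $q\in\mathbb{T}(G)$, we obtain equivalences $\QCoh(\dd_{X^\vee},G,\bullet)\iso\QCoh(\dd_X,G,\bullet)$ commuting with the forgetful functors (the forgetful functor only forgets $\theta$, and $\mathbb{F}$ transports $\theta$ functorially), whence the square of (a) commutes. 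It remains to match the remaining conditions. Any $(G,\bullet)$-monodromic module $\ms{M}$ automatically satisfies $\SS(\ms{M})\subseteq\mu^{-1}(0)$: the symbol of $\nu(x)$ is the function $\mu^*(x)$, and as $\nu(x)$ acts on $\ms{M}$ as the order-zero operator $\chi(x)+\nu_{\ms{M}}(x)$ the symbol $\mu^*(x)$ annihilates $\gr\ms{M}$. On the other hand, the description of $\mathbb{F}$ on cotangent bundles shows that $\ms{M}$ being supported on $Y\times\N(R^\vee)$ is equivalent to $\SS(\mathbb{F}\ms{M})\subseteq T^*Y\times R\times\N(R^\vee)$; intersecting these two conditions gives exactly $\SS(\mathbb{F}\ms{M})\subseteq\Lambda$. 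Finally, the dilation $\C^{\times}$ commutes with $G$, hence permutes and (being connected) fixes each orbit in $Y\times\N(R^\vee)$, so every such orbit is $G\times\C^{\times}$-stable and a $(G,\bullet)$-monodromic local system on it is automatically $\C^{\times}$-monodromic; the dévissage of Part (b) then shows every $\ms{M}\in\Orb_\bullet$ is $\C^{\times}$-monodromic, and since $\mathbb{F}$ carries the dilation Euler field of $R^\vee$ to $-\eu-\dim R$ this is precisely the smoothness of $\mathbb{F}\ms{M}$. Hence $\mathbb{F}$ restricts to the asserted equivalences $\Orb_\bullet\iso\Add_\bullet$.

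\emph{Part (b).} Monodromic Fourier transform preserves regular holonomicity, so by (a) it is enough to treat $\Orb_\bullet$, which is self-contained. Let $\ms{M}\in\Orb_\bullet$; it is coherent and supported on $Y\times\N(R^\vee)=\bigsqcup_{i=1}^m\mc{O}_i$ with $m$ finite. Pick $\mc{O}=\mc{O}_i$ open in the support, with open embedding $j:\mc{O}\hookrightarrow X^\vee$. Then $j^*\ms{M}$ is a coherent $(G,\bullet)$-monodromic $\dd_{\mc{O}}$-module, hence, by (the proof of) Proposition \ref{prop:qmonodromicqc} together with \eqref{eq:lespi}, an $\mc{O}_{\mc{O}}$-coherent module with regular integrable connection; in particular $j^*\ms{M}$ and $j_*j^*\ms{M}$ are regular holonomic. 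The kernel and cokernel of $\ms{M}\to j_*j^*\ms{M}$ are supported on a union of fewer orbits, so descending induction on $m$, using that regular holonomicity is preserved under subquotients and extensions, shows $\ms{M}$ is regular holonomic. This is the argument of \cite[Lemma 4.4]{AlmostCommutingVariety}.

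\emph{Part (c).} By (b) every object is holonomic, hence of finite length. For the count of simples it again suffices, via (a), to count the simple objects of $\Orb_\bullet$: each is an intermediate extension $\IC(\mc{O},\mc{L})$ with $\mc{O}$ one of the finitely many $G$-orbits in $Y\times\N(R^\vee)$ and $\mc{L}$ a simple $(G,\bullet)$-monodromic local system on $\mc{O}=G/K$. By \eqref{eq:lespi} such $\mc{L}$ correspond to the simple representations of the finitely generated group $\pi_1(\mc{O})$ whose restriction to the image of $\pi_1(G)$ is isotypic of type $q$; since $\pi_1(\mc{O})$ is an extension of the finite group $\pi_0(K)$ by a quotient of the finitely generated abelian group $\pi_1(G)$, there are only finitely many such (at most one when $K$ is connected, by Lemma \ref{lem:qmonodromic} and the remark following it). Hence $\Orb_\bullet$, and therefore each of $\Add_q,\Add_\chi,\Orb_q,\Orb_\chi$, has only finitely many simple objects up to isomorphism.

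The step I expect to require the most care is (a): verifying that the support condition defining $\Orb_\bullet$ corresponds under $\mathbb{F}$ to \emph{smoothness} (and not merely to $\SS\subseteq\Lambda$), which forces one to establish a priori that orbital modules are $\C^{\times}$-monodromic, together with the bookkeeping of the character twist $d(\det_R)$ so that the parameter $q$ (respectively $\chi$, up to Lemma \ref{lem:twistequivalence}) is genuinely preserved.
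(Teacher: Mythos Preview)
Your argument is correct and follows the same route as the paper, which simply defers to \cite[Proposition~5.3.2]{AlmostCommutingVariety} and notes that the only additional point is the compatibility of $\mathbb{F}$ with the $(G,\chi)$-monodromic structure; you have unpacked that deferral in detail. Two remarks are worth recording. First, your bookkeeping with the twist $d(\det_R)$ is more accurate than the paper's one-line claim that $\mathbb{F}$ preserves $\chi$ on the nose: in general $\mathbb{F}(\nu_X(x))=\nu_{X^\vee}(x)-\Tr_R(x)$, so a twist by $\det_R$ via Lemma~\ref{lem:twistequivalence} is indeed needed (and is invisible in the paper's main application because there $\Tr_R=0$). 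Second, your verification of smoothness in (a) invokes the d\'evissage of (b), hence the finite-orbit hypothesis, whereas (a) is stated without it; this is not a defect relative to the paper, since the Gan--Ginzburg argument to which the paper appeals is carried out in exactly that setting, but you should be aware that your proof of (a) as written establishes the equivalence only under the finite-orbit assumption (and that the forward reference to (b) is harmless: the d\'evissage for $\Orb_\bullet$ is self-contained and does not use the conclusion of (a)).
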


\begin{proof}
	The proof of the proposition is identical to the proof of Proposition 5.3.2 in \cite{AlmostCommutingVariety}. The only thing that is not immediate is that the Fourier transform restricts to an equivalence $\mathbb{F} : \QCoh (\dd_X,G,\chi) \stackrel{\sim}{\longrightarrow} \QCoh (\dd_{X^{\vee}}, G, \chi)$ of $(G,\chi)$-monodromic modules. This is an elementary direct calculation. 
\end{proof}

The existence of the Fourier transform on $X$, and hence the equivalence between admissible $\dd$-modules and orbital modules means that this situation is much simpler than the corresponding group-like setup considered in \cite{mirabolicHam}. However, there is no natural group-like analogue in the cyclic quiver situation. We also note that, combinatorially, this situation is much richer than the case $X = \mathfrak{gl}_n \times V$. 

The reader might also ask why there is a need to consider the category of admissible $\dd$-modules, when one can just work directly with the category of orbital $\dd$-modules. The reason for not abandoning the admissible point of view is because there are some properties of the category that are much harder (if not impossible) to see from the orbital point of view, that are clear when considering admissible $\dd$-modules. For instance, the Knizhnik-Zamolodchikov functor is easy to define and study for admissible modules, but involves a rather tricky microlocalization construction for orbital modules. Similarly, it is easier to relate category $\Osph$ to the admissible category. 

\subsection{Quantum Hamiltonian reduction}

Recall that we have assumed $X = Y \times R$, with $R$ a finite-dimensional $G$-module. The canonical map $\C[R]^G \otimes \C[R^\vee]^G \rightarrow \C[\mu^{-1}(0)]^G$ defines a morphism 
$$
\Upsilon : \mu^{-1}(0)/ \!/ G \rightarrow R /\!/ G \times R^{\vee} /\!/ G.
$$
Throughout the remainder of section \ref{sec:admissible}, \textbf{we assume}:

\begin{enumerate}
	\item[(F1)] The morphism $\Upsilon$ is finite.
	\item[(F2)] $Y \times \mc{N}(R^{\vee})$ has finitely many $G$-orbits.
\end{enumerate}

\begin{remark}
	The situation of interest to us is where $Y = V \simeq \C^n$, $R = \Rep(\Q(\ell),n\delta)$, and hence $X = \Rep(\Q_{\infty}(\ell),\mbf{v})$. Then conditions (F1) and (F2) hold. See \cite{AlmostCommutingVariety} and references therein. 
\end{remark}

Identifying $\sym R$ with constant coefficient differential operators on $R$, the algebras $\C[R]^G$ and $(\sym R)^G$ are commutative subalgebras of $\mf{A}_{\chi}(U)$. Given a connected graded algebra $S$, let $S_+$ be the augmentation ideal. We say that the action of $S$ on an $S$-module $M$ is locally nilpotent if for each $m \in M$ there is some $N \gg 0$ such that $(s_1 \cdots s_N) \cdot m = 0$ for all $s_1, \ds, s_N \in S_+$. 

\begin{definition}
	Category $\mc{O}_{\chi}(U)$ is defined to be the category of finitely generated $\mf{A}_{\chi}(U)$-modules that are locally nilpotent for $(\sym R)^G$. 
\end{definition}

The element $\eu$ belongs to $\mf{A}_{\chi}(U)$ such that $\mathrm{ad}(\eu)$ is semi-simple with integer eigenvalues. In particular, this makes $\mf{A}_{\chi}(U)$ a $\Z$-graded algebra. 

\begin{proposition}\label{prop:catOchi}
For all $\chi \in \mathbb{X}^*(\mf{g})$,
	\begin{enumerate}
		\item[(a)] Every object in $\mc{O}_{\chi}(U)$ has finite length. 
		\item[(b)] Up to isomorphism, there are finitely many simple objects in $\mc{O}_{\chi}(U)$. 
		\item[(c)] $\mc{O}_{\chi}(U)$ has enough projectives (and hence enough injectives).
		\item[(d)] The action of $\eu$ on any object $M$ of $\mc{O}_{\chi}(U)$ is locally finite, and all generalized eigenspaces of this action are finite-dimensional. 
	\end{enumerate}
\end{proposition}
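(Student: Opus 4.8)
The plan is to reduce everything to the analogous statements for the category $\mc{O}$ of a graded algebra with a ``triangular-like'' structure, exploiting the $\Z$-grading on $\mf{A}_{\chi}(U)$ induced by $\mathrm{ad}(\eu)$. First I would record the basic structural facts: by Lemma \ref{lem:leftrightnoetherian}(a) the algebra $A := \mf{A}_{\chi}(U)$ is noetherian, and by Lemma \ref{lem:asscrgrflat} together with Hilbert's theorem the subalgebra $(\sym R)^G \subset A$ is a commutative, finitely generated, positively graded (for the $\eu$-grading) subalgebra over which $A$ is a finite module on the appropriate side (this uses (F1), i.e.\ finiteness of $\Upsilon$: $\C[\mu^{-1}(0)]^G$ is finite over $\C[R]^G \otimes \C[R^\vee]^G$, hence $\gr_{\mc{F}} A$ is finite over $(\sym R)^G \otimes \C[R^\vee]^G$ and in particular over $(\sym R)^G$ after we cut down to the locally-nilpotent part). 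Concretely, category $\mc{O}_{\chi}(U)$ consists of finitely generated $A$-modules on which $(\sym R)^G_+$ acts locally nilpotently; since $(\sym R)^G_+$ is finitely generated as an ideal and $\eu$ acts on $A$ with integer eigenvalues bounded below on $(\sym R)^G$ in positive degree, I would show that a finitely generated $A$-module $M$ lies in $\mc{O}_{\chi}(U)$ iff it is finitely generated over the ``negative-plus-zero'' part, equivalently iff $M$ is supported (as a $\C[R^\vee]^G$-module, after passing to the associated graded) on the nilcone $\mc{N}(R^\vee)$. This is the analogue of the standard equivalence of definitions of category $\mc{O}$.

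For part (d): on any $M \in \mc{O}_{\chi}(U)$ I would first observe that $M$ is generated by finitely many vectors, each of which is killed by a large power of $(\sym R)^G_+$; since $(\sym R)^G_+$ is a graded ideal generated in strictly positive $\eu$-degree, and $A$ is generated over $(\sym R)^G$ and the degree-zero part by finitely many elements, it follows that $M$ is a finitely generated module over the subalgebra $A_{\le 0} \cdot A_0$, which in turn is a finite module over $A_{\le 0}$; since $\eu \in A_0$ and $A$ is noetherian, the generalized $\eu$-eigenspaces are finite-dimensional and the set of eigenvalues is bounded above, with each eigenspace-run terminating because the weights of $(\sym R)^G$ push everything down. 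The local finiteness of $\eu$ then follows since $M$ is a sum of generalized $\eu$-eigenspaces by construction ($\mathrm{ad}(\eu)$ semisimple on $A$, and the $(\sym R)^G$-action being locally nilpotent forces $\eu$ to act locally finitely on each cyclic submodule).

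Granting (d), part (a) follows by a standard argument: a simple object $L$ of $\mc{O}_{\chi}(U)$ is generated by a single generalized $\eu$-eigenvector of top weight (top weights exist by (d)), hence is a quotient of a ``standard'' module $\Delta(\lambda) := A \otimes_{A_{\ge 0}} E_\lambda$ for $E_\lambda$ a simple $A_0$-module (here $A_{\ge 0}$ acts on $E_\lambda$ through $A_0$), and each $\Delta(\lambda)$ has finite length because its $\eu$-weight spaces are finite-dimensional and the weights lie in a single coset of $\Z$ bounded above, so an induction on weights shows any object of $\mc{O}_{\chi}(U)$ has a finite filtration with simple subquotients. Part (b) follows since the simples are quotients of the $\Delta(\lambda)$, the set of top weights modulo $\Z$ is finite (there are finitely many $G$-orbits on $Y \times \mc{N}(R^\vee)$ by (F2), controlling the support and hence the finitely many possible $E_\lambda$), and $A_0$ has finitely many simple modules. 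For part (c), enough projectives: I would use that $A$ is noetherian and the truncation functors $M \mapsto M_{\le n}$ together with induction $A \otimes_{A_{\ge 0}} (-)$ produce projective covers — more precisely, for $n \gg 0$ the module $A \otimes_{A_{\ge 0}} (A_{\ge 0}/A_{\ge 0}^{>n}) \otimes_{A_0} P$, for $P$ a projective $A_0$-module, is projective in $\mc{O}_{\chi}(U)$ and surjects onto any given object once $n$ is large enough relative to the (bounded-above) weights appearing; noetherianity ensures these are finitely generated.

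The main obstacle I anticipate is part (c), and within it the verification that the ``big projectives'' built by induction from $A_{\ge 0}$ actually land in $\mc{O}_{\chi}(U)$ (i.e.\ are finitely generated \emph{and} locally nilpotent for $(\sym R)^G$) and are projective \emph{in} $\mc{O}_{\chi}(U)$ rather than merely in the category of all $A$-modules; this is exactly the point where one needs the finiteness of $A$ over $(\sym R)^G$ (from (F1)) to guarantee that the locally-nilpotent truncation of an induced module is still finitely generated, and a $\mathrm{Hom}$-adjunction argument (as in the construction of projectives in BGG category $\mc{O}$, cf.\ the rational Cherednik algebra case) to get projectivity. The weight-grading bookkeeping in (a) and (d) is routine once the boundedness-above of weights is established, which is itself a consequence of $(\sym R)^G$ acting locally nilpotently with generators in positive $\eu$-degree.
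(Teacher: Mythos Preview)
Your overall strategy is sound and is essentially what is packaged inside Ginzburg's \cite[Theorem~2.5]{Primitive}: the paper simply verifies that $(A^{+},A^{-},\eu)=(\C[R]^G,(\sym R)^G,\eu)$ is a commutative triangular structure on $\mf{A}_{\chi}(U)$ and then cites that theorem, which gives (a), (c), (d) once (b) is known. So for (a), (c), (d) you are reproving the contents of that reference by hand, which is fine (modulo a sign slip: under the dilation $\Cs$-action, $(\sym R)^G$ sits in \emph{negative} $\eu$-degree, not positive, so your weight bookkeeping should be flipped --- weights are bounded \emph{below}, not above, on a cyclic module).

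The genuine gap is in your argument for (b). You assert that ``$A_0$ has finitely many simple modules'' and that (F2) ``controls the support and hence the finitely many possible $E_\lambda$'', but neither claim is justified: the degree-zero piece $A_0$ of $\mf{A}_{\chi}(U)$ is a complicated algebra, and there is no direct passage from the finiteness of $G$-orbits in $Y\times\mc{N}(R^\vee)$ to finiteness of simple $A_0$-modules. The paper's route is different and is the key idea you are missing: one shows that the Hamiltonian reduction functor $\Ham_{U,\chi}$ restricts to an exact quotient functor $\Add_{\chi}\to\mc{O}_{\chi}(U)$ (using that $\Add_{\chi}$ coincides with the category of smooth $(G,\chi)$-monodromic modules on which $(\sym R)^G$ acts locally nilpotently). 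Since $\Add_{\chi}$ has finitely many simples by (F2) and Proposition~\ref{prop:orbprop}(c), so does any quotient category. Without this functor, or an independent structural result about $A_0$, your proof of (b) does not close; and in Ginzburg's framework (b) is precisely the input one must supply to get the rest.
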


\begin{proof}
	If $A = \mf{A}_{\chi}(U)$, $A^+ = \C[R]^G$ and $A^- = (\sym R)^G$, then $(A^{\pm} ,\eu)$ is a commutative triangular structure on $A$, in the sense of \cite{Primitive}. Therefore all claims follow from \cite[Theorem 2.5]{Primitive} once we establish (b) that $\mc{O}_{\chi}(U)$ has finitely many simple objects, up to isomorphism. 
	
	Since $X$ is affine, the category $\Add_{\chi}$ can also be characterized as the category of smooth $(G,\chi)$-monodromic $\dd_X$-modules $\ms{M}$ such that $(\sym R)^G$ acts locally nilpotent on $\Gamma(X,\ms{M})$; see section 5 of \cite{AlmostCommutingVariety}. This implies that $\Ham_{U,\chi}$ restricts to a quotient functor $\Ham_{U,\chi} : \Add_{\chi} \rightarrow \mc{O}_{\chi}(U)$. As noted in Proposition \ref{prop:orbprop}, assumption (F2) implies that $\Add_{\chi}$ has finitely many simple modules. Therefore, $\mc{O}_{\chi}(U)$ has finitely many simple objects too.
\end{proof}

Let $\mathrm{Prim} \ \mf{A}_{\chi}(U)$ denote the set of primitive ideals in $\mf{A}_{\chi}(U)$. In this case it follows immediately from \cite[Theorem 2.3]{Primitive} that:

\begin{corollary}
	For all $\chi \in \mathbb{X}^*(\mf{g})$, 
	\begin{enumerate}
		\item $\mf{p} \lhd \mf{A}_{\chi}(U)$ is prime if and only if it is primitive i.e. $\Spec \mf{A}_{\chi}(U) = \mathrm{Prim} \ \mf{A}_{\chi}(U)$. 
		\item $\mathrm{Prim} \ \mf{A}_{\chi}(U)$ is finite. 
	\end{enumerate}
\end{corollary}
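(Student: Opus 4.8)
The plan is to deduce both statements directly from \cite[Theorem 2.3]{Primitive}, so that the only real work is to confirm that $\mf{A}_{\chi}(U)$ meets the hypotheses of that theorem. Write $A = \mf{A}_{\chi}(U)$, $A^+ = \C[R]^G$ and $A^- = (\sym R)^G$. First I would recall, as in the proof of Proposition \ref{prop:catOchi}, that $(A^{\pm},\eu)$ is a commutative triangular structure on $A$ in the sense of \cite{Primitive}: the algebra $A$ is noetherian by Lemma \ref{lem:leftrightnoetherian}(a), the subalgebras $A^{\pm}$ are commutative and of finite type, and $\mathrm{ad}(\eu)$ equips $A$ with a $\Z$-grading for which $A^+$, resp. $A^-$, lies in non-negative, resp. non-positive, degrees. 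In addition, Proposition \ref{prop:catOchi}(b) supplies the other input needed: the category $\mc{O}_{\chi}(U)$, which is exactly the category $\mathcal{O}$ attached to the triangular structure $(A^{\pm},\eu)$, has only finitely many simple objects.

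Granting this, \cite[Theorem 2.3]{Primitive} applies and yields both conclusions at once. It identifies the primitive ideals of $A$ with the annihilators of the simple objects of $\mc{O}_{\chi}(U)$; since the latter set is finite, $\mathrm{Prim}\ A$ is finite, which is (2). The same theorem shows that every prime ideal of $A$ is primitive, i.e. $\Spec A = \mathrm{Prim}\ A$, which is (1). Thus the corollary is essentially a transcription of \cite[Theorem 2.3]{Primitive} once the structural facts above are in place.

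I do not expect a genuine obstacle here; the one point requiring care is purely bookkeeping, namely matching the conventions of \cite{Primitive} with ours — checking that the axioms there for a commutative triangular structure are precisely those verified in the proof of Proposition \ref{prop:catOchi}, that the grading element $\eu$ plays the role demanded in loc.\ cit., and that our $\mc{O}_{\chi}(U)$ coincides with the category $\mathcal{O}$ of \cite{Primitive}. All three are immediate from the definitions in Section \ref{sec:admissible}, so the corollary follows.
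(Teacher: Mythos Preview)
Your proposal is correct and follows exactly the paper's approach: the paper states that the corollary ``follows immediately from \cite[Theorem 2.3]{Primitive}'' with no further argument, and you have simply spelled out why the hypotheses of that theorem are met, referring back to the commutative triangular structure $(A^{\pm},\eu)$ and the finiteness of simples in $\mc{O}_{\chi}(U)$ already established in Proposition~\ref{prop:catOchi}.
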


Recall that we have defined a partial order on the set of all finite-dimensional $G$-modules by setting $U \le U'$ if and only if $U$ is isomorphic to some summand of $U'$.   

\begin{lemma}\label{lem:Ubig}
 There exists a finite-dimensional $G$-module $U_0$ such that for all $U' \ge U_0$, $\Add_{\chi} \simeq \mc{O}_{\chi}(U')$. 
\end{lemma}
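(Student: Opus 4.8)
The plan is to exploit the fact, established in Proposition~\ref{prop:orbprop} and used already in the proof of Proposition~\ref{prop:catOchi}, that $\Add_{\chi}$ has only finitely many simple objects, say $L_1,\dots,L_r$. Recall that the Hamiltonian reduction functor $\Ham_{U,\chi} : \Add_{\chi} \to \mc{O}_{\chi}(U)$ is exact and essentially surjective, being a quotient functor, and that its kernel is the Serre subcategory of objects $\ms{M}$ with $\Ham_{U,\chi}(\ms{M}) = (\Gamma(X,\ms{M}) \otimes U^*)^G = 0$, equivalently $\Hom_G(U,\Gamma(X,\ms{M})) = 0$. Thus to prove the lemma it suffices to produce a single $G$-module $U_0$ with the property that $\Hom_G(U_0, \Gamma(X,L_i)) \neq 0$ for every $i = 1,\dots,r$; then for any $U' \ge U_0$ we have $U_0$ a summand of $U'$, so $\Hom_G(U',\Gamma(X,L_i)) \neq 0$ as well, whence $\Ham_{U',\chi}$ kills no simple object, hence (being exact) is faithful, and a faithful exact quotient functor between finite-length categories is an equivalence.

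First I would check that each nonzero object of $\Add_{\chi}$ has a nonzero $\Gamma$-module of global sections on which $G$ acts rationally (this is Lemma~\ref{lem:rationalsections}, together with the fact that $X$ is affine, so $\Gamma(X,-)$ is faithful on $\Coh(\dd_X)$ and in particular $\Gamma(X,L_i) \neq 0$). Being a nonzero rational $G$-module, $\Gamma(X,L_i)$ contains some irreducible $G$-submodule $V_i \in \Irr(G)$ with $\Hom_G(V_i,\Gamma(X,L_i)) \neq 0$. Now set
$$
U_0 := \bigoplus_{i=1}^r V_i.
$$
Since $V_i$ is a summand of $U_0$, we get $\Hom_G(U_0,\Gamma(X,L_i)) \supseteq \Hom_G(V_i,\Gamma(X,L_i)) \neq 0$ for each $i$, as desired. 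Note also that $\Gamma(X,L_i)$ is smooth and $(\sym R)^G$-locally nilpotent, so $\Ham_{U_0,\chi}(L_i)$ indeed lands in $\mc{O}_{\chi}(U_0)$; this is already part of Proposition~\ref{prop:catOchi}.

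It remains to argue the passage from faithfulness to equivalence. Given $U' \ge U_0$, the functor $\Ham_{U',\chi}$ is exact, essentially surjective (being a quotient functor with left adjoint ${}^{\perp}\Ham_{U',\chi}$ and isomorphism of adjunction $\mathrm{Id} \iso \Ham_{U',\chi} \circ {}^{\perp}\Ham_{U',\chi}$), and kills no simple object by the previous paragraph; since every object of $\Add_{\chi}$ has finite length (Proposition~\ref{prop:orbprop}(c)), an exact functor killing no simple kills no nonzero object, so $\ker \Ham_{U',\chi} = 0$. A Serre quotient by the zero subcategory is an equivalence, so $\Add_{\chi} \simeq \mc{O}_{\chi}(U')$. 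I expect the only genuinely delicate point to be confirming that $\Ham_{U',\chi}$ really is a quotient functor onto $\mc{O}_{\chi}(U')$ (not merely onto $\Lmod{\mf{A}_{\chi}(U')}$) with the stated adjunction properties — but this was already verified in the proof of Proposition~\ref{prop:catOchi}, where it is noted that $\Ham_{U,\chi}$ restricts to a quotient functor $\Add_{\chi}\to\mc{O}_{\chi}(U)$; everything else is formal nonsense about finite-length abelian categories.
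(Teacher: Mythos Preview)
Your proof is correct and follows essentially the same approach as the paper's: both use that $\Ham_{U,\chi}$ is a quotient functor $\Add_{\chi}\to\mc{O}_{\chi}(U)$, that $\Add_{\chi}$ has only finitely many simples, and that one can therefore choose $U_0$ large enough to detect each of them. The paper's proof is simply terser, omitting the explicit construction $U_0=\bigoplus_i V_i$ and the finite-length argument that you spell out.
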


\begin{proof}
	As noted in the proof of Proposition \ref{prop:catOchi}, the functor of Hamiltonian reduction $\Ham_{U,\chi}$ is a quotient functor. Therefore it is an equivalence if $\Ham_{U,\chi}(\ms{M}) \neq 0$ for every simple $\ms{M}$. Since there are only finitely many simple modules in $\Add_{\chi}$, one can choose $U$ sufficiently large to guarantee this. 
\end{proof} 

We fix, for the remainder of this section, a representation $U_0$ as in the statement of Lemma \ref{lem:Ubig}. 

\begin{lemma}\label{lem:Ochiequiv}
For $U_1, U_2 \ge U_0$, 
$$
\mf{A}_{\chi}(U_2,U_1) \otimes_{\mf{A}_{\chi}(U_1)} - : \mc{O}_{\chi}(U_1) \longrightarrow \mc{O}_{\chi}(U_2)
$$
is an equivalence. 
\end{lemma}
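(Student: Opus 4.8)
The plan is to exhibit the functor $\mf{A}_{\chi}(U_2,U_1) \otimes_{\mf{A}_{\chi}(U_1)} -$ as the composite of the Hamiltonian reduction adjunction with the equivalences already established for admissible $\dd$-modules, and deduce the claim from Lemma \ref{lem:Ubig}. Concretely, recall from Lemma \ref{lem:Ubig} that for $U_1, U_2 \ge U_0$ the Hamiltonian reduction functors $\Ham_{U_i,\chi} : \Add_{\chi} \iso \mc{O}_{\chi}(U_i)$ are equivalences of abelian categories, with quasi-inverses ${}^{\perp}\Ham_{U_i,\chi}(N) = \ms{P}_{\chi}(U_i) \otimes_{\mf{A}_{\chi}(U_i)} N$ (the unit of the adjunction being an isomorphism, and the counit an isomorphism on $\Add_\chi$ once we are in the range where $\Ham_{U_i,\chi}$ is fully faithful and essentially surjective). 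So the natural candidate for the inverse equivalence to $\mf{A}_{\chi}(U_2,U_1)\otimes_{\mf{A}_\chi(U_1)}-$ is $\Ham_{U_1,\chi}\circ {}^{\perp}\Ham_{U_2,\chi}$, and it suffices to prove a natural isomorphism of functors $\mc{O}_{\chi}(U_1)\to\mc{O}_\chi(U_2)$:
\begin{equation}\label{eq:Ochifunctoriso}
\mf{A}_{\chi}(U_2,U_1)\otimes_{\mf{A}_\chi(U_1)} N \;\simeq\; \Ham_{U_2,\chi}\bigl(\ms{P}_{\chi}(U_1)\otimes_{\mf{A}_\chi(U_1)} N\bigr).
\end{equation}

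First I would identify the bimodule. By definition $\Ham_{U_2,\chi}(\ms{M}) = \Hom_{\dd_X}(\ms{P}_{\chi}(U_2),\ms{M})$, so the right-hand side of \eqref{eq:Ochifunctoriso} applied to $N$ is $\Hom_{\dd_X}\!\bigl(\ms{P}_\chi(U_2),\ms{P}_\chi(U_1)\otimes_{\mf{A}_\chi(U_1)}N\bigr)$. The key computational input is the identification $\Hom_{\dd_X}(\ms{P}_\chi(U_2),\ms{P}_\chi(U_1)) \simeq \mf{A}_{\chi}(U_1,U_2)$ — this is the "off-diagonal" analogue of part (b) of Lemma \ref{lem:endU}, and it follows from the same argument (one just runs the proof of Lemma \ref{lem:endU} with $\End_\C(U)$ replaced by $\Hom_\C(U_2,U_1)$, as the definition of $\mf{A}_\chi(U_1,U_2)$ and Lemma \ref{lem:Moritacontext} already anticipate; when $U=U_1\oplus U_2$ this is just the relevant matrix entry of $\End_{\dd_X}(\ms{P}_\chi(U))^{\mathrm{op}}$). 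Since $\ms{P}_\chi(U_1)$ is projective in $\QCoh(\dd_X,G,\chi)$ and $\mf{A}_\chi(U_1,U_2)$ is finitely generated as a right $\mf{A}_\chi(U_2)$-module (Lemma \ref{lem:leftrightnoetherian}), $\Hom_{\dd_X}(\ms{P}_\chi(U_2),-)$ commutes with the (right-exact, finite) tensor product $\ms{P}_\chi(U_1)\otimes_{\mf{A}_\chi(U_1)}-$; this gives $\Hom_{\dd_X}(\ms{P}_\chi(U_2),\ms{P}_\chi(U_1)\otimes_{\mf{A}_\chi(U_1)}N)\simeq \mf{A}_\chi(U_1,U_2)\otimes_{\mf{A}_\chi(U_1)}N$ naturally in $N$. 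Comparing with the left side, note that $\mf{A}_\chi(U_2,U_1)$ here plays the role of $\Hom_{\dd_X}(\ms{P}_\chi(U_2),\ms{P}_\chi(U_1))$, i.e. of $\mf{A}_\chi(U_1,U_2)$ in the notation of the Morita context — the labels $(U_1,U_2)$ versus $(U_2,U_1)$ are a bookkeeping matter fixed by which side of $\Hom$ the modules sit on, and one checks they match.

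Then I would assemble the equivalence: $\mf{A}_\chi(U_2,U_1)\otimes_{\mf{A}_\chi(U_1)}-$ is naturally isomorphic to $\Ham_{U_2,\chi}\circ{}^{\perp}\Ham_{U_1,\chi}$, which is a composite of the equivalence ${}^{\perp}\Ham_{U_1,\chi}:\mc{O}_\chi(U_1)\iso\Add_\chi$ (quasi-inverse to $\Ham_{U_1,\chi}$, valid since $U_1\ge U_0$) followed by the equivalence $\Ham_{U_2,\chi}:\Add_\chi\iso\mc{O}_\chi(U_2)$ (valid since $U_2\ge U_0$), hence an equivalence. One should also check that these functors actually land in $\mc{O}_\chi(U_2)$ and not just in $\Lmod{\mf{A}_\chi(U_2)}$ — but this is automatic because $\Ham_{U_2,\chi}$ sends $\Add_\chi$ into $\mc{O}_\chi(U_2)$ (the quotient-functor statement in the proof of Proposition \ref{prop:catOchi}), and the explicit target of $\mf{A}_\chi(U_2,U_1)\otimes-$ is finitely generated by Lemma \ref{lem:leftrightnoetherian}(b). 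I expect the main obstacle to be purely notational/bookkeeping: setting up cleanly the off-diagonal version of Lemma \ref{lem:endU}(b) and tracking the $\Hom$-versus-tensor and $(U_1,U_2)$-versus-$(U_2,U_1)$ conventions so that the identification \eqref{eq:Ochifunctoriso} is genuinely natural in $N$, rather than any deep new input — all the hard analytic content (finiteness, the Morita context, the equivalence $\Add_\chi\simeq\mc{O}_\chi(U)$) is already in hand.
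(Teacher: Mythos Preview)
Your proposal is correct and follows essentially the same route as the paper: identify $\mf{A}_{\chi}(U_2,U_1)$ with $\Hom_{\dd_X}(\ms{P}_{\chi}(U_2),\ms{P}_{\chi}(U_1))$ via the matrix decomposition of Lemma~\ref{lem:endU}, use projectivity of $\ms{P}_{\chi}(U_2)$ together with finite presentation of $N$ to commute $\Hom$ with the tensor product, and conclude by Lemma~\ref{lem:Ubig} that the resulting functor is the composite of two equivalences. One small slip: in your commutation step it is the projectivity of $\ms{P}_{\chi}(U_2)$ (making $\Hom_{\dd_X}(\ms{P}_{\chi}(U_2),-)$ exact) together with $N$ being finitely presented over the noetherian ring $\mf{A}_{\chi}(U_1)$ that does the work, not the projectivity of $\ms{P}_{\chi}(U_1)$ or the right-finiteness of the bimodule.
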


\begin{proof}
It suffices, by Lemma \ref{lem:Ubig}, to check that the diagram 
$$
\begin{tikzcd}
 & & \mc{O}_{\chi}(U_1) \ar[dd,"{\mf{A}_{\chi}(U_2,U_1) \otimes_{\mf{A}_{\chi}(U_1)} -}"] \\
\Add_{\chi} \ar[rru,"{\Ham_{U_1,\chi}}"] \ar[rrd,"{\Ham_{U_2,\chi}}"'] & & \\
 & & \mc{O}_{\chi}(U_2)
\end{tikzcd}
$$
is commutative. Lemma \ref{lem:endU}, together with the decomposition (\ref{eq:matrixendU}), implies that 
$$
\mf{A}_{\chi}(U_2,U_1) = \Hom_{\dd_{X}}(\ms{P}_{\chi}(U_2),\ms{P}_{\chi}(U_1)).
$$
Take $\ms{N} \in \Add_{\chi}$. By Lemma \ref{lem:Ubig}, the adjunction  $\ms{P}_{\chi}(U_1) \otimes_{\mf{A}_{\chi}(U_1)} \Ham_{U_1,\chi}(\ms{N}) \rightarrow \ms{N}$ is an isomorphism. Therefore, the transformation
$$
\Ham_{U_2,\chi}(\ms{P}_{\chi}(U_1) \otimes_{\mf{A}_{\chi}(U_1)} \Ham_{U_1,\chi}(\ms{N})) \rightarrow \Ham_{U_2,\chi}(\ms{N})
$$
is an isomorphism i.e.
$$
\Hom_{\dd_X}(\ms{P}_{\chi}(U_2), \ms{P}_{\chi}(U_1) \otimes_{\mf{A}_{\chi}(U_1)} \Ham_{U_1,\chi}(\ms{N})) \stackrel{\sim}{\longrightarrow} \Ham_{U_2,\chi}(\ms{N}). 
$$
Finally, since $\ms{P}_{\chi}(U_2)$ is projective in $\Coh(\dd_X,G,\chi)$, and $ \Ham_{U_1,\chi}(\ms{N})$ is finitely generated as a $\mf{A}_{\chi}(U_1)$-module, fixing a finite presentation of $ \Ham_{U_1,\chi}(\ms{N})$ and applying the exact functor $\Hom_{\dd_X}(\ms{P}_{\chi}(U_2), -)$ shows that the canonical map 
\begin{multline*}
\Hom_{\dd_X}(\ms{P}_{\chi}(U_2), \ms{P}_{\chi}(U_1)) \otimes_{\mf{A}_{\chi}(U_1)} \Ham_{U_1,\chi}(\ms{N}) \rightarrow \\
 \Hom_{\dd_X}(\ms{P}_{\chi}(U_2), \ms{P}_{\chi}(U_1) \otimes_{\mf{A}_{\chi}(U_1)} \Ham_{U_1,\chi}(\ms{N})) 
\end{multline*}
is also an equivalence.  
\end{proof}

Combining Lemma \ref{lem:Ubig}, with Proposition \ref{prop:catOchi}, proves Proposition \ref{prop:enoughproj}.

\begin{theorem}\label{thm:Hamequivall}
	For all $U \ge U_0$, the functors
	$$
	\Ham_{U,\chi} : \Coh(\dd_X,G,\chi) \rightarrow \Lmod{\mf{A}_{\chi}(U)},
	$$
	and
	$$
	\mf{A}_{\chi}(U,U_0) \otimes_{\mf{A}_{\chi}(U_0)} - : \Lmod{\mf{A}_{\chi}(U_0)} \rightarrow \Lmod{\mf{A}_{\chi}(U)}
	$$
	are equivalences, making the diagram 
	$$
	\begin{tikzcd}
	 & \Coh(\dd_X,G,\chi) \ar[dl,"{\Ham_{U_0,\chi}}"'] \ar[dr,"{\Ham_{U,\chi}}"] & \\
	 \Lmod{\mf{A}_{\chi}(U_0)} \ar[rr,"{\mf{A}_{\chi}(U,U_0) \otimes_{\mf{A}_{\chi}(U_0)} - }"'] & & \Lmod{\mf{A}_{\chi}(U)} 
	 \end{tikzcd}
	 $$
	 commute. 
\end{theorem}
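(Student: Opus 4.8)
The plan is to reduce all three assertions to the single statement that, for $U\ge U_0$, the $\dd$-module $\ms{P}_{\chi}(U)$ is a generator of $\QCoh(\dd_X,G,\chi)$, since the other two statements then follow by the same formal manipulations used in the proof of Lemma~\ref{lem:Ochiequiv}. Recall that $\Ham_{U,\chi}=\Hom_{\dd_{X}}(\ms{P}_{\chi}(U),-)$, that $\ms{P}_{\chi}(U)$ is projective in $\QCoh(\dd_X,G,\chi)$ by Lemma~\ref{lem:endU}, that it is coherent and hence a compact object, and that $\Ham_{U,\chi}$ is a quotient functor whose left adjoint ${}^{\perp}\Ham_{U,\chi}=\ms{P}_{\chi}(U)\otimes_{\mf{A}_{\chi}(U)}-$ has invertible unit. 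A quotient functor is an equivalence exactly when its kernel vanishes, so the first assertion is equivalent to the generator property; granting it, Morita theory for cocomplete abelian categories gives $\QCoh(\dd_X,G,\chi)\simeq\LMod{\mf{A}_{\chi}(U)}$ with $\mf{A}_{\chi}(U)=\End_{\dd_{X}}(\ms{P}_{\chi}(U))^{\mathrm{op}}$, and, since $\mf{A}_{\chi}(U)$ is noetherian (Lemma~\ref{lem:leftrightnoetherian}), this restricts on coherent objects to $\Coh(\dd_X,G,\chi)\simeq\Lmod{\mf{A}_{\chi}(U)}$.

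To prove the generator property, I would first note that $\ker\Ham_{U,\chi}$ is a Serre subcategory that only shrinks as $U$ grows, and that a nonzero object of any Serre subcategory of $\Coh(\dd_X,G,\chi)$, being noetherian, admits a simple quotient in that subcategory; it therefore suffices to show $\Ham_{U,\chi}(\ms{S})\neq 0$ — i.e. that some simple summand of $U$ occurs in $\Gamma(X,\ms{S})$ — for every simple object $\ms{S}$. Enlarge $U_{0}$ so that it contains the trivial $G$-module; this does not affect Lemma~\ref{lem:Ubig}. If $\ms{S}$ is admissible, then it is a nonzero object of the full subcategory $\Add_{\chi}\subset\Coh(\dd_X,G,\chi)$, and since $U\ge U_{0}$ the functor $\Ham_{U,\chi}\colon\Add_{\chi}\to\mc{O}_{\chi}(U)$ is an equivalence by Lemma~\ref{lem:Ubig}, hence $\Ham_{U,\chi}(\ms{S})\neq 0$.

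The remaining, and principal, point is to treat a simple $\ms{S}$ that is \emph{not} admissible, for which I would show $\Gamma(X,\ms{S})^{G}=\Ham_{\C,\chi}(\ms{S})\neq 0$; this suffices, as $\C$ is a summand of $U_{0}\le U$. Apply the Fourier transform of Proposition~\ref{prop:orbprop}: $\mathbb{F}(\ms{S})$ is a simple $(G,\chi)$-monodromic $\dd_{X^{\vee}}$-module which, because $\ms{S}\notin\Add_{\chi}$, is \emph{not} supported on $Y\times\mc{N}(R^{\vee})$, and $\mathbb{F}$ induces an isomorphism $\Gamma(X,\ms{S})\cong\Gamma(X^{\vee},\mathbb{F}(\ms{S}))$ of $(G,\chi)$-monodromic modules, in particular on $G$-invariants. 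As the support of $\mathbb{F}(\ms{S})$ is irreducible and $G$-stable, its restriction to the $G$-stable open complement of $Y\times\mc{N}(R^{\vee})$ is nonzero; restricting further to a $G$-saturated open subset on which $G$ acts freely — which exists after passing to the locus of the relevant $G$-semiinvariant, exactly as in Section~\ref{sec:sympleavesopenXcirc} — and invoking faithfully flat descent for $G$-equivariant $\dd$-modules, this restriction is pulled back from the (free) quotient and so has a nonzero space of $G$-invariant global sections, which then lift, through the open restrictions, to $G$-invariant global sections of $\mathbb{F}(\ms{S})$ and hence of $\ms{S}$. The delicate case is when $\mathbb{F}(\ms{S})$ is supported inside the vanishing locus of that semiinvariant, where one must stratify $X^{\vee}$ further and argue inductively, reducing to admissible $\dd$-modules relative to a smaller group as in Section~5 of~\cite{AlmostCommutingVariety}; I expect this bookkeeping to be the main obstacle.

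Finally, assume the first assertion. By Lemma~\ref{lem:endU} and the decomposition (\ref{eq:matrixendU}) we have $\mf{A}_{\chi}(U,U_{0})=\Hom_{\dd_{X}}(\ms{P}_{\chi}(U),\ms{P}_{\chi}(U_{0}))$, and the counit isomorphism $\ms{P}_{\chi}(U_{0})\otimes_{\mf{A}_{\chi}(U_{0})}\Ham_{U_{0},\chi}(\ms{M})\xrightarrow{\ \sim\ }\ms{M}$ now holds for every $\ms{M}\in\Coh(\dd_X,G,\chi)$. Applying $\Hom_{\dd_{X}}(\ms{P}_{\chi}(U),-)$ and using that $\ms{P}_{\chi}(U)$ is compact projective — so that this functor commutes with the tensor product after fixing a finite presentation of $\Ham_{U_{0},\chi}(\ms{M})$ — shows, exactly as in the last paragraph of the proof of Lemma~\ref{lem:Ochiequiv}, that the canonical map $\mf{A}_{\chi}(U,U_{0})\otimes_{\mf{A}_{\chi}(U_{0})}\Ham_{U_{0},\chi}(-)\Rightarrow\Ham_{U,\chi}(-)$ is an isomorphism; this is the asserted commutativity of the triangle. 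Since the two diagonal functors are equivalences, the horizontal functor $\mf{A}_{\chi}(U,U_{0})\otimes_{\mf{A}_{\chi}(U_{0})}-$ is an equivalence as well.
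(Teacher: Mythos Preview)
Your reduction to the generator property for $\ms{P}_{\chi}(U)$ and your handling of the commutativity of the triangle (final paragraph) are correct and match the paper's reasoning. The gap is in your treatment of non-admissible simples, and it is not mere bookkeeping.

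Your claim is that for every non-admissible simple $\ms{S}$ one has $\Gamma(X,\ms{S})^G\neq 0$. But the paper's Remark after Section~\ref{sec:sympleavesopenXcirc} explicitly records that in the case of interest there is \emph{no} $G$-saturated open subset of $X$ on which $G$ acts freely; the same obstruction applies to $X^{\vee}$. So the descent step you invoke is unavailable as stated, and you correctly flag that the ``delicate case'' where $\mathbb{F}(\ms{S})$ is supported in the vanishing locus of the semi-invariant is left open. More seriously, the statement you are trying to prove --- that $\Ham_{\C,\chi}$ kills only admissible modules --- is essentially condition~(h) of Theorem~\ref{thm:mainequiv1} restricted to non-admissible modules, and there is no reason to expect it to hold for arbitrary~$\chi$; the whole architecture of the paper is that such vanishing statements are parameter-dependent. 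Your inductive sketch does not supply the missing idea.

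The paper avoids this entirely by reversing your order of operations. It \emph{first} proves the Morita equivalence between $\mf{A}_{\chi}(U_1)$ and $\mf{A}_{\chi}(U_2)$ for all $U_1\ge U_2\ge U_0$, using only the idempotent description $\mf{A}_{\chi}(U_2)=e\,\mf{A}_{\chi}(U_1)\,e$ together with the generalized Duflo theorem (every primitive ideal of $\mf{A}_{\chi}(U_1)$ is the annihilator of a simple in $\mc{O}_{\chi}(U_1)$) and Lemma~\ref{lem:Ochiequiv} on category~$\mc{O}$. Only \emph{then} does it attack an arbitrary simple $\ms{N}$: one chooses an exhausting chain $U_0\le U_1\le\cdots$, observes the trivial fact that $\Ham_{U_N,\chi}(\ms{N})\neq 0$ for $N\gg 0$ (some isotype of $\Gamma(X,\ms{N})$ must appear in $U_N$), and then transports this back to $\Ham_{U_0,\chi}(\ms{N})\neq 0$ via the already-established Morita equivalence $\mf{A}_{\chi}(U_0,U_N)\otimes_{\mf{A}_{\chi}(U_N)}-$. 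No geometry on $X^{\vee}$, no free locus, and no case distinction between admissible and non-admissible simples is needed. The missing idea in your argument is precisely this bootstrap: prove the Morita statement first (on the algebra side, via Duflo), and use it to pull non-vanishing down from large $U$ to $U_0$, rather than trying to establish non-vanishing for a fixed small $U$ directly.
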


\begin{proof}
	First we check that for all $U_1 \ge U_2 \ge U_0$, the $\mf{A}_{\chi}(U_1)$-$\mf{A}_{\chi}(U_2)$-bimodule $\mf{A}_{\chi}(U_1,U_2)$ induces a Morita equivalence $\Lmod{\mf{A}_{\chi}(U_1)} \simeq \Lmod{\mf{A}_{\chi}(U_2)}$. Let $U_2'$ be a submodule of $U_1$ such that $U_1 = U_2 \oplus U_2'$. Under the identification (\ref{eq:matrixendU}) of $\mf{A}_{\chi}(U_1)$ with a matrix algebra, let $e \in \mf{A}_{\chi}(U_1)$ be the idempotent such that $e \mf{A}_{\chi}(U_1) e = \mf{A}_{\chi}(U_2)$. Then 
	$$
	\mf{A}_{\chi}(U_1) e = \mf{A}_{\chi}(U_2) \oplus \mf{A}_{\chi}(U_2',U_2) = \mf{A}_{\chi}(U_1,U_2)
	$$
	and $\End_{\mf{A}_{\chi}(U_1)}(\mf{A}_{\chi}(U_1)e)^{\mathrm{op}} = e \mf{A}_{\chi}(U_1) e = \mf{A}_{\chi}(U_2)$. Moreover, notice that $e \mf{A}_{\chi}(U_1)$ equals $\Hom_{\mf{A}_{\chi}(U_1)}(\mf{A}_{\chi}(U_1) e,\mf{A}_{\chi}(U_1) )$. Thus, it follows that $\mf{A}_{\chi}(U_1,U_2)$ defines a Morita equivalence if and only if the multiplication map $\mf{A}_{\chi}(U_1,U_2) \otimes_{\mf{A}_{\chi}(U_2)} \mf{A}_{\chi}(U_2,U_1) \rightarrow \mf{A}_{\chi}(U_1)$ is surjective i.e. we just need to check that $\mf{A}_{\chi}(U_1,U_2)$ is a generator in $\Lmod{\mf{A}_{\chi}(U_1)}$. Assume otherwise. Then there exists a (proper) primitive ideal $J \lhd \mf{A}_{\chi}(U_1)$ such that the image of $\mf{A}_{\chi}(U_1,U_2) \otimes_{\mf{A}_{\chi}(U_2)} \mf{A}_{\chi}(U_2,U_1) $ is contained in $J$. By the generalized Duflo Theorem \cite[Theorem 2.3]{Primitive}, there exists a simple object $L \in \mc{O}_{\chi}(U_1)$ such that $\mathrm{ann}_{\mf{A}_{\chi}(U_1)} L = J$. But then 
	$$
	\mf{A}_{\chi}(U_1,U_2) \otimes_{\mf{A}_{\chi}(U_2)} \mf{A}_{\chi}(U_2,U_1) \otimes_{\mf{A}_{\chi}(U_1)} L = 0. 
	$$
	By Lemma \ref{lem:Ochiequiv}, this implies that $L = 0$, contradicting the fact that $J \subsetneq \mf{A}_{\chi}(U_1)$. We deduce that $\mf{A}_{\chi}(U_1,U_2)$ is a generator. 
	
	Next, $\Ham_{U_0,\chi}$ is a quotient functor. Therefore it suffices to check that $\Ham_{U_0,\chi}(\ms{N}) \neq 0$ for all non-zero objects $\ms{N}$ in $\Coh(\dd_{X},G,\chi)$. Since there are only countably many isomorphism classes of simple $G$-modules, we can choose $U_0 \le U_1 \le U_2 \le \cdots $ such that for any simple $G$-module $W$ there exists some $N \gg 0$ with $W \le U_N$. Now assume that $\Ham_{U_0,\chi}(\ms{N}) = 0$. If $\ms{N} \neq 0$ then there exists $W$ such that $(\Gamma(X,\ms{N}) \otimes W^*)^G \neq 0$. Hence, for any $U_N \ge W$, $\Ham_{U_N,\chi}(\ms{N}) \neq 0$. Taking a quotient if necessary, we may assume $\ms{N}$ is simple. We argue as in the proof of Lemma \ref{lem:Ochiequiv}.  The adjunction  $\ms{P}_{\chi}(U_N) \otimes_{\mf{A}_{\chi}(U_N)} \Ham_{U_N,\chi}(\ms{N}) \rightarrow \ms{N}$ has non-zero image, hence is surjective (using the fact that the adjunction $\mathrm{Id}_{\Lmod{\mf{A}_{\chi}(U_N)}} \rightarrow \Ham_{U_N,\chi} \circ {}^{\perp} \Ham_{U_N,\chi}$ is an isomorphism). Therefore, there is a short exact sequence 
	\begin{equation}\label{eq:seskill}
	0 \rightarrow \ms{K} \rightarrow \ms{P}_{\chi}(U_N) \otimes_{\mf{A}_{\chi}(U_N)} \Ham_{U_N,\chi}(\ms{N}) \rightarrow \ms{N} \rightarrow 0,
	\end{equation}
	with $\Ham_{U_N,\chi}(\ms{K}) = 0$. This implies that $\Ham_{U_0,\chi}(\ms{K}) = 0$ too. Hence the transformation
	$$
	\Ham_{U_0,\chi}(\ms{P}_{\chi}(U_N) \otimes_{\mf{A}_{\chi}(U_N)} \Ham_{U_N,\chi}(\ms{N})) \rightarrow \Ham_{U_0,\chi}(\ms{N})
	$$
	is an isomorphism i.e.
	$$
	\Hom_{\dd_X}(\ms{P}_{\chi}(U_0), \ms{P}_{\chi}(U_N) \otimes_{\mf{A}_{\chi}(U_N)} \Ham_{U_N,\chi}(\ms{N})) \stackrel{\sim}{\longrightarrow} \Ham_{U_0,\chi}(\ms{N}). 
	$$
	Finally, since $\ms{P}_{\chi}(U_0)$ is projective in $\Coh(\dd_X,G,\chi)$, and $ \Ham_{U_N,\chi}(\ms{N})$ is finitely generated as a $\mf{A}_{\chi}(U_N)$-module, the canonical map 
	\begin{multline*}
	\Hom_{\dd_X}(\ms{P}_{\chi}(U_0), \ms{P}_{\chi}(U_N)) \otimes_{\mf{A}_{\chi}(U_N)} \Ham_{U_N,\chi}(\ms{N}) \rightarrow \\
	\Hom_{\dd_X}(\ms{P}_{\chi}(U_0), \ms{P}_{\chi}(U_N) \otimes_{\mf{A}_{\chi}(U_N)} \Ham_{U_N,\chi}(\ms{N})) = \Ham_{U_0,\chi}(\ms{N})
	\end{multline*}
	is also an isomorphism. But $\Hom_{\dd_X}(\ms{P}_{\chi}(U_0), \ms{P}_{\chi}(U_N)) = \mf{A}_{\chi}(U_0,U_N)$ defines a Morita equivalence between $\Lmod{\mf{A}_{\chi}(U_N)}$ and $\Lmod{\mf{A}_{\chi}(U_0)}$. Therefore, $\Ham_{U_N,\chi}(\ms{N}) \neq 0$ implies that $\Ham_{U_0,\chi}(\ms{N})  \neq 0$. This contradicts our initial assumption. 
\end{proof}

\begin{corollary}
	The module $\ms{P}_{\chi}(U)$ is a projective generator in $\Coh(\dd_X,G,\chi)$ for all $U \ge U_0$. 
\end{corollary}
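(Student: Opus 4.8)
The plan is to read off this corollary directly from Theorem~\ref{thm:Hamequivall}, exploiting the fact that an equivalence of abelian categories transports a projective generator to a projective generator.

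First I would check that $\ms{P}_{\chi}(U)$ is genuinely an object of $\Coh(\dd_X,G,\chi)$: it carries a canonical $(G,\chi)$-monodromic structure by Lemma~\ref{lem:endU}(a), and it is coherent because it is a quotient of $\dd_X \otimes U$, which is finitely generated over $\dd_X$ since $U$ is finite-dimensional. Next I would identify its image under the Hamiltonian reduction functor. By definition
$$
\Ham_{U,\chi}(\ms{P}_{\chi}(U)) = \Hom_{\dd_X}(\ms{P}_{\chi}(U),\ms{P}_{\chi}(U)) = \End_{\dd_X}(\ms{P}_{\chi}(U)),
$$
and, recalling from Lemma~\ref{lem:endU}(b) that $\mf{A}_{\chi}(U) = \End_{\dd_X}(\ms{P}_{\chi}(U))^{\mathrm{op}}$, the natural left $\mf{A}_{\chi}(U)$-action on this Hom-space is precomposition of endomorphisms, which is exactly the left regular representation of $\mf{A}_{\chi}(U)$.

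Since the left regular module is a projective generator of $\Lmod{\mf{A}_{\chi}(U)}$, and since $\Ham_{U,\chi} : \Coh(\dd_X,G,\chi) \stackrel{\sim}{\longrightarrow} \Lmod{\mf{A}_{\chi}(U)}$ is an equivalence for every $U \ge U_0$ by Theorem~\ref{thm:Hamequivall}, it follows that $\ms{P}_{\chi}(U)$ is a projective generator of $\Coh(\dd_X,G,\chi)$. One can also see the two properties separately: $\ms{P}_{\chi}(U)$ is projective in $\QCoh(\dd_X,G,\chi)$ by Lemma~\ref{lem:endU}(a), hence in the full subcategory $\Coh(\dd_X,G,\chi)$; and $\Ham_{U,\chi} = \Hom_{\dd_X}(\ms{P}_{\chi}(U),-)$, being an equivalence, is faithful, so for any $\ms{N}$ the subobject $\ms{N}'$ formed as the sum of the images of all morphisms $\ms{P}_{\chi}(U) \to \ms{N}$ satisfies $\Hom_{\dd_X}(\ms{P}_{\chi}(U),\ms{N}/\ms{N}') = 0$ by projectivity, forcing $\ms{N} = \ms{N}'$. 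There is essentially no obstacle remaining at this point — all of the substantive work went into the proof of Theorem~\ref{thm:Hamequivall}; the only step that warrants an explicit word is the coherence of $\ms{P}_{\chi}(U)$, which is immediate.
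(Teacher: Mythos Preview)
Your proposal is correct and is exactly the argument the paper has in mind: the corollary is stated without proof, immediately after Theorem~\ref{thm:Hamequivall}, because once $\Ham_{U,\chi}$ is known to be an equivalence and $\Ham_{U,\chi}(\ms{P}_{\chi}(U))$ is the left regular $\mf{A}_{\chi}(U)$-module, the conclusion is automatic. Your additional remarks on coherence and the alternative direct verification of the generator property are accurate but not needed.
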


\begin{example}
	A classical situation where one can apply the results of this section is the case where $G$ is a simple affine algebraic group and $X = R = \g$. We remark that the category $\Add$ is semi-simple and, via the Riemann-Hilbert correspondence, this category is equivalent to Lusztig's character sheaves on $\g$. Thus, $\mc{O}(U)$ is also semi-simple for all $U$, and for $U$ sufficiently large the simple modules in $\mc{O}(U)$ are in bijection with the simple character sheaves on $\g$. The results of Gunningham \cite{GunnighamAbelian} imply that for $U$ sufficiently large, the blocks of the algebra $\mf{A}(U)$ are labelled by Lusztig's cuspidal character sheaves. 
\end{example}

\subsection{Holonomic modules}\label{sec:holossQHR}

As in section \ref{sec:equvendring}, we assume that there exists a semi-invariant $s$ on $X$, such that the conditions of Proposition \ref{prop:UcircAisprime} hold. Namely, if $X^{\circ} = (s \neq 0)$, then: 
\begin{enumerate}
	\item[(i)] the moment map $\mu \colon T^* X^{\circ} \rightarrow \mf{g}^*$ is flat,
\item[(ii)] there is a $G$-saturated open subset of $X^{\circ}$ on which $G$ acts freely, and
\item[(iii)] the scheme $\mu^{-1}(0)^{\circ} /\!/ G$ is reduced and irreducible. 
\end{enumerate}
Let $j : X^{\circ} \hookrightarrow X$ denote the affine open embedding. Throughout the remainder of section \ref{sec:admissible}, \textbf{we assume} in addition that:
\begin{enumerate}
	\item[(F3)] The moment map $\mu \colon T^* X \rightarrow \mf{g}^*$ is flat.
\end{enumerate}

Let $M$ be a finitely generated $\mf{A}_{\chi}(U)$-module. By Lemma \ref{lem:asscrgrflat}, we may choose a good filtration $\mc{F}_{\idot} M$ on $M$, so that $\gr_{\mc{F}} M$ is a finitely generated $(\C[\mu^{-1}(0)] \otimes \End(U))^G$-module. By restriction, it is a finitely generated $Z(U)$-module and we write $V(M)$ for its support in $\mu^{-1}_U(0) /\!/ G$. We are in the general setting of \cite[\S 2.1]{BernsteinLosev}, therefore the results of \cite{BernsteinLosev} apply. Following \cite{BernsteinLosev}, we say that $M$ is \textit{holonomic} if $V(M)$ is isotropic. Here $V \subset \mu^{-1}_U(0)/\!/G$ is said to be isotropic if $V \cap \mc{L}$ is isotropic in $\mc{L}$ for all leaves $\mc{L} \subset \mu^{-1}_U(0)/\!/G$. 

\begin{lemma}
	Every module in $\mc{O}_{\chi}(U)$ is holonomic. 
\end{lemma}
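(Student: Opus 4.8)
The plan is to show that $V(M)$ is contained in an \emph{isotropic} subvariety of $\mu_U^{-1}(0)/\!/G$; since being holonomic only requires $V(M)$ itself to be isotropic, this suffices, and it sidesteps any appeal to involutivity of characteristic varieties. Let $M\in\mc{O}_\chi(U)$. By (F3) and Lemma~\ref{lem:asscrgrflat} we have $\gr_{\mc{F}}\mf{A}_\chi(U)=(\C[\mu^{-1}(0)]\otimes\End(U))^G$, so for any good filtration on $M$ the module $\gr_{\mc{F}}M$ is finitely generated over this algebra, hence over its centre $Z(U)$, and $V(M)$ is its support over $Z(U)$ --- independent of the chosen filtration, as recalled before the statement.

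First I would pin down where $V(M)$ sits. The commutative subalgebra $(\sym R)^G\subseteq\mf{A}_\chi(U)$, realised as the $G$-invariant constant-coefficient differential operators on $R$, is filtered by order, and the principal symbols of its elements span the subalgebra $\C[R^\vee]^G\subseteq\C[\mu^{-1}(0)]^G$ that corresponds to the $R^\vee/\!/G$-factor of the finite morphism $\Upsilon\circ\Xi:\mu_U^{-1}(0)/\!/G\to R/\!/G\times R^\vee/\!/G$. Because $M$ lies in $\mc{O}_\chi(U)$ it is locally nilpotent for $(\sym R)^G$; a short symbol computation then shows that every element of the augmentation ideal $\C[R^\vee]^G_+$ acts nilpotently on $\gr_{\mc{F}}M$, whence $V(M)\subseteq(\Upsilon\circ\Xi)^{-1}(R/\!/G\times\{0\})$. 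Pulling $\C[R^\vee]^G_+$ back along the quotient map $p:\mu^{-1}(0)\to\mu^{-1}(0)/\!/G$ cuts out exactly $\Lambda=\mu^{-1}(0)\cap(T^*Y\times R\times\mc{N}(R^\vee))$, since $\mc{N}(R^\vee)$ is by definition the zero fibre of $R^\vee\to R^\vee/\!/G$; as $p$ is surjective and $\Lambda$ is closed and $G$-stable, this gives $\Upsilon^{-1}(R/\!/G\times\{0\})=p(\Lambda)$, and therefore $V(M)\subseteq\Xi^{-1}(p(\Lambda))$.

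It then remains to see that $\Xi^{-1}(p(\Lambda))$ is isotropic. Recall that $\Lambda\subseteq\mu^{-1}(0)$ and that $\Lambda$ is Lagrangian in $T^*X$ (as established above, following \cite[Lemma~4.4]{AlmostCommutingVariety}). Over the dense $G$-stable open subset of $\mu^{-1}(0)$ on which $G$ acts freely --- which exists and is dense by (F3) together with the running hypothesis on $X^{\circ}$ --- the map $p$ is a smooth surjection onto a symplectic variety whose fibres are the leaves of the characteristic foliation of $\mu^{-1}(0)$, so the image of a Lagrangian is isotropic there; hence $p(\Lambda)$ is isotropic in $\mu^{-1}(0)/\!/G$. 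Finally $\Xi$ is a finite Poisson morphism (Lemma~\ref{lem:XiPoisson}): it carries symplectic leaves into symplectic leaves with a generically \'etale induced map, and restricts to a finite morphism over any subvariety, so $\Xi^{-1}$ of an isotropic set is isotropic. Thus $V(M)$ is isotropic, i.e.\ $M$ is holonomic. The steps I expect to require the most care are the symbol computation tying local nilpotency of $(\sym R)^G$ on $M$ to the vanishing of $\C[R^\vee]^G_+$ on $\gr_{\mc{F}}M$, and --- more seriously --- the control of isotropy across the singular strata of $\mu^{-1}(0)/\!/G$ under both $p$ and the finite cover $\Xi$; for the latter one may instead invoke the general machinery of \cite{BernsteinLosev} set up just above, in which the attracting locus of the contracting $\C^\times$-action, which contains $V(M)$, is known to be isotropic.
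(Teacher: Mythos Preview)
Your proposal is essentially correct and follows the same strategy as the paper: show $V(M)$ lies in the vanishing locus of $\C[R^\vee]^G_+$, identify that locus (via $\Xi$ and the quotient map) with the image of $\Lambda$, and conclude isotropy from the fact that $\Lambda$ is Lagrangian. Two points are worth comparing.

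First, the paper avoids your symbol computation entirely by invoking (F1): since $\Upsilon$ is finite, $\mf{A}_\chi(U)$ is a finite $\C[R]^G\otimes(\sym R)^G$-module, so any $M\in\mc{O}_\chi(U)$ is already finitely generated over $\C[R]^G$; the \emph{trivial} filtration $\mc{F}_{-1}M=0$, $\mc{F}_iM=M$ for $i\ge0$ is then good, and with this filtration $(\sym R)^G_+$ (which sits in strictly positive degree) acts by zero on $\gr M$. Your symbol argument also works, but this is cleaner and uses (F1) rather than only (F3).

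Second, for the isotropy step you attempt a free-locus argument and correctly flag that it does not handle the singular strata. The paper does not attempt this: it reduces from $\mu_U^{-1}(0)/\!/G$ to $\mu^{-1}(0)/\!/G$ via $\Xi$ (as you do), and then appeals directly to \cite[Proposition~4.1]{BernsteinLosev}, which says a closed subset of $\mu^{-1}(0)/\!/G$ is isotropic if and only if its preimage in $\mu^{-1}(0)$ is --- and that preimage is exactly $\Lambda$. This is precisely the fallback you suggest in your last sentence, so your proposal lands in the right place; the paper simply goes there immediately.
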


\begin{proof}
	Since $\mf{A}_{\chi}(U)$ is a finite $\C[R]^G \otimes (\sym R)^G$-bimodule, every object of $\mc{O}_{\chi}(U)$ is finitely generated over $\C[R]^G$. Therefore a good filtration of $M \in \mc{O}_{\chi}(U)$ is given by putting $\mc{F}_{-1} M = \{ 0 \}$ and $\mc{F}_m M = M$ for all $m \ge 0$. This implies that $V(M)$ is contained in the zero set $\mc{N}$ of $(\sym R)^G_+$. To see that the latter is isotropic, first note that it suffices to check that $\Xi(\mc{N})$ is isotropic i.e. we may assume without loss of generality that $U = \C$. Next, if $\pi : \mu^{-1}(0) \rightarrow \mu^{-1}(0) /\! / G$ is the quotient map, then by \cite[Proposition 4.1]{BernsteinLosev}, $\mc{N}$ is isotropic if and only if $\pi^{-1}(\mc{N})$ is isotropic in $\mu^{-1}(0)$. The closed set $\pi^{-1}(\mc{N})$ equals the Lagrangian $\Lambda$ defined in section \ref{sec:admissibledefn}. In particular, it is isotropic.  
\end{proof}

\begin{proposition}\label{prop:simpleholonomicO}
	If $\mf{A}_{\chi}(U)$ is simple then $\dim V(L) = \dim X /\!/ G$, for all simple objects $L \in \mc{O}_{\chi}(U)$. 
\end{proposition}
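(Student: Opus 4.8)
The plan is to leverage the simplicity of $\mf{A}_{\chi}(U)$ together with the holonomicity result established just above, via Bernstein's inequality in the equivariant setting of \cite[\S 2.1]{BernsteinLosev}. First I would recall that, since $\mf{A}_{\chi}(U)$ is simple, it admits a unique minimal dimension (in the sense of the Gelfand–Kirillov / Bernstein dimension attached to the order filtration), and every nonzero finitely generated module $M$ has $\dim V(M) \ge \tfrac12 \dim \mu_U^{-1}(0)/\!/G = \dim X/\!/G$; this is the content of Bernstein's inequality, which holds here because $\gr_{\mc{F}}\mf{A}_{\chi}(U)$ carries the Poisson structure induced by Lemma \ref{lem:asscrgrflat}, and $\mu_U^{-1}(0)/\!/G$ has finitely many symplectic leaves by Lemma \ref{lem:XiPoisson} combined with the standard fact that $\mu^{-1}(0)/\!/G$ does (under (F1)–(F3)). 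Concretely one invokes \cite[Proposition 2.? / Theorem 2.?]{BernsteinLosev} to the effect that over a prime (here simple) algebra with such a triangular/Poisson filtration, nonzero modules have support of half-dimension at least the dimension of the symplectic leaf through the generic point of the variety — which is all of $\mu_U^{-1}(0)/\!/G$, hence $\dim X/\!/G$.

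Next, for the reverse inequality, I would combine the previous lemma, which shows every $L \in \mc{O}_{\chi}(U)$ is holonomic — i.e. $V(L)$ is isotropic — with the observation that the generic (dense) symplectic leaf of $\mu_U^{-1}(0)/\!/G$ has dimension $2\dim X/\!/G$ (since $\Xi$ is finite by construction of $Z(U)$ and $\Upsilon$ is finite by (F1), so $\dim \mu_U^{-1}(0)/\!/G = \dim \mu^{-1}(0)/\!/G = \dim X/\!/G$ using flatness of $\mu$ from (F3); and the Poisson structure is generically symplectic because over the free locus $\mf{A}_{\chi}^{\circ}$ is Azumaya over a smooth symplectic base, cf. the discussion around Proposition \ref{prop:EGprime} and Lemma \ref{lem:nonzerorestopenU}). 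An isotropic subvariety of a symplectic variety of dimension $2\dim X/\!/G$ has dimension at most $\dim X/\!/G$; since $V(L)$ meets every leaf isotropically, and the non-dense leaves have strictly smaller dimension, we get $\dim V(L) \le \dim X/\!/G$. Combining the two inequalities gives equality.

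The step I expect to be the main obstacle is making Bernstein's inequality rigorous in this exact form: one needs that for a \emph{simple} (not merely prime) algebra $\mf{A}_{\chi}(U)$ equipped with the order filtration whose associated graded surjects onto $(\C[\mu^{-1}(0)]\otimes\End U)^G$ with $\mu$ flat, any nonzero module has Bernstein dimension bounded below by the dimension of the generic symplectic leaf of $\Spec Z(\gr\mf{A}_{\chi}(U)) = \mu_U^{-1}(0)/\!/G$. This is exactly the kind of statement proved in \cite{BernsteinLosev}, so I would cite the appropriate result there after checking that the hypotheses (finiteness of the number of leaves, existence of the Poisson filtration, primeness) are met — all of which are supplied by Lemmas \ref{lem:asscrgrflat}, \ref{lem:XiPoisson} and the simplicity assumption. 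A secondary, more routine point is checking $\dim \mu_U^{-1}(0)/\!/G = \dim X/\!/G$, which follows from finiteness of $\Xi$ and $\Upsilon$ together with the flatness of $\mu$, which forces $\mu^{-1}(0)$ to have pure dimension $2\dim X - \dim\mf{g}$ and hence $\mu^{-1}(0)/\!/G$ dimension $2\dim X - 2\dim\mf{g} = 2\dim X/\!/G$... wait, more carefully: $\dim\mu^{-1}(0)/\!/G = \dim X$ is \emph{not} right in general, but under (F1) the map $\Upsilon$ to $R/\!/G \times R^\vee/\!/G$ being finite pins down the dimension; I would spell this out using the product structure $X = Y \times R$ with $Y = V$ in the case of interest, where the numerology is classical (cf. \cite{AlmostCommutingVariety}).
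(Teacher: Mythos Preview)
Your approach is correct in spirit and relies on the same source \cite{BernsteinLosev}, but it is more roundabout than the paper's argument. The paper's proof is essentially two lines: let $\mf{p} = \mathrm{Ann}_{\mf{A}_{\chi}(U)}(L)$; the generalized Bernstein inequality \cite[Theorem~1.2]{BernsteinLosev} gives the \emph{equality} $\dim V(\mf{p}) = 2\dim V(L)$, and simplicity of $\mf{A}_{\chi}(U)$ forces $\mf{p}=(0)$, so $2\dim V(L) = \dim \mu^{-1}(0)/\!/G = 2\dim X/\!/G$. No separate upper bound from holonomicity is invoked.

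You instead split the conclusion into a lower bound (Bernstein inequality in its classical ``$\ge$'' form) and an upper bound (isotropy in the open leaf), and then spend effort verifying $\dim \mu_U^{-1}(0)/\!/G = 2\dim X/\!/G$. This works, but the annihilator formulation makes the two-step argument unnecessary: once you observe $\mf{p}=0$, Losev's theorem hands you the equality directly, and the dimension identity $\dim \mu^{-1}(0)/\!/G = 2\dim X/\!/G$ (which both arguments need) is immediate from flatness of $\mu$ and finiteness of $\Xi$. Your hesitation at the end about the numerology disappears once you phrase things via the annihilator rather than via the open symplectic leaf.
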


\begin{proof}
	Let $L \in \mc{O}_{\chi}(U)$ be simple. If $\mf{p}$ is the annihilator of $L$ in $\mf{A}_{\chi}(U)$, then the generalized Bernstein inequality \cite[Theorem 1.2]{BernsteinLosev} says that $\dim V(\mf{p}) = 2 \dim V(L)$. Since $\mf{A}_{\chi}(U)$ is assumed to be simple, $\mf{p} = (0)$ and hence $2 \dim V(L) = \dim \mu^{-1}(0)/\!/ G = 2 \dim X /\!/ G$. 
\end{proof}

\begin{corollary}
		If $\mf{A}_{\chi}(U)$ is prime, then the following are equivalent:
	\begin{enumerate}
		\item $\mf{A}_{\chi}(U)$ is simple. 
		\item $\dim V(L) = \dim X /\!/ G$, for all simple objects $L \in \mc{O}_{\chi}(U)$. 
	\end{enumerate}
\end{corollary}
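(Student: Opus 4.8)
The plan is to prove the two implications separately; only the reverse one uses the primeness hypothesis. The implication (1) $\Rightarrow$ (2) is already Proposition~\ref{prop:simpleholonomicO} verbatim, and notably its proof uses neither primeness nor anything beyond the generalized Bernstein inequality, so there is nothing further to do there. For (2) $\Rightarrow$ (1) I would first reduce ``$\mf{A}_{\chi}(U)$ simple'' to ``$(0)$ is the only primitive ideal''; this is legitimate because the preceding corollary identifies $\Spec \mf{A}_{\chi}(U)$ with $\mathrm{Prim}\,\mf{A}_{\chi}(U)$, and any nonzero two-sided ideal of the Noetherian algebra $\mf{A}_{\chi}(U)$ contains a nonzero prime, hence a nonzero primitive, ideal. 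So it suffices to rule out nonzero primitive ideals.

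Let then $\mf{p}$ be a nonzero primitive ideal. By the generalized Duflo theorem \cite[Theorem~2.3]{Primitive} we may write $\mf{p} = \mathrm{ann}_{\mf{A}_{\chi}(U)}(L)$ for a simple $L \in \mc{O}_{\chi}(U)$, and the generalized Bernstein inequality \cite[Theorem~1.2]{BernsteinLosev} gives $\dim V(\mf{p}) = 2\dim V(L)$. Feeding in hypothesis (2), namely $\dim V(L) = \dim X/\!/G$, one obtains
$$
\dim V(\mf{p}) = 2\dim X/\!/G = \dim \mu^{-1}(0)/\!/G = \dim \mu^{-1}_U(0)/\!/G ,
$$
where I use flatness of $\mu$ (so $\mu^{-1}(0)$ is a complete intersection of pure dimension $2\dim X - \dim\mf{g}$), the fact that $G$ acts with a free orbit on $X$ (so $\dim X/\!/G = \dim X - \dim\mf{g}$), and finiteness of $\Xi$. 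Since $V(\mf{A}_{\chi}(U)) = \mathrm{Supp}_{Z(U)}\gr_{\mc{F}}\mf{A}_{\chi}(U) = \mu^{-1}_U(0)/\!/G$, this says precisely that $\dim V(\mf{A}_{\chi}(U)/\mf{p}) = \dim V(\mf{A}_{\chi}(U))$, and the remaining task is to contradict this using $\mf{p}\neq(0)$ together with primeness.

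To finish I would invoke the dimension theory of \cite[\S2.1]{BernsteinLosev}: for a prime algebra carrying a good filtration whose associated graded is module-finite over a commutative affine ring, the function $M \mapsto \dim V(M)$ is additive on short exact sequences of finitely generated modules, and a proper prime quotient of the algebra has associated variety of strictly smaller dimension than $V(\mf{A}_{\chi}(U))$. Concretely this rests on the facts (Lemma~\ref{lem:asscrgrflat} and Lemma~\ref{lem:regiso}) that $\gr_{\mc{F}}\mf{A}_{\chi}(U) = (\C[\mu^{-1}(0)] \otimes \End(U))^G$ is torsion-free and module-finite over $\C[\mu^{-1}(0)]^G$ and is Azumaya over it on the locus where $G$ acts freely; primeness of $\mf{A}_{\chi}(U)$ then forces the dense symplectic leaf of $\mu^{-1}_U(0)/\!/G$ coming from the free locus to lie in a single irreducible component, so that no proper prime quotient can dominate it. This contradicts the displayed equality, hence $\mf{p} = (0)$, and $\mf{A}_{\chi}(U)$ is simple.

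I expect this last step to be the main obstacle: turning the numerical equality $\dim V(\mf{p}) = \dim\mu^{-1}_U(0)/\!/G$ into the ideal-theoretic statement $\mf{p} = (0)$. The delicacy is that $\mu^{-1}(0)$ — and therefore $\mu^{-1}_U(0)/\!/G$ — need not be irreducible in general; it is exactly in this reducible situation that ``$\mf{A}_{\chi}(U)$ is prime'' is a genuine hypothesis rather than something automatic, so one cannot simply appeal to irreducibility of $\mathrm{Spec}\,Z(U)$. Instead one must exploit that primeness of $\mf{A}_{\chi}(U)$ controls the generic behaviour along the free locus, together with the equidimensionality and exactness statements of \cite{BernsteinLosev}. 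Everything else — the chain of equalities above and the reduction to counting primitive ideals — is routine given the results already established.
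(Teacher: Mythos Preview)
Your approach is essentially the same as the paper's; two comments.

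First, your reduction step contains a slip: it is not true that a nonzero two-sided ideal \emph{contains} a nonzero prime ideal (already false in $\C[x]$). What you want, and what the paper uses, is that a nonzero proper two-sided ideal is \emph{contained in} some maximal ideal $\mf{m}$, which is then a nonzero primitive ideal. This does not affect anything downstream, since you then work directly with a nonzero primitive $\mf{p}$ anyway.

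Second, the step you flag as the ``main obstacle'' is handled in the paper by a single citation: Borho--Kraft \cite[Korollar~3.5]{BKGelfandKirillov} gives exactly the statement that for a prime algebra $A$ (with well-behaved GK dimension, as here), any nonzero two-sided ideal $\mf{m}$ satisfies $\mathrm{GKdim}(A/\mf{m}) < \mathrm{GKdim}(A)$. Since $\dim V(\mf{m})$ is the GK dimension of $\mf{A}_{\chi}(U)/\mf{m}$ and $\dim \mu^{-1}(0)/\!/G$ is the GK dimension of $\mf{A}_{\chi}(U)$, this immediately contradicts your displayed equality. There is no need to invoke the free locus, Azumaya properties, or irreducible components of $\mu^{-1}_U(0)/\!/G$; the GK-dimension inequality for prime rings does all the work.
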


\begin{proof}
	By Proposition \ref{prop:simpleholonomicO}, we may assume that $\dim V(L) = \dim X /\!/ G$, for all simple objects $L \in \mc{O}_{\chi}(U)$. If $I \neq (0)$ is a proper ideal of $\mf{A}_{\chi}(U)$, then it is contained in some maximal ideal $\mf{m}$. By the generalized Duflo Theorem \cite[Theorem 2.3]{Primitive}, there exists some simple object $L \in \mc{O}_{\chi}(U)$ whose annihilator equals $\mf{m}$. Again, by the generalized Bernstein inequality \cite[Theorem 1.2]{BernsteinLosev}$, \dim V(\mf{m}) = 2 \dim V(L)$. Thus, $\dim V(\mf{m}) = \dim \mu^{-1}(0)/\!/ G$, which equals the Gelfand-Kirillov dimension of $\mf{A}_{\chi}(U)$. But, since we have assumed that $\mf{A}_{\chi}(U)$ is prime, \cite[Korollar 3.5]{BKGelfandKirillov} says that $\dim V(\mf{m}) < \dim \mu^{-1}(0)/\!/ G$ if $\mf{m} \neq 0$. Thus, we deduce that $\mf{A}_{\chi}(U)$ is simple. 	
\end{proof}

Finally, we consider how the algebras $\mf{A}_{\chi}^{\circ}(U)$ are related to $\mf{A}_{\chi}(U)$. This will be developed further in section \ref{sec:qhrframedcyclicproof}. 

\begin{lemma}\label{lem:circcommutediagam}
For all $U \in \Rep(G)$, the diagrams 
\begin{equation}\label{eq:commutecircdiag1}
\begin{tikzcd}
\QCoh(\dd_{X^{\circ}},G,\chi) & & \ar[ll,"{}^{\perp} \Ham_{U,\chi}"'] \LMod{\mf{A}_{\chi}^{\circ}(U)} \\
\QCoh(\dd_{X},G,\chi) \ar[u,"j^*"] & & \ar[ll,"{}^{\perp} \Ham_{U,\chi}"'] \LMod{\mf{A}_{\chi}(U)} \ar[u,"{\mathrm{Ind}_{\mf{A}}^{\mf{A}^{\circ}}}"'] 
\end{tikzcd}
\end{equation}
and
\begin{equation}\label{eq:commutecircdiag2}
\begin{tikzcd}
\QCoh(\dd_{X^{\circ}},G,\chi) \ar[rr,"{\Ham_{U,\chi}}"] \ar[d,"{j_*}"'] & & \LMod{\mf{A}_{\chi}^{\circ}(U)}  \ar[d,"{\mathrm{Res}_{\mf{A}}^{\mf{A}^{\circ}}}"] \\   
\QCoh(\dd_{X},G,\chi) \ar[rr,"{\Ham_{U,\chi}}"] & &  \LMod{\mf{A}_{\chi}(U)}  
\end{tikzcd}
\end{equation}
commute. 
\end{lemma}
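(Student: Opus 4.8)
The plan is to deduce both squares from a single observation: writing $\ms{P}^{\circ}_{\chi}(U) := \ms{P}_{\chi}(U)|_{X^{\circ}} = j^{*}\ms{P}_{\chi}(U)$, this is exactly the $\dd_{X^{\circ}}$-module defining $\mf{A}^{\circ}_{\chi}(U)$, and under the identifications $\mf{A}_{\chi}(U) = \End_{\dd_{X}}(\ms{P}_{\chi}(U))^{\mathrm{op}}$ and $\mf{A}^{\circ}_{\chi}(U) = \End_{\dd_{X^{\circ}}}(\ms{P}^{\circ}_{\chi}(U))^{\mathrm{op}}$ of Lemma~\ref{lem:endU}, the algebra map $\varphi_{U}$ from Section~\ref{sec:sympleavesopenXcirc} is precisely the map induced by $j^{*}$ on endomorphism rings. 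In particular $\mathrm{Ind}_{\mf{A}}^{\mf{A}^{\circ}}(N) = \mf{A}^{\circ}_{\chi}(U)\otimes_{\mf{A}_{\chi}(U)}N$ and $\mathrm{Res}_{\mf{A}}^{\mf{A}^{\circ}}$ is restriction of scalars, both taken along $\varphi_{U}$. I would first establish the commutativity of diagram~(\ref{eq:commutecircdiag1}) directly, and then obtain diagram~(\ref{eq:commutecircdiag2}) by passing to right adjoints.

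For the first square, recall that ${}^{\perp}\Ham_{U,\chi}(N) = \ms{P}_{\chi}(U)\otimes_{\mf{A}_{\chi}(U)}N$, and likewise on $X^{\circ}$. Since $X^{\circ} = X \smallsetminus s^{-1}(0)$ is a principal affine open, $j^{*}$ is exact and, having the right adjoint $j_{*}$, commutes with arbitrary direct sums and cokernels. Choosing a presentation of $N$ by free $\mf{A}_{\chi}(U)$-modules and applying $j^{*}$ therefore yields a natural isomorphism
\[
j^{*}\bigl(\ms{P}_{\chi}(U)\otimes_{\mf{A}_{\chi}(U)}N\bigr) \;\simeq\; \bigl(j^{*}\ms{P}_{\chi}(U)\bigr)\otimes_{\mf{A}_{\chi}(U)}N \;=\; \ms{P}^{\circ}_{\chi}(U)\otimes_{\mf{A}_{\chi}(U)}N .
\]
On the other hand, associativity of the tensor product gives a natural isomorphism
\[
{}^{\perp}\Ham_{U,\chi}\bigl(\mathrm{Ind}_{\mf{A}}^{\mf{A}^{\circ}}N\bigr) \;=\; \ms{P}^{\circ}_{\chi}(U)\otimes_{\mf{A}^{\circ}_{\chi}(U)}\bigl(\mf{A}^{\circ}_{\chi}(U)\otimes_{\mf{A}_{\chi}(U)}N\bigr) \;\simeq\; \ms{P}^{\circ}_{\chi}(U)\otimes_{\mf{A}_{\chi}(U)}N .
\]
Both identifications are natural in $N$ and are morphisms of $(G,\chi)$-monodromic $\dd_{X^{\circ}}$-modules (the monodromic structure on every term is inherited from that of $\ms{P}_{\chi}(U)$ via the $G$-equivariant morphism $j$), so composing them shows that diagram~(\ref{eq:commutecircdiag1}) commutes.

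Diagram~(\ref{eq:commutecircdiag2}) now follows formally. Each of the four functors appearing in~(\ref{eq:commutecircdiag1}) is a left adjoint: $j^{*}\dashv j_{*}$ because $j$ is an affine open immersion; ${}^{\perp}\Ham_{U,\chi}\dashv\Ham_{U,\chi}$ as recorded in Section~\ref{sec:admissible}; and $\mathrm{Ind}_{\mf{A}}^{\mf{A}^{\circ}}\dashv\mathrm{Res}_{\mf{A}}^{\mf{A}^{\circ}}$ for extension/restriction of scalars along $\varphi_{U}$. Taking right adjoints of the natural isomorphism $j^{*}\circ{}^{\perp}\Ham_{U,\chi}\simeq{}^{\perp}\Ham_{U,\chi}\circ\mathrm{Ind}_{\mf{A}}^{\mf{A}^{\circ}}$ and invoking uniqueness of adjoints produces a natural isomorphism $\Ham_{U,\chi}\circ j_{*}\simeq\mathrm{Res}_{\mf{A}}^{\mf{A}^{\circ}}\circ\Ham_{U,\chi}$, which is precisely the commutativity of~(\ref{eq:commutecircdiag2}). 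If one prefers a direct argument, it is the $(j^{*},j_{*})$-adjunction $\Hom_{\dd_{X}}(\ms{P}_{\chi}(U),j_{*}\ms{N})\simeq\Hom_{\dd_{X^{\circ}}}(j^{*}\ms{P}_{\chi}(U),\ms{N})=\Hom_{\dd_{X^{\circ}}}(\ms{P}^{\circ}_{\chi}(U),\ms{N})$, whose naturality in the first variable upgrades it to an isomorphism of $\mf{A}_{\chi}(U)$-modules once the right-hand side is viewed as an $\mf{A}_{\chi}(U)$-module by restriction along $\varphi_{U}$.

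The only genuinely delicate point is the identification of $\varphi_{U}$ with the map that $j^{*}$ induces on endomorphism algebras, together with the bookkeeping of the $(G,\chi)$-monodromic structures under $j^{*}$ and $j_{*}$; I expect that once this is carefully unwound, the remaining steps are routine categorical formalities and present no real obstacle.
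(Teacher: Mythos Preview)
Your proof is correct and follows essentially the same route as the paper: both establish diagram~(\ref{eq:commutecircdiag1}) by identifying $j^{*}\ms{P}_{\chi}(U)$ with $\ms{P}^{\circ}_{\chi}(U)$ and using associativity of the tensor product, then deduce diagram~(\ref{eq:commutecircdiag2}) by passing to right adjoints. Your presentation argument for why $j^{*}$ commutes with $-\otimes_{\mf{A}_{\chi}(U)}N$ is a bit more explicit than the paper's, which simply asserts that the canonical bimodule map $\C[X^{\circ}]\otimes_{\C[X]}\ms{P}_{\chi}(U)\to\ms{P}^{\circ}_{\chi}(U)\otimes_{\mf{A}^{\circ}}\mf{A}^{\circ}$ is an isomorphism, but the content is the same.
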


\begin{proof}
For brevity, write $\mf{A} = \mf{A}_{\chi}(U)$ and $\mf{A}^{\circ} = \mf{A}_{\chi}^{\circ}(U)$. For $M$ a left $\mf{A}_{\chi}(U)$-module, 
$$
{}^{\perp} \Ham_{U,\chi} \left( \mathrm{Ind}_{\mf{A}}^{\mf{A}^{\circ}} M \right) = (\mc{P}_{\chi}(U) |_{X^{\circ}}) \otimes_{\mf{A}^{\circ}} \mf{A}^{\circ}\otimes_{\mf{A}} M, 
$$
and
$$
j^*( {}^{\perp} \Ham_{U,\chi}(M)) = \C[X^{\circ}] \otimes_{\C[X]} \mc{P}_{\chi}(U) \otimes_{\mf{A}} M.
$$
Therefore, the commutativity of (\ref{eq:commutecircdiag1}) follows from the fact that the canonical morphism 
$$
 \C[X^{\circ}] \otimes_{\C[X]} \mc{P}_{\chi}(U) \rightarrow (\mc{P}_{\chi}(U) |_{X^{\circ}}) \otimes_{\mf{A}^{\circ}} \mf{A}^{\circ}
$$
of $(\dd_{X^{\circ}}, \mf{A}_{\chi}(U) )$-bimodules is an isomorphism. The fact that (\ref{eq:commutecircdiag2}) commutes follows from the commutativity of (\ref{eq:commutecircdiag1}) by noting that $\mathrm{Res}_{\mf{A}}^{\mf{A}^{\circ}} \circ   \Ham_{U,\chi}$ is right adjoint to ${}^{\perp} \Ham_{U,\chi} \circ \mathrm{Ind}_{\mf{A}}^{\mf{A}^{\circ}}$ and $\Ham_{U,\chi} \circ j_*$ is right adjoint to $ j^*\circ {}^{\perp} \Ham_{U,\chi}$. 
\end{proof}

\begin{proposition}\label{prop:resXcircequigeneric}
The following are equivalent:
\begin{enumerate}
\item[(a)] The map $\mf{A}_{\chi}(U) \rightarrow \mf{A}_{\chi}^{\circ}(U)$ is an isomorphism for all $U \in \Rep(G)$. 
\item[(b)] Restriction is an equivalence $j^* : \QCoh(\dd_{X},G,\chi) \stackrel{\sim}{\longrightarrow} \QCoh(\dd_{X^{\circ}},G,\chi)$, with quasi-inverse $j_*$. 
\end{enumerate}
\end{proposition}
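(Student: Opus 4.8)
The plan is to prove both implications by transporting the restriction functor $j^*$ and its right adjoint $j_*$ through the equivalences coming from Hamiltonian reduction, and then reading off the statement from the behaviour of the comparison maps $\varphi_U$.

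For (b) $\Rightarrow$ (a) I would first note that $j^*$ does restrict to a functor $\QCoh(\dd_X,G,\chi) \to \QCoh(\dd_{X^{\circ}},G,\chi)$, since monodromicity is preserved under restriction to a $G$-stable open, and that $j^* \ms{P}_{\chi}(U) = \C[X^{\circ}] \otimes_{\C[X]} \ms{P}_{\chi}(U)$ is exactly the $\dd_{X^{\circ}}$-module $\ms{M}_{\chi}(U)|_{X^{\circ}}$ whose opposite endomorphism ring is, by definition, $\mf{A}_{\chi}^{\circ}(U)$. By the very definition of $\varphi_U$ (the map on endomorphism rings induced by restriction of global sections) together with Lemma~\ref{lem:endU}, $\varphi_U$ is precisely the map $\End_{\dd_X}(\ms{P}_{\chi}(U))^{\mathrm{op}} \to \End_{\dd_{X^{\circ}}}(j^*\ms{P}_{\chi}(U))^{\mathrm{op}}$ obtained by applying the functor $j^*$. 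If $j^*$ is fully faithful, this map is an isomorphism for every $U \in \Rep(G)$, which is (a). Note that no largeness hypothesis on $U$ enters here.

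For (a) $\Rightarrow$ (b) I would fix $U \ge U_0$, with $U_0$ as in Lemma~\ref{lem:Ubig}, so that $\ms{P}_{\chi}(U)$ is a compact projective generator of $\QCoh(\dd_X,G,\chi)$ and the Hamiltonian reduction functor $\Ham_{U,\chi} : \QCoh(\dd_X,G,\chi) \to \LMod{\mf{A}_{\chi}(U)}$ is an equivalence (Theorem~\ref{thm:projgen}, using that $\ms{P}_{\chi}(U)$ is coherent, hence compact). Since $j$ is an affine open embedding, $j_*$ is exact, preserves monodromicity, and satisfies $j^* j_* \cong \mathrm{id}$; therefore $j^*$, being left adjoint to the exact functor $j_*$, preserves projectivity, carries a generating family to a generating family (a surjection $\bigoplus \ms{P}_{\chi}(U_\alpha) \twoheadrightarrow j_* \ms{N}$ pulls back to $\bigoplus j^*\ms{P}_{\chi}(U_\alpha) \twoheadrightarrow \ms{N}$), and preserves compactness because $j$ is of finite type. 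Hence $j^*\ms{P}_{\chi}(U)$ is a compact projective generator of $\QCoh(\dd_{X^{\circ}},G,\chi)$, so the Hamiltonian reduction functor on $X^{\circ}$ is likewise an equivalence $\QCoh(\dd_{X^{\circ}},G,\chi) \to \LMod{\mf{A}_{\chi}^{\circ}(U)}$, with quasi-inverse the associated left adjoint. By the commutativity of the two squares in Lemma~\ref{lem:circcommutediagam}, under these equivalences $j^*$ is identified with $\mathrm{Ind}_{\mf{A}}^{\mf{A}^{\circ}} = \mf{A}_{\chi}^{\circ}(U) \otimes_{\mf{A}_{\chi}(U)} -$, extension of scalars along $\varphi_U$, and $j_*$ with restriction of scalars $\mathrm{Res}_{\mf{A}}^{\mf{A}^{\circ}}$ along $\varphi_U$. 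Hypothesis (a) says $\varphi_U$ is an isomorphism, so $\mathrm{Ind}_{\mf{A}}^{\mf{A}^{\circ}}$ and $\mathrm{Res}_{\mf{A}}^{\mf{A}^{\circ}}$ are mutually inverse equivalences; transporting back, $j^*$ is an equivalence, and since it has right adjoint $j_*$, its quasi-inverse is $j_*$. This is (b).

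The step I expect to require the most care is the verification that $j^* \ms{P}_{\chi}(U)$ is a compact projective generator of $\QCoh(\dd_{X^{\circ}},G,\chi)$: the open set $X^{\circ}$ is not itself of the product form $Y \times R$, so Theorem~\ref{thm:projgen} is not literally available for it, and one must instead exploit the exactness of $j_*$ for the affine open embedding $j$, together with $j^* j_* \cong \mathrm{id}$, to reduce the statement on $X^{\circ}$ to the one already proved on $X$. Once this is in place, the rest is formal manipulation of the diagrams of Lemma~\ref{lem:circcommutediagam} and of extension and restriction of scalars along the isomorphism $\varphi_U$.
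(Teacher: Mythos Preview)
Your argument is correct and uses the same ingredients as the paper's proof (Lemma~\ref{lem:circcommutediagam} and Theorem~\ref{thm:Hamequivall}), but the route you take in the direction (a) $\Rightarrow$ (b) is slightly more elaborate than necessary. You first show that $j^*\ms{P}_{\chi}(U)$ is a compact projective generator of $\QCoh(\dd_{X^{\circ}},G,\chi)$, so that Hamiltonian reduction on $X^{\circ}$ is itself an equivalence, and then transport $j^*$ and $j_*$ through the two equivalences to $\mathrm{Ind}$ and $\mathrm{Res}$. The paper bypasses this intermediate step entirely: from the two commuting squares of Lemma~\ref{lem:circcommutediagam} one reads off directly that $\mathrm{Res}_{\mf{A}}^{\mf{A}^{\circ}} \circ \mathrm{Ind}_{\mf{A}}^{\mf{A}^{\circ}} \simeq \Ham_{U,\chi} \circ j_* j^* \circ {}^{\perp}\Ham_{U,\chi}$, and then, choosing $U = U_0$ so that $\Ham_{U_0,\chi}$ and ${}^{\perp}\Ham_{U_0,\chi}$ are mutually inverse, the hypothesis that $\varphi_{U_0}$ is an isomorphism forces the unit $\mathrm{id} \to j_* j^*$ to be an isomorphism; together with the automatic counit isomorphism $j^* j_* \simeq \mathrm{id}$ for an open embedding, this gives (b). No knowledge of whether Hamiltonian reduction on $X^{\circ}$ is an equivalence is needed. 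Conversely, your argument for (b) $\Rightarrow$ (a), reading $\varphi_U$ as the map on endomorphism rings induced by the fully faithful functor $j^*$, is if anything cleaner than the paper's, which runs the same composite identity in reverse using $\Ham_{U,\chi} \circ {}^{\perp}\Ham_{U,\chi} \simeq \mathrm{id}$.
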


\begin{proof}
Let $A \rightarrow B$ be a ring homomorphism. Then we exploit the fact that this morphism is an isomoprhism if and only if the adjunction $\mathrm{id} \rightarrow \mathrm{Res}^B_A \mathrm{Ind}_A^B $ is an isomorphism.  Lemma \ref{lem:circcommutediagam} implies that 
$$
\mathrm{Res}_{\mf{A}}^{\mf{A}^{\circ}} \circ \mathrm{Ind}_{\mf{A}}^{\mf{A}^{\circ}} \simeq \Ham_{U,\chi} \circ j_* j^* \circ {}^{\perp} \Ham_{U,\chi}. 
$$
By Theorem \ref{thm:Hamequivall}, we may choose $U$ sufficiently large so that $\Ham_{U,\chi}$ is an equivalence, with inverse ${}^{\perp} \Ham_{U,\chi}$. Then if $\mf{A}_{\chi}(U) \rightarrow \mf{A}_{\chi}^{\circ}(U)$ is an isomorphism, it follows that $j^*$ is an equivalence, with inverse $j_*$. 

Conversely, if $j_* j^* \simeq \mathrm{id}$, then for all $U \in \Rep(G)$, we have  
$$
\mathrm{Res}_{\mf{A}}^{\mf{A}^{\circ}} \circ \mathrm{Ind}_{\mf{A}}^{\mf{A}^{\circ}} \simeq \Ham_{U,\chi} \circ {}^{\perp} \Ham_{U,\chi} \simeq \mathrm{id}. 
$$
and hence $\mf{A}_{\chi}(U) \rightarrow \mf{A}_{\chi}^{\circ}(U)$ is an isomorphism.
\end{proof}

\section{The framed cyclic quiver}\label{sect:framedquivertheory}

Generalizing the case considered originally by Gan and Ginzburg \cite{AlmostCommutingVariety}, admissible $\dd$-modules on the framed cyclic quiver are a rich class of examples of admissible $\dd$-modules in large part because of their connection with the representation theory of rational Cherendnik algebras. This connection will be recalled in the final section. First, as in the introduction, $\Q(\ell)$ denotes the cyclic quiver, with vertices $\{ 0, \ds, \ell-1 \}$ and arrows $a_i : i \rightarrow i+1$. Then $\Q_{\infty}(\ell)$ is the framed cyclic quiver with framing $\infty \rightarrow 0$. If $\varepsilon_i$ is the dimension vector with $1$ at vertex $i$ and zero elsewhere, then $\delta = \varepsilon_{0} + \cdots + \varepsilon_{\ell-1}$ is the minimal imaginary root for $\Q(\ell)$, and we fix $\mbf{v} := \varepsilon_{\infty} + n \delta$, a dimension vector for $\Q_{\infty}(\ell)$. Fix $G = G(n \delta)$ and denote by $X$ the space $\Rep(\Q_{\infty}(\ell),\mbf{v})$ of $\mbf{v}$-dimensional representations of the framed cyclic quiver. Then $X$ is a finite dimensional $G$-representation. We write $G(n \delta) = G_0 \times \cdots \times G_{\ell-1}$, where $G_i \simeq \mathrm{GL}_n$ acts on the vector space at vertex $i$ of $\Q_{\infty}(\ell)$. The corresponding decomposition of $\mf{g}$ is $\mf{g}_0 \oplus \cdots \oplus \mf{g}_{\ell-1}$. The character $g \mapsto \det(g_i)$ of $G$ is denoted $\det_i$ and its differential is $\Tr_i$. This defines an isomorphism $\C^{\ell} \iso \mathbb{X}^*(\mf{g})$, given by
\begin{equation}\label{eq:Xgidentification}
(\chi_0, \ds, \chi_{\ell-1}) \mapsto \chi = \sum_{i = 0}^{\ell-1} \chi_i \Tr_i. 
\end{equation}

\subsection{Combinatorics}\label{sec:framedcombinatorics}

The set of all partitions is denoted $\mathcal{P}$ and $\mathcal{P}_{\ell}$ denotes the set of all $\ell$-multipartitions. The subset of $\mc{P}$, resp. of $\mc{P}_{\ell}$, consisting of all partitions of $n\in\mathbf{N}$, resp. of all $\ell$-multipartitions of $n$, is denoted $\mc{P}(n)$, resp. $\mc{P}_{\ell}(n)$. Let $\lambda$ be a partition. The associated Young diagram is
$$
Y(\lambda) = \left\{ (i,j) \in \Z_{\ge 0}^2 \ | \ 1 \le j \le \ell(\lambda), 1 \le i \le \lambda_j \right\}
$$
and the content $\mathrm{ct}(\Box)$ of the box $\Box \in Y(\lambda)$ in position $(i,j)$ is the integer $j - i$.

The underlying graph of the quiver $\Q(\ell)$ is the Dynkin diagram of type $\widetilde{\mathsf{A}}_{\ell-1}$. Therefore we will identify the lattice of virtual dimension vectors $\Z^{\ell}$ for $\Q(\ell)$ with the root lattice $Q$ of type $\widetilde{\mathsf{A}}_{\ell-1}$. We denote by $\mathsf{R} \subset Q$ the set of \textit{roots} and $\mathsf{R}^+ = \mathsf{R} \cap Q^+$ the subset of \textit{positive roots} associated to $\widetilde{\mathsf{A}}_{\ell-1}$. If $\delta = \varepsilon_{0} + \cdots + \varepsilon_{\ell-1}$ denotes the minimal imaginary root and $\Phi := \{ \alpha \in \mathsf{R} \ | \ \varepsilon_0 \cdot \alpha = 0 \}$ is the finite root system of type $\mathsf{A}_{\ell-1}$, then 
$$
\mathsf{R} = \{ n \delta + \alpha \ |  \ n \in \Z, \ \alpha \in \Phi \cup \{ 0 \} \} \smallsetminus \{ 0 \}. 
$$
We fix a generator $\sigma$ of the cyclic group $\Z_{\ell}$. Given a partition $\lambda$, the $\ell$-residue of $\lambda$ is defined to be the element $\res_{\ell}(\lambda) := \sum_{\Box \in \lambda} \sigma^{\mathrm{ct}(\Box)}$ in the group algebra $\Z[\Z_{\ell}]$. Similarly, given an $\ell$-multipartition $\nu$, the shifted $\ell$-residue of $\nu$ is defined to be 
$$
\sres_{\ell}(\nu) = \sum_{i = 0}^{\ell-1} \sigma^i \res_{\ell}(\nu^{(i)}). 
$$
We identify the root lattice $Q$ with $\Z [\Z_{\ell}]$ by $\varepsilon_i \mapsto \sigma^i$.

\subsection{Local systems}\label{sec:localcomp} If we take $\Q = \Q_{\infty}(\ell)$ and let $I$ be the two-sided ideal of $\C \Q$ generated by $(a_{\ell-1} \circ \cdots \circ a_0)^n$, then the space of representations $\Rep(\C \Q/I, \mbf{v})$ is the enhanced cyclic nilpotent cone $\mc{N}_{\infty}(\ell,n)$. 

Recall from Theorem \ref{thm:paramcomb} that the $G$-orbits $\mathcal{O}_{(\lambda;\nu)}$ in the enhanced cyclic nilpotent cone $\mc{N}_{\infty}(\ell,n)$ are labelled by the set 
$$
\mathcal{Q}(n,\ell) = \left\{ (\lambda;\nu) \in \mathcal{P} \times \mathcal{P}_{\ell} \ | \ \res_{\ell}(\lambda) + \sres_{\ell}(\nu) = n \delta\right\}. 
$$
We sketch this parametrization here; the reader is referred to \cite{BB-enhnil-quiver} for details. The $G$-orbits in $\mc{N}_{\infty}(\ell,n)$ correspond to the isomorphism classes of representations of the framed cyclic quiver of dimension $\mathbf{v}$, where the operator $\mathbf{a} := a_{\ell-1} \circ \cdots \circ a_0$ acts nilpotent. The indecomposable representations $M$ of the framed cyclic quiver where $\dim M_{\infty} = 1$ and $\mathbf{a}$ acts nilpotent are parametrized up to isomorphism by the set of all partitions: $\lambda \in \mc{P}$ corresponds to $M_{\lambda}$ such that 
$$
\dim M_{\lambda} = \epsilon_{\infty} + \res_{\ell}(\lambda). 
$$
The indecomposable representations $U$ of the framed cyclic quiver where $\dim U_{\infty} = 0$ and $\mathbf{a}$ acts nilpotent are parametrized up to isomorphism by pairs of positive integers $(i,N)$, where if we think of $N$ as being the partition $(N)$ of $N$, then 
$$
\dim U(i,N) = \sigma^i \res_{\ell}(N)
$$
Therefore an arbitrary representation $M \in \mc{N}_{\infty}(\ell,n)$ will, up to isomorphism, decompose as 
\begin{equation}\label{eq:Mlambdanumodule}
M = M_{\lambda} \oplus \bigoplus_{i = 0}^{\ell-1} U(i,\nu^{(i)}_1) \oplus \cdots \oplus U(i,\nu^{(i)}_r)
\end{equation}
such that 
\begin{align*}
\mathbf{v} & = \dim M = \epsilon_{\infty} + \res_{\ell}(\lambda) + \sum_{i = 0}^{\ell-1} \sigma^i(\res_{\ell}(\nu^{(i)}_1) + \cdots + \res_{\ell}(\nu^{(i)}_r)) \\
 & = \epsilon_{\infty} + \res_{\ell}(\lambda) + \sres_{\ell}(\nu). 
\end{align*}
Thus, to each $(\lambda;\nu) \in \mathcal{Q}(n,\ell)$, we associate the $G$-orbit $\mathcal{O}_{(\lambda;\nu)}$ of the module $M(\lambda;\nu)$ defined in \eqref{eq:Mlambdanumodule}.


\begin{lemma}\label{lem:Gchilocal}
	There exists a $(G,\chi)$-monodromic local system on $\mathcal{O}_{(\lambda;\nu)}$ if and only if 
	$$
	\chi \cdot \sigma^i \res_{\ell}(\nu^{(i)}_j) \in \Z, \quad \forall \ 0 \le i \le \ell-1, 1 \le j \le \ell(\nu^{(i)}). 
	$$
\end{lemma}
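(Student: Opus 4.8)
The plan is to combine the explicit description of the fundamental group $\pi_1(\mathcal{O}_{(\lambda;\nu)})$ from Proposition~\ref{thm:cokerfun} (in the refined form \eqref{eq:cokerpiinfty}) with Proposition~\ref{prop:qmonodromicqc} and its homogeneous-space reformulation. Recall that $\mathcal{O}_{(\lambda;\nu)}$ is a single $G$-orbit, hence $\mathcal{O}_{(\lambda;\nu)} \cong G/K$ where $K = \Stab_G(M)$ for $M \in \mathcal{O}_{(\lambda;\nu)}$; by \eqref{eq:Sconnected} the group $K = \Aut_{\Q}(M) = E^{\times}$ is connected. So Proposition~\ref{prop:qmonodromicqc} applies: a $(G,\chi)$-monodromic local system on $\mathcal{O}_{(\lambda;\nu)}$ exists if and only if the image of $q = \exp(\chi)$ in $\mathbb{T}(K)$ is $1$. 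Tracing through the proof of Lemma~\ref{lem:qmonodromic}, this is equivalent to saying that the irreducible representation $q$ of $\pi_1(G) = \Z^{\ell}$ (here $G = G(n\delta)$, so $\pi_1(G)=\Z^\ell$) restricts trivially along $\pi_1(K) \to \pi_1(G)$, which by Corollary~\ref{cor:irrepO} is exactly the condition $q^{\beta^{(i)}} = 1$ for all indecomposable summands $M^{(i)}$ of $M$ other than the one containing the framing vertex.

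The second step is to identify the dimension vectors $\beta^{(i)}$ of the indecomposable summands of $M \in \mathcal{O}_{(\lambda;\nu)}$ in terms of the combinatorial data $(\lambda;\nu)$. This is where I invoke the classification from \cite{BB-enhnil-quiver} (referenced around Theorem~\ref{thm:paramcomb}): the indecomposable summands of $M$ not meeting the framed vertex are in bijection with the boxes of the multipartition $\nu$, and the summand corresponding to the $k$-th box of $\nu^{(i)}_j$ has dimension vector (under the identification $Q \cong \Z[\Z_\ell]$, $\varepsilon_i \mapsto \sigma^i$) equal to $\sigma^i \res_\ell(\nu^{(i)}_j)$ — that is, the $\ell$-residue of the arm of length $\nu^{(i)}_j$ shifted by $\sigma^i$. (The part $\lambda$ contributes only to the single summand through the framing vertex, whose dimension vector we discard as explained before \eqref{eq:cokerpiinfty}.) Under the pairing $\langle -,-\rangle : \mathbb{T}(G) \times Q \to \C/\Z$, the condition $q^{\beta^{(i)}} = 1$ becomes $\langle \exp(\chi), \sigma^i \res_\ell(\nu^{(i)}_j)\rangle = 1$, giving the stated criterion.

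The main obstacle is the second step: pinning down precisely which dimension vectors occur as indecomposable summands, and in particular checking that the summands indexed by distinct rows $\nu^{(i)}_j$ give independent conditions (so that no collapsing occurs and the list in the lemma is exactly right). This requires unwinding the combinatorial bijection of \cite{BB-enhnil-quiver} — specifically, that a partition $\mu$ sitting at vertex $i$ decomposes as a representation of $\Q(\ell)$ into indecomposables indexed by the rows of $\mu$, with the row of length $\mu_j$ contributing the nilpotent indecomposable whose dimension vector is $\res_\ell$ of a segment of length $\mu_j$ (appropriately $\sigma^i$-shifted), together with the observation that these are pairwise non-isomorphic precisely when the $\nu^{(i)}_j$ are the parts of an honest multipartition. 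Once that is in hand, the argument is a direct translation through Corollary~\ref{cor:irrepO} and Proposition~\ref{prop:qmonodromicqc}; I would also remark that since $K$ is connected the category of such local systems, when nonempty, is just $\mathrm{Vect}$, so "there exists a local system" is the only content.
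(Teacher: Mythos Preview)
Your approach is correct and essentially identical to the paper's: the paper's proof is the one-line ``This follows from Lemma~\ref{lem:qmonodromic} and Corollary~\ref{cor:irrepO},'' and you have simply unpacked those two citations together with the combinatorial identification (from \cite{BB-enhnil-quiver}) of the dimension vectors $\beta^{(i)}$ with $\sigma^i\res_\ell(\nu^{(i)}_j)$. One small slip: the indecomposable summands away from the framing are indexed by the \emph{rows} of the multipartition $\nu$ (one summand per part $\nu^{(i)}_j$), not by its boxes---your formulas already reflect this, but the phrasing ``the $k$-th box of $\nu^{(i)}_j$'' should be dropped.
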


\begin{proof}
	First we note that $\mathbb{T}(G) = (\Cs)^{\ell}$ such that the image of $\chi \in \mathbb{X}(\mf{g})$ is given by 
	$$
	q = \exp(\chi) := \left(\exp\left(2 \pi \sqrt{-1} \chi_0\right), \ds,\exp\left(2 \pi \sqrt{-1} \chi_{\ell-1}\right)\right).
	$$
	Let $K$ denote the stabilizer of $M(\lambda;\nu) \in \mathcal{O}_{(\lambda;\nu)}$ so that $\mathcal{O}_{(\lambda;\nu)} \simeq G / K$. Lemma \ref{lem:qmonodromic}, applied with $q = \exp(\chi)$, says that there is a $(G,\chi)$-monodromic local system on $\mathcal{O}_{(\lambda;\nu)}$ if and only if the image of $q$ in $\mathbb{T}(K)$ is $1$. If we set $\beta^{(i,j)} = \dim U(i,\nu^{(i)}_j) = \sigma^i \res_{\ell}(\nu^{(i)}_j)$, then the paragraph after Proposition~\ref{thm:cokerfun} explains that the map $\mathbb{T}(G) \rightarrow \mathbb{T}(K)$ is precisely $\Hom( - , \Cs)$ applied to the morphism \eqref{eq:cokerpiinfty}. This map sends $q = (q_0, \ds, q_{\ell-1})$ in $\mathbb{T}(G)$ to 
	$$
	\left(q^{\beta^{(0,1)}},\ds, q^{\beta^{(\ell-1,k)}}\right) \in \mathbb{T}(K),
	$$
	where $k = \ell(\nu^{(\ell-1)}))$. This equals $1 \in \mathbb{T}(K)$ if and only if 
	$$
	q^{\beta^{(i,j)}} = \exp\left(2 \pi \sqrt{-1} \left(\chi \cdot \sigma^i \res_{\ell}(\nu^{(i)}_j)\right)\right) = 1
	$$
	for all $i,j$ i.e. if and only if $\chi \cdot \sigma^i \res_{\ell}(\nu^{(i)}_j) \in \Z$. 
\end{proof}

Recall from section \ref{sec:countingitro} that we have defined the set $\mathcal{Q}_{\chi}(n,\ell)$. Lemma \ref{lem:Gchilocal} and Proposition \ref{prop:orbprop} imply Theorem \ref{thm:maincount}, as stated in the introduction. Next we note that:

\begin{lemma}\label{lem:multitauf}
	The fundamental group $\pi_1(\mathcal{O}_{(\lambda;\nu)})$ equals $\Z^{\ell}$ if and only if $\nu = \emptyset$. 
\end{lemma}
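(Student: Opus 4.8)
The plan is to combine the cokernel description of the fundamental group from Proposition~\ref{thm:cokerfun}, in its framed form (\ref{eq:cokerpiinfty}), with the explicit list of dimension vectors of the indecomposable summands of a representative of $\mathcal{O}_{(\lambda;\nu)}$ recorded in \cite{BB-enhnil-quiver}. First I would fix $M \in \mathcal{O}_{(\lambda;\nu)}$ and write $M = M^{(1)} \oplus \cdots \oplus M^{(k)}$ as a direct sum of indecomposables, with $M^{(k)}$ the unique summand for which $\beta^{(k)}_{\infty} \neq 0$; then (\ref{eq:cokerpiinfty}) identifies $\pi_1(\mathcal{O}_{(\lambda;\nu)})$ with the cokernel of the map $B' \colon \Z^{k-1} \to \Z^{\ell}$ whose columns are the restrictions $\beta^{(1)}|_{\Q_0'}, \dots, \beta^{(k-1)}|_{\Q_0'}$. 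Since $\beta^{(i)}_{\infty} = 0$ for every $i < k$, each such $M^{(i)}$ is a nonzero representation of the cyclic quiver $\Q(\ell)$, so that $\beta^{(i)}|_{\Q_0'}$ is a nonzero element of $\Z_{\geq 0}^{\ell}$; in particular every column of $B'$ is nonzero.

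The next step is the elementary observation that a quotient of $\Z^{\ell}$ by a nonzero subgroup is never isomorphic to $\Z^{\ell}$: a subgroup of $\Z^{\ell}$ is free, so a nonzero one has positive rank and its cokernel has rank strictly less than $\ell$. Combined with the previous paragraph this gives $\mathrm{coker}(B') \cong \Z^{\ell}$ if and only if the matrix $B'$ has no columns, that is, if and only if $k = 1$, i.e. if and only if $M$ is indecomposable. So the lemma reduces to showing that $M$ is indecomposable precisely when $\nu = \emptyset$.

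For this I would invoke the orbit description of \cite{BB-enhnil-quiver}: there the dimension vectors of the indecomposable summands of $M \in \mathcal{O}_{(\lambda;\nu)}$ are recovered from the pair $(\lambda;\nu)$, their multiset consisting of $\varepsilon_{\infty} + \res_{\ell}(\lambda)$ (the summand meeting the framing) together with, for each part of each of $\nu^{(0)}, \dots, \nu^{(\ell-1)}$, one nonzero dimension vector (in fact a positive root of $\widetilde{\mathsf{A}}_{\ell-1}$, but only its nonvanishing is used here). Hence $M$ has exactly one indecomposable summand if and only if $\nu$ has no parts, i.e. $\nu = \emptyset$, which completes the proof.

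I do not anticipate a genuine obstacle. The only non-formal ingredient is the description of the orbits from \cite{BB-enhnil-quiver}, which is already in use in this section (for instance in the proof of Lemma~\ref{lem:Gchilocal}); everything else is bookkeeping with (\ref{eq:cokerpiinfty}) together with the triviality about $\Z^{\ell}$. The one point requiring a little care is the degenerate case $k = 1$, where $B'$ has empty source and cokernel $\Z^{\ell}$; it is precisely this convention that makes the stated equivalence come out as $k = 1 \Leftrightarrow \nu = \emptyset$ with no off-by-one correction.
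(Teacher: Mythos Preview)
Your proposal is correct and follows essentially the same argument as the paper's proof: both reduce to the equivalence ``$M$ is indecomposable $\Leftrightarrow$ $\nu = \emptyset$'' from \cite{BB-enhnil-quiver}, and then observe via (\ref{eq:cokerpiinfty}) that the cokernel is $\Z^{\ell}$ precisely when $B'$ is the empty matrix, since any nonzero column forces the cokernel to have rank at most $\ell-1$. Your version is slightly more explicit about why each column of $B'$ is nonzero and about the degenerate $k=1$ case, but the substance is identical.
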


\begin{proof}
	The orbit $\mc{O}_{(\lambda;\nu)}$ corresponds to an indecomposable representation (with dimension vector $\varepsilon_{\infty} + n \delta$) if and only if $\nu = \emptyset$. In this case, $k = 1$ and the cokernel of (\ref{eq:cokerpiinfty}) is just $\Z^{\ell}$. If $\nu \neq \emptyset$, then $B' \neq 0$ and hence the rank of the image of $B'$ is at least one, implying that the rank of the cokernel is at most $\Z^{\ell - 1}$. 
\end{proof}

We recall from (\ref{eq:hyperplanes}) in the introduction that $\mc{R}_{n} = \{ \alpha \in \mathsf{R}^+ \  | \ \varepsilon_0 \cdot \alpha < n \} \cup \{ n \delta \}$.  

\begin{lemma}\label{lem:3equiv4}
	We have $| \mathcal{Q}_{\chi}(n,\ell)| = |\mathcal{P}_{\ell}(n)|$ if and only if $\chi \cdot \alpha \notin \Z$ for all $\alpha \in \mc{R}_n$. 
\end{lemma}

\begin{proof}
	We identify $\mathcal{P}_{\ell}(n)$ with the set of all partitions $\lambda \in \mc{P}$ with $\res_{\ell}(\lambda) = n \delta$; this is combinatorial identification between the set of all partitions of $n \ell$ with trivial $\ell$-core and the set of all $\ell$-multipartitions of $n$. Therefore we can identify $\mathcal{P}_{\ell}(n)$ with the subset of $\mathcal{Q}(n,\ell)$ consisting of all $(\lambda;\nu)$ such that $\nu = \emptyset$. By Lemma \ref{lem:Gchilocal}, there exists a $(G,\chi)$-monodromic local system on $\mathcal{O}_{(\lambda;\nu)}$ if and only if 
	\begin{equation}\label{eq:existmonolocalsystem}
	\chi \cdot \sigma^i \res_{\ell}(\nu^{(i)}_j) \in \Z, \quad \forall \ 0 \le i \le \ell-1, 1 \le j \le \ell(\nu^{(i)}).
	\end{equation}
	In particular, if $\nu = \emptyset$ then this condition is vacuous and hence there is always a $(G,\chi)$-monodromic local system on $\mathcal{O}_{(\lambda;\emptyset)}$. Thus, $\mathcal{P}_{\ell} \subset \mathcal{Q}_{\chi}(n,\ell)$ for all $\chi$ and the lemma is really claiming that there exists $(\lambda;\nu) \in \mathcal{Q}_{\chi}(n,\ell)$ with $\nu \neq \emptyset$ if and only if $\chi \cdot \alpha \in \Z$ for some $\alpha \in \mc{R}_n$. Recall from section \ref{sec:framedcombinatorics} that $\mathsf{R}^+$ denotes the set of positive roots in the root lattice $Q$. For each positive integer $N$ and $0 \le i \le \ell-1$, the dimension vector $\sigma^i \res_{\ell}(N)$ is a root in $\mathsf{R}^+$. \\
	
	First, let us assume that $\chi \cdot \alpha \notin \Z$ for all $\alpha \in \mc{R}_n$ and choose $(\lambda;\nu)$ with $\nu \neq \emptyset$. If $\lambda \neq \emptyset$ then $\sum_{i,j}  \sigma^i \res_{\ell}(\nu^{(i)}_j) <  n \delta$ and hence $\epsilon_0 \cdot \sigma^i \res_{\ell}(\nu^{(i)}_j) < n$ for all $i,j$. This means that $\sigma^i \res_{\ell}(\nu^{(i)}_j)$ belongs to $\{ \alpha \in \mathsf{R}^+ \  | \ \varepsilon_0 \cdot \alpha < n \}$ and \eqref{eq:existmonolocalsystem} is violated for all $i,j$. Hence $(\lambda;\nu) \notin \mathcal{Q}_{\chi}(n,\ell)$. The only way one can have $\epsilon_0 \cdot \sigma^i \res_{\ell}(\nu^{(i)}_j) = n$ for all $i,j$ is if $\nu = (\ds,\emptyset, (n\ell),\emptyset, \ds)$. In this case there is only one term in the sum, which equals $n \delta$, and $\chi \cdot n \delta \notin \Z$ again violates \eqref{eq:existmonolocalsystem}. \\    
		
	The converse requires some case by case analysis. To begin with, we will assume that $\chi \cdot n \delta \in \Z$. Let $M_0$ be the unique representation with dimension vector $\varepsilon_{\infty}$ and let $M_1$ be any indecomposable nilpotent representation with dimension vector $n \delta$; there are $\ell$ to choose from. Take $\mc{O} = G \cdot M$, where $M = M_0 \oplus M_1$. Then there is some $i$ such that $\nu^{(i)} = (n \ell)$ and $\nu^{(j)} = \lambda = \emptyset$ for $j \neq i$. The orbit $\mc{O}_{(\lambda;\nu)}$ admits a $(G,\chi)$-monodromic local system and hence $| \mathcal{Q}_{\chi}(n,\ell)| > |\mathcal{P}_{\ell}(n)|$. 
	
	Next, we consider the case $\chi \cdot \alpha \in \Z$ for some $\alpha \in \mathsf{R}^+$ with $\varepsilon_0 \cdot \alpha < n$. Recall from section \ref{sec:framedcombinatorics} that $\Phi = \{ \alpha \in \mathsf{R} \ | \ \varepsilon_0 \cdot \alpha = 0 \}$ is the finite root system of type $\mathsf{A}_{\ell-1}$ and $\mathsf{R}^+$ is the set of all $m \delta + \alpha$ with $m \ge 0$ and $\alpha \in \Phi \cup \{ 0 \}$ such that $m \delta + \alpha \ge 0$. For $\alpha \in \mathsf{R}^+$ with $\varepsilon_0 \cdot \alpha < n$, there are three specific cases to consider: (1) $\alpha = m \delta$, (2) $\alpha = m \delta + \beta$, for $\beta \in \Phi^+$, and (3) $\alpha = m \delta - \beta$, for $\beta \in \Phi^+$.\\ 
	
	Case (1). $\alpha =  m \delta$ for some $0 < m < n$. The indecomposable module $U(0,m \ell)$ has dimension vector $m \delta$. Choose any partition $\lambda$ of $n - m$. Then there is a unique indecomposable nilpotent representation $M_{\lambda}$ corresponding to $\lambda$. The orbit $G \cdot M$, where $M = M_{\lambda} \oplus U(0,m \ell)$, equals $\mc{O}_{(\lambda,\nu)}$, where $\nu^{(0)} = (\ell m)$ and $\nu^{(i)} =\emptyset$ otherwise. Since $\chi \cdots \alpha \in \Z$, this orbit admits a $(G,\chi)$-monodromic local system. \\
	
	Case (2). $ \alpha = m \delta + \beta$, for $\beta \in \Phi^+$, with $0 \le m < n$. Then 
	$$
	(\varepsilon_{\infty} + n \delta) - \alpha = \varepsilon_{\infty} + \varepsilon_{j+1} + \varepsilon_{j+2} + \cdots + \varepsilon_{\ell-1} + \varepsilon_{0} + \varepsilon_{1} + \cdots + \varepsilon_{i-1} + (n-m-1) \delta
	$$
	for some $1 \le i \le j \le \ell-1$. This means that $(\varepsilon_{\infty} + n \delta) - \alpha  = \mbf{d}_{\lambda}$, for $\lambda$ the hook partition $(i + (n-m-1) \ell, 1^{\ell-1-j})$. Since $\alpha$ is a real root, there is a unique indecomposable (nilpotent) representation $M_1$ with dimension vector $\alpha$. If $M_0$ is the indecomposable labeled by $\lambda$, then take $M = M_0 \oplus M_1$. Its orbit $\mc{O}_{(\lambda,\nu)}$ has the property that $(\lambda,\nu)$ belongs to $\mathcal{Q}_{\chi}(n,\ell) \smallsetminus \mathcal{P}_{\ell}(n)$. \\
	
	Case (3). $ \alpha = m \delta - \beta$, for $\beta \in \Phi^+$, with $0 < m < n$. Then 
	$$
	(\varepsilon_{\infty} + n \delta) - \alpha = \varepsilon_{\infty} + \varepsilon_{i} + \varepsilon_{i+} + \cdots + \varepsilon_{j-1} + \varepsilon_{j} + (n-m) \delta
	$$
	for some $1 \le i \le j \le \ell-1$. This means that $(\varepsilon_{\infty} + n \delta) - \alpha  = \mbf{d}_{\lambda}$, for $\lambda$ the hook partition $(j+1 + (n-m-1) \ell, 1^{\ell-i})$. Again, since $\alpha$ is a real root, there is a unique indecomposable (nilpotent) representation $M_1$ with dimension vector $\alpha$. If $M_0$ is the indecomposable labeled by $\lambda$, then take $M = M_0 \oplus M_1$. Its orbit $\mc{O}_{(\lambda,\nu)}$ has the property that $(\lambda,\nu)$ belongs to $\mathcal{Q}_{\chi}(n,\ell) \smallsetminus \mathcal{P}_{\ell}(n)$. 
\end{proof}

We say that $\chi \in \mathbb{X}^*(\mf{g})$ is \textit{integral} if it is in the image of $d : \mathbb{X}^*(G) \rightarrow \mathbb{X}^*(\mf{g})$. Under the identification (\ref{eq:Xgidentification}) this is equivalent to $\chi_i \in \Z$ for all $i$. 

\begin{lemma}\label{lem:intequality}
	We have $\mc{Q}_{\chi}(n,\ell) = \mc{Q}(n,\ell)$ if and only if $\chi$ is integral. 
\end{lemma}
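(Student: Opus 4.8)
The statement is an equivalence and the plan is to treat the two directions separately, with essentially all of the work going into the converse; the key is to name one good witness orbit.

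\emph{The direction ``$\chi$ integral $\Rightarrow \mc{Q}_{\chi}(n,\ell) = \mc{Q}(n,\ell)$''} is immediate. If $\chi$ is integral then $\chi$ lies in the image of $d:\mathbb{X}^*(G)\to\mathbb{X}^*(\mf{g})$, so $\exp(\chi)=1$ in $\T(G)$ and hence $\langle \exp(\chi),\alpha\rangle = 1$ for every $\alpha\in Q$. For any $(\lambda;\nu)\in\mc{Q}(n,\ell)$ each element $\sigma^i\,\res_{\ell}(\nu^{(i)}_j)$ lies in $Q=\Z[\Z_{\ell}]$, so the defining condition of $\mc{Q}_{\chi}(n,\ell)$ holds automatically; since $\mc{Q}_{\chi}(n,\ell)\subseteq\mc{Q}(n,\ell)$ by definition, the two sets coincide.

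\emph{For the converse} I would exhibit a single element of $\mc{Q}(n,\ell)$ whose membership in $\mc{Q}_{\chi}(n,\ell)$ already forces $\chi$ to be integral. Take $(\lambda;\nu)$ with $\lambda=(1^{(n-1)\ell})$ the one-column partition of height $(n-1)\ell$ and $\nu=\big((1),(1),\dots,(1)\big)$, each of the $\ell$ components consisting of a single box. Since a column whose height is divisible by $\ell$ contains each $\ell$-residue the same number of times, $\res_{\ell}(\lambda)=(n-1)\delta$; and $\sres_{\ell}(\nu)=\sum_{i=0}^{\ell-1}\sigma^i\,\res_{\ell}\big((1)\big)=\sum_{i=0}^{\ell-1}\sigma^i=\delta$. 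Hence $\res_{\ell}(\lambda)+\sres_{\ell}(\nu)=n\delta$, so $(\lambda;\nu)\in\mc{Q}(n,\ell)$. Now, assuming $\mc{Q}_{\chi}(n,\ell)=\mc{Q}(n,\ell)$, this pair lies in $\mc{Q}_{\chi}(n,\ell)$, so $\langle \exp(\chi),\sigma^i\,\res_{\ell}(\nu^{(i)}_j)\rangle=1$ for all $i,j$. The only part of each $\nu^{(i)}$ is $\nu^{(i)}_1=1$ and $\res_{\ell}\big((1)\big)=\sigma^0$, so this reads $\langle \exp(\chi),\sigma^i\rangle=1$ for all $i=0,\dots,\ell-1$. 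As the $\sigma^i=\varepsilon_i$ generate $Q$ and $\langle \exp(\chi),-\rangle$ is additive on $Q$, it follows that $\chi\cdot\alpha\in\Z$ for all $\alpha\in Q$; taking $\alpha=\varepsilon_i$ gives $\chi_i\in\Z$ for every $i$, i.e.\ $\chi$ is integral.

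I do not expect any genuine obstacle: once the witness pair is identified, the argument is bookkeeping against the definitions of $\mc{Q}(n,\ell)$, $\mc{Q}_{\chi}(n,\ell)$ and the pairing $\langle-,-\rangle$. The only points needing (routine) care are the residue identity $\res_{\ell}(1^{(n-1)\ell})=(n-1)\delta$, which is insensitive to the sign convention for contents precisely because $(n-1)\ell$ is a multiple of $\ell$, and the degenerate cases $n=1$ (where $\lambda=\emptyset$ and the orbit is that of $M_0\oplus S_0\oplus\cdots\oplus S_{\ell-1}$, with $M_0$ the framing representation and $S_k$ the simple at vertex $k$) and $\ell=1$ (where the witness becomes $\lambda=(1^{n-1})$, $\nu=((1))$); in all cases the same computation applies.
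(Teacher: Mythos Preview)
Your proof is correct. The forward direction matches the paper's (``$\chi$ integral means $\exp(\chi)=1$, so the pairing condition is vacuous''), but for the converse you take a genuinely different route. The paper argues by contrapositive: assuming some $\chi_i\notin\Z$, it reuses the case analysis from the proof of Lemma~\ref{lem:3equiv4} to locate an orbit $G\cdot M$ with an indecomposable summand of dimension $\varepsilon_i$ (or, when only $\chi_0\notin\Z$, of dimension a multiple of $\delta$), and then Lemma~\ref{lem:Gchilocal} excludes that orbit from $\mc{Q}_{\chi}(n,\ell)$. You instead exhibit a \emph{single} combinatorial witness $(\lambda;\nu)=\big((1^{(n-1)\ell});((1),\dots,(1))\big)$ whose membership in $\mc{Q}_{\chi}(n,\ell)$ simultaneously forces $\langle\exp(\chi),\varepsilon_i\rangle=1$ for every $i$, hence $\chi_i\in\Z$ for all $i$. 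Your approach is more self-contained and economical, since it avoids appealing to the orbit constructions of the previous lemma and works directly with the combinatorial definitions; the paper's approach has the minor advantage of making explicit which orbits fail for a given non-integral $\chi$, which aligns with the narrative flow from Lemma~\ref{lem:3equiv4}. Your handling of the residue computation for $(1^{(n-1)\ell})$ and the degenerate cases $n=1$, $\ell=1$ is correct and needs no further justification.
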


\begin{proof}
	That we have equality if $\chi$ is integral is clear; the trivial local system on each orbit is $(G,\chi)$-monodromic in this case. Conversely, if $\chi$ is not integral, then there exists some $i$ such that $\chi_i = \varepsilon_i \cdot \chi \notin \Z$. If $i \neq 0$, then, just as in the proof of Lemma \ref{lem:3equiv4}, since $\varepsilon_i \in \mc{R}_n$ there exists a nilpotent representation $M = M_0 \oplus M_1$, with $M_0$ and $M_1$ indecomposable such that $\dim M_1 = \varepsilon_i$; this is Case (2) of the proof of Lemma \ref{lem:3equiv4}. By Lemma \ref{lem:Gchilocal}, there does not exists a $(G,\chi)$-monodromic local system on $G \cdot M$. Hence $\mc{Q}_{\chi}(n,\ell) \subsetneq \mc{Q}(n,\ell)$. If $0$ is the only $i$ for which $\chi_i \notin \Z$, then $\chi \cdot \delta \notin \Z$ too. Again, as in Case (1) of the proof of Lemma \ref{lem:3equiv4}, this implies that there is some orbit $\mc{O}_{(\lambda,\nu)}$ such that $(\lambda,\nu)$ is not in $\mc{Q}_{\chi}(n,\ell)$. 
\end{proof} 

\begin{example}
	Let $\ell = n = 2$. Then, a computer computation shows that, for integral $\chi$, $\mc{Q}_{\chi}(2,2) = \mc{Q}(2,2)$ has $41$ elements. But for generic $\chi$, Lemma \ref{lem:3equiv4} implies that
	$$
	\mc{Q}_{\chi}(2,2) = \{ (\lambda;\emptyset) \, | \, \lambda \in \mc{P}_2(2) \}, 
	$$
	which has only $5$ elements.   
\end{example}

\subsection{Quantum Hamiltonian reduction}\label{sec:qhrframedcyclicproof}

As noted in section \ref{sec:admissible}, the characteristic variety of each admissible $\dd$-module lies in 
$$
\Lambda = \bigcup_{(\lambda;\mu) \in \mc{Q}(n,\ell)} \Lambda_{(\lambda;\nu)}, 
$$
where $ \Lambda_{(\lambda;\nu)} = \mathbb{F}\left( \overline{T^*_{X^{\vee}} \mc{O}_{(\lambda;\nu)}} \right)$ is the Fourier transform of the conormal to the orbit $\mc{O}_{(\lambda;\nu)}$ in $X^{\vee}$. In particular, the characteristic cycle  $\SS(\ms{M})$ belongs to $\bigoplus_{(\lambda;\nu) \in \mc{Q}(n,\ell)} \Z \Lambda_{(\lambda;\nu)}$. 

\begin{proposition}
	The characteristic cycles map 
	$$
	\SS : K_0( \Add_{\chi}) \rightarrow \bigoplus_{(\lambda;\nu) \in \mc{Q}(n,\ell)} \Z \Lambda_{(\lambda;\nu)}
	$$
	is always injective. It is an isomorphism if and only if $\chi$ is integral. 
\end{proposition}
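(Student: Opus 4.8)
The plan is to analyze the characteristic cycles map via its compatibility with the labelling of simple objects. By Proposition \ref{prop:orbprop}(c), the category $\Add_{\chi}$ has finite length and finitely many simple objects, so $K_0(\Add_{\chi})$ is free on the classes $[\ms{L}_{(\lambda;\nu)}]$ indexed by $(\lambda;\nu) \in \mathcal{Q}_{\chi}(n,\ell)$ (Proposition \ref{prop:simplescout}). First I would recall the standard fact (following from the fact that the simple $\dd$-module $\ms{L}_{(\lambda;\nu)}$ corresponds, under Fourier transform and the Riemann--Hilbert correspondence, to an intersection cohomology extension of a local system on $\mc{O}_{(\lambda;\nu)}$) that the characteristic cycle $\SS(\ms{L}_{(\lambda;\nu)})$ has the form $d_{(\lambda;\nu)} \Lambda_{(\lambda;\nu)} + \sum_{(\lambda';\nu') < (\lambda;\nu)} c_{(\lambda';\nu')} \Lambda_{(\lambda';\nu')}$, where $d_{(\lambda;\nu)} = \mathrm{rk}(\ms{L}_{(\lambda;\nu)}|_{\mc{O}_{(\lambda;\nu)}}) > 0$ and the sum runs over orbits in the closure of $\mc{O}_{(\lambda;\nu)}$. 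Since closure gives a partial order on $\mathcal{Q}(n,\ell)$, ordering the simples compatibly with this partial order shows that $\SS$ is represented, in the bases $\{[\ms{L}_{(\lambda;\nu)}]\}$ and $\{\Lambda_{(\lambda;\nu)}\}$, by a ``triangular'' matrix with non-zero diagonal entries $d_{(\lambda;\nu)}$. This immediately gives injectivity, independent of $\chi$.

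For the statement about when $\SS$ is an isomorphism, the key point is that $\SS$ is injective with image a finite-index-type subgroup precisely controlled by the index set sizes: $\SS$ is an isomorphism if and only if its source and target have the same rank \emph{and} the map is surjective. The rank of the target is $|\mathcal{Q}(n,\ell)|$, while the rank of the source is $|\Irr \Add_{\chi}| = |\mathcal{Q}_{\chi}(n,\ell)|$ by Proposition \ref{prop:simplescout}. By Lemma \ref{lem:intequality}, $\mathcal{Q}_{\chi}(n,\ell) = \mathcal{Q}(n,\ell)$ if and only if $\chi$ is integral; and $\mathcal{Q}_{\chi}(n,\ell) \subseteq \mathcal{Q}(n,\ell)$ always. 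Hence if $\chi$ is not integral the source has strictly smaller rank than the target, so $\SS$ cannot be surjective, let alone an isomorphism. Conversely, when $\chi$ is integral the two index sets coincide, so the triangular matrix above is square with non-zero diagonal; the remaining task is to show the diagonal entries $d_{(\lambda;\nu)}$ are all equal to $1$, i.e. that the relevant local systems have rank one. This is where I would invoke the analysis from section \ref{sect:fund_group}: by Corollary \ref{cor:irrepO} and the computation in Lemma \ref{lem:qmonodromic}, every $(G,q)$-monodromic local system on each $\mc{O}_{(\lambda;\nu)}$ (when one exists) is one-dimensional, since $\pi_1(\mc{O}_{(\lambda;\nu)})$ is abelian and the category of $(G,q)$-monodromic local systems is $\mathrm{Vect}$. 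Therefore each simple admissible $\dd$-module restricts to a rank-one local system on its open orbit, so $d_{(\lambda;\nu)} = 1$ for all $(\lambda;\nu)$, the transition matrix is unitriangular, hence invertible over $\Z$, and $\SS$ is an isomorphism.

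I expect the main obstacle to be making rigorous the claim that $\SS(\ms{L}_{(\lambda;\nu)})$ is ``upper triangular'' with the leading term $d_{(\lambda;\nu)}\Lambda_{(\lambda;\nu)}$ — that is, controlling the support of the characteristic cycle so that only components $\Lambda_{(\lambda';\nu')}$ with $\mc{O}_{(\lambda';\nu')} \subseteq \overline{\mc{O}_{(\lambda;\nu)}}$ can appear, and that the coefficient of $\Lambda_{(\lambda;\nu)}$ itself is exactly the generic rank. This follows from the fact that the characteristic cycle of an intersection-cohomology $\dd$-module on a variety with a stratification into orbits is supported on conormals to strata in the closure of its support, with the open-stratum multiplicity equal to the rank of the defining local system; via the Fourier transform equivalence $\mathbb{F}: \Orb_\chi \iso \Add_\chi$ of Proposition \ref{prop:orbprop}(a), and the fact that $\mathbb{F}$ sends $\overline{T^*_{\mc{O}}X^\vee}$ to $\Lambda_{(\lambda;\nu)}$, this transports to the statement we need. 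Once this structural input is in place, everything else is bookkeeping with the index sets and the rank-one statement already available from section \ref{sect:fund_group}.
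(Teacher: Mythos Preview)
Your proposal is correct and follows essentially the same approach as the paper: both arguments identify the simples of $\Add_\chi$ with Fourier transforms of IC sheaves $\IC(\mc{O}_{(\lambda;\nu)},\mc{L}_q)$ indexed by $\mc{Q}_\chi(n,\ell)$, observe that the characteristic cycle of such a simple has leading term $\Lambda_{(\lambda;\nu)}$ with multiplicity equal to the rank of $\mc{L}_q$ (which is one, since the fundamental groups are abelian), and then invoke Lemma~\ref{lem:intequality} for the ``if and only if $\chi$ is integral'' statement. The only organizational difference is that the paper uses the rank-one fact up front to get injectivity with image a direct summand in one step, whereas you separate injectivity (triangularity with nonzero diagonal) from surjectivity (unitriangularity plus rank comparison); both amount to the same thing.
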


\begin{proof}
	Since $\Add_{\chi}$ has only finitely many simple objects and every object has finite length, $K_0( \Add_{\chi})$ is a free $\Z$-module with basis given by the class of the simple objects. Each of these objects is the Fourier transform of an intersection cohomology sheaf $\IC(\mc{O}_{(\lambda;\nu)},\mc{L}_{\chi})$, where $(\lambda;\mu) \in \mc{Q}_{\chi}(n,\ell)$ and $\mc{L}_{\chi}$ is the corresponding $(G,\chi)$-monodromic local system on $\mc{O}_{(\lambda,\nu)}$. In particular, we note that the multiplicity of the cycle $\mathbb{F}\left( \overline{T^*_{X^{\vee}} \mc{O}_{(\lambda;\nu)}} \right)$ in $\SS(\mathbb{F}(\IC(\mc{O}_{(\lambda;\nu)},\mc{L}_{\chi})))$ is precisely one since the rank of $\mc{L}_{\chi}$ is one. Thus, 
	$$
	K_0( \Add_{\chi}) = \bigoplus_{(\lambda;\nu) \in \mc{Q}_{\chi}(n,\ell)} \Z[\mathbb{F}(\IC(\mc{O}_{(\lambda;\nu)},\mc{L}_{\chi}))]
	$$
	and the map $\SS$ is injective. In fact, its image, as a $\Z$-module, is a direct summand. The final statement follows from Lemma \ref{lem:intequality}.  
\end{proof}

A point of $X$ is a pair $(v,x_{\idot})$, where $v \in V_0$ and $x_i : V_i \rightarrow V_{i+1}$, for $i = 0, \ds, \ell-1$. Here $V_i$ is the $n$-dimensional vector space at vertex $i$. Let $\mathbf{x} : V_0 \rightarrow V_0$ be the product $\mathbf{x} = x_{\ell-1} \cdots x_0$. We say that a representation $(v,x_{\idot})$ is \textit{cyclic} if $\C[\mathbf{x}] \cdot v= V_0$. The open set of all cyclic representations is denoted $X^{\circ}$. Define
$$
s : X \rightarrow \C, \quad s(v,x_{\idot}) = \det(v,\mathbf{x}(v), \ds, \mathbf{x}^{n-1}(v)). 
$$
It is a semi-invariant of $G$ and $X^{\circ}$ is precisely the non-vanishing locus of $s$. In particular, $X^{\circ}$ is an affine open $G$-stable subset of $X$. Let $X^{\reg}$ denote the open subset of $X^{\circ}$, where $\mathbf{x}$ is regular semi-simple as an element of $\mathfrak{gl}(V_0)$. Under the quotient map $X^{\circ} \rightarrow X/\!/G \simeq \mf{h} / W$, $X^{\reg}$ is the preimage of the regular locus. It is a principal open set, and the quotient map is a principal $G$-bundle.

We check that the assumptions (i)-(iii) of Proposition \ref{prop:UcircAisprime} hold in our situation.

\begin{lemma}\label{lem:cyclicindecomp}
	\begin{enumerate}
		\item[(0)] The set $X^{\circ}$ equals the open subset of $X$ consisting of all indecomposable representations of $\Q_{\infty}(\ell)$. 
		\item[(i)] The moment map $\mu : T^* X \rightarrow \mf{g}^*$ is flat.
		\item[(ii)] There is a $G$-saturated open subset of $X^{\circ}$ on which $G$ acts freely.
		\item[(iii)] The scheme $\mu^{-1}(0)^{\circ} /\!/ G$ is reduced and irreducible.
	\end{enumerate}
\end{lemma}

\begin{proof}
	Part (0). If $\mathbf{x}$ is nilpotent, then it is a consequence of the classification result \cite[Theorem 1.2]{BB-enhnil-quiver} that $(v,x_{\idot})$ is cyclic if and only if it is indecomposable. One can reduce the general case to the nilpotent case by considering the generalized eigenspaces of $\mathbf{x}$ in $V_0$. 
	
	Parts (i) and (ii). The moment map is flat by \cite[Theorem 3.7]{AlmostCommutingVariety}. The open subset $X^{\reg}$ satisfies statement (ii). 
	
	For part (iii), we first note that $\mu^{-1}(0)$ is reduced by \cite[Theorem 3.7]{AlmostCommutingVariety}. It is not irreducible however. Part (0) shows that $\mu^{-1}(0)^{\circ}$ equals the open subset denoted $\mathscr{M}_n$ in section 3.3 of \cite{AlmostCommutingVariety}. In particular, it is irreducible by \cite[Theorem 3.7]{AlmostCommutingVariety}. Therefore, the scheme $\mu^{-1}(0)^{\circ} /\!/ G$ is reduced and irreducible.  
\end{proof}

Lemma \ref{lem:cyclicindecomp} implies that we can freely apply the results from section \ref{sec:holossQHR}. By Proposition \ref{prop:UcircAisprime}, we deduce that the algebras $\mf{A}_{\chi}^{\circ}(U)$ are all prime. It is \textit{not} true that the algebras $\mf{A}_{\chi}(U)$ are prime in general. The following result will be required later. 

\begin{lemma}\label{lem:genericlocuslocalsystemresXreg}
	For any $\ms{M}$ in $\Add_{\chi}$, $\ms{M} |_{X^{\reg}}$ is a local system.
\end{lemma}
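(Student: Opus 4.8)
The claim is that for any $\ms{M} \in \Add_{\chi}$, the restriction $\ms{M}|_{X^{\reg}}$ is a local system, where $X^{\reg} \subset X^{\circ}$ is the open subset on which $\mathbf{x}$ is regular semisimple. The strategy is to compare $X^{\reg}$ against the orbit stratification of the enhanced nilpotent cone and exploit the fact that admissibility forces the singular support to lie in the Lagrangian $\Lambda$.

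First I would recall that, by the Fourier transform equivalence of Proposition \ref{prop:orbprop}, it is equivalent to work with the corresponding orbital module $\mathbb{F}^{-1}(\ms{M}) \in \Orb_{\chi}$, whose support lies in $V \times \mc{N}(R^\vee) = \mc{N}_{\infty}(\ell,n)$ (here $R^\vee = \Rep(\Q(\ell),n\delta)^\vee$ after identifying $V = V_0$). More directly, since $\ms{M}$ is $(G,\chi)$-monodromic, smooth, and has $\SS(\ms{M}) \subset T^* Y \times R \times \mc{N}(R^\vee)$, on the open set $X^{\reg}$ the base coordinate $\mathbf{x}$ is regular semisimple, so no nonzero representation $(v, x_\bullet) \in X^{\reg}$ can have its image in the nilpotent cone unless $n = 0$. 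The key geometric point is therefore that $X^{\reg}$ meets the "bad locus" only in the zero fiber of the conormal directions: concretely, $\SS(\ms{M})|_{T^*X^{\reg}}$ is contained in $\Lambda \cap T^*X^{\reg}$, and one checks that $\Lambda \cap T^*X^{\reg}$ is exactly the zero section $X^{\reg} \subset T^*X^{\reg}$. Indeed, after Fourier transform $\Lambda$ is the union of the conormals $\overline{T^*_X \mc{O}_{(\lambda;\nu)}}$; intersecting with $T^*X^{\reg}$ kills all orbits except the open dense one (which corresponds to $\mathbf{x}$ regular semisimple, i.e. to the orbit whose closure contains $X^{\reg}$), and the conormal to an open orbit is just the zero section over that orbit.

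The next step: a coherent $\dd$-module whose singular support is contained in the zero section is $\mc{O}$-coherent, hence an integrable connection, i.e. a vector bundle with flat connection; see \cite[Theorem 1.4.10 or \S 2.2]{HTT} (a $\dd$-module with $\SS$ equal to the zero section is a connection). Since admissible $\dd$-modules are regular holonomic (Proposition \ref{prop:orbprop}(b)), the connection has regular singularities, so $\ms{M}|_{X^{\reg}}$ is a local system in the sense fixed in the Notation subsection. One subtlety to handle: $X^{\reg}$ is connected (it is the preimage of the regular locus $\mf{h}^{\reg}/W$ under $X^\circ \to X/\!/G \simeq \mf{h}/W$, and $X^\circ$ is irreducible by Lemma \ref{lem:cyclicindecomp}, being an open subset of the irreducible variety $X$), so there is no issue with rank jumping between components.

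\textbf{Main obstacle.} The crux is the computation $\Lambda \cap T^*X^{\reg} = X^{\reg}$ (the zero section). Unwinding the definition of $\Lambda$ via the Fourier transform, this amounts to showing that for every pair $(\lambda;\nu) \in \mc{Q}(n,\ell)$, the closed subvariety $\mathbb{F}(\overline{T^*_X \mc{O}_{(\lambda;\nu)}})$ meets $T^*X^{\reg}$ only along the zero section, which in turn reduces to: the only orbit $\mc{O}_{(\lambda;\nu)}$ meeting $X^{\reg}$ after Fourier transform is the one for which the conormal over that orbit contributes nothing beyond the zero section. Here one uses that $X^{\reg}$ parameterizes $(v,x_\bullet)$ with $\mathbf{x}$ regular semisimple and $v$ cyclic, so $\mathbf{x}$ has $n$ distinct eigenvalues and the representation is "as generic as possible"; on such points the conormal direction in $\mc{N}(R^\vee)$ must vanish because the nilcone $\mc{N}(R^\vee)$ does not meet the fiber over a regular semisimple $\mathbf{x}$ except at the origin. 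This is a clean but slightly fiddly linear-algebra verification using the explicit description of $\Lambda$ from Section \ref{sec:admissibledefn} together with the classification in \cite{BB-enhnil-quiver}; everything else is standard $\dd$-module theory.
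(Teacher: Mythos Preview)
Your overall framework is correct and matches the paper's: reduce to showing that $\Lambda \cap T^*X^{\reg}$ is the zero section, and then invoke the standard fact that a regular holonomic module with singular support in the zero section is a local system. The gap is in your justification of the key geometric claim $\Lambda \cap T^*X^{\reg}=X^{\reg}$.

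Your orbit-stratification sketch is based on a confusion about where the orbits live. The orbits $\mc{O}_{(\lambda;\nu)}$ stratify the enhanced \emph{nilpotent} cone inside $X^{\vee}=V\times R^{\vee}$; none of them has $\mathbf{x}$ regular semisimple, and there is no ``open orbit whose closure contains $X^{\reg}$''. Under the Fourier transform $\mathbb{F}((v,w),x,y)=((v,w),y,-x)$, the base coordinate of $T^*X^{\vee}$ is $(v,y)$ and the fibre coordinate is $(w,-x)$. Restricting to $T^*X^{\reg}$ constrains the \emph{fibre} direction $x$, not the base, so it does not single out any particular orbit in $X^{\vee}$. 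Thus the sentence ``intersecting with $T^*X^{\reg}$ kills all orbits except the open dense one'' has no content. Likewise, the phrase ``the nilcone $\mc{N}(R^{\vee})$ does not meet the fibre over a regular semisimple $\mathbf{x}$ except at the origin'' is false as written: $\mc{N}(R^{\vee})$ is just the locus of nilpotent $y$, independent of $x$; there are plenty of nonzero nilpotent $y$ for a given regular semisimple $x$. The constraint you need comes from the moment-map equation $\mu=0$, which you never invoke.

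The paper's argument is not a ``fiddly linear-algebra verification'' but uses two substantive ingredients. First, embedding $T^*X$ into $\mathbf{V}\times\mathbf{V}^*\times\mf{gl}(\mathbf{V})^2$ and applying \cite[Lemma 2.1.3]{AlmostCommutingVariety}, any $(v,w,x,y)\in\mu^{-1}(0)$ has $w$ vanishing on $\C\langle x,y\rangle\cdot v$; since $v$ is cyclic for $x$ on $X^{\reg}$, this forces $w=0$. Second, with $w=0$ the pair $(x,y)$ is a representation of the preprojective algebra $\Pi(\Q(\ell))$; conjugating $x$ into $\mf{h}^{\reg}$ and using Crawley-Boevey's description of the moment-map fibre \cite[Lemma 3.1]{CBmomap} together with an Euler form dimension count, one shows $y\in\mf{h}$. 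Since $y$ is also required to lie in $\mc{N}(R^{\vee})$, this forces $y=0$. Both steps are essential and neither appears in your proposal.
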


\begin{proof}
	The characteristic variety of $\ms{M} |_{X^{\reg}}$ is contained in $\Lambda \cap T^* X^{\reg}$. Since every object in $\Add_{\chi}$ has regular singularities, it suffices to show that $\Lambda \cap T^* X^{\reg}$ is the zero-section $T^*_{X^{\reg}} X^{\reg}$. If $\mathbf{V} = V_0 \oplus \cdots \oplus V_{\ell-1}$, then a point $(v,x) \in X$ may be thought of as a pair $v \in V_0 \subset \mathbf{V}$ and $x \in \End_{\C}(\mathbf{V})$. Then $T^* X$ embeds in $\mc{M} :=  \mathbf{V} \times \mathbf{V}^* \times \mf{gl}(\mathbf{V}) \times \mf{gl}(\mathbf{V}) $ and the moment map $\mu : T^* X \rightarrow \mf{g}$ is just the restriction of the moment map $\boldsymbol{\mu} : \mc{M} \rightarrow \mf{gl}(\mathbf{V})$. Hence $\mu^{-1}(0) = \boldsymbol{\mu}^{-1}(0) \cap T^* X$. By \cite[Lemma 2.1.3]{AlmostCommutingVariety}, this implies that if $(v,w,x,y) \in \mu^{-1}(0)$, then $w \in \mathbf{V}^*$ vanishes on $\C\langle x,y \rangle \cdot v$. Since $v$ is assumed to be cyclic for $x$, this forces $w = 0$. 
	
	Forgetting $v$, the fact that $w = 0$ implies that we may think of $(x,y)$ as a representation of the preprojective algebra $\Pi(\Q(\ell))$. As explained in \cite{GordonCyclicQuiver}, the space $\mf{h}$ embeds as diagonal matrices  in $\Rep(\Q(\ell),n \delta)$ and if $(v,x) \in X^{\reg}$, then $x$ is conjugate by $G$ to an element in $\mf{h}^{\reg}$. Therefore we may assume that $x \in \mf{h}^{\reg}$. Now, $\mf{h}^*$ similarly embeds in $\Rep (\Q(\ell)^{\mathrm{op}},n \delta)$, and for any $y \in \mf{h}^*$, we clearly have $(x,y) \in \mu_{\Q(\ell)}^{-1}(0)$. On the other hand, by \cite[Lemma 3.1]{CBmomap}, the space of all $y$ in $\Rep(\Q(\ell)^{\mathrm{op}},n \delta)$ satisfying $(x,y) \in \mu_{\Q(\ell)}^{-1}(0)$ is a torsor over $\Ext^1_{\Q(\ell)}(x,x)^*$. Since $x$ is a direct sum of pairwise non-isomorphic simple representations of dimension $\delta$, the standard Euler-Poincar\'e formula implies that $\dim \Ext^1_{\Q(\ell)}(x,x)^* = \dim \End_{\Q(\ell)}(x) = n$. Thus, we deduce that $y \in \mf{h}$. On the other hand, $y$ must belong to $\mc{N}$. This forces $y = 0$, as required. 
\end{proof} 

As in section \ref{sec:sympleavesopenXcirc}, let $j : X^{\circ} \hookrightarrow X$ be the open embedding.

\begin{proposition}\label{prop:Xcyclicres}
	The following are equivalent: 
	\begin{enumerate}
		\item[(a)] $\chi \cdot \alpha \notin \Z$ for all $\alpha \in \mc{R}_n$.
		\item[(b)] The restriction $j^* : \QCoh(\dd_X,G,\chi) \rightarrow \QCoh(\dd_{X^{\circ}},G,\chi)$ is an equivalence, with inverse $j_*$. 
	\end{enumerate}
\end{proposition}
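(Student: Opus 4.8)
The plan is to reformulate (b) as a statement about $G$-orbits in the closed complement $Z := X\smallsetminus X^{\circ} = s^{-1}(0)$, and then to match that statement against $\mc{R}_n$ using the fundamental-group computations of Section~\ref{sect:fund_group} together with the combinatorics of \cite{BB-enhnil-quiver}. \emph{Step 1 (reduction to supports).} First I would show that (b) is equivalent to the assertion that there is no non-zero $(G,\chi)$-monodromic $\dd_X$-module supported on $Z$. Since $X$ is a vector space and $X^{\circ}$ is the principal open set $(s\neq 0)$, one has $\dd(X^{\circ}) = \dd(X)[s^{-1}]$, so $j^*\ms{M} = \ms{M}[s^{-1}]$, the functor $j_*$ is restriction of scalars, $j_*j^*\ms{M} = \ms{M}[s^{-1}]$, and $j^*j_*\simeq\mathrm{id}$ automatically. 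The kernel and cokernel of the unit $\ms{M}\to\ms{M}[s^{-1}]$ are supported on $Z$, and they are monodromic because $s$ is a $G$-semi-invariant (so its torsion submodule is $G$-stable). Hence if no non-zero monodromic module is supported on $Z$ then $j_*j^*\simeq\mathrm{id}$, giving (b); conversely, if (b) holds then $j^*\ms{M} = \ms{M}[s^{-1}] = 0$ for every $\ms{M}$ supported on $Z$, forcing $\ms{M} = 0$. (This reformulation also follows from Proposition~\ref{prop:resXcircequigeneric} and the remark following Theorem~\ref{thm:mainequiv2}.)

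\emph{Step 2 (reduction to orbits).} Because $\ms{M}$ is monodromic it is, locally on $X$, of the form (a local system on a $G$-orbit) $\boxtimes$ (a $\dd$-module on a transverse slice); in particular its restriction to any orbit $\mc{O}$ contained in $\Supp\ms{M}$ is a non-zero $(G,\chi)$-monodromic $\dd_{\mc{O}}$-module, so (stabilisers are connected by (\ref{eq:Sconnected}) and Proposition~\ref{prop:qmonodromicqc} applies to $\mc{O} = G/K$) the orbit $\mc{O}$ carries a $(G,\chi)$-monodromic local system. Conversely, the minimal (intermediate) extension $\IC(\overline{\mc{O}},\mc{L})$ of such a local system is a non-zero $(G,\chi)$-monodromic $\dd_X$-module supported on $\overline{\mc{O}}\subseteq Z$ (recall $Z$ is closed). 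Thus (b) fails precisely when some $G$-orbit contained in $Z$ carries a $(G,\chi)$-monodromic local system. By Corollary~\ref{cor:irrepO} together with Lemma~\ref{lem:qmonodromic}, writing a representation $M$ as $M = M^{(\infty)}\oplus\bigoplus_i N_i$, with $M^{(\infty)}$ the unique indecomposable summand meeting the vertex $\infty$ and the $N_i$ the remaining indecomposables, the orbit $G\cdot M$ carries a $(G,\chi)$-monodromic local system if and only if $\langle\exp(\chi),\dim N_i\rangle = 1$ for every $i$.

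\emph{Step 3 (matching with $\mc{R}_n$).} It then remains to prove: a decomposable $M$ of dimension vector $\varepsilon_{\infty}+n\delta$ has $\langle\exp(\chi),\dim N_i\rangle = 1$ for all $i$ only if $\chi\cdot\alpha\in\Z$ for some $\alpha\in\mc{R}_n$. Each $\dim N_i$ is a positive root of $\widetilde{\mathsf{A}}_{\ell-1}$ or of the form $m_i\delta$, and $\dim N_i\le n\delta$ coordinatewise; if some $\varepsilon_0\cdot\dim N_i < n$ then already $\dim N_i\in\mc{R}_n$. Otherwise every $\varepsilon_0\cdot\dim N_i = n$, and comparing $\varepsilon_0$-coordinates in $\sum_i\dim N_i\le n\delta$ forces a single $N_i$, with $\dim N_i = n\delta - \gamma$ for some $\gamma\in Q^+$ supported on $\{1,\dots,\ell-1\}$; but then $\dim M^{(\infty)} = \varepsilon_{\infty}+\gamma$ has a zero in coordinate $0$, which for $\gamma\neq0$ contradicts indecomposability of $M^{(\infty)}$ (the summand $\varepsilon_{\infty}$ splits off), so $\dim N_i = n\delta\in\mc{R}_n$. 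This yields (a)$\Rightarrow$(b). For the converse, given $\langle\exp(\chi),\alpha\rangle = 1$ with $\alpha\in\mc{R}_n$, I would exhibit a decomposable $M = M^{(\infty)}\oplus N$ with $\dim N = \alpha$: for $\alpha = n\delta$ take $\dim M^{(\infty)} = \varepsilon_{\infty}$, and for $\alpha\in\mathsf{R}^+$ with $\varepsilon_0\cdot\alpha < n$ the indecomposable $M^{(\infty)}$ of dimension vector $\varepsilon_{\infty}+n\delta-\alpha$ exists — this is exactly the case-by-case construction already carried out in the proof of Lemma~\ref{lem:3equiv4}. Then $G\cdot M\subseteq Z$ carries a $(G,\chi)$-monodromic local system, so (b) fails.

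The main obstacle is Step~3: one must use the description of the indecomposable dimension vectors of $\Q_{\infty}(\ell)$ and verify that the coordinatewise bound $\dim N_i\le n\delta$ always forces some summand dimension vector into $\mc{R}_n$, the subtle point being the single-summand case where indecomposability of $M^{(\infty)}$ rescues the argument. The delicate existence of the summand $M^{(\infty)}$ needed in the converse direction is already established in Lemma~\ref{lem:3equiv4}, so the genuinely new content is the two reductions in Steps~1--2 and this coordinatewise counting.
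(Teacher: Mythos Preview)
Your approach is essentially the same as the paper's: both reduce (b) to the nonexistence of $(G,\chi)$-monodromic modules supported on $Z=X\smallsetminus X^{\circ}$, then reduce that to the nonexistence of monodromic local systems on $G$-orbits in $Z$, and finally match this against $\mc{R}_n$ via the dimension vectors of the indecomposable summands (with the converse borrowed from Lemma~\ref{lem:3equiv4}). One imprecision in your Step~2: the claim that the restriction of $\ms{M}$ to \emph{any} orbit in $\Supp\ms{M}$ is non-zero is too strong---you need an orbit open in $\Supp\ms{M}$, and the paper handles this carefully via Kashiwara's theorem and generic projectivity rather than invoking a local product form. Your Step~3, on the other hand, spells out an argument (the coordinatewise $\varepsilon_0$-count forcing some $\dim N_i\in\mc{R}_n$) that the paper leaves implicit in the sentence ``our assumption on $\chi$, together with the fact that $\mc{O}\subset X\smallsetminus X^{\circ}$, implies that the image of $q$ in $\mathbb{T}(K)$ is not $1$.''
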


\begin{proof}
	We recall from Lemma \ref{lem:multitauf} that the fundamental group $\pi_1(\mc{O})$ of a $G$-orbit $\mc{O}$ in $X$ is a quotient of $\Z^{\ell}$, and it follows from Lemma \ref{lem:multitauf} that $\pi_1(\mc{O}) = \Z^{\ell}$ if and only if $\mc{O}$ parameterizes indecomposable representations of $\Q_{\infty}(\ell)$. Therefore Lemma \ref{lem:cyclicindecomp} implies that $\pi_1(\mc{O}) = \Z^{\ell}$ if and only if $\mc{O} \subset X^{\circ}$. 
	
	Now assume that $\chi \cdot \alpha \in \Z$ for some $\alpha \in \mc{R}_n$. Then Lemma \ref{lem:3equiv4} says that there exists an orbit $\mc{O}$ with $\pi_1(\mc{O})$ a proper quotient of $\Z^{\ell}$ such that $\mc{O}$ admits a $(G,\chi)$-monodromic local system $\mc{L}$. Necessarily, $\mc{O} \subset X \smallsetminus X^{\circ}$. We can choose a $(G,\chi)$-monodromic structure on $\mc{L}$. Then $\mathrm{IC}(\mc{O},\mc{L})$ is a $(G,\chi)$-monodromic $\dd$-module supported on the complement of $X^{\circ}$. Thus, $j^*$ kills this module and cannot be an equivalence. 
	
	Conversely, assume that $\chi \cdot \alpha \notin \Z$ for all $\alpha \in \mc{R}_n$. We must show that there are no $(G,\chi)$-monodromic $\dd$-modules supported on the complement of $X^{\circ}$. Let us assume otherwise - $\ms{M}$ is a $(G,\chi)$-monodromic $\dd$-module supported on $X \smallsetminus X^{\circ}$. Since $\ms{M}$ is the colimit of coherent $(G,\chi)$-monodromic $\dd$-modules, we may assume it is coherent. We choose a $G$-stable open subset $U$ of $X$ such that $Y := \Supp \ms{M} \cap U$ is a smooth, closed subvariety of $U$. Let $i : Y \hookrightarrow U$ be the closed embedding. By Kashiwara's Theorem \cite[Theorem 1.6.1]{HTT}, $\ms{N} := i^{\natural} \ms{M}$ is a coherent $(G,\chi)$-monodromic $\dd$-module on $Y$. Its support equals $Y$. Therefore, \cite[Lemma 3.3.2]{HTT} says that there is a dense open subset $V \subset Y$ such that $\ms{N}|_{\mc{O}_V}$ is a \textit{non-zero} projective $\mc{O}_V$-module. We may assume that $V$ is $G$-stable. Choose a $G$-orbit $\mc{O} \subset V$ and let $k \colon \mc{O} \hookrightarrow V$ be the locally closed embedding. Then $\ms{N}' = H^0(k^* \ms{N})$ is a \textit{non-zero} quasi-coherent $\dd_{\mc{O}}$-module since  $\ms{N}|_{\mc{O}_V}$ is projective over $\mc{O}_V$. However, if $K$ is the stabilizer of some point $x_0 \in \mc{O}$, then our assumption on $\chi$, together with the fact that $\mc{O} \subset X \smallsetminus X^{\circ}$, implies that the image of $q$ in $\mathbb{T}(K)$ is not $1$. Therefore, we deduce from Proposition \ref{prop:qmonodromicqc} that there are no $(G,\chi)$-monodromic $\dd$-modules on $\mc{O}$ (recall from (\ref{eq:Sconnected}) that $\mc{O} \simeq G / K$, where $K$ is connected). This contradicts the fact that $\ms{N}' \neq 0$. 
\end{proof} 

We deduce from Proposition \ref{prop:Xcyclicres} and Proposition \ref{prop:resXcircequigeneric} that:

\begin{corollary}\label{cor:UcircUisoconditions}
	The following are equivalent:
	\begin{enumerate}
		\item[(a)] $\chi \cdot \alpha \notin \Z$ for all $\alpha \in \mc{R}_n$.
		\item[(b)] $\varphi_U: \mf{A}_{\chi}(U) \rightarrow \mf{A}_{\chi}^{\circ}(U)$ is an isomorphism \textit{for all} $U \in \Rep(G)$. 
	\end{enumerate}
\end{corollary}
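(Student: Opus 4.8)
The plan is to concatenate Proposition \ref{prop:Xcyclicres} and Proposition \ref{prop:resXcircequigeneric}, once we have checked that the standing hypotheses of section \ref{sec:admissible} are in force for $X = \Rep(\Q_{\infty}(\ell),\mbf{v})$ and $G = G(n\delta)$. Concretely, $X = V_0 \times \Rep(\Q(\ell),n\delta)$ is a trivial $G$-equivariant vector bundle over the affine $G$-variety $Y = V_0$ with $R = \Rep(\Q(\ell),n\delta)$, so conditions (F1) and (F2) hold (see the remark in section \ref{sec:admissible} and \cite{AlmostCommutingVariety}); the moment map $\mu$ is flat, giving (F3), this being classical for the cyclic quiver (again \cite{AlmostCommutingVariety} and references therein); and the semi-invariant $s(v,x_{\idot}) = \det(v, \mbf{x}(v), \ds, \mbf{x}^{n-1}(v))$ introduced above cuts out $X^{\circ} = X \smallsetminus s^{-1}(0)$, which by Lemma \ref{lem:cyclicindecomp} is the indecomposable locus. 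It remains only to note that $X^{\circ}$ contains a non-empty $G$-saturated open subset on which $G$ acts freely: the locus where $\mbf{x} = x_{\ell-1}\cdots x_0$ has $n$ distinct eigenvalues and $v$ is cyclic for $\mbf{x}$ does the job — it is cut out inside $X^{\circ}$ by non-vanishing of the discriminant of $\mbf{x}$, a $G$-invariant function, hence is $G$-saturated, and a point of this form has trivial stabiliser.

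With these hypotheses verified, Proposition \ref{prop:resXcircequigeneric} applies and shows that statement (b) of the corollary — that $\varphi_U : \mf{A}_{\chi}(U) \to \mf{A}_{\chi}^{\circ}(U)$ is an isomorphism for every $U \in \Rep(G)$ — is equivalent to the assertion that $j^* : \QCoh(\dd_X,G,\chi) \to \QCoh(\dd_{X^{\circ}},G,\chi)$ is an equivalence with quasi-inverse $j_*$. But this last condition is verbatim condition (b) of Proposition \ref{prop:Xcyclicres}, which is shown there to be equivalent to $\chi \cdot \alpha \notin \Z$ for all $\alpha \in \mc{R}_n$, i.e. to statement (a). Composing the two biconditionals yields (a) $\Leftrightarrow$ (b).

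Since the logical deduction itself is immediate, the step I would treat most carefully — and the only place with genuine content — is the verification of the geometric hypotheses: above all the flatness of $\mu$ (needed both for Proposition \ref{prop:UcircAisprime} and, via Lemma \ref{lem:asscrgrflat}, for Proposition \ref{prop:resXcircequigeneric}) and the exhibition of a $G$-saturated open set on which $G$ acts freely \emph{inside} $X^{\circ}$ rather than inside $X$ (the remark in section \ref{sec:sympleavesopenXcirc} stresses that no such set exists in $X$ itself, which is exactly why one localizes to $X^{\circ}$). Everything else is formal.
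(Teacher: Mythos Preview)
Your proof is correct and takes exactly the same approach as the paper, which simply states that the corollary follows from Proposition \ref{prop:Xcyclicres} combined with Proposition \ref{prop:resXcircequigeneric}. You have been more careful than the paper in explicitly verifying the standing hypotheses (F1)--(F3) and the existence of the $G$-saturated free locus inside $X^{\circ}$ (your description coincides with the set $X^{\reg}$ introduced just after the corollary), but the logical content is identical.
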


\section{Semi-simplicity}\label{sec:sscriterionframed} 

In this final section, we apply the results of sections \ref{sec:QHR} and \ref{sec:admissible} to the special case $X = \Rep(\Q_{\infty}(\ell),\mathbf{v})$, in order to prove the results stated in the introduction. We begin by recalling Ariki's semi-simplicity criterion. 

\subsection{Rational Cherednik algebras} 

We will fix an identification 
$$
\mathbb{X}^*(\g) = \mathfrak{c} := \left\{ (\kappa_{0,0} , \kappa_{0,1},\kappa_{0}, \ds, \kappa_{\ell-1}) \in \C^2 \oplus \C^{\ell} \ \Big| \ \kappa_{0,0} + \kappa_{0,1} =0,  \sum_{i = 0}^{\ell-1} \kappa_i = 0 \right\}
$$
by 
\begin{equation}\label{eq:kappachi}
\chi_0 =  \frac{1}{\ell} + (\kappa_0 - \kappa_{1}) + (\kappa_{0,0} - \kappa_{0,1}) - 1, \quad \chi_i = \frac{1}{\ell} + (\kappa_i - \kappa_{i+1}), \quad 1 \le i \le \ell-1.
\end{equation}
Equivalently, $\kappa_{0,0} - \kappa_{0,1} = \delta \cdot \chi$ and 
$$
\ell \kappa_{i+1} = i - \sum_{j = 1}^{i} j \chi_j + \sum_{j = i+1}^{\ell-1} (\ell - j) \chi_j, \quad 1 \le i \le \ell-1.
$$ 
Associated to each $\mbf{\kappa} \in \mathfrak{c}$ is the \textit{cyclotomic rational Cherednik algebra} $\H_{\mbf{\kappa}}(W)$, where $W = \Z_{\ell} \wr \s_n$ is the wreath product of the symmetric group $\s_n$ with the cyclic group $\Z_{\ell}$, as defined in \cite[\S 3.1]{BerestChalykhQuasi}. It is a non-commutative algebra containing the group algebra $\C W$ as a subalgebra. Let $e \in \C W$ be the trivial idempotent. The algebra $e \H_{\mbf{\kappa}} (W) e$ contains both $\C[\h]^W$ and $\C[\h^*]^W$, where $\h$ is the reflection representation of $W$. The Harish-Chandra homomorphism allows one to identify the spherical subalgebra $e \H_{\mbf{\kappa}} (W) e$ with a certain algebra of quantum Hamiltonian reduction. Namely, it was shown in \cite{OblomkovHC} and \cite{GordonCyclicQuiver}  that\footnote{The parameters $(k,c_1, \ds, c_{\ell-1})$ used in \cite{GordonCyclicQuiver} are related to the parameters $(\kappa_{0,0} , \kappa_{0,1},\kappa_{0}, \ds, \kappa_{\ell-1})$ of \cite{BerestChalykhQuasi} by $k = \kappa_{0,0} - \kappa_{0,1}$ and $c_r = \sum_{p = 0}^{\ell-1} (\kappa_{p+1} - \kappa_p) \zeta^{pr}$, where $\zeta = \exp\left(\frac{2 \pi \sqrt{-1}}{\ell} \right)$.}:

\begin{theorem}\label{thm:HChomo}
	There is a filtered algebra isomorphism $\mathfrak{R}_{\chi} : \mf{A}_{\chi}(\C) \stackrel{\sim}{\longrightarrow} e \H_{\mbf{\kappa}} (W) e$.
\end{theorem}

Therefore, as a special case of the functor of Hamiltonian reduction described in section \ref{sec:QHR}, there is an exact functor $\Ham_{\chi} : \mathrm{Coh}(\dd_{X},G,\chi) \rightarrow \Lmod{e \H_{\kappa}(W) e}$ defined by 
$$
\Ham_{\chi}(\ms{M}) = \{m \in \Gamma(X,\ms{M}) \ | \ \nu(X) \cdot m = \chi(X) m \ \forall \ X \in \g \} = \Gamma(X,\ms{M})^G. 
$$

\begin{definition}
	Category $\Osph$ is the full subcategory of the category $\Lmod{e \H_{\kappa}(W) e}$ of finitely generated $ e \H_{\kappa}(W) e$-modules consisting of all modules on which $\C[\h^*]^W$ acts locally nilpotent.  
\end{definition}

The Harish-Chandra homomorphism $\mathfrak{R}_{\chi}$ has the property that $\mathfrak{R}_{\chi}(\C[R]^G) = \C[\h]^W$ and similarly $\mathfrak{R}_{\chi}((\sym R)^G) = \C[\h^*]^W$. This implies that the functor of Hamiltonian reduction restricts to an exact quotient functor $\Ham_{\chi} : \Add_{\chi} \rightarrow \Osph$. 

\subsection{Localization of the Cherednik algebra} The following observation is not needed elsewhere. Recall that $\kappa$ (or equivalently $\chi$) is \textit{spherical} if $ e  \H_{\kappa}(W)$ defines a Morita equivalence between $e\H_{\kappa}(W) e$ and $\H_{\kappa}(W)$.

\begin{lemma}\label{lem:factorspherical}
	Let $\ms{Q} := {}^{\perp} \Ham_{\chi}(e  \H_{\kappa}(W))$, an object of $\mathrm{Coh}(\dd_X,G,\chi)$. Then, 
	\begin{enumerate}
		\item[(a)] $\End_{\dd_X}(\ms{Q}) \simeq \H_{\kappa}(W)^{\mathrm{op}}$. 
		\item[(b)] $\ms{Q}$ is projective in $\QCoh(\dd_X,G,\chi)$ if and only if $\chi$ is spherical. 
	\end{enumerate}
\end{lemma}

\begin{proof}
	For brevity, let $A = e\H_{\kappa}(W) e$. It is a consequence of the double centralizer theorem, \cite[Theorem 1.5]{EG}, that $\End_A(e \H_{\kappa}(W)) \simeq \H_{\kappa}(W)^{\mathrm{op}}$. Then part (a) follows from adjunction and the fact that $1 \rightarrow \Ham_{\chi} \circ {}^{\perp} \Ham_{\chi}$ is an isomorphism:
	$$
	\End_{\dd}(\ms{Q}) = \Hom_{A} (e \H_{\kappa}(W), \Ham_{\chi} \circ {}^{\perp} \Ham_{\chi}(e \H_{\kappa}(W))). 
	$$
	Part (b). As a left $A$-module, $e \H_{\kappa}(W)$ is projective if and only if $\chi$ is spherical. Since ${}^{\perp} \Ham_{\chi}$ is left adjoint to the exact functor $\Ham_{\chi}$, this implies that $\ms{Q}$ is projective in $\QCoh(\dd_X,G,\chi)$ if $\chi$ is spherical. If $\chi$ is not spherical then there exists a short exact sequence $0 \rightarrow N_1 \rightarrow N_2 \rightarrow N_3 \rightarrow 0$ of modules in $\Osph$ such that 
	\begin{equation}\label{eq:notses1}
	0 \rightarrow  \Hom_{A}(e \H_{\kappa}(W), N_1) \rightarrow \Hom_{A}(e \H_{\kappa}(W), N_2) \rightarrow \Hom_{A}(e \H_{\kappa}(W),N_3) \rightarrow 0
	\end{equation}
	is not exact. Applying the right exact functor ${}^{\perp} \Ham_{\chi}$ gives an exact sequence ${}^{\perp} \Ham_{\chi}(N_1) \rightarrow {}^{\perp} \Ham_{\chi}(N_2) \rightarrow {}^{\perp} \Ham_{\chi}(N_3) \rightarrow 0$ and we let $N'$ be the quotient of ${}^{\perp} \Ham_{\chi}(N_1)$ making the sequence 
	$$
	\begin{tikzcd}
	0 \ar[r] & N' \ar[r] & {}^{\perp} \Ham_{\chi}(N_2) \ar[r] & {}^{\perp} \Ham_{\chi}(N_3)  \ar[r] & 0
	\end{tikzcd}
	$$
	exact. The fact that $1 \rightarrow \Ham_{\chi} \circ {}^{\perp} \Ham_{\chi}$ is an isomorphism implies that $\Hom_{\dd}(\ms{Q}, N') = \Hom_{\dd}(\ms{Q}, {}^{\perp} \Ham_{\chi}(N_1))$. Thus, the sequence 
	$$
	0 \rightarrow \Hom_{\dd}(\ms{Q}, N') \rightarrow \Hom_{\dd}(\ms{Q}, {}^{\perp} \Ham_{\chi}(N_2)) \rightarrow \Hom_{\dd}(\ms{Q},{}^{\perp} \Ham_{\chi}(N_3)) \rightarrow 0
	$$
	is isomorphic to sequence (\ref{eq:notses1}). In particular, it is not exact. 
\end{proof}

Lemma \ref{lem:factorspherical} implies that when $\chi$ is spherical there is an exact quotient functor 
$$
\Hom_{\dd}(\ms{Q}, - ) : \Coh(\dd_X,G,\chi) \rightarrow \Lmod{\H_{\kappa}(W)},
$$
making the diagram 
$$
\begin{tikzcd}
\Coh(\dd_X,G,\chi) \ar[rr,"{\Hom_{\dd}(\ms{Q}, - )}"] \ar[dr,"\Ham_{\chi}"'] & & \Lmod{\H_{\kappa}(W)} \ar[dl,"e \cdot - "] \\
& \Lmod{e \H_{\kappa}(W) e} &
\end{tikzcd}
$$  
commutative. 

\subsection{The cyclotomic Hecke algebra}

The cyclotomic Hecke algebra $\Ha_q(W)$, for $q \in \mathbb{T}(G)$, is the finite dimensional algebra generated by $T_0, T_1, \ds, T_{n-1}$, satisfying the braid relations
$$
T_0 T_1 T_0 T_1 = T_1 T_0 T_1 T_0, \quad T_i T_{i+1} T_i = T_{i+1} T_i T_{i+1}, \quad T_i T_j = T_j T_i,
$$
for $i,j = 1, \ds, n-1$, $|i-j| > 1$, and the additional relations
$$
\prod_{r = 0}^{\ell-1} (T_0 - u_r) = 0, \quad (T_i - q_0)(T_i - q_1) = 0, \quad \forall \ i > 0.
$$
where $q_0,q_1, u_0, \ds, u_{\ell-1} \in \Cs$. Let $q = - q_1q_0^{-1}$. By Ariki's Theorem \cite{ArikiSemisimple},

\begin{theorem}\label{thm:Ariki}
	The algebra $\mc{H}_q(W)$ is semi-simple if and only if  
	$$
	\prod_{m = 2}^n (1 - q^m) \prod_{i \neq j} (u_i - q^m u_j) \neq 0.
	$$
\end{theorem}

\begin{proof}
	The parametrization given in \cite{ArikiSemisimple} is slightly different. Namely, $\mc{H}_q(W)$ is generated by $a_0, \ds, a_{n-1}$ satisfying the relations of \cite[Definition 0]{ArikiSemisimple}. Then the isomorphism between the two algebras is given by $a_0 = T_0$ and $a_i = - q_0^{-1} T_i$ for $i > 0$.   
\end{proof}

Define
\begin{equation}\label{eq:qkappa}
q_0 = \exp \left(2 \pi \sqrt{-1} \kappa_{0,0}\right), \quad  q_1 = -\exp\left(2 \pi \sqrt{-1} \kappa_{0,1}\right), \quad u_i = \zeta^{-j} \exp\left( 2 \pi \sqrt{-1} \kappa_r\right). 
\end{equation}
Using the Knizhnik-Zamolodchikov functor, on can deduce from Theorem \ref{thm:Ariki} when category $\Osph$  is semi-simple. Namely, by \cite[Theorem 6.6]{BerestChalykhQuasi}, category $\Osph$ is semi-simple if and only if 
\begin{equation}\label{eq:Oss1}
\kappa_{0,0} - \kappa_{0,1} + \frac{j}{m} \notin \Z, \quad 2 \le m \le n, (j,m) = 1. 
\end{equation}
and
\begin{equation}\label{eq:Oss2}
m( \kappa_{0,0}- \kappa_{0,1}) + \kappa_j - \kappa_i + \frac{(i-j)}{\ell} \notin \Z, \quad -n < m < n, i \neq j. 
\end{equation}
We note that if $\Osph$  is semi-simple then the parameter $\chi$ is spherical and the simple objects of $\Osph$ are in bijection with the set of $\ell$-multi-partitions of $n$. 

\subsection{The proof of Theorem \ref{thm:mainequiv1}}\label{sec:proofthm:mainequiv1}

The proof of Theorem \ref{thm:mainequiv1} follows from Theorem \ref{thm:equivcond} and Corollary \ref{cor:threemoreconditions} below. Combining the results of section \ref{sec:localcomp} with the above numerical criterion for the semi-simplicity of $\Osph$, we deduce: 

\begin{theorem}\label{thm:equivcond}
	The following are equivalent:
	\begin{enumerate}
		\item[(a)] $\chi \cdot \alpha \notin \Z$ for $\alpha \in \mc{R}_n$.
		\vspace{2mm}
		\item[(b)] $|\mathcal{Q}_{\chi}(n,\ell)| = |\mathcal{P}_{\ell}(n)|$. 
		\vspace{2mm}
		\item[(c)] $\Ham_{\chi} : \Add_{\chi} \rightarrow \Osph$ is an equivalence. 
	\end{enumerate}
\end{theorem}

\begin{proof}
	Lemma \ref{lem:3equiv4} is precisely the statement that (a) $\Leftrightarrow$ (b). Notice that 
	\begin{multline*}
	\mc{R}_n =  \{ m \delta \ | \ 1 \le m \le n\} \cup \{ m \delta + \varepsilon_i + \cdots + \varepsilon_j \ | \ 0 \le m \le n-1, 1 \le i \le j \le \ell-1 \} \\
	\cup \{ m \delta - \varepsilon_i - \cdots - \varepsilon_j \ | \ 1 \le m \le n-1, 1 \le i \le j \le \ell-1 \}.
	\end{multline*}
	Applying the identification (\ref{eq:kappachi}) between $\chi$ and $\mbf{\kappa}$, we see that (a) holds if and only if 
	\begin{enumerate}
		\item[(A)] $\langle \chi , m \delta \rangle = m \delta \cdot \chi = m (\kappa_{0,0} - \kappa_{0,1}) \notin \Z$, $\forall \ 1 \le m \le n$,
		$$
		\Leftrightarrow \quad \kappa_{0,0} - \kappa_{0,1} + \frac{j}{m} \notin \Z, \quad 1 \le m \le n, (j,m) = 1. 
		$$
		\item[(B)] $\langle \chi , m \delta + \varepsilon_i + \cdots + \varepsilon_j \rangle = m \delta \cdot \chi + (\chi_i + \cdots + \chi_j) \notin \Z$, $\forall \ 0 \le m \le n-1,  1 \le i \le j \le \ell-1$,
		$$
		\Leftrightarrow \quad m(\kappa_{0,0} - \kappa_{0,1}) + (\kappa_i - \kappa_{j+1}) + \frac{j+1-i}{\ell} \notin \Z, \quad 0 \le m \le n-1,  1 \le i \le j \le \ell-1. 
		$$
		\item[(C)] $\langle \chi , m \delta - \varepsilon_i - \cdots - \varepsilon_j \rangle = m \delta \cdot \chi - (\chi_i + \cdots + \chi_j) \notin \Z$, $\forall \ 1 \le m \le n-1, 1 \le i \le j \le \ell-1$,
		$$
		\Leftrightarrow \quad  m(\kappa_{0,0} - \kappa_{0,1}) - (\kappa_i - \kappa_{j+1}) - \frac{j+1-i}{\ell} \notin \Z, \quad 1 \le m \le n-1, 1 \le i \le j \le \ell-1 
		$$
	\end{enumerate}
	This system of equations is equal to the set of equations (\ref{eq:Oss1}) and (\ref{eq:Oss2}), together with the additional condition $\kappa_{0,0} - \kappa_{0,1} \notin \Z$, which comes from $\delta \cdot \chi \notin \Z$. This implies that $\left| \Irr \Osph \right|$ equals $\left|\mathcal{P}_{\ell}(n)\right|$ when (a) holds. Thus, (a) and (b) together imply that the number of simple objects in $\Add_{\chi}$ equals the number of simple objects in $\Osph$. Since $\Ham_{\chi} : \Add_{\chi} \rightarrow \Osph$ is a quotient functor, we deduce that (c) holds. Conversely, if (c) holds then $\left| \Irr \Add_{\chi} \right| = \left| \Irr \Osph \right|$. Since we have 
	$$
	\left| \Irr \Osph \right| \le  \left|\mathcal{P}_{\ell}(n)\right| \quad \textrm{and} \quad \left| \Irr \Add_{\chi}\right|  = \left|\mathcal{Q}_{\chi}(n,\ell)\right| \ge \left|\mathcal{P}_{\ell}(n)\right|
	$$
	for all $\chi$, we see that (c) implies (b). 
\end{proof}

In the statement of Theorem \ref{thm:mainequiv1}, we have taken $k := \kappa_{0,0} - \kappa_{0,1}$. 

\begin{corollary}\label{cor:threemoreconditions}
	The following are equivalent:
	\begin{enumerate}
		\item[(a)] $\Ham_{\chi} : \Coh(\dd_{X},G,\chi) \rightarrow \Lmod{e \H_{\mbf{\kappa}} e}$ is an equivalence. 
		\vspace{2mm}
		\item[(b)] $\Gamma(X,\ms{M})^G \neq 0$ for all non-zero $(G,\chi)$-monodromic $\dd_{X}$-modules $\ms{M}$. 
		\vspace{2mm}
		\item[(c)] $\chi \cdot \alpha \notin \Z$ for all $\alpha \in \mc{R}_n$.
	\end{enumerate}
\end{corollary}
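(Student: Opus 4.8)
The plan is to prove the three-way equivalence by relating everything back to the already-established Theorem~\ref{thm:equivcond}. The cleanest route is to show (c) $\Rightarrow$ (a) $\Rightarrow$ (b) $\Rightarrow$ (c), using the functor of Hamiltonian reduction $\Ham_{\chi} = \Ham_{\C,\chi} : \Coh(\dd_X,G,\chi) \rightarrow \Lmod{\mc{U}_\chi}$ together with the Harish-Chandra isomorphism $\mathfrak{R}_\chi : \mc{U}_\chi \iso e\H_{\mbf\kappa}e$ of Theorem~\ref{thm:HChomo}.

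For (c) $\Rightarrow$ (a): assume $\chi\cdot\alpha\notin\Z$ for all $\alpha\in\mc{R}_n$. By Theorem~\ref{thm:Hamequivall}, fix $U_0$ large enough that $\Ham_{U_0,\chi}$ is an equivalence; it suffices to show that $\mf{A}_\chi(U_0)$ is Morita equivalent to $\mf{A}_\chi(\C) = \mc{U}_\chi$ via the bimodule $\mf{A}_\chi(\C,U_0)$, since then $\Ham_{\C,\chi} = (\mf{A}_\chi(\C,U_0)\otimes_{\mf{A}_\chi(U_0)} -)\circ \Ham_{U_0,\chi}$ is an equivalence onto $\Lmod{\mc{U}_\chi} \simeq \Lmod{e\H_{\mbf\kappa}e}$. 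The key input here is Theorem~\ref{thm:equivcond}(3)$\Leftrightarrow$(5): condition (c) forces $\Ham_\chi : \Add_\chi \rightarrow \Osph$ to be an equivalence, hence $\Ham_{\C,\chi}(\ms{M}) \neq 0$ for every simple $\ms{M}$ in $\Add_\chi$. Combined with the generic-behaviour arguments already used in the proof of Theorem~\ref{thm:Hamequivall} (in particular that $\mf{A}_\chi(\C,U_0)$ is a generator once $\Ham_{\C,\chi}$ kills no simple in category $\mc{O}$), this gives the Morita equivalence, so (a) holds.

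For (a) $\Rightarrow$ (b): if $\Ham_{\chi}$ is an equivalence it is in particular faithful on objects, so $\ms{M}\neq 0$ implies $\Ham_\chi(\ms{M}) = \Gamma(X,\ms{M})^G \neq 0$ (using Lemma~\ref{lem:rationalsections} to identify $\Ham_\chi(\ms{M})$ with $\Gamma(X,\ms{M})^G$). For (b) $\Rightarrow$ (c): I argue the contrapositive. If $\chi\cdot\alpha\in\Z$ for some $\alpha\in\mc{R}_n$, then by Theorem~\ref{thm:equivcond} we have $|\mc{Q}_\chi(n,\ell)| > |\mc{P}_\ell(n)|$, and the proof of Lemma~\ref{lem:3equiv4} exhibits an explicit orbit $\mc{O}_{(\lambda;\nu)}$ with $\nu\neq\emptyset$ carrying a $(G,\chi)$-monodromic local system $\mc{L}$. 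I then take $\ms{M}$ to be the Fourier transform (Proposition~\ref{prop:orbprop}(a)) of the intermediate extension $\IC(\mc{O}_{(\lambda;\nu)},\mc{L})$, an object of $\Add_\chi$, and must show $\Gamma(X,\ms{M})^G = \Ham_{\C,\chi}(\ms{M}) = 0$. This follows because $\Ham_\chi : \Add_\chi \rightarrow \Osph$ is a quotient functor whose kernel is a nonzero Serre subcategory precisely when $|\Irr\Add_\chi| = |\mc{Q}_\chi(n,\ell)| > |\Irr\Osph|$; one picks a simple $\ms{M}$ in the kernel, which exists by the counting, and $\Ham_{\C,\chi}(\ms{M}) = 0$ is exactly $\Gamma(X,\ms{M})^G = 0$.

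The main obstacle is the implication (c) $\Rightarrow$ (a): upgrading the equivalence $\Add_\chi \simeq \Osph$ to an equivalence of the \emph{whole} module categories $\Coh(\dd_X,G,\chi) \simeq \Lmod{e\H_{\mbf\kappa}e}$. The point is that semisimplicity, or even just the vanishing of the kernel of $\Ham_\chi$ on the ``category $\mc O$'' part, must be propagated to all coherent monodromic $\dd$-modules; this is where one genuinely needs Theorem~\ref{thm:Hamequivall} and the Morita-generator argument rather than a soft diagram chase. Everything else is bookkeeping with adjunctions and the dictionary between $\Ham_\chi(\ms{M})$ and $\Gamma(X,\ms{M})^G$.
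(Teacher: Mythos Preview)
Your argument is correct and follows essentially the same route as the paper. The paper proves (a) $\Leftrightarrow$ (b) directly (quotient functor is an equivalence iff its kernel vanishes, and $\Ham_\chi(\ms{M}) = \Gamma(X,\ms{M})^G$), then (a) $\Rightarrow$ (c) by restricting the equivalence to $\Add_\chi \to \Osph$ and invoking Theorem~\ref{thm:equivcond}; your (b) $\Rightarrow$ (c) via the counting $|\mc{Q}_\chi(n,\ell)| > |\mc{P}_\ell(n)| \ge |\Irr \Osph|$ is just the contrapositive of the same step.

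For (c) $\Rightarrow$ (a) the paper is more direct than your version: once (c) holds, Theorem~\ref{thm:equivcond} gives that $\Ham_\chi : \Add_\chi \to \Osph = \mc{O}_\chi(\C)$ is an equivalence, so $\C$ itself satisfies the hypothesis of Lemma~\ref{lem:Ubig}. One may therefore simply take $U_0 = \C$ in Theorem~\ref{thm:Hamequivall}, which immediately yields that $\Ham_{\C,\chi} : \Coh(\dd_X,G,\chi) \to \Lmod{\mf{A}_\chi(\C)} \simeq \Lmod{e\H_{\mbf\kappa}e}$ is an equivalence. There is no need to fix a separate large $U_0$ and then re-enter the Morita-generator argument to compare $\mf{A}_\chi(U_0)$ with $\mf{A}_\chi(\C)$; that is already packaged inside Theorem~\ref{thm:Hamequivall} once you recognise that $\C$ qualifies as the base module.
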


\begin{proof}
	Since $\Ham_{\chi}$ is a quotient functor, and $\Ham_{\chi}(\ms{M}) = \Gamma(X,\ms{M})^G$ on the level of vector spaces, (a) is equivalent to (b). If (a) holds, then in particular $\Ham_{\chi} : \Add_{\chi} \rightarrow \Osph$ is an equivalence, and hence (c) holds by Theorem \ref{thm:equivcond}. Conversely, if $\Ham_{\chi} : \Add_{\chi} \rightarrow \Osph$ is an equivalence then we may take $U_0 = \C$, the trivial $G$-module, in Theorem \ref{thm:Hamequivall} and we deduce that $\Ham_{\chi} : \Coh(\dd_{X},G,\chi) \rightarrow \Lmod{e \H_{\mbf{\kappa}} e}$ is an equivalence.
\end{proof}

In recent work, T. Shoji has shown that there is a close relationship between certain perverse sheaves on $GL(V) \times V^{\ell-1}$ and representations of the group $\s_n \wr \Z_{\ell}$; see the survey \cite{ShojiSurvey} and references within. It seem likely that the categories studied in \cite{ShojiSurvey} are related by symplectic duality to the admissible $\dd$-modules we have considered here. We hope to make this statement precise in future work. 

\subsection{The proof of Theorem \ref{thm:genericbehabousqhr1}}\label{sec:genericbehabousqhr1proof}

In this section we give the proof of Theorem \ref{thm:genericbehabousqhr1}. We assume that $\chi \cdot \alpha \notin \Z$ for all $\alpha \in \mc{R}_n$. This implies that $\Osph$ is semi-simple, which in turn means that $\mf{A}_{\chi}(\C)$ is a simple algebra. Moreover, for all $U$ containing $\C$ as a summand, Theorem \ref{thm:Hamequivall} implies that $\mf{A}_{\chi}(U)$ is Morita equivalent to $\mf{A}_{\chi}(\C)$, and hence is also simple. Next, if $U$ is an arbitrary representation, let $U' = U \oplus \C$. We have $U' \ge \C$ and $U' \ge U$. This implies that $\mf{A}_{\chi}(U')$ is simple. Then, Lemma \ref{lem:closedembeddingspecUU} implies that $\Spec \mf{A}_{\chi}(U)$ embeds in $\Spec \mf{A}_{\chi}(U')$ i.e. $\mf{A}_{\chi}(U)$ has a unique prime ideal. By Theorem \ref{cor:UcircUisoconditions}, $\mf{A}{\chi}(U)$ is isomorphic to $\mf{A}{\chi}^{\circ}(U)$. As noted in section \ref{sec:qhrframedcyclicproof}, the latter ring is prime by Proposition \ref{prop:UcircAisprime}. Therefore, we deduce that $\mf{A}{\chi}(U)$ is simple. 

Next, we must check that all rings are Morita equivalent. But since we can identify $\mf{A}{\chi}(U)$ with $\mf{A}{\chi}^{\circ}(U)$, and we have shown that all of these rings are simple, this follows from Proposition \ref{prop:nonzeroUUbi} (b). 

Finally, since each $\mf{A}_{\chi}(U)$ is simple, Proposition \ref{prop:simpleholonomicO} implies that if $M$ in $\mc{O}_{\chi}(U)$ is non-zero then $V(M) = X/\!/G$.

\subsection{The proof of Theorem \ref{thm:mainequiv}}\label{sec:proofmainframedquiveradd}

As in \cite{mirabolicHam}, the \textit{Harish-Chandra} $\dd$-module is defined to be
$$
\mc{G}_{\chi} := \dd_X / \dd_X \nu_{\chi}(\mf{g}) + \dd_X (\sym R)^G_+.
$$
It is an object of $\Add_{\chi}$. The goal of this section is to give a proof the following theorem, from which we will deduce Theorem \ref{thm:mainequiv}:

\begin{theorem}\label{thm:intss}
If $\chi \cdot \delta \in \Z$ then $\mc{G}_{\chi}$ is not semi-simple. Hence, the category $\Add_{\chi}$ is not semi-simple. 
\end{theorem}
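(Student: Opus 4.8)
The plan is to produce an explicit non-split extension inside $\Add_{\chi}$ when $\chi \cdot \delta \in \Z$, thereby showing that the category fails to be semi-simple. The natural source of such an extension is the pair consisting of the nilpotent cone $\mc{N}_{\infty}(\ell,n)$ and an orbit of strictly smaller dimension inside it; via the Fourier transform $\mathbb{F}$ of Proposition \ref{prop:orbprop}, it suffices to work on the orbital side $\Orb_{\chi}$ and find two simple objects $L_1, L_2$ supported on the cone with $\Ext^1_{\Orb_{\chi}}(L_1,L_2) \neq 0$. Concretely, I would take the open dense orbit $\mc{O}_{(\lambda;\emptyset)}$ in $\mc{N}_{\infty}(\ell,n)$, whose closure is the whole cone, together with a codimension-one orbit $\mc{O}_{(\lambda';\nu')}$; the condition $\chi \cdot \delta \in \Z$ is exactly what guarantees (via Lemma \ref{lem:Gchilocal} and the computation of $\mc{R}_n$) that the relevant $(G,\chi)$-monodromic local systems exist on \emph{both} strata, so both $\mathrm{IC}$-sheaves live in $\Orb_{\chi}$.

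The key steps, in order: (i) Using Lemma \ref{lem:multitauf} and Corollary \ref{cor:irrepO}, check that $n\delta \in \mc{R}_n$ and that $\chi \cdot \delta \in \Z$ forces the existence of a $(G,\chi)$-monodromic local system $\mc{L}$ on some orbit $\mc{O}$ with $\pi_1(\mc{O})$ a \emph{proper} quotient of $\Z^\ell$ — concretely one may take $M = M_0 \oplus M_1$ with $\dim M_1 = n\delta$ indecomposable nilpotent, as in Case 1 of the proof of Lemma \ref{lem:3equiv4}. So $|\mc{Q}_\chi(n,\ell)| > |\mc{P}_\ell(n)|$. (ii) Argue that $\mc{O} \subset X \smallsetminus X^\circ$, since $\pi_1(\mc{O}) \neq \Z^\ell$ means $\mc{O}$ does not parametrize indecomposables, by Lemma \ref{lem:cyclicindecomp}. (iii) Form the simple $\dd$-module $\mathbb{F}(\mathrm{IC}(\mc{O},\mc{L}))$; it is supported on $X \smallsetminus X^\circ$, hence killed by $j^*$. (iv) Invoke Proposition \ref{prop:Xcyclicres}: since $j^*$ is not faithful (it kills a nonzero object), $j^*$ is not an equivalence, and $j_* j^* \not\simeq \mathrm{id}$. (v) Deduce from $j_* j^* \not\simeq \mathrm{id}$ — or more directly from the fact that there is a nonzero $\dd$-module supported on the complement of the dense open $X^\circ$ — that there must be a nontrivial extension between a module supported on $X \smallsetminus X^\circ$ and a module meeting $X^\circ$; a clean way is to take any simple $\ms{N}$ meeting $X^\circ$ (e.g. $\mathbb{F}$ of the $\mathrm{IC}$ on the dense orbit) and observe that $j_* j^* \ms{N} \neq \ms{N}$ in general, so the unit $\ms{N} \to j_* j^* \ms{N}$ has nonzero kernel or cokernel supported on the complement, witnessing a non-split piece. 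Alternatively and most cleanly: semi-simplicity of $\Add_\chi$ would by Theorem \ref{thm:equivcond} force $|\mc{Q}_\chi(n,\ell)| = |\mc{P}_\ell(n)|$ (since a semi-simple category with $|\mc{Q}_\chi(n,\ell)|$ simples mapping onto $\Osph$ which has at most $|\mc{P}_\ell(n)|$ simples, combined with part (5)$\Leftrightarrow$(4) and the quotient-functor count, pins down equality), contradicting step (i).

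In fact the shortest route is to run the argument entirely through the counting statement: assume $\Add_\chi$ is semi-simple; then the exact quotient functor $\Ham_\chi : \Add_\chi \to \Osph$ is a functor between semi-simple categories which is essentially surjective, so $|\Irr \Add_\chi| \ge |\Irr \Osph|$, but semi-simplicity of $\Add_\chi$ together with $\Ham_\chi$ being a quotient functor with finitely many simples forces, via the argument in the proof of Theorem \ref{thm:equivcond}, the chain $|\mc{P}_\ell(n)| \le |\mc{Q}_\chi(n,\ell)| = |\Irr \Add_\chi| = |\Irr \Osph| \le |\mc{P}_\ell(n)|$, whence $|\mc{Q}_\chi(n,\ell)| = |\mc{P}_\ell(n)|$, contradicting step (i) which produced a surplus orbit with a monodromic local system precisely because $\chi \cdot \delta \in \Z$. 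The main obstacle I anticipate is justifying the middle equality $|\mc{Q}_\chi(n,\ell)| = |\Irr \Osph|$ from semi-simplicity alone: a priori a quotient functor between semi-simple categories can have simples mapping to zero, so I need the additional input (available from section \ref{sec:admissible}) that every simple in $\Add_\chi$ has nonzero image under $\Ham_\chi$, or equivalently must argue that if some simple $L$ had $\Ham_\chi(L) = 0$ then $L$ is a proper direct summand obstruction to $\Ham_\chi$ being a quotient functor onto a semi-simple target — this is where I would need to be careful, possibly falling back on the explicit $\mathrm{IC}$-sheaf / restriction argument of steps (i)–(iv) and Proposition \ref{prop:Xcyclicres} instead, which sidesteps the counting subtlety by exhibiting the offending module directly.
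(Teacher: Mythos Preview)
Your proposal contains a genuine gap, and you have correctly identified it yourself. Both the counting route and the fallback via Proposition~\ref{prop:Xcyclicres} reduce to the same unproven claim: that the existence of a simple object in $\Add_\chi$ killed by $\Ham_\chi$ (equivalently, supported on $X \smallsetminus X^\circ$) forces non-semi-simplicity. But a semi-simple category can perfectly well have a simple direct summand annihilated by an exact quotient functor, and nothing in steps (i)--(iv) supplies the missing extension. Step (v) asserts $j_* j^* \ms{N} \neq \ms{N}$ ``in general'', which is precisely what needs proof; even granting it, the resulting sequence could split (and one must also check $j_* j^* \ms{N}$ lies in $\Add_\chi$). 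The counting argument is circular: the implication ``$\Add_\chi$ semi-simple $\Rightarrow |\mc{Q}_\chi(n,\ell)| = |\mc{P}_\ell(n)|$'' is not part of Theorem~\ref{thm:equivcond}, and the paper deduces it only \emph{after} Theorem~\ref{thm:intss} (see the proof of Theorem~\ref{thm:mainequiv}). Crucially, $\Osph$ \emph{can} be semi-simple when $\chi \cdot \delta \in \Z$ --- the first condition in Corollary~\ref{cor:Oss} involves only $m \ge 2$ --- so semi-simplicity of the target gives no leverage on the extra root $n\delta$.

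The paper's argument is completely different and constructive. It exploits the Harish-Chandra module $\mc{G}_\chi = \dd_X / (\dd_X \nu_\chi(\mf{g}) + \dd_X (\sym R)^G_+)$, which has the structural property that \emph{no quotient} is killed by $\Ham_\chi$ (its $G$-invariant generator survives in every quotient). If $\Add_\chi$ were semi-simple, every submodule of $\mc{G}_\chi$ would also be a direct summand and hence a quotient, so no submodule could be killed by $\Ham_\chi$ either. The paper then exhibits an explicit submodule with zero $G$-invariants: after twisting to $\chi \cdot \delta = 0$, one sets $\psi = -\chi + \Tr_0$, shows (Lemma~\ref{lem:quotemcG}) that $\mc{F}_{\psi'} \boxtimes \delta_0$ is a quotient of $\mc{G}_\psi$, dualizes via Lemma~\ref{lem:commuteequivdual} to obtain an embedding $\mathbb{D}(\mc{F}_{\psi'}) \boxtimes (\delta_0 \otimes \det_0) \hookrightarrow \mc{G}_\chi$, and checks that this submodule has no $\Cs_\Delta$-invariants because all $\Cs_\Delta$-weights on $\delta_0 \otimes \det_0$ are at least $n$. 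The key idea you are missing is this asymmetry between submodules and quotients of a cyclic object; semi-simplicity would erase that distinction, and the paper exploits the failure directly rather than via an abstract extension or count.
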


Recall that $R = \Rep (\Q, n \delta)$ so that $X = V \times R$. The action of $G(n \delta)$ on $V \times R$ is the diagonal action, with the action of $G(n \delta)$ on $V$ factoring through $G_0 \simeq GL(V)$. If $\Cs_{\Delta} \subset G(n \delta)$ is the one-dimensional diagonal torus then the action of $G(n \delta)$ on $R$ factors through $PG(n \delta) := G(n \delta) / \Cs_{\Delta}$. This implies that $G(n \delta)$ acts trivially on $\bigwedge^{\dim R} R$. The moment map $\nu : \mf{g} \rightarrow \dd(X)$ can be decomposed as $\nu = \nu_R + \nu_V$, where $\nu_R: \mf{g} \rightarrow \dd(R) \subset \dd(X)$ and $\nu_V: \mf{g} \rightarrow  \dd(V) \subset \dd(X)$. We define the Harish-Chandra $\dd$-module in this context to be 
$$
\mc{F}_{\chi} = \dd_R/ \dd_R \nu_{R,\chi}(\mf{g}) + \dd_R (\sym R)^G_+.
$$

\begin{lemma}
	The module $\mc{F}_{\chi}$ is non-zero if and only if $\chi \cdot \delta = 0$. 
\end{lemma}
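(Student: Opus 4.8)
The plan is to prove the two implications of the equivalence separately; the forward one is immediate and the reverse is the substantial part.

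\emph{The forward implication.} If $\chi\cdot\delta\neq 0$ then $\mc{F}_\chi=0$. Write $e:=(\mathrm{Id},\dots,\mathrm{Id})\in\g=\bigoplus_{i=0}^{\ell-1}\mf{gl}_n$. Since the $G(n\delta)$-action on $R=\Rep(\Q(\ell),n\delta)$ factors through $PG(n\delta)=G(n\delta)/\Cs_{\Delta}$, the line $\C e$ is precisely the kernel of the comoment map $\g\to\Theta_R$, so $\nu_R(e)=0$ and hence $\nu_{R,\chi}(e)=-\chi(e)=-n(\chi\cdot\delta)$ under the identification (\ref{eq:Xgidentification}). When $\chi\cdot\delta\neq 0$ this is a non-zero scalar lying in the defining left ideal $\dd_R\,\nu_{R,\chi}(\g)+\dd_R(\sym R)^G_+$, which is therefore equal to $\dd_R$; thus $\mc{F}_\chi=0$.

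\emph{The reverse implication: set-up.} Assume $\chi\cdot\delta=0$. Then $\chi$ vanishes on $\C e=\mf{k}$, so by Proposition~\ref{prop:Gchimonoproperties} the module $\mc{F}_\chi$ may be regarded as the Harish-Chandra $\dd$-module of the connected reductive group $PG(n\delta)$ acting on the vector space $R$; equivalently, the Fourier transform $\mathbb{F}$ along $R$ identifies it with the orbital $\dd_{R^{\vee}}$-module $\dd_{R^{\vee}}/\bigl(\dd_{R^{\vee}}\nu_{R^{\vee},\chi}(\g)+\dd_{R^{\vee}}\C[R^{\vee}]^G_+\bigr)$, with no twist of the parameter because $G(n\delta)$ acts trivially on $\bigwedge^{\dim R}R$. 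To prove $\mc{F}_\chi\neq 0$ I would restrict to a generic locus. Let $\mbf{x}=x_{\ell-1}\cdots x_0$ be the product around the cycle and $R^{\mathrm{gen}}\subset R$ the open subset where $\mbf{x}$ is regular semisimple with pairwise distinct eigenvalues. Over a suitable non-empty open subset $U$ of $R/\!/G$, the set $R^{\mathrm{gen}}$ is a fibre bundle whose fibre is the closed orbit $\mc{O}=G\cdot M$ of a representation $M$ that is a direct sum of $n$ pairwise non-isomorphic simple representations of dimension vector $\delta$, with $\mc{O}\cong G/T$ and $T=\Aut_{\Q}(M)\cong(\Cs)^n$ the generic stabiliser. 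Restricting $\mc{F}_\chi$ to $R^{\mathrm{gen}}$ and applying the monodromic descent of Proposition~\ref{prop:Gchimonoproperties} along this bundle, $\mc{F}_\chi|_{R^{\mathrm{gen}}}$ becomes the exterior product of a $(G,\chi)$-monodromic local system on $G/T$ with the ``radial part'' $\dd$-module $\dd_U/\dd_U\{\text{radial parts of }(\sym R)^G_+\}$ on $U$.

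\emph{The reverse implication: the two factors.} The local-system factor is non-zero precisely because such a local system exists: by Proposition~\ref{thm:cokerfun} and Corollary~\ref{cor:irrepO} applied to $M$, the fundamental group $\pi_1(\mc{O})$ is the cokernel of the $n\times\ell$ all-ones matrix, namely $\Z^{\ell}/\Z\delta$, and $\exp(\chi)$ descends to a (necessarily one-dimensional) representation of it if and only if $\langle\exp(\chi),\delta\rangle=1$, i.e. $\chi\cdot\delta\in\Z$; this holds here since $\chi\cdot\delta=0$, and by Lemma~\ref{lem:qmonodromic} the local system then exists and is unique. The radial-part factor is non-zero by the Harish-Chandra/Dunkl-type description of the radial parts of the invariant constant-coefficient operators on $R$: on $U$ these generate an ideal whose quotient is a non-zero $\mc{O}_U$-coherent $\dd_U$-module of finite rank --- the quiver counterpart of the Hotta--Kashiwara system, whose non-vanishing on the regular semisimple locus is classical in the case $\ell=1$, $R=\mf{gl}_n$. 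Combining the two factors gives $\mc{F}_\chi|_{R^{\mathrm{gen}}}\neq 0$, hence $\mc{F}_\chi\neq 0$.

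\emph{Main obstacle.} The delicate point is entirely in the reverse implication: making rigorous the identification of $\mc{F}_\chi|_{R^{\mathrm{gen}}}$ with the above exterior product and, above all, verifying that the radial-part factor does not vanish, i.e. that imposing the invariant constant-coefficient operators after descent to $U$ does not collapse the module. This is the quiver analogue of the non-vanishing of the Harish-Chandra system on a reductive Lie algebra, and I expect it can also be extracted from the non-vanishing results for Harish-Chandra $\dd$-modules in \cite{AlmostCommutingVariety} and \cite{mirabolicHam}, once the factorisation of the $G(n\delta)$-action through $PG(n\delta)$ --- which is exactly what imposes the constraint $\chi\cdot\delta=0$ --- is built in.
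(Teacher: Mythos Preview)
Your forward implication is correct and is exactly the paper's argument: the diagonal element kills $\nu_R$ but not $\chi$, so the defining ideal contains a unit.

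For the reverse implication, your approach differs from the paper's and is incomplete in the way you yourself flag. The paper avoids the geometric restriction argument entirely. Instead it invokes Oblomkov's Harish-Chandra homomorphism \cite{OblomkovHC}: when $\chi\cdot\delta=0$ there is a surjective algebra map
\[
\mf{R}_{\chi}':\bigl(\dd(R)/\dd(R)\,\nu_{R,\chi}(\g)\bigr)^G\twoheadrightarrow e\,\H_c(\s_n\wr\Z_{\ell})\,e
\]
carrying $(\sym R)^G$ isomorphically onto $\C[\h^*]^{\s_n\wr\Z_{\ell}}$. Hence $\Gamma(R,\mc{F}_\chi)^G$ surjects onto $e\H e/e\H e\,\C[\h^*]^{\s_n\wr\Z_{\ell}}_+$, which is non-zero by the PBW theorem for rational Cherednik algebras. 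So $\mc{F}_\chi\neq 0$.

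This is much shorter and sidesteps exactly the obstacle you identify: you do not need to establish the factorisation $\mc{F}_\chi|_{R^{\mathrm{gen}}}\simeq(\text{local system})\boxtimes(\text{radial part})$ or the non-vanishing of the radial-part factor, both of which are genuine work (and in the quiver setting not available off the shelf in the literature you cite). In effect, Oblomkov's map already packages the ``radial reduction'' for you, and PBW replaces the delicate analysis of the Hotta--Kashiwara system. Your outline could in principle be completed, but as written the reverse direction is a sketch with its hardest step deferred; the paper's route closes the gap with a one-line citation.
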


\begin{proof}
	If $\chi \cdot \delta \neq 0$ then $\nu_R (\mathbf{1}_{\Delta}) = 0$ but $\chi(\mathbf{1}_{\Delta}) \neq 0$. This implies that $\dd(R) \nu_{R,\chi}(\mf{g}) = \dd(R)$ and hence $\mc{F}_{\chi} = 0$. If $\chi \cdot \delta = 0$ then the result \cite{OblomkovHC} shows that there is a surjective\footnote{One can actually show that the morphism is an isomorphism. Details will appear elsewhere.} algebra homomorphism 
	$$
	\mf{R}_{\chi}' : (\dd(R) / \dd(R) \nu_{R,\chi}(\mf{g}))^G \rightarrow e \H_{\kappa}(\s_n \wr \Z_{\ell}) e,
	$$
	mapping $(\sym R)^G$ isomorphically onto $\C[\h^*]^{\s_n \wr \Z_{\ell}}$. This implies that 
	$$
	\Gamma(R,\mc{F}_{\chi})^G \twoheadrightarrow e \H e / e \H e \ \C[\h^*]^{\s_n \wr \Z_{\ell}}_+. 
	$$
	The module on the right is non-zero by the PBW property for rational Cherednik algebras. Thus, $\Gamma(R,\mc{F}_{\chi}) \neq 0$. 
\end{proof}

If $\mf{g}' = \mathrm{Lie} \ PG(n \delta)$, then the above lemma shows that for $\chi \cdot \delta = 0$, we can consider $\chi$ as a character of $\mf{g}'$ and 
$$
\mc{F}_{\chi} = \dd_R / \dd_R \nu_{R,\chi}(\mf{g}') + \dd_R (\sym R)^{PG}_+.
$$
If we decreed that the canonical generator $1 \in \mc{F}_{\chi}$ is $PG(n \delta)$-invariant then $\mc{F}_{\chi}$ is a $(PG(n \delta), \chi)$-monodromic $\dd$-module. Equivalently, it is a $(G(n \delta), \chi)$-monodromic $\dd$-module such that $\Gamma(R,\mc{F}_{\chi})^{\Cs_{\Delta}} = \Gamma(R,\mc{F}_{\chi})$. We make the skyscraper module $S_0 = \dd_V / \dd_V \C[V]_+ = \dd_V \cdot v_0$ into a $G(n\delta)$-module by saying that the generator $v_0$ is invariant. Since
$$
\nu_V(\mathbf{1}_0) \cdot v_0 = \left(n - \sum_{i = 1}^n \partial_i x_i \right) \cdot v_0 = n v_0, 
$$
this means that $S_0$ is a $(G(n\delta),\Tr_0)$-monodromic $\dd$-module. Define $\chi' := \chi - \Tr_0$, so that 
\begin{equation}\label{eq:chiprime}
\chi'(\mathbf{1}_i) = \left\{ \begin{array}{lll}
\chi(\mathbf{1}_i) & = n \chi_i & i \neq 0 \\
\chi(\mathbf{1}_0) - n & = n \chi_0 - n & i = 0
\end{array}\right.  
\end{equation}
Then $\mc{F}_{\chi'} \boxtimes S_0$ is a $(G(n\delta),\chi' + \Tr_0) = (G(n\delta),\chi)$-monodromic $\dd$-module. Recall from Proposition \ref{prop:dualholo} that if $\ms{M}$ is a holonomic $(G,\chi)$-monodromic module then $\mathbb{D}(\ms{M})$ is a $(G,-\chi)$-monodromic module. 

\begin{lemma}\label{lem:commuteequivdual}
	Let $\mathrm{det}_0 : G(n \delta) \rightarrow \Cs$ be the character $g \mapsto \det(g_0)$. 
	\begin{enumerate}
		\item[(a)] $\mathbb{D}(S_0) \simeq S_0 \otimes \mathrm{det}_0^{\otimes 2}$. 
		\vspace{2mm}
		\item[(b)] $\mathbb{D}(\mc{G}_{\chi}) \simeq \mc{G}_{- \chi + \Tr_0} \otimes \mathrm{det}_0$.
	\end{enumerate}
\end{lemma}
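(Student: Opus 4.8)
The plan is to compute both duals directly from explicit presentations, using the machinery from section \ref{sec:monodromicproperties} — in particular Lemma \ref{lem:Afreemfgmod}, which computes $\Ext$ of a module of the form $A/A\mf{g}$, and Proposition \ref{prop:dualityOchiG}, which handles the ``twisting by the modular character'' bookkeeping. Throughout one keeps in mind that $\mathbb{D}$ for a $\dd$-module on a smooth variety $Z$ is $\mathbb{D}(\ms{M}) = \Ext^{\dim Z}_{\dd_Z}(\ms{M},\dd_Z)\otimes_{\mc{O}_Z}\Omega_Z^{\otimes-1}$ concentrated in a single degree for a holonomic module presented by a Koszul-type resolution, and that twisting the monodromic structure by a character $\psi$ of $G$ corresponds to $\ms{M}\mapsto\ms{M}\otimes\psi$ as in Lemma \ref{lem:twistequivalence}.

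For part (a): $\delta_0 = \dd_V/\dd_V\C[V]_+$, so $V = \mathds{A}^n$ with coordinates $x_1,\dots,x_n$ and $\delta_0 = \dd_V/\dd_V(x_1,\dots,x_n)$ is the delta module at the origin, which is simply $\delta_0 \simeq \dd_V\otimes_{\C[V]}(\dd_V/\C[V]_+)$, i.e.\ the ``$\partial$-polynomials'' $\C[\partial_1,\dots,\partial_n]\cdot v_0$. Since $\C[V]$ is a polynomial ring, the Koszul complex on $x_1,\dots,x_n$ resolves $\delta_0$; applying $\Hom_{\dd_V}(-,\dd_V)$ and computing cohomology (this is the standard computation that $\mathbb{D}$ of a delta module is a delta module) gives $\mathbb{D}(\delta_0)\simeq\delta_0$ as a plain $\dd_V$-module, so the content of (a) is entirely about the $G(n\delta)$-equivariant (monodromic) structure. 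Here one tracks the weights: $\delta_0$ carries the $(G(n\delta),\Tr_0)$-monodromic structure with $v_0$ invariant, and dualizing introduces a shift by the character corresponding to the $G$-action on the fibre $\bigwedge^{\dim V}V\otimes\bigwedge^{\dim V}V^*$-type factor coming from $\Omega_V^{\otimes-1}$ and from the $\Ext$; since $GL(V)=G_0$ acts on $V=\C^n$ as the standard representation, $\det$ of that action is $\mathrm{det}_0$, and one gets a total shift of $\mathrm{det}_0^{\otimes 2}$ — concretely, $\nu_V(\mathbf 1_0)$ acts by $n$ on $v_0$, by $-n$ after one ``side'' of dualization, and the canonical generator of $\mathbb{D}(\delta_0)$ transforms with character $+\Tr_0$ relative to $\delta_0$ coming from $\Omega_V^{\otimes-1}\simeq\mathrm{det}_0$-twist plus the $\Ext$ contributing another $\mathrm{det}_0$-twist, totalling $\mathrm{det}_0^{\otimes 2}$. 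I would verify this on the level of the explicit generator and the $\g$-action on it, exactly as in the proof of Proposition \ref{prop:dualityOchiG}.

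For part (b): $\mc{G}_\chi = \dd_X/\big(\dd_X\nu_\chi(\mf g) + \dd_X(\sym R)^G_+\big)$. I would present $\mc{G}_\chi$ as a quotient and resolve in two stages: first quotient by $(\sym R)^G_+$ (a regular sequence, or at least a sequence whose Koszul complex has the needed exactness by flatness of $\mu$, cf.\ the discussion around (F3) and Lemma \ref{lem:asscrgrflat}), obtaining the Harish-Chandra-type module on which we then impose $\nu_\chi(\mf g)$; applying $\mathbb{D}$ and using Lemma \ref{lem:Afreemfgmod} with $A$ the appropriate algebra and $\nu$ replaced by $\nu_\chi = \nu-\chi$ shows $\Ext^N_A(A/A\mf g_\chi,A)\simeq {}^{\chi-\delta_{\mf g}}A$-type module, i.e.\ the relation $\nu_\chi \leadsto \nu_{-\chi+\delta_{\mf g}}$ after dualizing, where $\delta_{\mf g}$ is the modular character of $\mf g = \mathrm{Lie}\,G(n\delta)$. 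Since $\mf g$ is reductive, $\delta_{\mf g}=0$ by the remark before Lemma \ref{lem:topwedgegact}; the remaining asymmetry comes purely from $\Omega_X^{\otimes-1}$. Now $X = V\times R$ and $G(n\delta)$ acts trivially on $\bigwedge^{\dim R}R$ (stated in the text, since the $R$-action factors through $PG(n\delta)$), so $\Omega_X^{\otimes-1}\simeq\Omega_V^{\otimes-1}\simeq\mathrm{det}_0$ as a $G(n\delta)$-equivariant line bundle (it's the top exterior power of $V=\C^n$ with the standard $G_0$-action). Combining: $\mathbb{D}(\mc G_\chi)$ is $(G,-\chi)$-monodromic by Proposition \ref{prop:dualholo}, its explicit presentation matches that of $\mc G_{-\chi}$ after twisting by the equivariant line bundle $\Omega_X^{\otimes-1}\simeq\mathrm{det}_0$, and the $\eu$- and $\mf g$-weight of the canonical generator supplies the final $\Tr_0$-shift. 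Putting the pieces together gives $\mathbb{D}(\mc G_\chi)\simeq\mc G_{-\chi+\Tr_0}\otimes\mathrm{det}_0$.

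The main obstacle will be bookkeeping of the various character twists — distinguishing the contribution of $\Omega_X^{\otimes-1}$ (an honest equivariant line-bundle twist), the contribution from the $\Ext$ computation via Lemma \ref{lem:Afreemfgmod} (which produces the switch $\nu_\chi\leadsto\nu_{-\chi}$, harmless since $\delta_{\mf g}=0$), and the correction needed so that the \emph{canonical generators} line up (the $\Tr_0$/$\mathrm{det}_0$ shifts). The cleanest way to avoid sign errors is to do the computation first on the ``toy'' factor $\delta_0$ on $V$ (part (a)), where everything is explicit, and then import the same calculation for the $V$-direction of $X$ in part (b), noting that the $R$-direction contributes nothing to the twist because $G(n\delta)$ acts trivially on $\bigwedge^{\dim R}R$ and $\delta_{\mf g}=0$.
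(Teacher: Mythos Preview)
Your proposal is correct and follows essentially the same approach as the paper: compute $\mathbb{D}$ via Lemma~\ref{lem:Afreemfgmod}, use that the modular character of a reductive (resp.\ abelian) Lie algebra vanishes, identify $\Omega_X^{\otimes-1}\simeq\det_0$ equivariantly via the triviality of the $G$-action on $\bigwedge^{\dim R}R$, and then compare monodromic characters to pin down the $\det_0$-twists. The only organizational difference is that the paper treats $\mf{k}=\nu_\chi(\mf g)\oplus(\sym R)^G_+$ as a single Lie algebra and applies Lemma~\ref{lem:Afreemfgmod} once (citing \cite{MirabolicCharacter} for flatness of $\dd(X)$ over $\mf{k}$), rather than resolving in two stages as you propose; and for part (a) the paper simply invokes Proposition~\ref{prop:dualholo} to conclude $\mathbb{D}(\delta_0)$ is $(G,-\Tr_0)$-monodromic and then matches characters, rather than decomposing the $2\Tr_0$ shift into separate $\Ext$ and $\Omega^{-1}$ contributions.
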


\begin{proof}
	Part (a). Forgetting the equivariant structure, $\mathbb{D}(S_0) = S_0$. On the other hand, as shown above $S_0$ is a $(G(n\delta),\Tr_0)$-monodromic $\dd$-module. Thus, by Proposition \ref{prop:dualityOchiG}, the module $\mathbb{D}(S_0)$ is $(G(n\delta),-\Tr_0)$-monodromic. Since $d \, \mathrm{det}_0^{\otimes 2} = 2 \Tr_0$, it follows that $\mathbb{D}(S_0) \simeq S_0 \otimes \mathrm{det}_0^{\otimes 2}$. 
	
	Part (b). As in \cite{MirabolicCharacter}, if we let $\mf{k} = \nu_{\chi}(\mf{g}) \oplus (\sym R)^{G}_+$, then $\dd(X)$ is a flat right $\mf{k}$-module. Therefore Lemma \ref{lem:Afreemfgmod} says that 
	$$
	\Ext_{\dd}^{N}(\mc{G}_{\chi},\dd) \simeq \mf{k} \dd \setminus \dd
	$$
	as a right $\dd$-module. Here we have used the fact that $\mf{k}$ is the direct sum of a reductive Lie algebra and an abelian Lie algebra, which implies that the modular character $\delta$ defined in section~\ref{sec:dualitymonodromic} is zero. Therefore, as a non-equivariant $\dd$-module, 
	$$
    \mathbb{D}(\mc{G}_{\chi}) \simeq \mf{k} \dd \setminus \dd \otimes_{\mc{O}} \Omega_X^{\otimes -1}. 
    $$
    If $s \in \bigwedge^{\dim X} X$ is non-zero, it defines a nowhere vanishing section of $\Omega_X^{\otimes -1}$. As a $\mf{g}$-module, $ \bigwedge^{\dim X} X \simeq  \bigwedge^{\dim V} V$, and hence $x \cdot s = \Tr_0(x) s$ for all $x \in \mf{g}$. This means that 
    $$
    x \cdot (1 \otimes s) = (- x \cdot 1) \otimes s + 1 \otimes (x \cdot s) = (- \chi + \Tr_0)(x) (1 \otimes s). 
    $$
    Similarly, $v \cdot (1 \otimes s) = (- v) \otimes s$ for $v \in R \subset \sym R$. Since multiplication by $-1$ clearly commutes with the action of $G$, we deduce that $(\sym R)_+^G \cdot (1 \otimes s) = 0$. We deduce that $\mathbb{D}(\mc{G}_{\chi}) \simeq \mc{G}_{- \chi + \Tr_0}$, if we forget the equivariant structure. On the other hand, we know from Proposition \ref{prop:dualityOchiG} that $\mathbb{D}(\mc{G}_{\chi})$ is $(G(n\delta),-\chi)$-monodromic. Thus, $\mathbb{D}(\mc{G}_{\chi}) \simeq \mc{G}_{- \chi + \Tr_0} \otimes \mathrm{det}_0$ as required. 
\end{proof}

\begin{lemma}\label{lem:quotemcG}
	If $\chi \cdot \delta = 1$, then $\mc{F}_{\chi} \boxtimes S_0$ is a quotient of $\mc{G}_{\chi}$. 
\end{lemma}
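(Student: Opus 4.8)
The plan is to produce an explicit surjection of $(G,\chi)$-monodromic $\dd_X$-modules from $\mc{G}_{\chi}$ onto $\mc{F}_{\chi'}\boxtimes\delta_0$, where $\chi'=\chi-\Tr_0$. (The module written $\mc{F}_{\chi}$ in the statement is $\mc{F}_{\chi'}$: the hypothesis $\chi\cdot\delta=1$ is precisely the condition $\chi'\cdot\delta=0$ under which $\mc{F}_{\chi'}\neq 0$ and under which $\mc{F}_{\chi'}\boxtimes\delta_0$ is the $(G,\chi)$-monodromic $\dd$-module constructed above, whereas $\mc{F}_{\chi}$ itself vanishes once $\chi\cdot\delta=1$.) Since $X=R\times V$ as a diagonal $G$-variety, $\dd_X$ is the external tensor product $\dd_R\boxtimes\dd_V$, so $\mc{F}_{\chi'}\boxtimes\delta_0$ is cyclic over $\dd_X$, generated by $1\otimes v_0$, where $1$ and $v_0$ are the canonical generators of $\mc{F}_{\chi'}$ and $\delta_0$. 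Hence there is a surjection of $\dd_X$-modules $\dd_X\twoheadrightarrow\mc{F}_{\chi'}\boxtimes\delta_0$, $1\mapsto 1\otimes v_0$; as both sides are weakly $G$-equivariant with distinguished generators transforming compatibly, this is a morphism in $\QCoh(\dd_X,G,\chi)$.

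It then remains to check that this map kills the left ideal $\dd_X\,\nu_{\chi}(\g)+\dd_X\,(\sym R)^G_+$, so that it descends to the required surjection $\mc{G}_{\chi}\twoheadrightarrow\mc{F}_{\chi'}\boxtimes\delta_0$. The part involving $(\sym R)^G_+$ is immediate: these operators act only on the $R$-factor and $(\sym R)^G_+\cdot 1=0$ in $\mc{F}_{\chi'}$ by construction, so $(\sym R)^G_+\cdot(1\otimes v_0)=0$. For the part involving $\nu_{\chi}(\g)$ I would write $\nu_{\chi}(x)=\nu_R(x)+\nu_V(x)-\chi(x)$ for $x\in\g$, use $\nu_R(x)\cdot 1=\chi'(x)\cdot 1$ in $\mc{F}_{\chi'}$ (definition of $\mc{F}_{\chi'}$) and $\nu_V(x)\cdot v_0=\Tr_0(x)\cdot v_0$ in $\delta_0$ (the statement that $\delta_0$ is $(G,\Tr_0)$-monodromic with $v_0$ invariant, which reduces via Proposition \ref{prop:affinemono} to the computation $\nu_V(\mbf{1}_0)\cdot v_0=n v_0$ recorded above), and conclude
\[
\nu_{\chi}(x)\cdot(1\otimes v_0)=\bigl(\chi'(x)+\Tr_0(x)-\chi(x)\bigr)(1\otimes v_0)=0,
\]
the last equality being the identity $\chi'+\Tr_0=\chi$, cf. (\ref{eq:chiprime}).

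This is essentially a bookkeeping verification and I do not anticipate a genuine obstacle. The two points deserving mild care are: (i) the identification $\dd_X=\dd_R\boxtimes\dd_V$ together with the resulting cyclicity of $\mc{F}_{\chi'}\boxtimes\delta_0$, which is what makes the candidate map surjective; and (ii) keeping track of the $G$-equivariant (monodromic) structures rather than only the underlying $\dd$-module structures, so that the conclusion genuinely lands in $\QCoh(\dd_X,G,\chi)$ — but since both generators are normalised so that $\g$ acts through the appropriate character, the map is $G$-equivariant for free, and being a surjection of objects of $\Add_{\chi}$ it exhibits $\mc{F}_{\chi'}\boxtimes\delta_0$ as a quotient of $\mc{G}_{\chi}$.
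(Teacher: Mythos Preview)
Your proposal is correct and follows essentially the same route as the paper: both produce the surjection by sending the canonical generator of $\mc{G}_{\chi}$ to $1\otimes v_0$ and verifying that the defining relations are annihilated. The paper phrases the verification as an ideal containment
\[
\dd_X\,\nu_{\chi}(\mf{g}) + \dd_X\,(\sym R)^G_+ \subseteq \dd_X\,\nu_{R,\chi'}(\mf{g}) + \dd_X\,(\sym R)^G_+ + \dd_X\,\C[V]_+,
\]
established by explicit operator computations showing $\nu_V(x)-\Tr_0(x)\in\dd_X\,\C[V]_+$, whereas you equivalently evaluate directly on $1\otimes v_0$ using $\nu_V(x)\cdot v_0=\Tr_0(x)v_0$; your formulation is a touch cleaner but the content is identical. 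You are also right that the $\mc{F}_{\chi}$ in the statement is $\mc{F}_{\chi'}$ with $\chi'=\chi-\Tr_0$ (this is what the paper's proof actually shows), and your handling of the monodromic structure matches the paper's.
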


\begin{proof}
	If $\left\{ e_{i,j}^{(0)} \right\}$ is the standard basis of $\mf{g}_0 \subset \mf{g}$, then the fact that $e_{i,j}^{(0)} \cdot x_k = - \delta_{k,i} x_j$ implies that 
	$$
	\nu_V\left( e_{i,j}^{(0)}\right) = - x_j \partial_i = - \partial_i x_j, \quad  \nu_V\left(e_{i,i}^{(0)} - e_{j,j}^{(0)}\right) = x_j \partial_j - x_i \partial_j = \partial_j x_j - \partial_i x_i, \quad \forall \ i \neq j
	$$
	and $\nu_{V}(\mathbf{1}_0) = - \sum_{i = 1}^n x_i \partial_i = n - \sum_{i = 1}^n \partial_i x_i$. We have $\nu_{V}(\mf{g}_i) = 0$ for all $i \neq 0$. This shows that for all $x \in [\mf{g}_0,\mf{g}_0]$, we have $\nu_{V}(x) \in \dd(X) \C[V]_+$. Then 
	$$
	\nu(x) - \chi(x) \equiv \nu_R - \chi'(x) \mod \dd(X) \C[V]_+
	$$
	for all $x \in \mf{g}$ because 
	$$
	\nu(\mathbf{1}_0) - n \chi_0 = \nu_R(\mathbf{1}_0) + \nu_V(\mathbf{1}_0) - n \chi_0 \equiv \nu_R(\mathbf{1}_0) - n \chi_0 + n \mod \dd(X) \C[V]_+. 
	$$ 
	We conclude that 
	$$
	\dd_X \nu_{\chi}(\mf{g}) + \dd_X (\sym R)^G_+ \subseteq \dd_X \nu_{R,\chi'}(\mf{g}) + \dd_X (\sym R)^G_+ + \dd_X \C[V]_+,
	$$
	and hence $\mc{G}_{\chi} \twoheadrightarrow \mc{F}_{\chi} \boxtimes S_0$. This morphism sends the invariant generator of $\mc{G}_{\chi}$ onto the invariant generator of $\mc{F}_{\chi} \boxtimes S_0$, which implies that this is a morphism of $(G(n\delta),\chi)$-monodromic $\dd$-modules. 
\end{proof} 

\begin{proof}[Proof of Theorem \ref{thm:intss}]
	If $\chi \cdot \delta \in \Z$, then we may assume by Lemma \ref{lem:twistequivalence} that $\chi \cdot \delta = 0$. Set $\psi = - \chi + \Tr_0$, so that $\psi \cdot \delta = 1$. Then Lemma \ref{lem:commuteequivdual} (2) implies that $\mathbb{D}(\mc{G}_{\psi}) \simeq \mc{G}_{\chi} \otimes \mathrm{det}_0$. Hence, combining Lemma \ref{lem:commuteequivdual} (1) and Lemma \ref{lem:quotemcG}, we deduce that 
	$$
	\mathbb{D}(\mc{F}_{\psi}) \boxtimes S_0 \otimes \mathrm{det}_0 \hookrightarrow \mc{G}_{\chi}
	$$
	The module $\ms{G}_{\chi}$ has the property that no quotient is killed by the functor $\Ham_{\chi}$ of Hamiltonian reduction. Therefore, if we assume that $\mc{G}_{\chi}$ is semi-simple, it follows that no submodule of $\mc{G}_{\chi}$ is killed by Hamiltonian reduction.
	
	On the other hand,  
	\begin{align*}
	\Ham_{\chi}(\mathbb{D}(\mc{F}_{\psi}) \boxtimes (S_0 \otimes \mathrm{det}_0)) & = \Gamma(X,\mathbb{D}(\mc{F}_{\psi}) \boxtimes (S_0 \otimes \mathrm{det}_0))^G \\
	 & \subset \Gamma(X,\mathbb{D}(\mc{F}_{\psi}) \boxtimes (S_0 \otimes \mathrm{det}_0))^{\Cs_{\Delta}} \\
	 & = \Gamma(R,\mathbb{D}(\mc{F}_{\psi})) \boxtimes \Gamma(V,(S_0 \otimes \mathrm{det}_0))^{\Cs_{\Delta}},
	\end{align*}
	since $\mathbb{D}(\mc{F}_{\psi})$ is a $(PG(n \delta), \chi)$-monodromic module. Then, the fact that all the weights of $\Cs_{\Delta}$ on $\Gamma(V,(S_0 \otimes \mathrm{det}_0))$ are $\ge n$ implies that $\Gamma(V,(S_0 \otimes \mathrm{det}_0))^{\Cs_{\Delta}} = 0$. Thus, $\Ham_{\chi}$ kills $\mathbb{D}(\mc{F}_{\psi}) \boxtimes (S_0 \otimes \mathrm{det}_0)$, contradicting our assumption that $\mc{G}_{\chi}$ is semi-simple. 	   
\end{proof}

\begin{proof}[Proof of Theorem \ref{thm:mainequiv}]
	Finally, we can give the proof of Theorem \ref{thm:mainequiv}. It was explained in the paragraph preceding Theorem \ref{thm:mainequiv} that (c) holds if and only if $\chi \cdot \alpha \notin \Z$ for all $\alpha \in \mc{R}_n$.	
	
	Assume first that (a): $\Add_{\chi}$ is semi-simple. This implies that $\Osph$ is semi-simple. Hence, as explained in the proof of Theorem \ref{thm:equivcond}, this implies that $\chi \cdot \alpha \notin \Z$ for all $\alpha \in \mathsf{R}^+$ with $\varepsilon_0 \cdot \alpha < n$. Therefore, we just need to show that $\chi \cdot \delta \notin \Z$. But this is precisely the statement of Theorem \ref{thm:intss}. Conversely, if $\chi \cdot \alpha \notin \Z$ for all $\alpha \in \mc{R}_n$, then by Theorem \ref{thm:equivcond}, category $\Osph$ is semi-simple and $\Ham_{\chi} : \Add_{\chi} \rightarrow \Osph$ is an equivalence. We deduce that $\Add_{\chi}$ is semi-simple. Thus, (a) holds if and only if $\chi \cdot \alpha \notin \Z$ for all $\alpha \in \mc{R}_n$. 
	
	Assume that (b): $\ms{M} |_{X^{\reg}} \neq 0$ for all non-zero $\ms{M}$ in $\Add_{\chi}$. The open set $X^{\reg}$ is the intersection of $X^{\circ}$ with the open set $X^{\mathrm{ss}}$, which is defined to be the pre-image in $X$ of the regular locus $\h^{\reg}/W$ in $\h /W$ under the quotient map $X \rightarrow X/\!/G \simeq \mf{h} / W$. Therefore, there exists an element $t \in \C[X]^G$ such that $X^{\reg} = X^{\circ} \cap (t \neq 0)$. In particular, if $\ms{M} |_{X^{\reg}} \neq 0$ then $\ms{M} |_{X^{\circ}} \neq 0$ and we deduce from Proposition \ref{prop:Xcyclicres} that $\chi \cdot \alpha \notin \Z$ for all $\alpha \in \mc{R}_n$. Conversely, if $\chi \cdot \alpha \notin \Z$ for all $\alpha \in \mc{R}_n$, and $\ms{M} \in \Add_{\chi}$ is non-zero, then once again Proposition \ref{prop:Xcyclicres} implies that $\ms{M} |_{X^{\circ}} \neq 0$. The fact that and $\Ham_{\chi} : \Add_{\chi} \rightarrow \Osph$ is an equivalence implies that $\Ham_{\chi}(\ms{M}) \neq 0$. Moreover, the fact that category $\Osph$ is semi-simple implies that every module in $\Osph$ is free over $\C[\h]^W = \C[X]^G$. In particular, $\Ham_{\chi}(\ms{M}) \neq 0$ implies that $\Ham_{\chi}(\ms{M})[t^{-1}] \neq 0$. Hence,  
	$$
	0 \neq \Ham_{\chi}(\ms{M})[t^{-1}] \subset \Gamma(X,\ms{M})[t^{-1}]= \Gamma(X^{\mathrm{ss}},\ms{M}),
	$$
	and thus $\ms{M} |_{X^{\reg}} \neq 0$ i.e. (b) holds if and only if $\chi \cdot \alpha \notin \Z$ for all $\alpha \in \mc{R}_n$. 
\end{proof}

\subsection{Extensions of local systems}

Our classification allows one to make an elementary ``cleaness'' result for generic $\chi$. Namely, for each $(\lambda;\nu) \in \mc{Q}(n,\ell)$, let $j^{(\lambda;\nu)} : \mc{O}_{(\lambda;\nu)} \hookrightarrow X$ be the locally closed embedding. The irreducible $(G,\chi)$-monodromic local system on $\mc{O}_{(\lambda;\nu)}$ (when it exists) is denoted $\mc{L}_{\chi}$. 

\begin{corollary}\label{cor:clean}
If $\chi \cdot \alpha \notin \Z$ for all $\alpha \in \mc{R}_n$ then 
$$
j^{(\lambda;\emptyset)}_!  \mc{L}_{\chi} = j^{(\lambda;\emptyset)}_{!,*} \mc{L}_{\chi}  = j^{(\lambda;\emptyset)}_{*} \mc{L}_{\chi}, \quad \forall \lambda \in \mc{P}_{\ell}(n) = \mc{Q}_{\chi}(n,\ell).
$$
Moreover, $\Ham_{\chi}(j^{(\lambda;\emptyset)}_{!,*} \mc{L}_{\chi}) \neq 0$. 
\end{corollary}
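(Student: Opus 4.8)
The plan is to reduce the cleanness statement to the generic behaviour established in Theorem~\ref{thm:genericissimpleandmorita} and Corollary~\ref{cor:genericissimpleandmoritalocal}, together with the description of $X^{\circ}$ as the locus of indecomposable (equivalently, cyclic) representations from Lemma~\ref{lem:cyclicindecomp}. First I would observe that the orbits $\mc{O}_{(\lambda;\emptyset)}$ are precisely the orbits consisting of indecomposable representations of $\Q_{\infty}(\ell)$, so by Lemma~\ref{lem:cyclicindecomp} they are exactly the orbits contained in the open set $X^{\circ}$; moreover by Lemma~\ref{lem:multitauf} these are the only orbits with $\pi_1 = \Z^{\ell}$. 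Under the hypothesis $\chi\cdot\alpha\notin\Z$ for all $\alpha\in\mc{R}_n$, Proposition~\ref{prop:Xcyclicres} tells us that $j^*:\QCoh(\dd_X,G,\chi)\to\QCoh(\dd_{X^{\circ}},G,\chi)$ is an equivalence with inverse $j_*$, and in particular there are no nonzero $(G,\chi)$-monodromic $\dd$-modules supported on $X\smallsetminus X^{\circ}$.

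The key step is then: since $j^{(\lambda;\emptyset)}_! \mc{L}_{\chi}$ and $j^{(\lambda;\emptyset)}_* \mc{L}_{\chi}$ both surject onto (resp.\ inject from) the intermediate extension $j^{(\lambda;\emptyset)}_{!,*}\mc{L}_{\chi}$, the discrepancies $\ker(j^{(\lambda;\emptyset)}_!\mc{L}_{\chi}\twoheadrightarrow j^{(\lambda;\emptyset)}_{!,*}\mc{L}_{\chi})$ and $\mathrm{coker}(j^{(\lambda;\emptyset)}_{!,*}\mc{L}_{\chi}\hookrightarrow j^{(\lambda;\emptyset)}_*\mc{L}_{\chi})$ are $(G,\chi)$-monodromic $\dd$-modules supported on $X\smallsetminus\mc{O}_{(\lambda;\emptyset)}$. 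I need to know these are in fact supported on $X\smallsetminus X^{\circ}$. This follows because the only orbit contained in $\overline{\mc{O}_{(\lambda;\emptyset)}}\cap X^{\circ}$ is $\mc{O}_{(\lambda;\emptyset)}$ itself: any orbit in the boundary of $\mc{O}_{(\lambda;\emptyset)}$ corresponds to a representation obtained by degenerating an indecomposable, and (degenerations of cyclic representations of $\Q_{\infty}(\ell)$ being again classified via the nilpotent case in \cite{BB-enhnil-quiver}) such a boundary orbit consists of decomposable representations, hence lies in $X\smallsetminus X^{\circ}$. Actually it is cleaner to argue via Proposition~\ref{prop:Xcyclicres} directly: restricting to $X^{\circ}$, the maps $j^{(\lambda;\emptyset)}_!,\,j^{(\lambda;\emptyset)}_{!,*},\,j^{(\lambda;\emptyset)}_*$ of the closed embedding $\mc{O}_{(\lambda;\emptyset)}\hookrightarrow X$ become the corresponding extensions along $\mc{O}_{(\lambda;\emptyset)}\hookrightarrow X^{\circ}$, and since $\mc{O}_{(\lambda;\emptyset)}$ is closed in $X^{\circ}$ (being a connected component of the cyclic locus stratified by these orbits — or at least open in its closure with boundary outside $X^{\circ}$), all three coincide on $X^{\circ}$. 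Since $j^*$ is an equivalence, the three $\dd$-modules on $X$ having equal restriction to $X^{\circ}$ forces them to be equal on $X$.

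The final nonvanishing claim $\Ham_{\chi}(j^{(\lambda;\emptyset)}_{!,*}\mc{L}_{\chi})\neq 0$ is immediate from Corollary~\ref{cor:genericissimpleandmoritalocal}(b) applied with $U=\C$: the hypothesis $\chi\cdot\alpha\notin\Z$ for all $\alpha\in\mc{R}_n$ is in force, $j^{(\lambda;\emptyset)}_{!,*}\mc{L}_{\chi}$ is a nonzero (simple, in fact) object of $\Add_{\chi}$, so $V(\Ham_{\chi}(j^{(\lambda;\emptyset)}_{!,*}\mc{L}_{\chi})) = X/\!/G$, which is in particular nonempty, whence $\Ham_{\chi}(j^{(\lambda;\emptyset)}_{!,*}\mc{L}_{\chi})\neq 0$. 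Alternatively one may invoke Proposition~\ref{prop:simplescout} together with the fact that $\Ham_{\chi}$ is an equivalence in this case (Theorem~\ref{thm:equivcond}).

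The main obstacle I anticipate is the precise justification that $\mc{O}_{(\lambda;\emptyset)}$ is closed in $X^{\circ}$, i.e.\ that no boundary orbit of $\mc{O}_{(\lambda;\emptyset)}$ consists of indecomposable representations. This is where one genuinely needs the orbit classification of \cite{BB-enhnil-quiver}: one must check that if $M$ is an indecomposable representation of $\Q_{\infty}(\ell)$ of dimension vector $\mbf{v}$ then any proper degeneration of $M$ decomposes. For $\mbf{x}$ nilpotent this follows from the explicit list of indecomposables in \cite[Theorem~1.2]{BB-enhnil-quiver} (there is a unique indecomposable of each admissible dimension vector, and dimension vectors of proper degenerations are strictly smaller, forcing a split), and the general case reduces to the nilpotent one via the generalized eigenspace decomposition of $\mbf{x}$ exactly as in the proof of Lemma~\ref{lem:cyclicindecomp}. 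Once this closedness is in hand, the rest of the argument is formal.
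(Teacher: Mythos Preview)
There is a genuine gap. Your key geometric claim—that $\mc{O}_{(\lambda;\emptyset)}$ is closed in $X^{\circ}$, equivalently that every orbit in $\partial\mc{O}_{(\lambda;\emptyset)}$ consists of decomposable representations—is false in general. Already for $\ell=1$, $n=2$ (see Section~\ref{sec:neq1}) the orbit $\mc{O}_{((2);\emptyset)}$ is dense in the enhanced nilcone $\mc{N}\times V$, so its boundary contains $\mc{O}_{((1,1);\emptyset)}$, which also has $\nu=\emptyset$ and consists of \emph{indecomposable} representations. Thus indecomposables do degenerate to other indecomposables, and Proposition~\ref{prop:Xcyclicres} cannot be used to conclude that the kernel and cokernel you write down vanish: there \emph{is} a $(G,\chi)$-monodromic local system living on this boundary stratum. (Note also that $\mc{O}_{((1,1);\emptyset)}$, consisting of pairs $(X,v)$ with $X$ regular nilpotent and $Xv=0$, is not contained in $X^{\circ}$, since such $v$ lies in $\ker X$ and is not cyclic for $X$; so one should in any case be cautious about invoking Lemma~\ref{lem:cyclicindecomp} in the way you do.)

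The intended argument bypasses all orbit-closure combinatorics. Under the hypothesis, $\Orb_{\chi}$ (equivalently $\Add_{\chi}$) is semi-simple by Theorem~\ref{thm:mainequiv}, and $j^{(\lambda;\emptyset)}_!\mc{L}_{\chi}$, $j^{(\lambda;\emptyset)}_{!,*}\mc{L}_{\chi}$, $j^{(\lambda;\emptyset)}_*\mc{L}_{\chi}$ all lie in $\Orb_{\chi}$. The kernel $K$ of the surjection $j^{(\lambda;\emptyset)}_!\mc{L}_{\chi}\twoheadrightarrow j^{(\lambda;\emptyset)}_{!,*}\mc{L}_{\chi}$ is supported on the boundary; semi-simplicity makes $K$ a direct summand and hence a \emph{quotient} of $j^{(\lambda;\emptyset)}_!\mc{L}_{\chi}$, but by adjunction $j_!$ admits no nonzero quotient supported off the open orbit, so $K=0$. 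The dual argument handles $j_*$. Finally $\Ham_{\chi}(j^{(\lambda;\emptyset)}_{!,*}\mc{L}_{\chi})\neq 0$ is immediate from Theorem~\ref{thm:equivcond}(5), since $\Ham_{\chi}$ is an equivalence and kills nothing.
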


It seems natural to expect, based on Corollary \ref{cor:clean} that $\Ham_{\chi}(j^{(\lambda;\nu)}_{!,*} \mc{L}_{\chi}) \neq 0$ implies that $\nu = \emptyset$ for all $\chi$ and that the Fourier transform of $j^{(\lambda;\emptyset)}_{!} \mc{L}_{\chi}$ should always give an admissible $\dd$-module that maps to the Verma module in $\Osph$ (assuming for simplicity that $\mbf{\kappa}$ is not aspherical). However, as the example in section \ref{sec:neq1} shows, such expectations are far too naive.

\subsection{Example: the enhanced nilcone}\label{sec:neq1}

In this section, we describe in detail what our results mean in the case of the original enhanced nilpotent cone. This is the situation originally considered by Gan-Ginzburg \cite{AlmostCommutingVariety}, and relates admissible $\dd$-modules to the spherical subalgebra $e \H_{\kappa} (\s_n) e$ of the rational Cherednik algebra $\H_{\kappa}(\s_n)$ associated to the symmetric group. The set $\mathcal{Q}(n,1)$ is equal to the set $\mc{P}_2(n)$ of all bipartitions of $n$. Given a partition $\nu = (\nu_1, \ds )$, we define $\gcd(\nu)$ to be the greatest common divisor of the $\nu_i$ and set $\gcd(\emptyset) = 0$. The following proposition was used in the proof of \cite[Theorem 29]{BellSchKostka}.

\begin{proposition}\label{prop:ell1}
Fix $\chi \in \C$. 
\begin{enumerate}
\item For each $(\lambda;\nu) \in \mc{P}_2(n)$,  
$$
\pi_1(\mathcal{O}_{(\lambda;\nu)}) = \Z /  \gcd(\nu) \Z.
$$
\item $\mathcal{O}_{(\lambda;\nu)}$ admits a $(\GL_n,\chi)$-monodromic local system if and only if $\gcd(\nu) \chi \in \Z$. 
\item The category $\Add_{\chi}$ is not semi-simple if and only if $\chi = \frac{r}{m}$, with $1 \le m \le n$ and $r \in \Z$, $(r,m) = 1$. 
\end{enumerate}
\end{proposition}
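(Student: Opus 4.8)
The plan is to specialize the general framework developed in the preceding sections to the case $\ell = 1$, where $\Q_{\infty}(1)$ is the Jordan quiver with a framing, so that $X = \mc{N}(V) \times V$ (after Fourier transform) is precisely the enhanced nilpotent cone studied by Gan--Ginzburg and Achar--Henderson. The root system $\widetilde{\mathsf A}_0$ has $\mathsf R^+ = \{ m \delta \mid m \ge 1 \}$, and the pairing $\langle \exp(\chi), m \delta \rangle$ is simply $m \chi \bmod \Z$. With these identifications in hand, parts (1), (2), (3) become straightforward consequences of Proposition~\ref{thm:cokerfun}, Lemma~\ref{lem:Gchilocal}, and Theorem~\ref{thm:mainequiv} respectively.

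For part (1): the orbit $\mc{O}_{(\lambda;\nu)}$ consists of representations of $\Q_{\infty}(1)$ whose indecomposable summands are recovered from $(\lambda;\nu)$ via Theorem~\ref{thm:paramcomb}. First I would recall from \cite{BB-enhnil-quiver} that, for $\ell = 1$, the indecomposables with a cyclic vector at $\infty$ have dimension vector $\varepsilon_\infty + m\delta$ (one such for each $m$), contributing the summand $M^{(k)}$ with $\beta^{(k)}_\infty = 1$, while the remaining indecomposables are the nilpotent Jordan blocks of size $\nu_i$, with dimension vector $\nu_i \delta$. Feeding this into the refined version \eqref{eq:cokerpiinfty} of Proposition~\ref{thm:cokerfun}: after discarding $M^{(k)}$, the matrix $B'$ is the $1 \times (k-1)$ row vector $(\nu_1, \ds, \nu_{k-1})$ (the dimension vector being a single integer since $|\Q_0'| = 1$), and its cokernel in $\Z^1 = \Z$ is $\Z / \gcd(\nu_1, \ds, \nu_{k-1})\Z = \Z / \gcd(\nu)\Z$, with the convention $\gcd(\emptyset) = 0$ handling the case $\nu = \emptyset$. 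For part (2): by Lemma~\ref{lem:Gchilocal} (or directly Corollary~\ref{cor:irrepO}), a $(\GL_n, \chi)$-monodromic local system exists on $\mc{O}_{(\lambda;\nu)}$ iff $q^{\beta^{(i)}} = 1$ for all $i$, i.e. iff $\exp(\chi)^{\nu_i} = 1$ for every part $\nu_i$ of $\nu$, which is equivalent to $\nu_i \chi \in \Z$ for all $i$, hence to $\gcd(\nu)\chi \in \Z$.

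For part (3): $\mc{R}_n = \{ m\delta \mid 1 \le m \le n\}$ when $\ell = 1$ (the ``$\varepsilon_0 \cdot \alpha < n$'' real roots being absent since $\Phi = \emptyset$), so Theorem~\ref{thm:mainequiv} says $\Add_\chi$ is semi-simple iff $m\chi \notin \Z$ for all $1 \le m \le n$. Negating: $\Add_\chi$ fails to be semi-simple iff there is some $m$ with $1 \le m \le n$ and $m\chi \in \Z$, which is exactly the condition that $\chi = r/m$ in lowest terms with $1 \le m \le n$ (taking $m$ minimal gives $(r,m) = 1$). I expect no real obstacle here; the only point requiring a little care is the bookkeeping in part (1) to confirm that the preferred summand $M^{(k)}$ really does have $\beta^{(k)}_\infty = 1$ in the $\ell = 1$ case and that the remaining summands are exactly the nilpotent single-vertex Jordan blocks indexed by the parts of $\nu$ — this is where one must actually invoke the explicit description of the parameterization from \cite{BB-enhnil-quiver}, and getting the indexing conventions to match (so that the entries of $B'$ are precisely the $\nu_i$) is the fiddly part.
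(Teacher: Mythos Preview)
Your proposal is correct and follows essentially the same approach as the paper. Parts (1) and (2) match the paper exactly (the paper simply cites Proposition~\ref{thm:cokerfun} for (1) and leaves (2) implicit); for part (3) you invoke Theorem~\ref{thm:mainequiv} directly, whereas the paper re-derives it in this special case by going through $\Osph$ semi-simplicity, Corollary~\ref{cor:Oss}, and Theorem~\ref{thm:intss} separately---but since Theorem~\ref{thm:mainequiv} is already proved by this point (and the Remark following the proposition explicitly identifies the condition with that of Theorem~\ref{thm:mainequiv}), your shortcut is entirely legitimate and arguably cleaner.
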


\begin{proof}
Part (1) is just Proposition \ref{thm:cokerfun}. Part (2) then follows from Corollary \ref{cor:irrepO}. Finally, part (3) is a consequence of Theorem \ref{thm:mainequiv}, noting that $\chi = \frac{r}{m}$, with $1 \le m \le n$ and $r \in \Z$, $(r,m) = 1$ is equivalent to the statement $\chi \cdot \alpha \in \Z$ for some $\alpha \in \mc{R}_n$. 
\end{proof}

Consider the case $n = 2$. In this case the orbits in $\mc{N}(V) \times V$ are labeled by bipartitions of $2$. Let $\mc{O}_{\reg}$ denote the regular nilpotent orbit in $\mc{N}(V)$, so that $\mc{N}(V) = \mc{O}_{\reg} \cup \{ 0 \}$. 
\begin{align*}
\mc{O}_{((2);\emptyset)} & = \{ (X,v) \in \mc{O}_{\reg} \times V \ | \  X v \neq 0 \}, & \pi_1(\mc{O}) = \Z. \\
\mc{O}_{((1,1);\emptyset)} & = \{ (X,v) \in \mc{O}_{\reg} \times V \ | \ X v = 0 \}, & \pi_1(\mc{O}) = \Z. \\
\mc{O}_{((1);(1))} & = \{ (0,v) \in \{ 0 \} \times V \ | \ v \neq 0 \}, & \pi_1(\mc{O}) = 1. \\
\mc{O}_{(\emptyset;(2))} & = \mc{O}_{\reg} \times \{ 0 \},  & \pi_1(\mc{O}) = \Z_2. \\
\mc{O}_{(\emptyset;(1,1))} & = \{ (0,0) \}, & \pi_1(\mc{O}) = 1. \\
\end{align*}
When $\chi \notin \frac{1}{2} \Z$, the category $\Orb_{\chi}$ is semi-simple, with two simple objects $j^{((2);\emptyset)}_{!,*} \mc{L}_{\chi}$ and $j^{((1,1);\emptyset)}_{!,*} \mc{L}_{\chi}$. When $\chi \in \frac{1}{2} + \Z$, the category $\Orb_{\chi}$ is not semi-simple and has three simple objects
$$
L_1 := j^{((2);\emptyset)}_{!,*}\mc{L}_{\chi}, \quad L_2 :=  j_{!,*}^{((1,1);\emptyset)} \mc{L}_{\chi}, \quad L_3 := j_{!,*}^{(\emptyset;(2))} \mc{L}_{\chi}.
$$
Applying \cite[Theorem 7.8]{CEE} c.f. \cite[Proposition 6.4.1]{mirabolicHam}, one can deduce that $L_2$ is never killed by Hamiltonian reduction, that $\Ham_{\chi}(L_1) \neq 0$ if and only if $\chi \in \frac{1}{2} + \Z_{\ge 0}$ and $\Ham_{\chi}(L_3) \neq 0$ if and only if $\chi \in - \frac{1}{2} + \Z_{< 0}$. 

If $\chi \in \Z$ is integral then again $\Orb_{\chi}$ is not semi-simple and has five simple objects
$$
S_1 := j^{((2);\emptyset)}_{!,*}\mc{L}_{\chi}, \quad S_2 :=  j_{!,*}^{((1,1);\emptyset)} \mc{L}_{\chi}, \quad S_3 := j_{!,*}^{(\emptyset;(2))} \mc{L}_{\chi}, 
$$
$$
S_4 := j_{!,*}^{(\emptyset;(1,1))} \mc{L}_{\chi}, \quad S_5 := j_{!,*}^{((1);(1))} \mc{L}_{\chi}.
$$
In this case, when $\chi \in \Z_{\ge 0}$, $S_1$ and $S_5$ are not killed by $\Ham_{\chi}$, but $S_2,S_3$ and $S_4$ are, and when $\chi \in \Z_{< 0}$, $S_3$ and $S_4$ are not killed, but the others are.



	
	\def\cprime{$'$} \def\cprime{$'$} \def\cprime{$'$} \def\cprime{$'$}
	\def\cprime{$'$} \def\cprime{$'$} \def\cprime{$'$} \def\cprime{$'$}
	\def\cprime{$'$} \def\cprime{$'$} \def\cprime{$'$} \def\cprime{$'$}
	\def\cprime{$'$} \def\cprime{$'$}

\end{document}